\numberwithin{equation}{section}
\DeclareMathAlphabet{\mathpzc}{OT1}{pzc}{m}{it}
\titlespacing{\section}{0 cm }{1,5 cm}{0,2 cm}
\titlespacing{\subsection}{0 cm }{1 cm}{0,2 cm}
\newtheorem{prop}{Proposition}[section]
\newtheorem*{prop*}{Proposition}
\newtheorem*{agree}{General Agreement}
\newtheorem*{prop2.1}{Proposition 2.1}
\newtheorem*{prop1.1}{Proposition 1.1}
\newtheorem*{prop3.3}{Proposition 3.3}
\newtheorem{theo}{Theorem}[section]
\newtheorem*{theo_1*}{Theorem 1}
\newtheorem*{theo_2}{Theorem 2}
\newtheorem*{theo_3*}{Theorem 3}
\newtheorem*{theo_4*}{Theorem 4}
\newtheorem*{theo_5.a*}{Theorem 5.a}
\newtheorem*{theo_6.a*}{Theorem 6.a}
\newtheorem*{theo_5.b*}{Theorem 5.b}
\newtheorem*{theo_6.b*}{Theorem 6.b}
\newtheorem{lemma}{Lemma}[section]
\newtheorem{definition}{Definition}[section]
\newtheorem{remark}{Remark}[section]
\newtheorem{cor}{Corollary}[section]
\newtheorem{example}{Example}[section]
\begin{document}

\thispagestyle{empty}

  \setlength{\headsep}{2cm}
{{\centering

{\huge{{\textbf{Limiting Distributions of Scaled}}}

\vspace{0.2 cm}

{\huge{{\textbf{Eigensections in a GIT-Setting}}}}

\vspace{1 cm}

{\LARGE{\scshape{Daniel Berger}}}
\vspace{0.2 cm}

}}

\vspace{0.5 cm}

\begin{abstract}
Let $\mathrm{\mathbf{L}\rightarrow \mathbf{X}}$ be a base point free $\mathrm{\mathbb{T}=T^{\mathbb{C}}}$-linearized hermitian line bundle over a compact variety $\mathrm{\mathbf{X}}$ where $\mathrm{T=\left(S^{1}\right)^{m}}$ is a real torus. 
The main focus of this paper is to describe the asymptotic behavior of a certain class of sequences $\mathrm{\left(s_{n}\right)_{n}}$ of $\mathrm{\mathbb{T}}$-eigensections $\mathrm{s_{n}\in H^{0}\left(\mathbf{X},\mathbf{L}^{n}\right)}$ as $\mathrm{n\rightarrow \infty}$, introduced by {\scshape{Shiffman, Tate}} and {\scshape{Zelditch}}, and its connection to the geometry of the Hilbert quotient $\mathrm{\pi\!:\!\bf{X}^{ss}_{\xi}\rightarrow \mathbf{X}^{ss}_{\xi}/\!\!/\mathbb{T}}$ where $\mathrm{\xi\in \mathfrak{t}^{*}}$.

Using these sequences $\mathrm{\left(s_{n}\right)_{n}}$ we will first define a naturally associated sequence of probability measures $\mathrm{\left(\bm{\nu}_{n}\left(y\right)\right)_{n}}$ on each fiber $\mathrm{\pi^{-1}\left(y\right)}$ of the Hilbert quotient where $\mathrm{y}$ varies in a Zariski dense subset $\mathrm{\mathbf{Y}_{0}}$ of the base $\mathrm{\mathbf{X}^{ss}_{\xi}/\!\!/\mathbb{T}}$. In the main part of this paper we will then show that $\mathrm{\left(\bm{\nu}_{n}\left(y\right)\right)_{n}}$ converges uniformly over $\mathrm{\mathbf{Y}_{0}}$ to a Dirac fiber measure whose support is completely determined by the hermitian bundle metric $\mathrm{h}$ and the asymptotic geometry of the rescaled weight vectors $\mathrm{\left(n^{-1}\xi_{n}\right)_{n}}$ given by the initial sequence $\mathrm{\left(s_{n}\right)_{n}}$. 
 
The essential step of the proof is based on the work of {\scshape{D.\! Barlet}} whose results provide us with the construction of an equivariant, dimensional-theoretical {\itshape{flattening}} $\mathrm{\Pi\!:\!\widetilde{\bf{\,X\,}}\rightarrow \widetilde{\bf{\,Y\,}}}$ of the Hilbert quotient $\mathrm{\pi\!:\!\bf{X}^{ss}_{\xi}\rightarrow \mathbf{X}^{ss}_{\xi}/\!\!/\mathbb{T}}$ which turns out to be crucial in order to guarantee uniform estimates over all of $\mathrm{\mathbf{Y}_{0}}$.

\end{abstract}

\vspace{1cm}

  \setlength{\headsep}{0.8cm}
\begin{spacing}{1.2}

\setlength{\pdfpagewidth}{8.5in}
\setlength{\pdfpageheight}{11in}

\tableofcontents
\thispagestyle{empty}

\fontsize{11}{12}\selectfont 

\setcounter{page}{1}
\section{Introduction and Statement of Results}
The aim of this thesis is to examine the asymptotic geometry of a certain class of sequences of eigensections of a line bundle by describing the convergence properties of a naturally associated measure sequence. In this discussion, we let $\mathrm{\mathbf{X}}$ be a projective, normal, purely m-dimensional variety and $\mathrm{\mathbb{T}=T^{\mathbb{C}}}$ a complex torus with the unique maximal compact subgroup $\mathrm{T=\big(S^{1}\big)^{m}}$. We equip $\mathrm{\mathbf{X}}$ with an algebraic action $\mathrm{\phi\!:\!\mathbb{T}\times \mathbf{X}\rightarrow \mathbf{X}}$ of the complex torus $\mathrm{\mathbb{T}}$ and assume $\mathrm{\phi}$ to be compatible with the holomorphic structure of $\mathrm{\mathbf{X}}$. Furthermore, we will fix a based point free line bundle $\mathrm{p\!:\!\mathbf{L}\rightarrow \mathbf{X}}$ over $\mathrm{\mathbf{X}}$ and an algebraic $\mathrm{\mathbb{T}}$-action $\mathrm{\widehat{\phi}\!:\!\mathbb{T}\times \mathbf{L}\rightarrow \mathbf{L}}$ which projects down to $\mathrm{\phi}$ so that the corresponding morphisms of the fibers $\mathrm{\mathbf{L}_{\vert x}}$ are linear transformations. In the sequel, we will refer to $\mathrm{\mathbf{L}}$ as a $\mathrm{\mathbb{T}}$-{{linearized}} line bundle. Moreover, let $\mathrm{h}$ \label{Hermitian Bundle Metric} be a $\mathrm{T}$-invariant smooth, hermitian, positive bundle metric on $\mathrm{\mathbf{L}}$ and let $\mathrm{\mathfrak{t}}$ be the Lie algebra of the compact form $\mathrm{T\subset \mathbb{T}^{\mathbb{C}}}$. In this context, there exists ({cf.\! }\cite{Gu-St}) a naturally associated moment map $\mathrm{\mu\!:\!\mathbf{X}\rightarrow \mathfrak{t}^{*}}$ \label{Notation Moment Map} with respect to the K\"ahler form $\mathrm{p^{*}\omega=-\frac{i}{2}\partial \overline{\partial} \vert \cdot\vert_{h}^{2}}$ \label{Notation Kaehler form} given by the formula $$\mathrm{p^{*}\mu^{\xi}=-\frac{1}{4}d^{c}\mathbf{log}\,\vert \cdot\vert_{h}^{2}\,\widehat{X}_{\xi}}$$ where $\mathrm{\widehat{X}_{\xi}\label{Notation Fundamental Vector Field on L}}$ denotes the fundamental vector field of $\mathrm{\widehat{\phi}}$ on the total space $\mathrm{\mathbf{L}}$ (in the sequel we will use the notation $\mathrm{\vert \cdot \vert^{2}_{h}=\vert \cdot\vert^{2}}$\label{Notation Norm of Section with Respect to H}).

If $\mathrm{\xi\in Im\left(\mu\right)}$, it is possible to define an equivalence relation on the Zariski open, $\mathrm{\mathbb{T}}$-invariant set of semistable points $$\mathrm{\mathbf{X}^{ss}_{\xi}=\left\{x\in \mathbf{X}\!:\!\,cl\left(\mathbb{T}.x\right)\cap \mu^{-1}\left(\xi\right)\neq \varnothing\right\}\label{Notation Set of Semistable Points}}$$ given by $$\mathrm{z_{0}\sim z_{1}:\Leftrightarrow cl\left(\mathbb{T}.z_{0}\right)\cap cl\left(\mathbb{T}.z_{1}\right)\neq \varnothing.}$$ Having defined $\mathrm{\sim}$, it is possible (cf. \cite{He-Hu2}, {pp.\ }{\bf{\oldstylenums{310}-\oldstylenums{349}}}) to equip the induced quotient $\mathrm{\mathbf{X}^{ss}_{\xi}/\!\sim}$ with a unique, holomorphic structure of a complex space denoted by $\mathrm{\mathbf{X}^{ss}_{\xi}/\!\!/\mathbb{T}\label{Notation Hilbert Quotient}}$ so that the quotient map $\mathrm{\pi\!:\!\bf{X}^{ss}_{\xi}\rightarrow \mathbf{X}^{ss}_{\xi}/\!\!/\mathbb{T}}$ \label{Notation Hilbert Quotient Map} is holomorphic and has the following two characteristic properties: \vspace{-0.2 cm} \begin{enumerate}  
\item If $\mathrm{\mathcal{O}_{\mathbf{X}}}$ denotes the sheaf of holomorphic functions on $\mathrm{\mathbf{X}}$ and $\mathrm{\mathcal{O}_{\mathbf{X}^{ss}/\!\!/\mathbb{T}}}$ and is the sheaf of holomorphic functions on $\mathrm{\mathbf{X}^{ss}_{\xi}/\!\!/\mathbb{T}}$, then $\mathrm{(\pi_{*}\mathcal{O}_{\mathbf{X}^{ss}_{\xi}})^{\mathbb{T}}=\mathcal{O}_{\mathbf{X}^{ss}_{\xi}/\!\!/\mathbb{T}}}$. \vspace{-0.2 cm}
\item We have $\mathrm{cl\,\left(\mathbb{T}.x\right)\cap cl\,\left(\mathbb{T}.y\right)\neq\varnothing}$ if and only if $\mathrm{\pi\left(x\right)=\pi\left(y\right)}$.
\end{enumerate} As a generalization of the work of {\scshape{Shiffman, Tate}} and {\scshape{Zelditch}} (cf.\ \cite{S-T-Z}) and the results of {\scshape{Huckleberry}}, {\scshape{Sebert}} (cf.\ \cite{Hu-Se}), we link the geometry of the sequence of $\mathrm{\mathbb{T}}$-representation spaces, given by $\mathrm{H^{0}\left(\mathbf{X},\mathbf{L}^{n}\right)}$, as $\mathrm{n\rightarrow \infty}$ to the geometry of the quotient $\mathrm{\pi\!:\!\mathbf{X}^{ss}_{\xi}\rightarrow \mathbf{X}^{ss}_{\xi}/\!\!/\mathbb{T}\eqqcolon\mathbf{Y}\label{Notation Abbreviation for Hilbert Quotient}}$. To be more precise, we show that for each choice of $\mathrm{\xi\in Im\left(\mu\right)}$, it is possible to construct a convergent measure sequence which localizes along $\mathrm{\mu^{-1}\left(\xi\right)}$ by using sequences of $\mathrm{\mathbb{T}}$-eigensections $\mathrm{s_{n}\in H^{0}\left(\mathbf{X},\mathbf{L}^{n}\right)}$, i.e.\ $$\mathrm{{\bf{exp}}\left(\eta\right).s_{n}=e^{2\,\pi\,\sqrt{-1}\xi_{n}\left(\eta\right)}\,s_{n}\text{ where }\eta\in \mathfrak{t},\,\xi_{n}\in\mathfrak{t^{*}_{\mathbb{Z}}}}$$ whose rescaled weights $\mathrm{\frac{\xi_{n}}{n}}$ asymptotically approximate the chosen $\mathrm{\xi\in \mathfrak{t}^{*}}$ appropriately as $\mathrm{n\rightarrow \infty}$. 

As a starting point, we show (cf.\ {\bf{Theorem 1}}) that, given $\mathrm{\xi\in Im\left(\mu\right)}$, there exists a finite cover $\mathrm{\left\{\mathbf{X}^{i}\right\}_{i\in I}\text{ of }\mathbf{X}^{ss}_{\xi}}$ consisting of open, $\mathrm{\pi}$-saturated subsets and a finite collection $$\mathrm{\left\{\left( s_{n}^{i} \right)_{n}\right\}_{i\in I},\text{ where }s_{n}^{i}\in H^{0}(\mathbf{X},\mathbf{L}^{n})}$$ of sequences consisting of $\mathrm{\mathbb{T}}$-eigensections such that the following properties are fulfilled:
\begin{enumerate}  
\item $\mathrm{\frac{\xi^{i}_{n}}{n}\rightarrow \xi}$ where $\mathrm{\vert \xi^{i}_{n}-n\,\xi\vert\in \mathcal{O}\left(1\right)}$ for each $\mathrm{i}$.\vspace{-0.2 cm}
\item $\mathrm{\mathbf{X}^{i}\subset \mathbf{X}(s_{n}^{i})\coloneqq \left\{x\in \mathbf{X}:\, s_{n}^{i}\left(x\right)\neq 0\right\}\label{Notation GTZU}}$ for all $\mathrm{n}$ big enough.\vspace{-0.2 cm}
\item $\mathrm{\mathbf{X}^{i}\cap \mathbb{T}.\mu^{-1}\left(\xi\right)=\mathbf{X}^{i}\cap \mathbb{T}.\mu^{-1}\left(n^{-1}\xi^{i}_{n}\right)}$ for all $\mathrm{n\geq N_{0}}$.
\end{enumerate} The construction of such a {\itshape{tame collection}} will be the first step of the present work and involves the combinatorial analysis of the sets $\mathrm{\mu\left(cl\left(\mathbb{T}.x\right)\right)}$, $\mathrm{x\in \mathbf{X}^{ss}_{\xi}}$, which are known to be convex polytopes in $\mathrm{\mathfrak{t}^{*}}$ ({cf.\ }\cite{Ati}). The crucial step of the existence proof is to control the dependence of the geometry of $\mathrm{\mu\left(cl\left(\mathbb{T}.x\right)\right)}$ as $\mathrm{x}$ varies in $\mathrm{\mathbf{X}^{ss}_{\xi}}$. Even in the special case $\mathrm{\mathbf{X}^{s}_{\xi}=\mathbf{X}^{ss}_{\xi}}$, i.e.\ where every $\mathrm{\pi}$-fiber is given by a $\mathrm{\mathbb{T}}$-orbit, the  shape and position of $\mathrm{\mu\left(cl\left(\mathbb{T}.x\right)\right)}$ can in general vary considerably.

After having proven the existence of  $\mathrm{\left\{\left(s_{n}^{i}\right)_{n}\right\}_{i}}$ with the aforementioned properties, it is possible to define for each finite, open cover $\mathrm{\mathfrak{U}=\left\{\mathbf{U}_{i}\right\}_{i}}$ of $\mathrm{\mathbf{Y}}$ subordinate to $\mathrm{\left\{\pi\left(\mathbf{X}^{i}\right)\right\}_{i}}$, i.e.\ $\mathrm{\mathbf{U}_{i}\subset \pi\left(\mathbf{X}^{i}\right)}$, a finite collection $\mathrm{\left\{\bm{\nu}^{\mathfrak{U}}_{n}\right\}_{n}=\left\{\bm{\nu}^{i}_{n}\right\}_{i}}$ of sequences of $\mathrm{\pi}$-fiber probability measures on Zariski open subsets of $\mathrm{\mathbf{X}^{ss}_{\xi}}$ by using the corresponding norm functions $\mathrm{\vert s^{i}_{n}\vert^{2}}$ with respect to the hermitian bundle metric $\mathrm{h}$. The precise construction of the collection $\mathrm{\left\{\bm{\nu}^{\mathfrak{U}}_{n}\right\}_{n}}$ is based on the observation that the ambiguity of the norm sequence $\mathrm{\left(\vert s_{n}^{i}\vert^{2}\right)_{n}}$, which is only well-determined up to scalar multiplication, can be abolished if one considers the normalized sequence given by $\mathrm{\Vert s_{n}^{i}\Vert^{-2}\vert s_{n}^{i}\vert^{2}}$. Here $\mathrm{\Vert s_{n}^{i}\Vert^{2}}$ denotes the fiber integral of the function $\mathrm{\vert s_{n}^{i}\vert^{2}}$ over $\mathrm{\pi_{y}\coloneqq \pi^{-1}\left(y\right)}$. The collection $\mathrm{\left\{\bm{\nu}^{\mathfrak{U}}_{n}\right\}_{n}}$ associated to the tame collection $\mathrm{\left\{s_{n}^{i}\right\}_{i}}$ is then given by $$\mathrm{\bm{\nu}_{n}^{i}\left(y\right)\left(\mathbf{A}\right)\coloneqq \int_{\mathbf{A}}\Vert s_{n}^{i}\Vert^{-2}\vert s_{n}^{i}\vert^{2}\,d\,[\pi_{y}]\text{ for }\mathbf{A}\text{ measurable and }y\in \mathbf{U}_{i}\cap \mathbf{Y}_{0}}$$ where $\mathrm{\int_{\mathbf{A}}d\,[\pi_{y}]}$ is defined to be the integral over $\mathrm{\mathbf{A}\subset \pi^{-1}\left(y\right)}$ of the restriction $\mathrm{\omega^{dim_{\mathbb{C}}\,\pi_{y}}\vert\pi_{y}}$ with a certain multiplicity\footnote{All relevant details of the theory of fiber integration can be found in \cite{Kin}.}. Since the dimension $\mathrm{k_{y}=dim_{\mathbb{C}}\,\pi^{-1}\left(y\right)}$ of a fiber $\mathrm{\pi^{-1}\left(y\right)}$ for $\mathrm{y\in \mathbf{Y}}$ can change as $\mathrm{y}$ moves in $\mathrm{\mathbf{Y}}$ and since the construction of each $\mathrm{\bm{\nu}_{n}^{i}\left(y\right)}$ involves $\mathrm{k_{y}}$, one can not expect that $\mathrm{\bm{\nu}_{n}^{\mathfrak{U}}}$ defines a uniform object over the full base $\mathrm{\mathbf{Y}}$. However, it is possible to find a Zariski dense subset $\mathrm{\mathbf{Y}_{0}\subset \mathbf{Y}}$, so that $\mathrm{\pi^{-1}\left(y\right)}$ for $\mathrm{y\in \mathbf{Y}_{0}}$ are purely $\mathrm{k}$-dimensional complex varieties (set $\mathrm{\mathbf{X}_{0}\coloneqq \pi^{-1}\left(\mathbf{Y}_{0}\right)}$). Over this set, it is reasonable to examine the convergence properties of the measure sequence $\mathrm{\bm{\nu}_{n}^{i}\left(\cdot\right)}$ for each $\mathrm{i\in I}$. More precisely, if $\mathrm{f\in\mathcal{C}^{0}\!\left(\mathbf{X}\right)}$ is a continuous function and if $\mathrm{f_{red}}$ denotes the {\itshape{reduced function}} on the base $\mathrm{\mathbf{Y}}$ given by the restriction $\mathrm{\overline{f}\vert \mu^{-1}\left(\xi\right)}$ of the averaged function $\mathrm{\overline{f}\left(x\right)=\int_{T}f\left(t.x\right)\,d\nu_{T}}$ \label{Notation Averaged Function} with respect to the {\scshape{Haar}} measure $\mathrm{\nu_{T}}$, we prove the following theorem.

\begin{theo_3*}\label{Theorem Uniform Convergence of the Tame Measure Sequence} \textnormal{[\scshape{Uniform Convergence of the Tame Measure Sequence}]}\vspace{0.1 cm}\\
For for every tame collection $\mathrm{\left\{s_{n}^{i}\right\}_{i}}$ there exists a finite cover $\mathrm{\mathfrak{U}}$ of $\mathrm{\mathbf{Y}}$ with $\mathrm{\mathbf{U}_{i}\subset}$ $\mathrm{\pi\left(\mathbf{X}^{i}\right)}$ so that the collection of fiber probability measures $\mathrm{\left\{\bm{\nu}^{\mathfrak{U}}_{n}\right\}}$ associated to $\mathrm{\left\{s^{i}_{n}\right\}_{i}}$ converges uniformly on $\mathrm{\mathbf{Y}_{0}}$ to the fiber Dirac measure of $\mathrm{\mu^{-1}\left(\xi\right)\cap \mathbf{X}_{0}}$, i.e.\ for every $\mathrm{i\in I}$ and every $\mathrm{f\in \mathcal{C}^{0}\left(\mathbf{X}\right)}$, we have $$\begin{xy}
 \xymatrix{\mathrm{\left(y\mapsto \int_{\pi^{-1}\left(y\right)}f\, d\bm{\nu}_{n}^{i}\left(y\right)\right)}\ar[rr]^{\mathrm{\,\,\,\,\,}}&&\mathrm{f_{red}\text{ uniformly on }\mathbf{U}_{i}\cap \mathbf{Y}_{0}.\label{Notation Reduced Function}}
 }
\end{xy}$$
\end{theo_3*}

Furthermore, if $\mathrm{\left\{D^{\mathfrak{U}}_{n}\right\}=\left\{D_{n}^{i}\right\}_{i}}$ denotes the corresponding collection of cumulative fiber probability densities given by $$\mathrm{D^{i}_{n}\left(y,\cdot \right)\!:\!t\mapsto D^{i}_{n}\left(y,t \right)\coloneqq \int_{\left\{ \frac{\vert s_{n}^{i}\vert^{2}}{\left\Vert s_{n}^{i}\right\Vert ^{2}}\geq t\right\} \cap\pi^{-1}\left(y\right)}d\,[\pi_{y}]\text{ for }y\in\mathbf{ U}_{i}\cap \mathbf{Y}_{0}}$$ the following convergence result can be proved.
\begin{theo_4*}\label{Theorem Uniform Convergence of the Tame Distribution Sequence} \textnormal{[\scshape{Uniform Convergence of the Tame Distribution Sequence}]}\vspace{0.1 cm}\\
For every $\mathrm{t\in \mathbb{R}}$ and every tame collection $\mathrm{\left\{s_{n}^{i}\right\}_{i}}$ there exists a finite cover $\mathrm{\mathfrak{U}}$ of $\mathrm{\mathbf{Y}}$ with $\mathrm{\mathbf{U}_{i}\subset}$ $\mathrm{\pi\left(\mathbf{X}^{i}\right)}$ so that the collection of cumulative fiber probability densities $\mathrm{\left\{D^{\mathfrak{U}}_{n}\left(\cdot,t\right)\right\}}$ associated to $\mathrm{\left\{s^{i}_{n}\right\}_{i}}$ converges uniformly on $\mathrm{\mathbf{Y}_{0}}$ to the zero function on $\mathrm{\mathbf{Y}_{0}}$, i.e.\ for every $\mathrm{i\in I}$ we have $$\begin{xy}
 \xymatrix{\mathrm{\left(y\mapsto D_{n}^{i}\left(y,t\right)\right)}\ar[rr]^{\mathrm{\,\,\,\,\,}}&&\mathrm{0\text{ uniformly on }\mathbf{U}_{i}\cap \mathbf{Y}_{0}\subset \pi\left(\mathbf{X}^{i}\right).}
 }
\end{xy}$$
\end{theo_4*}

In this sense, we have shown that each tame sequence of eigensections $\mathrm{\left(s_{n}^{i}\right)_{n}}$, attached to a prescribed weight $\mathrm{\xi}$, gives rise to a sequence of fiber measures over $\mathrm{\mathbf{Y}_{0}}$, which independently of the choice of $\mathrm{\left(s_{n}^{i}\right)_{n}}$, localizes uniformly along the critical $\mathrm{\mu^{-1}\left(\xi\right)}$. The localization property of the measure sequence $\mathrm{\bm{\nu}_{n}^{\mathfrak{U}}\left(\cdot\right)}$ attached to the tame collection $\mathrm{\left\{\left(s^{i}_{n}\right)\right\}_{i}}$ is a consequence of the fact that the corresponding sequences of strictly plurisubharmonic functions $\mathrm{\varrho^{i}_{n}\!:\!\mathbf{X}^{i}\rightarrow \mathbb{R}}$ given by $$\mathrm{\varrho^{i}_{n}\coloneqq -\frac{1}{n}\mathbf{log}\,\vert s_{n}^{i}\vert^{2}}$$ converge (along with all derivatives) uniformly on compact sets to a strictly plurisubharmonic function $\mathrm{\varrho^{i}}$. It is crucial to note that the restriction $\mathrm{\varrho^{i}\vert \pi^{-1}\left(y\right)}$ of $\mathrm{\varrho^{i}}$ to each fiber of the projection $\mathrm{\pi\!:\!\mathbf{X}^{ss}_{\xi}\rightarrow \mathbf{Y}}$ takes on its $\mathrm{T}$-invariant minimum along the uniquely defined $\mathrm{T}$-orbit $\mathrm{T.x_{y}\subset \pi^{-1}\left(y\right)}$ given by $$\mathrm{T.x_{y}=\mu^{-1}\left(\xi\right)\cap \pi^{-1}\left(y\right).}$$ Using this observation, it is then possible to deduce estimates of the magnitude of $\mathrm{\varrho^{i}_{n}}$ and hence of $\mathrm{e^{-n\,\varrho^{i}_{n}}=\vert s_{n}^{i}\vert^{2}}$ outside a $\mathrm{T}$-invariant, relatively compact tube of $\mathrm{\mu^{-1}\left(\xi\right)}$ as $\mathrm{n}$ tends to infinity (cf.\ {\bf{Theorem 2}}).

Apart form the determination of the asymptotic behavior of $\mathrm{\varrho^{i}_{n}}$, which plays an essential role in the proof of {\bf{Theorem 3, 4}}, it is also necessary to deal with the following issue: The fact that $\mathrm{\bm{\nu}_{n}^{\mathfrak{U}}\left(\cdot\right)}$ is defined over a non-compact base makes a direct application of the standard convergence theorems of measure theory considerably more difficult. Therefore, a good portion of the proof of the above convergence theorem will be devoted to resolving this issue by constructing a new quotient $\mathrm{\Pi\!:\!\widetilde{\bf{\,X\,}}\rightarrow \widetilde{\bf{\,Y\,}}}$ which extends the restricted quotient $\mathrm{\pi\!:\!\mathbf{X}_{0}\rightarrow \mathbf{Y}_{0}\label{Notation Inverse Image of Y_0 Under The Projection Map}}$, so that the following diagram commutes
 
$$\begin{xy}
\xymatrix{
\mathrm{\mathbf{X}_{0}}\ar@{^{(}->}[rrr]\ar[d]_{\mathrm{\pi\vert \mathbf{X}_{0}}} &&& \mathrm{\widetilde{\bf{\,X\,}}}\ar[d]^{\mathrm{\Pi}}\\
\mathrm{\mathbf{Y}_{0}}\ar@{^{(}->}[rrr]&&& \mathrm{\widetilde{\bf{\,Y\,}}}\\
}
\end{xy}
$$ and so that all fibers of $\mathrm{\Pi}$ are compact and of pure dimension $\mathrm{k}$. 

The construction of this equivariant, dimensional-theoretical {\itshape{flattening}} $\mathrm{\Pi\!:\!\widetilde{\bf{\,X\,}}\rightarrow \widetilde{\bf{\,Y\,}}}$, which is based on results of {\scshape{D.\! Barlet}}, allows us to realize the fiber measure sequence $\mathrm{\bm{\nu}_{n}^{\mathfrak{U}}\left(\cdot\right)}$ as a restriction of a measure sequence defined on $\mathcal{\widetilde{\bf{\,X\,}}}$. Since $\mathrm{\widetilde{\bf{\,X\,}}}$ and all its fibers are compact, it is then possible to show the above mentioned convergence properties of $\mathrm{\left\{\bm{\nu}_{n}^{\mathfrak{U}}\right\}_{n}}$ by applying results concerning the continuity of fiber integration. 

In the last part of this work, we return to the initial sequence of $\mathrm{\mathbb{T}}$-eigensections $\mathrm{\left(s_{n}\right)_{n}}$ and examine the convergence properties of the fiber measure sequence induced by $\mathrm{\vert s_{n}\vert^{2}\Vert s_{n}\Vert^{-2}}$. Unlike in the tame case, we first have to face the problem that the function $\mathrm{\Vert s_{n}\Vert^{-2}}$ is only well defined for all $\mathrm{y\in \mathbf{Y}}$ with the property $\mathrm{s_{n}\vert\pi^{-1}\left(y\right)\not\equiv0}$. The task of defining a maximal, $\mathrm{n}$-stable set in $\mathrm{\mathbf{Y}}$, on which we can consistently write down a measure sequence $\mathrm{\left(\bm{\nu}_{n}\right)_{n}}$ for all $\mathrm{n\in \mathbb{N}}$ big enough attached to $\mathrm{\left(s_{n}\right)_{n}}$, naturally leads to the notion of a {\itshape{removable singularity}}. Having available this concept, which is based on the idea that certain singularities can be "divided out" by multiplying $\mathrm{s_{n}}$ with locally defined, invariant holomorphic functions, we are able to uniquely extend the measure sequence beyond its original set of definition for all $\mathrm{n\geq N_{0}}$ on an open subset of $\mathrm{\mathbf{Y}_{0}}$ which is given by $\mathrm{\mathbf{R}_{N_{0}}\cap \mathbf{Y}_{0}}$. Here, $\mathrm{\mathbf{R}_{N_{0}}}$ denotes the open subset of all singularities $\mathrm{y\in \mathbf{Y}}$ which are removable for all $\mathrm{n\geq N_{0}}$. Once the maximal set of definition given by $\mathrm{\mathbf{R}_{N_{0}}\cap \mathbf{Y}_{0}}$ is found, we continue our discussion by analyzing the convergence properties of the measure sequence $\mathrm{\left(\bm{\nu}_{n}\right)_{n}}$ over $\mathrm{\mathbf{Y}_{0}\cap \mathbf{R}_{N_{0}}}$ and obtain the following two results.

\begin{theo_5.b*}\label{theo_5.b*}\textnormal{[\scshape{Uniform Convergence of the Initial Distribution Sequence}]}\vspace{0.1 cm}\\
For fixed $\mathrm{t\in \mathbb{R}}$ the sequence $\mathrm{\left(D_{n}\left(\cdot,t\right)\right)_{n}}$ converges uniformly on $\mathrm{\mathbf{Y}_{0}\cap\mathbf{R}_{N_{0}}}$ to the zero function.
\end{theo_5.b*}

\begin{theo_6.b*}\label{theo_6.b*}\textnormal{[\scshape{Uniform Convergence of the Initial Measure Sequence}]}\vspace{0.10 cm}\\ 
For $\mathrm{f\in \mathcal{C}^{0}\!\left(\mathbf{X}\right)}$ the sequence $$\mathrm{\left(y\mapsto \int_{\pi^{-1}\left(y\right)}f\,d\bm{\nu}_{n}\left(y\right)\right)_{n}}$$ converges uniformly over $\mathrm{\mathbf{Y}_{0}\cap \mathbf{R}_{N_{0}}}$ to the reduced function $\mathrm{f_{red}}$.
\end{theo_6.b*} Since the deviation of the initial measure sequence $\mathrm{\left(\bm{\nu}_{n}\right)}$ induced by $\mathrm{\left(s_{n}\right)_{n}}$ from a tame, locally defined measure sequence $\mathrm{\left(\bm{\nu}^{i}_{n}\right)_{n}}$ is completely described by the locally defined sequence of functions $\mathrm{\left(\triangle^{i}_{n}\right)_{n}}$ given by $\mathrm{s_{n}=\triangle^{i}_{n}\cdot s^{i}_{n}}$, it is not surprising that the proof of both theorems is based on technics we already used before when proving {\bf{Theorem 3, 4}}. These are combined with certain analytic facts about the growth properties of $\mathrm{\triangle^{i}_{n}}$ as $\mathrm{\varrho^{i}\rightarrow \infty}$ on $\mathrm{\pi^{-1}\left(y\right)}$ and as $\mathrm{n\rightarrow \infty}$.

\vspace{3 cm}

{\setlength{\parindent}{0cm}{\bf{Acknowledgements.}} The author would like to express his gratitude to his advisor Professor {\scshape{Alan Huckleberry}} who initiated this project and provided permanent support and advise whenever needed from its very beginning.} 

Furthermore, the author is indebted to the constructive remarks of Prof.\ Dr.\ {\scshape{Heinzner}}, Prof.\ Dr.\ {\scshape{Winkelmann}}, Prof.\ Dr.\ {\scshape{Wurzbacher}} and Dr.\ {\scshape{Tsanov}}.
 
Finally, the author would like to thank the {\scshape{Studienstiftung des Deutschen Volkes}} for financial support which included a scholarship for the author's years as a student and PhD candidate at {\scshape{Ruhr-Universit{\"a}t Bochum}} and for his time as a visiting student at {\scshape{Massachusetts Institute of Technology}}. Also in this regard, the author owes special thanks to Prof.\ {\scshape{Vogan}} who generously supported the research period at MIT.    \newpage

\newpage
\section{Existence of Tame Sequences}\label{Existence of Tame Sequences}

The aim of this section is to prove the following theorem.

\begin{theo_1*} \textnormal{[\scshape{Existence of Tame Sequences}]}\vspace{0.1 cm}\\ 
If $\mathrm{\mathbf{L}\rightarrow \mathbf{X}}$ is as above and $\mathrm{\xi\in Im\left(\mu\right)}$, then we can find a finite cover $$\mathrm{\left\{\mathbf{X}^{i}\right\}_{i\in I}\text{ of }\mathbf{X}^{ss}_{\xi}}$$ consisting of open, $\mathrm{\pi}$-saturated subsets and a finite collection $$\mathrm{\left\{\left( s_{n}^{i} \right)_{n}\right\}_{i\in I},\text{ where }s_{n}^{i}\in H^{0}(\mathbf{X},\mathbf{L}^{n})\label{Notation i-th tame sequence}}$$ of sequences consisting of $\mathrm{\mathbb{T}}$-eigensections such that the following properties are fulfilled:
\begin{enumerate}  
\item $\mathrm{\frac{\xi^{i}_{n}}{n}\rightarrow \xi}$ where $\mathrm{\vert \xi^{i}_{n}-n\,\xi\vert\in \mathcal{O}\left(1\right)}$ for each $\mathrm{i}$\label{Notation Approximating Sequence of Weight Vectors}.\vspace{-0.2 cm}
\item $\mathrm{\mathbf{X}^{i}\subset \mathbf{X}(s_{n}^{i})}$\label{Notation i-th set} for all $\mathrm{n}$ big enough.\vspace{-0.2 cm}
\item $\mathrm{\mathbf{X}^{i}\cap \mathbb{T}.\mu^{-1}\left(\xi\right)=\mathbf{X}^{i}\cap \mathbb{T}.\mu^{-1}\left(\frac{\xi^{i}_{n}}{n}\right)}$ for all $\mathrm{n\geq N_{0}}$.
\end{enumerate} 
\end{theo_1*}
In the sequel, we will refer to a collection $\mathrm{\left\{\left(s_{n}^{i}\right)\right\}_{i}}$ of sequences of $\mathrm{\mathbb{T}}$-eigensections $\mathrm{s_{n}^{i}\in H^{0}}$ $\mathrm{\left(\mathbf{X},\mathbf{L}^{n}\right)}$ with the aforementioned properties as {\itshape{tame}}. Before we proceed with the proof of {\bf{Theorem 1}}, let us consider an example of a tame collection.
\begin{example}\label{Example First Example}  \textnormal{Let $\mathrm{\mathbf{X}=\mathbb{C}\mathbb{P}^{1}\times \mathbb{C}\mathbb{P}^{1}}$ with K\"aher form $\mathrm{\omega=p_{1}^{*}\omega_{FS}+p_{2}^{*}\omega_{FS}}$ where $\mathrm{p_{i}\!:\!}$ $\mathrm{\mathbf{X}}$ $\mathrm{\rightarrow}$ $\mathrm{\mathbb{C}\mathbb{P}^{1}}$ denotes the $\mathrm{i}$-th projection, $\mathrm{i\in\left\{0,1\right\}}$. Equip $\mathrm{\mathbf{X}}$ with the $\mathrm{\mathbb{T}\cong\mathbb{C}^{*}}$-action given by $$\mathrm{t.\left(\left[z_{0}\!:\!z_{1}\right],\left[\zeta_{0}\!:\!\zeta_{1}\right]\right)=\left(\left[t\, z_{0}\!:\!z_{1}\right],\left[t\, \zeta_{0}\!:\!\zeta_{1}\right]\right)}$$ and consider the $\mathrm{\mathbb{T}}$-linearization on $\mathrm{\mathbf{L}=p_{0}^{*}\mathcal{O}_{\mathbb{C}\mathbb{P}^{1}}\left(1\right)\otimes p_{1}^{*}\mathcal{O}_{\mathbb{C}\mathbb{P}^{1}}\left(1\right)\rightarrow \mathbf{X}}$ induced by the $\mathrm{\mathbb{T}}$-action on $\mathrm{\mathbb{C}\mathbb{P}^{4}}$ given by $$\mathrm{t.[u_{0}\!:\!u_{1}\!:\!u_{2}\!:\!u_{3}\!:\!v]=[t\,u_{0}\!:\!u_{1}\!:\!u_{2}\!:\!t^{-1}\,u_{3}\!:\!v].}$$ Here we view $\mathrm{\mathbf{L}}$ embedded in $\mathrm{\mathcal{O}_{\mathbb{C}\mathbb{P}^{3}}\left(1\right)\cong\mathbb{C}\mathbb{P}^{4}\setminus \{[0\!:\!0\!:\!0\!:\!0\!:\!1]\}}$ realized as the cone over $$\mathrm{\mathbf{X}\cong\left\{[u]\in \mathbb{C}\mathbb{P}^{4}\!:\!\, u_{0}u_{3}-u_{1}u_{2}=0, v=0\right\} \subset \left\{[u]\in \mathbb{C}\mathbb{P}^{4}\!:\!\,v=0\right\}\cong \mathbb{C}\mathbb{P}^{3}.}$$ Furthermore, let $\mathrm{H}$ denote the standard hermitian metric on $\mathrm{\mathcal{O}_{\mathbb{C}\mathbb{P}^{3}}\left(1\right)}$ which is given by $\mathrm{H([u\!:\!v])=\Vert u\Vert^{-2}\vert v\vert^{2}}$ for $\mathrm{u=\left(u_{0},u_{1},u_{2},u_{3}\right)\neq 0}$ and which is $\mathrm{T}$-invariant with respect to the above action. Define $\mathrm{h\coloneqq H\vert \mathbf{X}}$.}

\textnormal{If $\mathrm{\xi=0}$ and $\mathrm{\mu}$ denotes the moment map induced by the hermitian bundle metric $\mathrm{h}$, it follows that $$\mathrm{\mu^{-1}\left(0\right)=\left\{\left([z_{0}\!:\!z_{1}],[\zeta_{0}\!:\!\zeta_{1}]\right)\!:\!\,z_{0}\zeta_{0}-z_{1}\zeta_{1}=0\right\}\subset \mathbb{C}\mathbb{P}^{1}\times \mathbb{C}\mathbb{P}^{1}.}$$ A calculation shows that the only eigensections $\mathrm{s_{n}\in H^{0}\left(\mathbf{X},\mathbf{L}^{n}\right)}$ which can be part of a tame sequences in the sense of {\bf{Theorem 1}} are given by the collection $$\mathrm{\left\{ s_{n,k}=z_{0}^{n-k}z_{1}^{k}\zeta_{0}^{k}\zeta_{1}^{n-k},\,0\leq k\leq n\right\} \subset H^{0}\left(\mathbf{X},\mathbf{L}^{n}\right).}$$ A tame collection is for example given by $$\mathrm{\Big\{\!\left(z_{0}^{n}\zeta_{1}^{n}\right)_{n},\left(z_{1}^{n}\zeta_{0}^{n}\right)_{n}\!\Big\}.\,\, \boldsymbol{\Box}}$$}
\end{example}

For the proof of the existence of tame sequences, we will first restate an important fact about the geometry of an arbitrary $\mathrm{\mathbb{T}}$-orbit closure $\mathrm{cl\left(\mathbb{T}.x\right)}$ \label{Notation T-Orbit Closure} where $\mathrm{x\in\mathbf{X}}$:

Every image $\mathrm{\mu(cl(\mathbb{T}.x))\subset \mathfrak{t}^{*}}$ of an arbitrary orbit closure $\mathrm{ cl(\mathbb{T}.x)\subset \mathbf{X}}$ is the convex hull of the image of finitely many fixed points $\mathrm{x_{i}\in {\bf{Fix}}^{\mathbb{T}}\coloneqq \{x\in \mathbf{X}\!:\!\,t.x}$ \label{Notation Fix Point Set Of T-Action} $\mathrm{=x}$ $\mathrm{\text{for all }t\in \mathbb{T}\}}$ ({cf.\ }\cite{Ati}), i.e.\:
\begin{equation}\label{Equation Convex Hull}
\mathrm{\mu(cl(\mathbb{T}.x))=\mathfrak{Conv}\,\left\{\mu({\bf{S}}_{x})\right\}}
\end{equation} where $$\mathrm{{\bf{S}}_{x}=\left\{\sigma_{x,j_{x}}\in cl\left(\mathbb{T}.x\right)\cap {\bf{Fix}}^{\mathbb{T}},\, 1\leq j_{x}\leq m_{x}\right\}\subset {\bf{Fix}}^{\mathbb{T}}}$$ \label{Notation T-Fixed Points Contained In The Closure Of T.x} is a finite set and $\mathrm{\mathfrak{Conv}\,\left\{\mu\left({\bf{S}}_{x}\right)\right\}}$ \label{Notation Convex Hull Of A Subset} denotes the convex hull of the corresponding image $\mathrm{\mu\left({\bf{S}}_{x}\right)}$.

Consider the decomposition of the $\mathrm{\mathbb{T}}$-representation space $\mathrm{H^{0}(\mathbf{X},\mathbf{L})}$ in its eigenspaces $\mathrm{\mathbb{C}s_{k}}$ where $\mathrm{1\leq k\leq m\coloneqq dim_{\mathbb{C}}\,H^{0}(\mathbf{X},\mathbf{L})}$. For any non-empty $\mathrm{J\subset \left\{1,\dots, m\right\}}$ let $\mathrm{\mathcal{S}_{J}\coloneqq \left\{s_{j}\!:\!\,j\in J\right\}}$ $\mathrm{\subset H^{0}\left(\mathbf{X},\mathbf{L}\right)}$ \label{Notation Set Of T-Eigensections Indexed By J} and let $\mathrm{{\bm{\mathfrak{S}}}_{J}\subset \mathfrak{t}^{*}}$ \label{Notation Set Of All Characters Corresponding To S_x} be the set of all characters corresponding to the set $\mathrm{\mathcal{S}_{J}}$.

We introduce the following notation: For a non-empty $\mathrm{J\subset \{1,\dots,m\}}$, let $$\mathrm{\mathbf{M}_{J}\coloneqq\left\{x\in \mu^{-1}\left(\xi\right)\!:\!\, s_{j}\left(x\right)\neq 0\,\text{ for all } j\in J, s_{j}\left(x\right)= 0\,\text{ for all } j\in\complement\, J\right\}. \label{Notation Definition of M_J}}$$ Note that $\mathrm{\mathbf{M}_{J}\subset \mu^{-1}\left(\xi\right)}$ is a $\mathrm{T}$-invariant, open subset which might be empty for certain non-empty subindices $\mathrm{J\subset \{1,}$ $\mathrm{\dots,m\}}$. Furthermore, the collection $\mathrm{\left\{\mathbf{M}_{J}\right\}_{J\neq \varnothing}}$ is finite and cover $\mathrm{\mu^{-1}\left(\xi\right)}$. The first claim follows by the fact that $\mathrm{dim_{\mathbb{C}}\,H^{0}\left(\mathbf{X},\mathbf{L}\right)<\infty}$ and the second claim is a direct consequence of the assumption that $\mathrm{\mathbf{L}}$ is base point free.

The next lemma establishes a connection between the geometry of the image of $\mathrm{\mathbb{T}.x}$ under the moment map $\mathrm{\mu}$ where $\mathrm{x\in \mathbf{M}_{J}}$ and the convex set $\mathrm{\mathfrak{Conv}\,{\bm{\mathfrak{S}}}_{J}}$.

\begin{lemma}\label{Lemma Convex Hull Moment Map}
Let $\mathrm{x\in \mathbf{M}_{J}}$, then it follows that $$\mathrm{\mu(cl(\mathbb{T}.x))=\mathfrak{Conv}\,{\bm{\mathfrak{S}}}_{J}}$$ and, if $\mathrm{x}$ is not a $\mathrm{\mathbb{T}}$-fixed point, $$\mathrm{\xi\in\mu\left(\mathbb{T}.x\right)=Relint\,\mathfrak{Conv}\,{\bm{\mathfrak{S}}}_{J}}$$ where $\mathrm{Relint\,\mathfrak{Conv}\,{\bm{\mathfrak{S}}}_{J}}$ \label{Notation Relative Interior} denotes the relative interior of $\mathrm{{\bm{\mathfrak{S}}}_{J}\subset \mathfrak{t}^{*}}$.
\end{lemma}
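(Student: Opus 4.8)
The plan is to realise the orbit closures through the complete linear system of $\mathbf{L}$ and then to identify the vertices of the Atiyah polytope $\mu(cl(\mathbb{T}.x))$ with the weights indexed by $J$. Write $s_1,\dots,s_m$ for the chosen basis of $\mathbb{T}$-eigenvectors of $H^0(\mathbf{X},\mathbf{L})$, say $s_j\in\mathbb{C}s_j$ of weight $\chi_j\in\mathfrak{t}^*_{\mathbb{Z}}$, so that $\boldsymbol{\mathfrak{S}}_J=\{\chi_j:j\in J\}$ (as a set, allowing repetitions among the $\chi_j$). Since $\mathbf{L}$ is base point free, these sections define a $\mathbb{T}$-equivariant morphism $\iota\colon\mathbf{X}\to\mathbb{P}(H^0(\mathbf{X},\mathbf{L})^{*})$ which in the dual basis reads $\iota(x)=[s_1(x):\dots:s_m(x)]$ after a local trivialisation of $\mathbf{L}$; in particular $s_j(x)\neq0$ iff the $j$-th homogeneous coordinate of $\iota(x)$ is non-zero, and $\iota$ intertwines the $\mathbb{T}$-action on $\mathbf{X}$ with the linear $\mathbb{T}$-action on $\mathbb{P}(H^0(\mathbf{X},\mathbf{L})^{*})$ whose weight on the $j$-th coordinate axis is $\chi_j$. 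I also use two facts about the Kostant formula defining $\mu$: at a $\mathbb{T}$-fixed point $p$ the fibre $\mathbf{L}_p$ carries a linear $\mathbb{T}$-action through a single character $\chi_p$ and $\mu(p)=\chi_p$ (so $\mu(p)$ is independent of $h$); and, since $s_j$ is an eigensection of weight $\chi_j$, evaluating at the fixed point $p$ forces $\chi_p=\chi_j$ whenever $s_j(p)\neq0$. Finally I invoke the quoted convexity \eqref{Equation Convex Hull}, $\mu(cl(\mathbb{T}.x))=\mathfrak{Conv}\,\mu(\mathbf{S}_x)$ with $\mathbf{S}_x=cl(\mathbb{T}.x)\cap\mathbf{Fix}^{\mathbb{T}}$.

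\emph{The inclusion $\mu(cl(\mathbb{T}.x))\subseteq\mathfrak{Conv}\,\boldsymbol{\mathfrak{S}}_J$.} Fix $x\in\mathbf{M}_J$ and $p\in\mathbf{S}_x$. For $j\notin J$ the zero set of $s_j$ is a $\mathbb{T}$-invariant closed subvariety containing $\mathbb{T}.x$ (because $s_j(x)=0$), hence contains $cl(\mathbb{T}.x)\ni p$, so $s_j(p)=0$. Base point freeness gives some $s_j(p)\neq0$, necessarily with $j\in J$, and then $\mu(p)=\chi_j\in\boldsymbol{\mathfrak{S}}_J$ by the remarks above. Thus $\mu(\mathbf{S}_x)\subseteq\boldsymbol{\mathfrak{S}}_J$ and, taking convex hulls, $\mu(cl(\mathbb{T}.x))\subseteq\mathfrak{Conv}\,\boldsymbol{\mathfrak{S}}_J$.

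\emph{The reverse inclusion.} By convexity it suffices to show that every vertex $\chi_{j_0}$ of $\mathfrak{Conv}\,\boldsymbol{\mathfrak{S}}_J$ lies in $\mu(cl(\mathbb{T}.x))$. Choose a one-parameter subgroup $\lambda\colon\mathbb{C}^{*}\to\mathbb{T}$ for which $\chi_{j_0}$ is the \emph{unique} extremiser of $\langle\,\cdot\,,\lambda\rangle$ on $\boldsymbol{\mathfrak{S}}_J$ selected by the degeneration $t\to0$ (possible because $\chi_{j_0}$ is a vertex and the $\chi_j$ are lattice points). Since $\mathbf{X}$ is complete, $p:=\lim_{t\to0}\lambda(t).x$ exists and lies in $cl(\mathbb{T}.x)$; using equivariance of $\iota$ one computes that the surviving homogeneous coordinates of $\iota(p)$ are exactly those $j\in J$ with $\chi_j=\chi_{j_0}$, so $s_j(p)=0$ whenever $\chi_j\neq\chi_{j_0}$ while $s_j(p)\neq0$ for at least one $j$ with $\chi_j=\chi_{j_0}$. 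Re-running the argument of the previous paragraph for $cl(\mathbb{T}.p)\subseteq cl(\mathbb{T}.x)$ — every $\mathbb{T}$-fixed $q\in cl(\mathbb{T}.p)$ has $s_j(q)=0$ for all $j$ with $s_j(p)=0$, and base point freeness then forces some $s_j(q)\neq0$ with $\chi_j=\chi_{j_0}$ — shows $\mu(cl(\mathbb{T}.p))=\{\chi_{j_0}\}$, hence $\mu(p)=\chi_{j_0}\in\mu(cl(\mathbb{T}.x))$. Together with the previous paragraph this gives $\mu(cl(\mathbb{T}.x))=\mathfrak{Conv}\,\boldsymbol{\mathfrak{S}}_J$.

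\emph{The relative-interior statement and the main obstacle.} If $x$ is not $\mathbb{T}$-fixed it lies in the dense orbit of the (possibly non-normal) projective toric variety $cl(\mathbb{T}.x)$; by the orbit–face correspondence for torus actions (equivalently, by the monotonicity of $\langle\mu,\eta\rangle$ along the $\exp(i\mathbb{R}\eta)$-flows, cf.\ \cite{Ati}, \cite{Gu-St}) $\mu$ maps $\mathbb{T}.x$ onto $Relint\,\mathfrak{Conv}\,\boldsymbol{\mathfrak{S}}_J$, while $\xi=\mu(x)\in\mu(\mathbb{T}.x)$ by hypothesis, giving $\xi\in Relint\,\mathfrak{Conv}\,\boldsymbol{\mathfrak{S}}_J$. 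The two delicate points are: (i) fixing the normalisation in the Kostant formula so that $\mu(p)=\chi_p$ holds exactly at fixed points — this is precisely what makes the purely representation-theoretic set $\boldsymbol{\mathfrak{S}}_J$ coincide with the a priori metric-dependent image; and (ii) the control of the degenerations $p=\lim_{t\to0}\lambda(t).x$ together with the orbit stratification of $cl(\mathbb{T}.x)$ needed for the relative-interior claim. Everything else is bookkeeping with the equivariant Kodaira map and the already-quoted convexity \eqref{Equation Convex Hull}.
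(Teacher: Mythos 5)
Your argument is correct and rests on the same three ingredients as the paper's: Atiyah's convexity theorem expressing $\mathrm{\mu\left(cl\left(\mathbb{T}.x\right)\right)}$ as the convex hull of the moment images of the fixed points in the orbit closure, the Guillemin--Sternberg formula identifying $\mathrm{\mu}$ at a fixed point with the weight of any non-vanishing eigensection there, and base-point freeness of $\mathrm{\mathbf{L}}$ together with $\mathrm{x\in\mathbf{M}_{J}}$ to force any such section to come from $\mathrm{J}$. Where you go further is in actually closing both inclusions: the paper's proof establishes $\mathrm{\mu\left(\mathbf{S}_{x}\right)\subset{\bm{\mathfrak{S}}}_{J}}$, hence $\mathrm{\mu\left(cl\left(\mathbb{T}.x\right)\right)\subset\mathfrak{Conv}\,{\bm{\mathfrak{S}}}_{J}}$, and then passes directly to the stated equality, leaving the reverse containment implicit; you make it explicit by degenerating $\mathrm{x}$ along a one-parameter subgroup that isolates each vertex of $\mathrm{\mathfrak{Conv}\,{\bm{\mathfrak{S}}}_{J}}$, and organizing this via the equivariant Kodaira map keeps the bookkeeping of surviving coordinates transparent. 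Your treatment of the relative-interior claim invokes the same standard orbit--face correspondence for Hamiltonian torus actions that the paper's short closure argument is implicitly relying on, so the two are equivalent there.
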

\begin {proof} First of all note that since the image of $\mathrm{cl\left(\mathbb{T}.x\right)}$ is known to be the convex hull of the image $\mathrm{\mu\left({\bf{S}}_{x}\right)}$ where $\mathrm{{\bf{S}}_{x}}$ are the $\mathrm{\mathbb{T}}$-fixed points in $\mathrm{cl\left(\mathbb{T}.x\right)}$, the first claim follows as soon as we have shown that $\mathrm{\mu\left(\sigma_{x,j_{x}}\right)\in {\bm{\mathfrak{S}}}_{J}}$ for all $\mathrm{\sigma_{x,j_{x}}\in {\bf{S}}_{x}}$.

Let $\mathrm{\sigma_{x,j_{x}}\in {\bf{S}}_{x}}$. Since $\mathrm{\mathbf{L}}$ is assumed to be base point free there exists at least one $\mathrm{\mathbb{T}}$-eigensection $\mathrm{s}$ which does not vanish at $\mathrm{\sigma_{x,j_{x}}}$. As $\mathrm{x\in \mathbf{M}_{J}}$, such an $\mathrm{s}$ is necessarily given by $\mathrm{s=s_{j}}$ for $\mathrm{j\in J}$. If $\mathrm{\xi_{j}}$ denotes its corresponding character, we deduce $\mathrm{\mu\left(\sigma_{x,j_{j}}\right)=\xi_{j}}$ which is a direct consequence of the following reasoning: Let $\mathrm{D\!:\!\mathcal{A}^{0}\left(\mathbf{L}\right)\rightarrow\mathcal{A}^{1}\left(\mathbf{L}\right)}$ be the uniquely defined, hermitian connection associated to $\mathrm{h}$ and recall that we have the formula ({cf.\! }\cite{Gu-St})
\begin{equation}\label{Equation Formula Connection Moment Map}
\mathrm{D_{\mathbf{X}_{\eta}}s+2\,\sqrt{-1}\mu^{\eta}s=\frac{d}{dt}_{t=0}{\bf{exp}}(\sqrt{-1}\,t\,\eta).s.\text{ for all }\eta\in \mathfrak{t}.}
\end{equation} If we apply formula \ref{Equation Formula Connection Moment Map} to $\mathrm{s=s_{j}}$ at the fixed point $\mathrm{\sigma_{x,j_{x}}}$, we deduce $\mathrm{\mu^{\eta}(\sigma_{x,j_{x}})=\xi_{j}(\eta)}$ for all $\mathrm{\eta \in \mathfrak{t}}$. So it follows $\mathrm{\mu(\sigma_{x,j_{x}})=\xi_{j}}$ and hence $$\mathrm{\mu\left(cl\left(\mathbb{T}.x\right)\right)= \mathfrak{Conv}\,{\bm{\mathfrak{S}}}_{J}}$$ as claimed.

The second claim follows from the fact that $\mathrm{\xi=\mu\left(x\right)}$ for $\mathrm{x\in \mathbf{M}_{J}\subset \mu^{-1}\left(\xi\right)}$ and the general fact that $\mathrm{cl\left(f\left(\mathbf{A}\right)\right)=f\left(cl\left(\mathbf{A}\right)\right)}$ for a continuous map $\mathrm{f\!:\!\mathbf{X}\rightarrow \mathbf{Y}}$ where $\mathrm{\mathbf{X}}$ is compact and $\mathrm{\mathbf{A}\subset \mathbf{X}}$.
\end{proof}

For the proof of the next proposition, which will be crucial for the existence of tame sequences, we need the following technical lemma.

\begin{lemma}\label{Lemma Convex Polytope}
Let $\mathrm{{\bm{\mathfrak{P}}}=\mathfrak{Conv}\,\left\{(q_{1},\dots,q_{m})\right\}}$, $\mathrm{q_{1},\cdots,q_{m}\in \mathbb{R}^{n}}$ and let 
$$\mathrm{{\bm{\mathfrak{P}}}_{n^{-1}\mathbb{N}^{0}}\coloneqq \mathfrak{Conv}_{n^{-1}\mathbb{N}^{0}}\left\{q_{1},\dots,q_{m}\right\}\coloneqq \Bigg\{ p=\sum_{j}\nu_{j}q_{j}\!:\!\,\sum_{j}\nu_{j}=1,\,\nu_{j}\in n^{-1}\mathbb{N}^{0}\Bigg\}.}$$
If $\mathrm{\xi\in{\bm{\mathfrak{P}}}}$, then there exists a sequence $\mathrm{\xi_{n}=\sum_{j}\nu_{j,n}q_{j}}$ so that the following conditions are fulfilled:
\begin{enumerate}  
\item $\mathrm{\xi_{n}\rightarrow \xi}$\\[-0.6 cm]
\item $\mathrm{\mathrm{\left|\xi_{n}-n\,\xi\right|\in\mathcal{O}\left(1\right)}}$\\[-0.6 cm]
\item $\mathrm{\xi_{n}\subset {\bm{\mathfrak{P}}}_{n^{-1}\mathbb{N}^{0}}}$ for all $\mathrm{n}$ big enough.\\[-0.6 cm]
\item There exists $\mathrm{N_{0}\in \mathbb{N}}$ and a partition $\mathrm{J=J_{0}\cup J_{1}}$, $\mathrm{J_{1}\neq \varnothing}$ of $\mathrm{J=\left\{1,\dots,m\right\}}$ so that $\mathrm{\nu_{j,n}=0}$ for all $\mathrm{j\in J_{0}}$, $\mathrm{n\geq N_{0}}$ and $\mathrm{\nu_{j,n}>0}$ for all $\mathrm{j\in J_{1}}$, $\mathrm{n\geq N_{0}}$.\\[-0.6 cm]
\item The sequences $\mathrm{\left(\nu_{j,n}\right)_{n}}$ are convergent so that the limits are strictly positive for all $\mathrm{j\in J_{1}}$.
\end{enumerate}
\end{lemma}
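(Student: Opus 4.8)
The plan is to reduce to the rational case and then do a careful denominator-clearing argument. First I would observe that it suffices to treat a single point $\xi$ in the polytope $\bm{\mathfrak{P}}$: write $\xi = \sum_j \lambda_j q_j$ with $\sum_j \lambda_j = 1$, $\lambda_j \geq 0$. Let $J_1 = \{j : \lambda_j > 0\}$ and $J_0 = \{j : \lambda_j = 0\}$; by discarding the vertices indexed by $J_0$ we may assume $\xi$ lies in the relative interior of $\mathfrak{Conv}\{q_j : j \in J_1\}$, which will eventually give properties (4) and (5). The point $\xi$ itself need not be rational, so the idea is to approximate each $\lambda_j$ ($j \in J_1$) from below by a rational $p_{j,n}/n \in n^{-1}\mathbb{N}^0$ with $|\lambda_j - p_{j,n}/n| \leq 1/n$, and then absorb the leftover mass $1 - \sum_{j\in J_1} p_{j,n}/n$ (which lies in $[0, |J_1|/n]$) into one distinguished index $j_0 \in J_1$. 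Concretely, set $\nu_{j,n} = p_{j,n}/n$ for $j \in J_1 \setminus \{j_0\}$, $\nu_{j,n} = 1 - \sum_{j \neq j_0} p_{j,n}/n$ for $j = j_0$, and $\nu_{j,n} = 0$ for $j \in J_0$.

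With this definition, (3) holds by construction since all $\nu_{j,n} \in n^{-1}\mathbb{N}^0$ sum to $1$, and $\nu_{j_0,n} \geq 0$ because the leftover mass is nonnegative; for $n$ large it is in fact strictly positive (bounded below by something like $\lambda_{j_0} - |J_1|/n > 0$), so (4) holds with this $J_1 \neq \varnothing$. For (1), note $|\nu_{j,n} - \lambda_j| \leq |J_1|/n \to 0$ for every $j$, hence $\xi_n = \sum_j \nu_{j,n} q_j \to \sum_j \lambda_j q_j = \xi$. For (2), $|\xi_n - n\xi| = |\sum_j (\nu_{j,n} - \lambda_j) q_j| \cdot$ — wait, more carefully: $\xi_n - n\xi$ is not the right scaling; the statement wants $|n\nu_{j,n} - n\lambda_j|$ bounded, i.e. $|n\xi_n' - n\xi|$ where one reads $\xi_n$ as the integer-weight vector $n\nu_{j,n}$. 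In any case $|n\nu_{j,n} - n\lambda_j| \leq |J_1|$ for all $j$, so $|\xi_n - n\xi| \leq |J_1| \max_j |q_j| \in \mathcal{O}(1)$, giving (2). Finally (5): $\nu_{j,n} \to \lambda_j$ and for $j \in J_1$ the limit $\lambda_j$ is strictly positive by the definition of $J_1$; since there are finitely many indices, convergence is automatic.

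I expect the main subtlety — not a deep obstacle, but the step requiring the most care — to be ensuring the leftover-mass coordinate $\nu_{j_0,n}$ stays nonnegative for all $n$ (not just large $n$) while simultaneously keeping property (4)'s clean dichotomy ($\nu_{j,n} = 0$ exactly on $J_0$, $> 0$ exactly on $J_1$, for all $n \geq N_0$). The choice of $j_0$ matters: one should pick $j_0 \in J_1$ with $\lambda_{j_0}$ maximal (or at least bounded away from $0$) so that the downward rounding errors of the other coordinates, which total at most $(|J_1|-1)/n$, cannot overwhelm it. One must also handle the degenerate case $|J_1| = 1$, i.e. $\xi = q_{j_0}$ is a vertex, separately but trivially (take $\nu_{j_0,n} = 1$, all others $0$). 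The rest is bookkeeping: everything reduces to the elementary fact that a real number in $[0,1]$ can be approximated from below by a multiple of $1/n$ within error $1/n$, together with finiteness of the index set to upgrade pointwise control to the uniform $\mathcal{O}(1)$ bounds demanded in (1)–(2).
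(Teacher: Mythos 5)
Your proposal is correct and follows essentially the same route as the paper: split the index set into $J_0$ (zero barycentric coefficient) and $J_1$ (positive), approximate each coefficient by a multiple of $1/n$, and absorb the leftover mass into a fixed index in $J_1$. The paper passes through a linear map $\mathbf{M}$ from the standard simplex onto $\bm{\mathfrak{P}}$ and merely asserts the existence of the approximating rational sequence (``it is direct to see''); you make that step explicit via floor rounding --- after which the choice of $j_0$ is in fact immaterial, since downward rounding guarantees the leftover is at least $\lambda_{j_0}>0$ for any $j_0\in J_1$ --- and you correctly resolve the notational clash between claims (1) and (2), which the lemma statement and the paper's own proof both share, by reading (2) as the $\mathcal{O}(1/n)$-rate bound $n\,|\nu_{j,n}-\lambda_j|\in\mathcal{O}(1)$.
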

\begin{proof} Let $\mathrm{{\bm{\mathfrak{P}}}_{0}\subset \mathbb{R}^{m}}$ denote the convex hull of the standard basis $\mathrm{\{e_{1},\dots,e_{m}\}}$ in $\mathrm{\mathbb{R}^{m}}$, then 
\begin{equation}\label{Equation Image Given By Matrix}
\mathrm{{\bm{\mathfrak{P}}}\coloneqq \mathfrak{Conv}\,\left\{q_{1},\dots,q_{m}\right\}={\bf{M}}({\bm{\mathfrak{P}}}_{0})}
\end{equation} for the matrix $\mathrm{\bf{M}}$ whose columns are given by $\mathrm{(q_{1},\dots,q_{m})}$. By \ref{Equation Image Given By Matrix} we find $\mathrm{\nu\in {\bm{\mathfrak{P}}}_{0}}$ with $\mathrm{\xi={\bf{M}}\left(\nu\right)}$. Without restriction of generality, we can assume that 
\begin{equation}\label{Equation In Order To Guarantee}
\mathrm{\nu\in Relint\,\mathfrak{Conv}\,\{e_{\ell},\dots,e_{m}\}},
\end{equation} where $\mathrm{1\leq \ell\leq m-1}$ because otherwise, $\mathrm{\xi=q_{j_{0}}}$ for one $\mathrm{1\leq j_{0}\leq m}$ and the claim follows immediately by choosing $\mathrm{\nu_{j_{0},n}=1}$ and $\mathrm{\nu_{j,n}=0}$ for all $\mathrm{j\neq j_{0}}$.

Now, it is direct to see that we can  choose a sequence $\mathrm{\left( \nu_{n}\right)_{n}}$ in $\mathrm{\mathfrak{Conv}_{n^{-1}\mathbb{N}^{0}}\,\{e_{\ell},}$ $\mathrm{\dots,e_{m}\}}$ so that $\mathrm{\nu_{n}\rightarrow \nu}$ where all limits are strictly positive. Furthermore, we can always guarantee that $\mathrm{\left|\nu_{n}-n\,\nu\right|}$ $\mathrm{\in\mathcal{O}\left(1\right)}$. Set $$\mathrm{\left(\xi_{n}\right)_{n}=\left( {\bf{M}}(\nu_{n})\right)_{n}\subset {\bm{\mathfrak{P}}}_{n^{-1}\mathbb{N}^{0}}.}$$ 

The first claim follows by $\mathrm{\xi_{n}=\mathbf{M}\left(\nu_{n}\right)\rightarrow \mathbf{M}\left(\nu\right)=\xi}$, the second claim is a direct consequence of $\mathrm{\left|\xi_{n}-n\xi\right|\leq\left\Vert \mathbf{M}\right\Vert _{\infty}\left|\nu_{n}-n\nu\right|\in\mathcal{O}\left(1\right)}$ and the third, resp.\ fifth claim follows by construction. The fourth claim results from equation \ref{Equation In Order To Guarantee}:\ We have $\mathrm{\nu_{j,n}=0}$ for all $\mathrm{j}$ with $\mathrm{1\leq j \leq \ell-1}$ and $\mathrm{\nu_{j,n}>0}$ for all $\mathrm{j}$ with $\mathrm{\ell\leq j\leq m}$ and $\mathrm{n}$ big enough which follows by $\mathrm{\nu_{n}\rightarrow \nu \in Relint\, \mathfrak{Conv}}$ $\mathrm{\{e_{\ell},\dots,e_{m}\}}$. Hence, we have $\mathrm{J_{0}=\left\{1,\dots,\ell-1\right\}}$ and $\mathrm{J_{1}=\left\{\ell,\dots,m\right\}}$.
\end{proof}

The following proposition will be the essential step in order to prove {\bf{Theorem 1}}.

\begin{prop}\label{Proposition Existence of Tame Sequences} If $\mathrm{\mathbf{L}\rightarrow \mathbf{X}}$ is as above and $\mathrm{\xi\in Im(\mu)}$, then for each $\mathrm{x \in \mathbf{M}_{J}}$ there exists a sequence 
$$\mathrm{\left(s^{J}_{n}\right)_{n}, s^{J}_{n}\in H^{0}(\mathbf{X},\mathbf{L}^{n})}$$
of $\mathrm{\xi^{J}_{n}}$-eigensections with the following characteristic properties:

\begin{enumerate}  
\item $\mathrm{\frac{\xi^{J}_{n}}{n}\rightarrow \xi}$ where $\mathrm{\vert\xi^{J}_{n}-n\,\xi\vert\in \mathcal{O}\left(1\right)}$\\[-0.6 cm]
\item The set $\mathrm{\mathbf{X}(s^{J}_{n})  \coloneqq\{x\in \mathbf{X}\!:\! s_{n}^{J}(x)\neq 0\}}$ is independent of $\mathrm{n}$ for $\mathrm{n}$ big enough. \\[-0.6 cm]
\item $\mathrm{\mathbf{X}^{J}\coloneqq\pi^{-1}\left(\pi\left(\mathbf{M}_{J}\right)\right)\subset \mathbf{X}(s^{J}_{n})}$.\\[-0.6 cm]
\item $\mathrm{\mathbf{X}^{J} \cap \mathbb{T}.\mu^{-1}\left(\xi\right)=\mathbf{X}^{J} \cap \mathbb{T}.\mu^{-1}\left(\frac{\xi^{J}_{n}}{n}\right)}$ for all $\mathrm{n}$ big enough.

\end{enumerate} 
\end{prop}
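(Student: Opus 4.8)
The plan is to reduce Proposition \ref{Proposition Existence of Tame Sequences} to the purely combinatorial Lemma \ref{Lemma Convex Polytope} by applying the latter to the polytope $\mathrm{{\bm{\mathfrak{P}}}=\mathfrak{Conv}\,{\bm{\mathfrak{S}}}_{J}}$, whose vertices are (a subset of) the characters $\mathrm{\xi_{j}}$, $\mathrm{j\in J}$, attached to the eigensections $\mathrm{s_j\in H^0(\mathbf{X},\mathbf{L})}$ that are non-vanishing exactly on $\mathrm{\mathbf{M}_J}$. By Lemma \ref{Lemma Convex Hull Moment Map} we have $\mathrm{\xi\in\mathfrak{Conv}\,{\bm{\mathfrak{S}}}_J}$, so Lemma \ref{Lemma Convex Polytope} produces a sequence $\mathrm{\xi_n=\sum_{j\in J}\nu_{j,n}\,\xi_j}$ with $\mathrm{\nu_{j,n}\in n^{-1}\mathbb{N}^0}$, $\mathrm{\sum_j\nu_{j,n}=1}$, satisfying $\mathrm{\xi_n\to\xi}$, $\mathrm{|\xi_n-n\xi|\in\mathcal{O}(1)}$, with a partition $\mathrm{J=J_0\cup J_1}$ so that for $\mathrm{n\ge N_0}$ the support of $\mathrm{(\nu_{j,n})_j}$ is exactly $\mathrm{J_1}$, and with all the limits $\mathrm{\lim_n\nu_{j,n}>0}$ for $\mathrm{j\in J_1}$. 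I then set
$$\mathrm{s^{J}_{n}\coloneqq \prod_{j\in J}s_{j}^{\,n\,\nu_{j,n}}\in H^{0}(\mathbf{X},\mathbf{L}^{n}),}$$
which is a $\mathrm{\mathbb{T}}$-eigensection with character $\mathrm{\xi^{J}_{n}=n\sum_{j}\nu_{j,n}\xi_{j}=n\,\xi_n}$, so property (1) is immediate from the corresponding properties of $\mathrm{(\xi_n)_n}$.

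For (2) and (3): the zero set of $\mathrm{s^{J}_{n}}$ is the union of the zero sets of those $\mathrm{s_j}$ with $\mathrm{n\,\nu_{j,n}>0}$, i.e. with $\mathrm{j\in J_1}$ (for $\mathrm{n\ge N_0}$); hence $\mathrm{\mathbf{X}(s^{J}_{n})=\bigcap_{j\in J_1}\mathbf{X}(s_j)}$ is independent of $\mathrm{n}$ for $\mathrm{n\ge N_0}$, giving (2). For (3) I must show $\mathrm{\mathbf{X}^{J}=\pi^{-1}(\pi(\mathbf{M}_J))\subset\bigcap_{j\in J_1}\mathbf{X}(s_j)}$. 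Since each $\mathrm{s_j}$ is a $\mathrm{\mathbb{T}}$-eigensection, its zero set is $\mathrm{\mathbb{T}}$-invariant and in fact $\mathrm{\pi}$-saturated (an orbit closure meeting $\mathrm{\mathbf{X}(s_j)}$ lies in $\mathrm{\mathbf{X}(s_j)}$ up to the boundary, but more precisely: $\mathrm{|s_j|^2}$ descends to a function on orbit closures whose zero locus is $\mathrm{\pi}$-saturated because $\mathrm{s_j}$ does not vanish identically on $\mathrm{cl(\mathbb{T}.x)}$ iff it does not vanish at the associated fixed points). So it suffices to check $\mathrm{\mathbf{M}_J\subset\mathbf{X}(s_j)}$ for $\mathrm{j\in J_1\subset J}$, which holds by the very definition of $\mathrm{\mathbf{M}_J}$. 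The point to be careful about here is that $\mathrm{J_1}$ could a priori be a proper subset of $\mathrm{J}$, but since $\mathrm{J_1\subset J}$ this only makes the intersection larger, so the inclusion still goes the right way; I should also confirm that $\mathrm{\mathbf{X}(s^J_n)}$ is indeed $\mathrm{\pi}$-saturated so that $\mathrm{\mathbf{X}^J}$ (itself $\mathrm{\pi}$-saturated by construction) can be compared to it fibrewise.

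Property (4) is the heart of the matter. The claim is that $\mathrm{\mathbb{T}.\mu^{-1}(\xi)}$ and $\mathrm{\mathbb{T}.\mu^{-1}(n^{-1}\xi^J_n)=\mathbb{T}.\mu^{-1}(\xi_n)}$ have the same trace on $\mathrm{\mathbf{X}^{J}}$. Fix $\mathrm{x\in\mathbf{X}^{J}}$, so $\mathrm{\pi(x)=\pi(x')}$ for some $\mathrm{x'\in\mathbf{M}_J}$; by property (2) of the quotient $\mathrm{cl(\mathbb{T}.x)\cap cl(\mathbb{T}.x')\ne\varnothing}$, and one knows $\mathrm{\mu(cl(\mathbb{T}.x))=\mu(cl(\mathbb{T}.x'))=\mathfrak{Conv}\,{\bm{\mathfrak{S}}}_J}$ — indeed the fixed points in the common part of the two orbit closures already exhibit all of $\mathrm{{\bm{\mathfrak{S}}}_J}$ via Lemma \ref{Lemma Convex Hull Moment Map}, and any orbit closure containing them has the same moment image. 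The equality $\mathrm{x\in\mathbb{T}.\mu^{-1}(\eta)\cap\mathbf{X}^J \iff \eta\in\mu(\mathbb{T}.x)}$ reduces the claim to: for each $\mathrm{x\in\mathbf{X}^J}$, $\mathrm{\xi\in\mu(\mathbb{T}.x)\iff\xi_n\in\mu(\mathbb{T}.x)}$ for $\mathrm{n}$ large. Now $\mathrm{\mu(\mathbb{T}.x)}$ is a relatively open convex subset of $\mathrm{\mathfrak{Conv}\,{\bm{\mathfrak{S}}}_J}$, and it is characterized combinatorially by which faces of $\mathrm{\mathfrak{Conv}\,{\bm{\mathfrak{S}}}_J}$ it meets — equivalently by the support pattern of barycentric-coordinate representations. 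Since $\mathrm{\xi_n\to\xi}$ and, crucially, by clause (5) of Lemma \ref{Lemma Convex Polytope} the $\mathrm{\xi_n}$ are eventually written with the \emph{same} strictly positive support $\mathrm{J_1}$ as a valid representation of $\mathrm{\xi}$ itself (clause (4) combined with the convergent positive coefficients), the points $\mathrm{\xi}$ and $\mathrm{\xi_n}$ ($\mathrm{n\ge N_0}$) lie in the relative interior of exactly the same face $\mathrm{\mathfrak{Conv}\{\xi_j:j\in J_1\}}$ of $\mathrm{\mathfrak{Conv}\,{\bm{\mathfrak{S}}}_J}$; hence they meet exactly the same $\mathrm{\mu(\mathbb{T}.x)}$'s. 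This is the step I expect to be the main obstacle, because it requires matching the combinatorial face structure uniformly in $\mathrm{x}$ — but the uniformity is exactly what clauses (4)–(5) of Lemma \ref{Lemma Convex Polytope} were engineered to supply, and the finiteness of the fixed-point set and of the faces of $\mathrm{\mathfrak{Conv}\,{\bm{\mathfrak{S}}}_J}$ keeps everything finite. Finally I would note that properties (1)–(4) are precisely the tameness requirements of Theorem 1 localized to $\mathrm{\mathbf{X}^J}$, so this proposition feeds directly into the proof of Theorem 1 once one covers $\mathrm{\mu^{-1}(\xi)}$ by the finitely many nonempty $\mathrm{\mathbf{M}_J}$ and passes to the $\mathrm{\pi}$-saturations $\mathrm{\mathbf{X}^J}$.
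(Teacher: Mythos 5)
Your construction of $\mathrm{s^{J}_{n}}$ and your handling of properties (1)--(3) agree with the paper's proof and are correct, but your argument for property (4) contains a genuine error. You claim that $\mathrm{\mu(cl(\mathbb{T}.x))=\mathfrak{Conv}\,{\bm{\mathfrak{S}}}_J}$ for \emph{every} $\mathrm{x\in\mathbf{X}^{J}}$, with the justification that ``any orbit closure containing [the fixed points of $\mathrm{cl(\mathbb{T}.x')}$] has the same moment image.'' This is false: if $\mathrm{\mathbb{T}.x}$ is a higher-dimensional orbit in the fiber $\mathrm{\pi^{-1}(\pi(x'))}$ (not the unique closed orbit $\mathrm{\mathbb{T}.z_{x}}$), then $\mathrm{cl(\mathbb{T}.x)\supsetneq cl(\mathbb{T}.z_{x})}$ can contain \emph{additional} $\mathrm{\mathbb{T}}$-fixed points, so $\mathrm{\mu(cl(\mathbb{T}.x))}$ is a strictly larger polytope with $\mathrm{\mathfrak{Conv}\,{\bm{\mathfrak{S}}}_J}$ sitting in its boundary. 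Indeed, if your claim were true, then for every non-fixed $\mathrm{x\in\mathbf{X}^{J}}$ one would have $\mathrm{\xi\in Relint\,\mathfrak{Conv}\,{\bm{\mathfrak{S}}}_J=\mu(\mathbb{T}.x)}$ and hence $\mathrm{x\in\mathbb{T}.\mu^{-1}(\xi)}$, which would make the left-hand side of (4) all of $\mathrm{\mathbf{X}^{J}}$ minus its fixed points --- clearly not what the proposition asserts, since $\mathrm{\mathbb{T}.\mu^{-1}(\xi)}$ singles out only the closed orbits.

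This is exactly the case the paper's proof of the inclusion $\mathrm{``\!\supset\!''}$ must treat separately: assuming $\mathrm{\mathbb{T}.x\neq\mathbb{T}.z_{x}}$, one has $\mathrm{\mathfrak{Conv}\,{\bm{\mathfrak{S}}}_J=\mu(cl(\mathbb{T}.z_{x}))\subset bd\,\mu(cl(\mathbb{T}.x))}$, while $\mathrm{n^{-1}\xi^{J}_{n}\in\mathfrak{Conv}\,{\bm{\mathfrak{S}}}_J}$; hence $\mathrm{n^{-1}\xi^{J}_{n}\notin Relint\,\mu(cl(\mathbb{T}.x))=\mu(\mathbb{T}.x)}$, contradicting $\mathrm{x\in\mathbb{T}.\mu^{-1}(n^{-1}\xi^{J}_{n})}$. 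Your ``same relative interior'' idea can in fact be salvaged: replace your false equality by the correct inclusion $\mathrm{\mathfrak{Conv}\,{\bm{\mathfrak{S}}}_J\subset\mu(cl(\mathbb{T}.x))}$ and invoke the elementary fact that if a convex subset $\mathrm{C}$ of a polytope $\mathrm{P}$ has $\mathrm{Relint\,C\cap Relint\,P\neq\varnothing}$, then $\mathrm{Relint\,C\subset Relint\,P}$. Applied with $\mathrm{C=\mathfrak{Conv}\{\xi_{j}:j\in J_{1}\}}$ (which contains both $\mathrm{\xi}$ and $\mathrm{n^{-1}\xi^{J}_{n}}$ in its relative interior, by clauses (4)--(5) of Lemma \ref{Lemma Convex Polytope}) and $\mathrm{P=\mu(cl(\mathbb{T}.x))}$, this gives $\mathrm{\xi\in\mu(\mathbb{T}.x)\Leftrightarrow n^{-1}\xi^{J}_{n}\in\mu(\mathbb{T}.x)}$ for $\mathrm{n\geq N_{0}}$, which is a cleaner and more symmetric route to (4) than the paper's boundary-contradiction argument --- but you need the corrected inclusion and this convexity lemma; the reasoning as written does not establish the claim. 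Also note in passing that $\mathrm{\mathfrak{Conv}\{\xi_{j}:j\in J_{1}\}}$ need not literally be a \emph{face} of $\mathrm{\mathfrak{Conv}\,{\bm{\mathfrak{S}}}_J}$ (since the $\mathrm{\xi_{j}}$ need not be in convex position), so the face-language should be replaced by the convexity statement above.
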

\begin{proof} By {\bf{Lemma \ref{Lemma Convex Hull Moment Map}}} we know that $\mathrm{\mu\left(cl\left(\mathbb{T}.x\right)\right)=\mathfrak{Conv}\, {\bm{\mathfrak{S}}}_{J}}$. By applying {\bf{Lemma \ref{Lemma Convex Polytope}}} we find a sequence $$\mathrm{\left(\xi_{n}^{J,\prime}\right)_{n}\subset\mathfrak{Conv}\,{\bm{\mathfrak{S}}}_{J}}$$ so that $$\mathrm{\xi^{J,\prime}_{n}=\sum_{j=1}^{Card\,J}\nu_{j,n}^{J}\xi_{j}}$$ where $\mathrm{n\cdot\nu_{j,n}^{J}\in \mathbb{N}\cup\left\{0\right\}}$ for all $\mathrm{n\in \mathbb{N}}$ and $\mathrm{\left(\xi^{\prime}_{n}\right)_{n}}$ converges to $\mathrm{\xi}$ so that $\mathrm{\vert \xi^{J,\prime}_{n}-n\,\xi\vert\in \mathcal{O}\left(1\right)}$. Note that we have $\mathrm{n\cdot\sum_{j=1}^{Card J}\nu_{j,n}^{J}=n}$ and that $$\mathrm{\xi^{J,\prime}_{n}=\sum_{j=1}^{Card\,J}\nu_{j,n}^{J}\xi_{j}=\sum_{j\in J_{1}}\nu_{j,n}^{J}\xi_{j}}$$ (recall that, according to {\bf{Lemma \ref{Lemma Convex Polytope}}}, we have a partition $\mathrm{J=J_{0}\cup J_{1}}$, $\mathrm{J_{0}\neq\varnothing}$, where $\mathrm{\nu_{j,n}^{J}=0}$ for $\mathrm{j\in J_{0}}$). Consider $$\mathrm{s_{n}^{J}\coloneqq\prod_{j=1}^{Card\, J}s_{j}^{n\cdot\nu_{j,n}^{J}}\in H^{0}(\mathbf{X},\mathbf{L}^{n\cdot\sum_{j=1}^{Card J}\nu_{j,n}^{J}})=H^{0}(\mathbf{X},\mathbf{L}^{n}),}$$ which defines a sequence of eigensections whose associated weight vectors $\mathrm{\xi^{J}_{n}}$ approximate $\mathrm{\xi}$:
$$\mathrm{\frac{\xi_{n}^{J}}{n}=\sum_{j=1}^{Card\,J}\nu_{j,n}^{J}\xi_{j}\rightarrow \xi.}$$ Furthermore, by the fourth claim of {\bf{Lemma \ref{Lemma Convex Polytope}}}, we know that $\mathrm{\nu_{j,n}=0}$ for all $\mathrm{j\in J_{0}}$ and $\mathrm{\nu_{j,n}>0}$ for all $\mathrm{j\in J_{1}}$ for all $\mathrm{n}$ big enough. Therefore we deduce $$\mathrm{\mathbf{X}\left(s^{J}_{n}\right)=\complement\bigcup_{j\in J_{1}}\left\{s_{j}=0\right\}}$$ for all $\mathrm{n}$ big enough which proves the second claim.

The third property can be proved as follows: If $\mathrm{z\in \pi^{-1}\left(\pi\left(\mathbf{M}_{J}\right)\right)}$, then we have $\mathrm{cl\left(\mathbb{T}.z\right)}$ $\mathrm{\cap \mathbb{T}.\mathbf{M}_{J}\neq \varnothing}$. Since $\mathrm{cl\left(\mathbb{T}.z\right)}$ is $\mathrm{\mathbb{T}}$-invariant, it follows that $\mathrm{\mathbb{T}.\mathbf{M}_{J}\subset cl\left(\mathbb{T}.z\right)}$. As $\mathrm{s_{n}^{J}\vert \mathbb{T}.\mathbf{M}_{J}\not\equiv 0}$ for all $\mathrm{n}$ big enough, it follows that $\mathrm{s_{n}^{J}\vert cl\left(\mathbb{T}.z\right)\not\equiv 0}$ and hence $\mathrm{s_{n}^{J}\left(z\right)\neq 0}$ for all $\mathrm{n}$ big enough.

It remains to verify the fourth claim. For this, let $\mathrm{x\in \mathbf{X}^{J}\cap \mathbb{T}.\mu^{-1}\left(\xi\right)}$. Note that we can assume that $\mathrm{x}$ is not a $\mathbb{T}$ fixed point: If $\mathrm{x}$ is a $\mathbb{T}$ fixed point, then it follows $\mathrm{x\in \mathbf{M}_{J}\subset \mu^{-1}\left(\xi\right)}$ and hence $\mathrm{\xi_{j}=\xi}$ for all $\mathrm{j\in J}$. In particular, we find $\mathrm{\xi=n^{-1}\xi^{J}_{n}}$ for all $\mathrm{n}$ big enough, so the fourth claim is immediate. In the sequel, let $\mathrm{x\in \mathbf{X}^{J}\cap \mathbb{T}.\mu^{-1}\left(\xi\right)}$ be not a $\mathrm{\mathbb{T}}$ fixed point. Observe that $\mathrm{x\in \mathbb{T}.\mathbf{M}_{J}}$ and hence $\mathrm{\xi\in\mu\left(\mathbb{T}.x\right)}$ or $$\mathrm{\xi\in Relint\,\mu\left(cl\left(\mathbb{T}.x\right)\right)=Relint\,\mathfrak{Conv}\,{\bm{\mathfrak{S}}}_{J}}$$ by {\bf{Lemma \ref{Lemma Convex Hull Moment Map}}}. Since $\mathrm{n^{-1}\xi^{J,\prime}_{n}\rightarrow \xi}$ and $\mathrm{\xi\in Relint\,\mathfrak{Conv}\,{\bm{\mathfrak{S}}}_{J}}$, we have $$\mathrm{n^{-1}\xi^{J,\prime}_{n}\in Relint\,\mu\left(cl\left(\mathbb{T}.x\right)\right)=Relint\,\mathfrak{Conv}\,{\bm{\mathfrak{S}}}_{J}}$$ for all $\mathrm{n}$ big enough as well. Therefore we deduce $$\mathrm{x\in \mathbb{T}.\mu^{-1}\left(n^{-1}\xi^{J,\prime}_{n}\right)}$$ for all $\mathrm{n}$ big enough, which proves  the inclusion $\mathrm{"\!\!\subset\!"}$. 

Now, let $\mathrm{x\in \mathbf{X}^{J} \cap \mathbb{T}.\mu^{-1}\left(n^{-1}\xi^{J}_{n}\right)}$. Note that, as in the previous case, we can assume that $\mathrm{x}$ is not a $\mathrm{\mathbb{T}}$-fixed point. Moreover, let $\mathrm{\mathbb{T}.z_{x}}$ be the unique closed orbit in $\mathrm{\pi^{-1}\left(\pi\left(x\right)\right)}$ i.e.\ $$\mathrm{\pi^{-1}\left(\pi\left(x\right)\right)\cap \mathbb{T}.\mu^{-1}\left(\xi\right)=\mathbb{T}.z_{x}.}$$ Note that $\mathrm{z_{x}\in \mathbb{T}.\mathbf{M}_{J}}$. If the claim were false, i.e.\ if $\mathrm{\mathbb{T}.x\neq\mathbb{T}.z_{x}}$, then we would deduce $$\mathrm{\xi\in \mathfrak{Conv}\,{\bm{\mathfrak{S}}}_{J}= \mu\left(cl\left(\mathbb{T}.z_{x}\right)\right)\subset  bd\,\mu\left(cl\left(\mathbb{T}.x\right)\right)}$$ where $\mathrm{n^{-1}\xi^{J,\prime}_{n}\in \mu\left(cl\left(\mathbb{T}.z_{x}\right)\right)}$ for all $\mathrm{n}$. In particular, by the above inclusion, it then follows that $\mathrm{n^{-1}\xi^{J,\prime}_{n}\notin \mu\left(\mathbb{T}.x\right)}$ for all $\mathrm{n}$ in contradiction to  $\mathrm{x\in \mathbf{X}{J} \cap \mathbb{T}.\mu^{-1}\left(n^{-1}\xi^{J}_{n}\right)}$ for all $\mathrm{n}$ big enough.
Therefore, the assumption is false and it follows that $\mathrm{x\in \mathbb{T}.\mathbf{M}_{J}}$ and hence $\mathrm{\xi\in\mu\left(\mathbb{T}.x\right)}$ or $\mathrm{x\in \mathbb{T}.\mu^{-1}\left(\xi\right)}$ which proves $\mathrm{"\!\!\supset\!"}$.
\end{proof}

After having shown {\bf{Proposition \ref{Proposition Existence of Tame Sequences}}} we can prove that the set of semistable points $\mathrm{\mathbf{X}^{ss}_{\xi}}$ can be covered by the $\mathrm{n}$-stable complements of finitely many sequences $\mathrm{\left(s_{n}^{i}\right)_{n}}$ of eigensections whose associated weight sequences $\mathrm{\left(\xi^{i}_{n}\right)_{n}}$ approximate the ray $\mathrm{\mathbb{R}^{\geq 0}\xi}$.

\begin{theo_1*} \textnormal{[\scshape{Existence of Tame Sequences}]}\vspace{0.1 cm}\\ 
If $\mathrm{\mathbf{L}\rightarrow \mathbf{X}}$ is as above and $\mathrm{\xi\in Im\left(\mu\right)}$, then we can find a finite cover $$\mathrm{\left\{\mathbf{X}^{i}\right\}_{i\in I}\text{ of }\mathbf{X}^{ss}_{\xi}}$$ consisting of open, $\mathrm{\pi}$-saturated subsets and a finite collection $$\mathrm{\left\{\left( s_{n}^{i} \right)_{n}\right\}_{i\in I},\text{ where }s_{n}^{i}\in H^{0}(\mathbf{X},\mathbf{L}^{n})}$$ of sequences consisting of $\mathrm{\mathbb{T}}$-eigensections such that the following properties are fulfilled:
\begin{enumerate}  
\item $\mathrm{\frac{\xi^{i}_{n}}{n}\rightarrow \xi}$ where $\mathrm{\vert \xi^{i}_{n}-n\,\xi\vert\in \mathcal{O}\left(1\right)}$ for each $\mathrm{i}$.\vspace{-0.2 cm}
\item $\mathrm{\mathbf{X}^{i}\subset \mathbf{X}(s_{n}^{i})}$ for all $\mathrm{n}$ big enough.\vspace{-0.2 cm}
\item $\mathrm{\mathbf{X}^{i}\cap \mathbb{T}.\mu^{-1}\left(\xi\right)=\mathbf{X}^{i}\cap \mathbb{T}.\mu^{-1}\left(\frac{\xi^{i}_{n}}{n}\right)}$ for all $\mathrm{n\geq N_{0}}$.
\end{enumerate} 
\end{theo_1*}
\begin{proof}  First of all choose an indexing $\mathrm{I}$ of all non-empty $\mathrm{\mathbf{M}_{J}}$ and recall that the collection $\mathrm{\left\{\mathbf{M}_{i}\right\}_{i\in I}}$ defines a finite cover $\mathrm{\mu^{-1}\left(\xi\right)}$. Hence, the corresponding collection $\mathrm{\left\{\mathbf{X}^{i}\right\}_{i\in I}}$ of open, $\mathrm{\pi}$-saturated subset $\mathrm{\mathbf{X}^{i}=\pi^{-1}\left(\pi\left(\mathbf{M}_{i}\right)\right)}$ defines a finite cover of $\mathrm{\mathbf{X}^{ss}_{\xi}}$ and the claim is a direct consequence of {\bf{Proposition \ref{Proposition Existence of Tame Sequences}}}. 
\end{proof}

Before we proceed with the proof of {\bf{Proposition \ref{Proposition Uniform Convergence Potential Functions in the Abelian Case}}}, we consider the following example of {\bf{Theorem 1}}.

\begin{example} \textnormal{Let $\mathrm{\mathbf{X}=\mathbf{\Sigma}_{m}}$, $\mathrm{m\in \mathbb{N}}$, be the $\mathrm{m}$-th {\scshape{Hirzebruch}}-Surface (cf. \cite{Hir}) which is defined as the projectivization $\mathrm{\mathbb{P}\left(\mathcal{O}_{\mathbb{C}\mathbb{P}^{1}}\left(1\right)\oplus \mathcal{O}_{\mathbb{C}\mathbb{P}^{1}}\left(m\right)\right)}$ and which is isomorphic to the hypersurface $\mathrm{\{z_{0}^{m}\zeta_{1}}$ $\mathrm{-z_{1}^{m}\zeta_{2}=0\}\subset \mathbb{C}\mathbb{P}^{1}\times \mathbb{C}\mathbb{P}^{2}}$.}

\textnormal{Consider the $\mathrm{\mathbb{T}=\mathbb{C}^{*}}$-action on $\mathrm{\mathbb{C}\mathbb{P}^{5}}$ given by $\mathrm{t.[u]=[u_{0}\!:\!t\,u_{1}\!:\!t\,u_{2}\!:\!u_{3}\!:\!t\,u_{4}\!:\!t\,u_{5}]}$ which pulls back to the $\mathrm{\mathbb{C}^{*}}$-action on $\mathrm{\mathbb{C}\mathbb{P}^{1}\times \mathbb{C}\mathbb{P}^{2}}$ given by $$\mathrm{t.\left(\left[z_{0}\!:\!z_{1}\right],\left[\zeta_{0}\!:\!\zeta_{1}\!:\!\zeta_{2}\right]\right)=([z_{0}\!:\!z_{1}],[\zeta_{0}\!:\!t\,\zeta_{1}\!:\!t\,\zeta_{2}])}$$ via the {\scshape{Segre}}-embedding $\mathrm{\sigma_{1,2}\!:\!\mathbb{C}\mathbb{P}^{1}\times \mathbb{C}\mathbb{P}^{2}\hookrightarrow \mathbb{C}\mathbb{P}^{5}}$. Moreover, fix the $\mathrm{\mathbb{C}^{*}}$-linearization on $\mathrm{\mathcal{O}_{\mathbb{C}\mathbb{P}^{5}}\left(1\right)}$ $\mathrm{\rightarrow \mathbb{C}\mathbb{P}^{5}}$ given by $\mathrm{t.[u,\zeta]=[t.u\!:\!\zeta]}$ where we have used the identification $$\mathrm{\mathcal{O}_{\mathbb{C}\mathbb{P}^{5}}\left(1\right)\cong \mathbb{C}\mathbb{P}^{6}\setminus\left\{[0\!:\!{\dots}\!:\!0\!:\!1]\right\}.}$$ It is direct to check that this $\mathrm{\mathbb{C}^{*}}$-linearization can be pulled back to a $\mathrm{\mathbb{C}^{*}}$-linearization on $\mathrm{\mathbf{L}\coloneqq \left(\sigma_{1,2}^{*}\mathcal{O}_{\mathbb{C}\mathbb{P}^{5}}\left(1\right)\right)\vert \mathbf{\Sigma}_{m}}$. Furthermore, the moment map of the $\mathrm{\mathbb{C}^{*}}$-linearization on $\mathrm{\mathcal{O}_{\mathbb{C}\mathbb{P}^{5}}\left(1\right)}$ $\mathrm{\rightarrow \mathbb{C}\mathbb{P}^{5}}$ associated to the standard hermitian metric $\mathrm{h}$ on $\mathrm{\mathcal{O}_{\mathbb{C}\mathbb{P}^{5}}\left(1\right)}$ defined by $$\mathrm{h\left(\left(u_{0},\zeta_{0}\right),\left(u_{0},\zeta_{0}\right)\right)=\left\Vert u\right\Vert ^{-2}\zeta_{0}\overline{\zeta}_{1}}$$ yields a moment map $\mathrm{\mu\!:\!\mathbf{X}\rightarrow \mathfrak{t}^{*}}$ on $\mathrm{X}$ which is given by $$\mathrm{\mu\left(\left[z_{0}\!:\!z_{1}\right],\left[\zeta_{0}\!:\!\zeta_{1}\!:\!\zeta_{2}\right]\right)=\frac{1}{2}\frac{\left|z_{0}\zeta_{1}\right|^{2}+\left|z_{0}\zeta_{2}\right|^{2}+\left|z_{1}\zeta_{1}\right|^{2}+\left|z_{1}\zeta_{2}\right|^{2}}{\left|z_{0}\zeta_{0}\right|^{2}+\left|z_{0}\zeta_{1}\right|^{2}+\left|z_{0}\zeta_{2}\right|^{2}+\left|z_{1}\zeta_{0}\right|^{2}+\left|z_{1}\zeta_{1}\right|^{2}+\left|z_{1}\zeta_{2}\right|^{2}}}.$$}
\textnormal{If $\mathrm{\xi=\frac{1}{2\,\sqrt{2}}\in Im\,\mu=[0,\frac{1}{2}]}$, then $$\mathrm{\mu^{-1}\left(\xi\right)=\left\{\left(\sqrt{2}-1\right)\left(\left|z_{0}\zeta_{1}\right|^{2}+\left|z_{0}\zeta_{2}\right|^{2}+\left|z_{1}\zeta_{1}\right|^{2}+\left|z_{1}\zeta_{2}\right|^{2}\right)-\left|z_{0}\zeta_{0}\right|^{2}-\left|z_{1}\zeta_{2}\right|^{2}=0\right\}\cap \mathbf{\Sigma}_{m}}$$ and $$\mathrm{\mathbf{X}^{ss}_{\xi}=\mathbf{X}\setminus\left(\left\{ \zeta_{0}=0\,\vee\,\zeta_{1}=\zeta_{2}=0\right\}\right) \subset\mathbb{C}\mathbb{P}^{1}\times \mathbb{C}\mathbb{P}^{2}}$$ where $\mathrm{\pi\!:\!\mathbf{X}^{ss}_{\xi}\rightarrow \mathbf{Y}\cong \mathbb{C}\mathbb{P}^{1}}$ can be identified with the restriction $\mathrm{p_{\mathbb{C}\mathbb{P}^{1}}\vert \mathbf{\Sigma}_{m}=\pi}$ of the projection map $\mathrm{p_{\mathbb{C}\mathbb{P}^{1}}\!:\!\mathbb{C}\mathbb{P}^{1}\times \mathbb{C}\mathbb{P}^{2}\rightarrow \mathbb{C}\mathbb{P}^{1}}$. A further analysis of the example shows that $\mathrm{6=dim_{\mathbb{C}}}$ $\mathrm{H^{0}\left(\mathbf{\Sigma}_{m},\mathbf{L}\right)}$ where $$\mathrm{\begin{array}{rclrclrclrclrclrcl}
\mathrm{s_{1}} & \mathrm{=} & \mathrm{z_{0}\zeta_{0},} & \mathrm{s_{2}} & \mathrm{=} & \mathrm{z_{0}\zeta_{1},} & \mathrm{s_{3}} & \mathrm{=} & \mathrm{z_{0}\zeta_{2},} & \mathrm{s_{4}} & \mathrm{=} & \mathrm{z_{1}\zeta_{0},} & \mathrm{s_{5}} & \mathrm{=} & \mathrm{z_{1}\zeta_{1},} & \mathrm{s_{6}} & \mathrm{=} & \mathrm{z_{1}\zeta_{2}}\\
\mathrm{\xi_{1}} & \mathrm{=} & \mathrm{0,} & \mathrm{\xi}_{2} & \mathrm{=} & \mathrm{1,} & \mathrm{\xi_{3}} & \mathrm{=} & \mathrm{1,} & \mathrm{\xi_{4}} & \mathrm{=} & \mathrm{0}, & \mathrm{\xi_{5}} & \mathrm{=} & \mathrm{1} & \mathrm{\xi_{6}} & \mathrm{=} & \mathrm{1}.\end{array}}$$ Moreover, it is direct to check that $\mathrm{\mathbf{M}_{J}\neq \varnothing}$ if and only if $\mathrm{J\in\left\{\left\{ 1,3\right\} ,\left\{ 4,5\right\},J\right\}}$. A tame collection is for example given by $$\mathrm{\left\{\mathbf{X}^{i}\right\}_{i=1,2}=\left\{\pi^{-1}\left(\mathbb{C}\mathbb{P}^{1}\setminus\left\{[1\!:\!0]\right\}\right),\,\pi^{-1}\left(\mathbb{C}\mathbb{P}^{1}\setminus\left\{[0\!:\!1]\right\}\right)\right\}}$$ where $$\mathrm{\left\{\left(s^{i}_{n}\right)_{n}\right\}_{i=1,2}=\left\{\Big(s_{1}^{n-\left\lfloor \frac{n}{2}\right\rfloor }s_{3}^{\left\lfloor \frac{n}{2}\right\rfloor }\Big)_{n},\,\Big(s_{4}^{n-\left\lfloor \frac{n}{2}\right\rfloor }s_{5}^{\left\lfloor \frac{n}{2}\right\rfloor }\Big)_{n}\right\}. \,\boldsymbol{\Box}}$$}
\end{example}

We complete this section by proving the following proposition.
\begin{prop}\label{Proposition Uniform Convergence Potential Functions in the Abelian Case} If $\mathrm{\left(s^{i}_{n}\right)_{n}}$ is a tame collection, then the associated collection of sequences of strictly plurisubharmonic potentials $\mathrm{\left(\varrho^{i}_{n}\right)_{n}}$ given by $$\mathrm{\varrho_{n}^{i}=-\frac{1}{n}\mathbf{log}\,\vert s_{n}^{i}\vert^{2}\label{Notation Sequence of S.p.s.h. Functions Associated to the Tame Sequence}}$$ converges uniformly on every compact set of $\mathrm{\mathbf{X}^{i}}$ to a smooth strictly plurisubharmonic function $\mathrm{\varrho^{i}\!:\!\mathbf{X}^{i}\rightarrow \mathbb{R}}$\label{Notation S.p.s.h. Limit Function Associated to the Tame Sequence}. Moreover, the same is true for all its derivatives. 
\end{prop}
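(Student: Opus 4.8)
The plan is to reduce the statement to the explicit product description of the tame sections together with the convergence of the barycentric coefficients furnished by Lemma \ref{Lemma Convex Polytope}. First I would recall from the construction in Proposition \ref{Proposition Existence of Tame Sequences} that, after fixing the $\mathbb{T}$-eigenbasis $s_1,\dots,s_m$ of $H^0(\mathbf{X},\mathbf{L})$ and writing $\mathbf{X}^i=\pi^{-1}(\pi(\mathbf{M}_J))$ for the corresponding index set $J=J_0\cup J_1$, the $i$-th tame section has the shape
$$s_n^i=\prod_{j\in J}s_j^{n\nu_{j,n}^J},\qquad \sum_{j\in J}\nu_{j,n}^J=1,\qquad n\nu_{j,n}^J\in\mathbb{N}\cup\{0\},$$
where, by the fourth claim of Lemma \ref{Lemma Convex Polytope}, $\nu_{j,n}^J=0$ for $j\in J_0$ and $\nu_{j,n}^J>0$ for $j\in J_1$ once $n\ge N_0$; moreover, by the construction in Proposition \ref{Proposition Existence of Tame Sequences}, $\mathbf{X}^i\subset\mathbf{X}(s_n^i)=\complement\bigcup_{j\in J_1}\{s_j=0\}$ for $n\ge N_0$, so each $s_j$ with $j\in J_1$ is nowhere zero on $\mathbf{X}^i$. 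Since the metric $h^n$ on $\mathbf{L}^n$ is the $n$-fold tensor power of $h$, the pointwise norm is multiplicative on such monomials, and therefore on $\mathbf{X}^i$ one has, for all $n\ge N_0$,
$$\varrho_n^i=-\tfrac1n\log|s_n^i|^2=-\sum_{j\in J}\nu_{j,n}^J\log|s_j|^2=-\sum_{j\in J_1}\nu_{j,n}^J\log|s_j|^2,$$
a finite affine combination of the \emph{fixed} smooth real functions $\log|s_j|^2$, $j\in J_1$, in which only the coefficients depend on $n$.

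Next I would invoke the fifth claim of Lemma \ref{Lemma Convex Polytope}: the coefficients converge, $\nu_{j,n}^J\to\nu_j^\infty>0$ for every $j\in J_1$. Setting $\varrho^i\coloneqq-\sum_{j\in J_1}\nu_j^\infty\log|s_j|^2$, which is smooth on $\mathbf{X}^i$, I obtain on any compact $K\subset\mathbf{X}^i$ the crude estimate $\sup_K|\varrho_n^i-\varrho^i|\le\sum_{j\in J_1}|\nu_{j,n}^J-\nu_j^\infty|\,\sup_K\bigl|\log|s_j|^2\bigr|$, and each $\sup_K\bigl|\log|s_j|^2\bigr|$ is finite (on $K$ the function $|s_j|^2$ is bounded and bounded away from $0$); hence $\varrho_n^i\to\varrho^i$ uniformly on $K$. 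Since differentiation commutes with the finite sum, any fixed partial derivative of $\varrho_n^i$ is the same affine combination of the corresponding fixed derivatives of the $\log|s_j|^2$, which are locally bounded, so the identical estimate gives uniform convergence on compact sets of all derivatives as well.

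It then remains to check that the limit $\varrho^i$ is strictly plurisubharmonic. On $\{s_j\ne0\}$ the function $-\log|s_j|^2$ is a local Kähler potential for $(\mathbf{X},\omega)$, and its complex Hessian $i\partial\bar\partial(-\log|s_j|^2)$ is a positive-definite $(1,1)$-form, by positivity of $h$; it is moreover independent of $j$, because two such potentials differ by $\log|f|^2$ with $f$ a nowhere-vanishing holomorphic transition function, which is pluriharmonic. Since the $s_j$ with $j\in J_1$ are nowhere zero on $\mathbf{X}^i$ and $\sum_{j\in J_1}\nu_j^\infty=1$, it follows that $i\partial\bar\partial\varrho^i$ equals this one positive form on all of $\mathbf{X}^i$, so $\varrho^i$ is a global strictly plurisubharmonic potential there. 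I do not expect a genuine obstacle in this proposition: the only points that require care are the multiplicativity of the hermitian norm under tensor powers — which is exactly what turns $-\tfrac1n\log|s_n^i|^2$ into an honest linear expression in the $\log|s_j|^2$ — and the use of Lemma \ref{Lemma Convex Polytope} in the sharp form that the barycentric coefficients converge to \emph{strictly positive} limits on $J_1$ and vanish on $J_0$ for large $n$, since this is precisely what makes $\varrho^i$ well defined on all of $\mathbf{X}^i$ and strictly (rather than merely weakly) plurisubharmonic.
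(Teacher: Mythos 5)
Your proof is correct and follows essentially the same route as the paper's: expand $\varrho_n^i$ as the convex combination $-\sum_{j\in J_1}\nu_{j,n}^J\log|s_j|^2$ using the explicit product form of $s_n^i$, then conclude from the convergence of the coefficients $\nu_{j,n}^J$ furnished by Lemma \ref{Lemma Convex Polytope}. You supply a bit more detail than the paper — the explicit sup-norm estimate on compacta, the commutation of differentiation with the finite sum, and the verification that $\varrho^i$ is strictly plurisubharmonic because the local potentials $-\log|s_j|^2$ share the same positive complex Hessian — but the underlying argument is the same.
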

\begin{proof} First of all recall (cf.\ proof of {\bf{Proposition \ref{Proposition Existence of Tame Sequences}}}) that each sequence $\mathrm{\left(s^{i}_{n}\right)_{n}}$ is given by $$\mathrm{s_{n}^{J}\coloneqq\prod_{j=1}^{Card J}s_{j}^{n\cdot\nu_{j,n}^{J}},}$$ where $\mathrm{J\subset \left\{1,\dots,m\right\}}$ is a finite, suitable index set and $\mathrm{\big(\nu_{j,n}^{J}\big)_{n}}$ a sequence of integers such that $$\mathrm{\sum_{j=1}^{Card\,J}\nu^{J}_{j,n}\xi _{j}=\sum_{j\in J_{1}}\nu_{j,n}^{J}\xi_{j}\rightarrow \xi}$$ where $\mathrm{J=J_{0}\cup J_{1}}$, $\mathrm{J_{1}\neq \varnothing}$, $\mathrm{\nu_{j}=0}$ for all $\mathrm{j\in J_{0}}$ and all $\mathrm{n}$ big enough, resp.\ $\mathrm{\nu_{j}>0}$ for all $\mathrm{j\in J_{1}}$ and all $\mathrm{n}$ big enough. Hence, it follows that $$\mathrm{\varrho^{i}_{n}=-\sum_{j\in J_{1}}\nu^{J}_{j,n}\mathbf{log}\,\vert s_{j}\vert^{2}, \nu_{j,n}^{J}>0\text{ for }j\in J_{1}}$$ for all $\mathrm{n}$ big enough. Recall that $\mathrm{\mathbf{X}\left(s^{J}_{n}\right)=\complement\bigcup_{j\in J_{1}}\left\{s_{j}=0\right\}}$ for all $\mathrm{n}$ big enough. As the sequences $\mathrm{\big(\nu_{j,n}^{J}\big)_{n}}$ are convergent with strictly positive limits for all $\mathrm{j\in J_{1}}$, it follows that $\mathrm{\varrho^{i}_{n}}$ converges uniformly on compact subsets of $\mathrm{\mathbf{X}^{i}}$ in all derivatives to the smooth s.p.s.h. ($\mathrm{\coloneqq}${\itshape{s}}trictly {\itshape{p}}luri{\itshape{s}}ub{\itshape{h}}armonic)\ function $\mathrm{\varrho^{i}\!:\!\mathbf{X}^{i}\rightarrow \mathbb{R}}$ given by $$\mathrm{\varrho^{i}=-\sum_{j\in J_{1}}\,\underset{n\rightarrow\infty}{lim}\nu_{j,n}^{J}\,\mathbf{log}\,\left|s_{j}\right|^{2}.}$$
\end{proof}

\section{Uniform Localization Proposition}\label{Uniform Localization Proposition}

In this section fix a tame sequence $\mathrm{\left(s_{n}^{i}\right)_{n}}$ and let $\mathrm{\left(\varrho_{n}^{i}\right)_{n}}$ be the associated sequence of strictly plurisubharmonic functions. Moreover, recall that by {\bf{Theorem 1}}, the subset $\mathrm{\mathbf{X}^{i}}$ is $\mathrm{\pi}$-saturated and contained in the complement of the zero set $\mathrm{\mathbf{X}\left(s_{n}^{i}\right)}$ of $\mathrm{s_{n}^{i}}$ for all $\mathrm{n}$ big enough. 

We start this section by recalling the following basic fact (cf.?\cite{He-Hu2}, {\bf{{pp.\ }\oldstylenums{310}-\oldstylenums{349}}}).

\begin{lemma}
Let $\mathrm{\varrho^{i}\!:\!\mathbf{X}^{i}\rightarrow \mathbb{R}^{\geq 0}}$ and let $\mathrm{\mathbf{Y}^{i}=\pi\left(\mathbf{X}^{i}\right)\label{Notation Image of X_i Under Projection Map}}$ be as above, then for each $\mathrm{y\in \mathbf{Y}^{i}}$ there exists a $\mathrm{\pi}$-saturated, open subset $\mathrm{\mathbf{V}^{i}=\pi^{-1}\left(\mathbf{W}^{i}\right)}$\label{Notation V^i}\label{Notation W^i} of $\mathrm{\mathbf{X}^{ss}_{\xi}}$ where $\mathrm{\mathbf{W}^{i}\subset \mathbf{Y}^{i}}$ is open so that $\mathrm{\left(\varrho^{i}\times \pi\right)\vert \mathbf{V}^{i}}$ is proper. 
\end{lemma}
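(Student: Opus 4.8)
The statement to prove is a local properness result: for each $\mathrm{y\in\mathbf{Y}^{i}}$ one can find a $\mathrm{\pi}$-saturated open neighbourhood $\mathrm{\mathbf{V}^{i}=\pi^{-1}(\mathbf{W}^{i})}$ of the fibre $\mathrm{\pi^{-1}(y)}$ on which the map $\mathrm{(\varrho^{i}\times\pi)\colon\mathbf{V}^{i}\to\mathbb{R}^{\geq0}\times\mathbf{W}^{i}}$ is proper. The plan is to exploit the fundamental property of the Hilbert quotient recalled in the introduction (cf.\ \cite{He-Hu2}): the quotient map $\mathrm{\pi\colon\mathbf{X}^{ss}_{\xi}\to\mathbf{Y}}$ locally looks like an \emph{analytic Hilbert quotient}, so over a suitable Stein neighbourhood $\mathrm{\mathbf{W}^{i}\ni y}$ the set $\mathrm{\pi^{-1}(\mathbf{W}^{i})}$ admits a $\mathrm{\mathbb{T}}$-equivariant embedding, and the function $\mathrm{\varrho^{i}}$, being (after Proposition \ref{Proposition Uniform Convergence Potential Functions in the Abelian Case}) a \emph{strictly plurisubharmonic exhaustion along the fibres} near the critical set, controls the non-compact directions inside each fibre. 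So the first step is to shrink $\mathrm{\mathbf{Y}^{i}}$ to a relatively compact Stein $\mathrm{\mathbf{W}^{i}}$ around $\mathrm{y}$; the second step is to verify the properness fibrewise and then propagate it to the saturation.

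For the fibrewise statement: recall from the discussion after Theorem 3,4 (and it is the content underlying Proposition \ref{Proposition Uniform Convergence Potential Functions in the Abelian Case}) that $\mathrm{\varrho^{i}\vert\pi^{-1}(y)=-\sum_{j\in J_{1}}(\lim_{n}\nu_{j,n}^{J})\,\mathbf{log}\,|s_{j}|^{2}}$ restricted to the fibre attains its $\mathrm{T}$-invariant minimum along the unique closed orbit $\mathrm{T.x_{y}=\mu^{-1}(\xi)\cap\pi^{-1}(y)}$, and blows up to $\mathrm{+\infty}$ as one runs out to the boundary of the fibre inside $\mathrm{\mathbf{X}}$ — i.e.\ as $\mathrm{s_{j}(z)\to0}$ for some $\mathrm{j\in J_{1}}$. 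Hence on the fibre $\mathrm{\pi^{-1}(y)}$, which is an affine (Stein) variety by the analytic Hilbert quotient structure, $\mathrm{\varrho^{i}\vert\pi^{-1}(y)}$ is a genuine exhaustion: the sublevel sets $\mathrm{\{\varrho^{i}\leq c\}\cap\pi^{-1}(y)}$ are closed in $\mathrm{\mathbf{X}^{i}}$, and since they avoid a neighbourhood of $\mathrm{\bigcup_{j\in J_{1}}\{s_{j}=0\}}$, they are in fact compact subsets of $\mathrm{\mathbf{X}}$. Thus $\mathrm{\varrho^{i}\times\pi}$ restricted to a single fibre is proper onto $\mathrm{\mathbb{R}^{\geq0}\times\{y\}}$. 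The second step upgrades this to a neighbourhood: using local triviality of the analytic Hilbert quotient over the Stein base $\mathrm{\mathbf{W}^{i}}$ together with semicontinuity of the fibrewise exhaustion data (the coefficients $\mathrm{\lim_{n}\nu_{j,n}^{J}}$ are fixed, only the sections $\mathrm{s_{j}}$ vary), one checks that $\mathrm{(\varrho^{i}\times\pi)^{-1}(K\times\overline{\mathbf{W}^{i}})}$ is relatively compact for every compact $\mathrm{K\subset\mathbb{R}^{\geq0}}$, after possibly shrinking $\mathrm{\mathbf{W}^{i}}$; this is exactly the properness of $\mathrm{(\varrho^{i}\times\pi)\vert\mathbf{V}^{i}}$ with $\mathrm{\mathbf{V}^{i}=\pi^{-1}(\mathbf{W}^{i})}$.

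The main obstacle is the passage from the single fibre to a full neighbourhood of the fibre — i.e.\ ensuring \emph{uniform} control of the exhaustion as $\mathrm{y}$ varies. A priori the fibres of $\mathrm{\pi}$ can jump in dimension and in the combinatorial type of the orbit-closure polytopes $\mathrm{\mu(cl(\mathbb{T}.z))}$, and the "escape to the boundary" could happen at different rates for nearby fibres. The clean way around this is to invoke the saturatedness of $\mathrm{\mathbf{X}^{i}=\pi^{-1}(\pi(\mathbf{M}_{i}))}$ together with the fact (from Theorem 1, property 2) that $\mathrm{\mathbf{X}^{i}\subset\mathbf{X}(s_{n}^{i})}$ for all large $\mathrm{n}$, hence $\mathrm{\mathbf{X}^{i}}$ is contained in the common non-vanishing locus $\mathrm{\complement\bigcup_{j\in J_{1}}\{s_{j}=0\}}$, so $\mathrm{\varrho^{i}}$ is a globally defined smooth function on all of $\mathrm{\mathbf{X}^{i}}$ and the only non-compactness of $\mathrm{\mathbf{X}^{i}}$ is in the base direction, which is killed by multiplying by $\mathrm{\pi}$ and restricting to the relatively compact $\mathrm{\mathbf{W}^{i}}$. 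Concretely: a sequence $\mathrm{(z_{k})}$ in $\mathrm{\mathbf{V}^{i}}$ with $\mathrm{\varrho^{i}(z_{k})}$ bounded and $\mathrm{\pi(z_{k})\to y'\in\overline{\mathbf{W}^{i}}}$ stays in a fixed compact subset of $\mathrm{\mathbf{X}}$ (boundedness of $\mathrm{\varrho^{i}}$ keeps it away from $\mathrm{\bigcup_{j\in J_{1}}\{s_{j}=0\}}$, and $\mathrm{\mathbf{X}}$ is compact), and any limit point lies in $\mathrm{\pi^{-1}(y')\subset\mathbf{V}^{i}}$ because $\mathrm{\mathbf{V}^{i}}$ is saturated and $\mathrm{\mathbf{W}^{i}}$ can be taken closed-in-$\mathbf{Y}^{i}$; this is precisely properness. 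I would present this last argument as the core of the proof, with the Stein-neighbourhood reduction and the fibrewise exhaustion lemma as the two preliminary reductions.
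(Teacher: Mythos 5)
The paper does not actually prove this lemma; it is stated as a recalled ``basic fact'' with a citation to \cite{He-Hu2}, so there is no in-paper argument to compare yours against. Judged on its own terms, your third paragraph is the right idea — a direct compactness argument: take a sequence $\mathrm{(z_{k})}$ with $\mathrm{\varrho^{i}(z_{k})}$ bounded and $\mathrm{\pi(z_{k})}$ in a compact $\mathrm{K'\subset\mathbf{W}^{i}}$, pass to a convergent subsequence in the compact $\mathrm{\mathbf{X}}$, and check the limit stays in $\mathrm{\mathbf{V}^{i}}$.

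However, there is a genuine gap at the last step. Boundedness of $\mathrm{\varrho^{i}=-\sum_{j\in J_{1}}(\lim_{n}\nu^{J}_{j,n})\,\mathbf{log}|s_{j}|^{2}}$ on the subsequence only keeps the limit point $\mathrm{z_{\infty}}$ inside $\mathrm{\mathbf{X}(s^{i}_{n})=\complement\bigcup_{j\in J_{1}}\{s_{j}=0\}}$. But Theorem 1 gives $\mathrm{\mathbf{X}^{i}\subset\mathbf{X}(s^{i}_{n})}$, which is the \emph{wrong} direction for what you need; you cannot conclude directly that $\mathrm{z_{\infty}\in\mathbf{X}^{i}}$, nor even that $\mathrm{z_{\infty}\in\mathbf{X}^{ss}_{\xi}}$ — and until you know the latter you cannot evaluate $\mathrm{\pi(z_{\infty})}$ or invoke saturatedness. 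To close this gap you need the GIT fact (a consequence of {\bf{Lemma \ref{Lemma Convex Hull Moment Map}}} and equation \eqref{Equation Formula Connection Moment Map}) that if an eigensection $\mathrm{s_{j}}$ of weight $\mathrm{\xi_{j}}$ does not vanish at $\mathrm{z_{\infty}}$, then $\mathrm{\xi_{j}\in\mu(cl(\mathbb{T}.z_{\infty}))}$; since this holds for all $\mathrm{j\in J_{1}}$, convexity gives $\mathrm{\mathfrak{Conv}\,\{\xi_{j}:j\in J_{1}\}\subset\mu(cl(\mathbb{T}.z_{\infty}))}$, and since $\mathrm{\xi\in Relint\,\mathfrak{Conv}\,\{\xi_{j}:j\in J_{1}\}}$ this forces $\mathrm{\xi\in\mu(cl(\mathbb{T}.z_{\infty}))}$, i.e.\ $\mathrm{z_{\infty}\in\mathbf{X}^{ss}_{\xi}}$. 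Only then does continuity of $\mathrm{\pi}$ give $\mathrm{\pi(z_{\infty})=y'\in\mathbf{W}^{i}\subset\mathbf{Y}^{i}}$, whence $\mathrm{z_{\infty}\in\pi^{-1}(\mathbf{Y}^{i})=\mathbf{X}^{i}}$ by saturatedness and $\mathrm{z_{\infty}\in\mathbf{V}^{i}}$. You should also take $\mathrm{\mathbf{W}^{i}}$ relatively compact with closure inside $\mathrm{\mathbf{Y}^{i}}$, not merely ``closed in $\mathrm{\mathbf{Y}^{i}}$'', so that $\mathrm{y'\in\mathbf{Y}^{i}}$ is automatic. Your second paragraph (Stein neighbourhoods, fibrewise exhaustion, ``local triviality of the analytic Hilbert quotient'') is not needed for the argument and the local-triviality claim is not accurate for general Hilbert quotients; the direct compactness argument of your third paragraph, once the semistability gap is filled, is self-contained.
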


Note that it is always possible to assume that $\mathrm{\mathbf{W}^{i}\subset \mathbf{Y}^{i}}$ is a compact neighborhood which we will do form now on. Moreover, since $\mathrm{\mathbf{Y}}$ is compact finitely many of those compact neighborhoods $\mathrm{\mathbf{W}^{i}}$ will cover $\mathrm{\mathbf{Y}}$.

In the sequel, we will work with the normalized s.p.s.h function $\mathrm{\hat{\varrho}^{i}}$, resp. $\mathrm{\hat{\varrho}^{i}_{n}}$ on $\mathrm{\mathbf{X}^{i}}$ defined by $$\mathrm{\hat{\varrho}^{i}\coloneqq \varrho^{i}-\pi^{*}\varrho^{i}_{red}\text{ resp.\! }\hat{\varrho}^{i}_{n}\coloneqq \varrho^{i}_{n}-\pi^{*}\varrho^{i}_{n,red}.}$$ Since $\mathrm{\pi^{*}\varrho^{i}}$ is continuous and since $\mathrm{\mathbf{W}^{i}\subset \mathbf{Y}^{i}}$ was chosen to be a compact neighborhood, the above lemma remains valid for $\mathrm{\hat{\varrho}^{i}}$, resp{.\! }for $\mathrm{\hat{\varrho}^{i}_{n}}$. In the sequel, we will set $\mathrm{\varrho^{i}=\hat{\varrho}^{i}}$ and $\mathrm{\hat{\varrho}^{i}_{n}=\varrho^{i}_{n}}$. 

We define $$\mathrm{T\left(\epsilon,\mathbf{W}^{i}\right)\coloneqq \left(\varrho^{i}\times \pi\right)^{-1}\left([0,\epsilon]\times \mathbf{W}^{i}\right)\label{Notation Compact Neighborhood Tube}.}$$ By the above lemma, $\mathrm{T\left(\epsilon,\mathbf{W}^{i}\right)}$ is a relatively compact subset of $\mathrm{\mathbf{X}^{i}\subset \mathbf{X}^{ss}_{\xi}}$ which is $\mathrm{T}$-invariant by its definition. Furthermore, define $$\mathrm{T^{c}\left(\epsilon,\mathbf{W}^{i}\right)\coloneqq \left(\varrho^{i}\times \pi\right)^{-1}\left([\epsilon,\infty)\times \mathbf{W}^{i}\right)\label{Notation Complement of Compact Neighborhood Tube}.}$$

\begin{remark} \label{Preparation Remark} Note that there exists $\mathrm{N_{0}\left(\epsilon\right)\in\mathbb{N}}$ so that $$\mathrm{\mu^{-1}\left(n^{-1}\xi^{i}_{n}\right)\cap\pi^{-1}\left(\mathbf{W}^{i}\right)\subset T\left(\epsilon,\mathbf{W}^{i}\right)\text{ for all }n\geq N_{0}\left(\epsilon\right).}$$ Otherwise there would exist a sequence $\mathrm{\left(x_{n}\right)_{n}}$ in $\mathrm{\mu^{-1}\left(n^{-1}\xi^{i}_{n}\right)\cap\pi^{-1}\left(\mathbf{W}^{i}\right)}$ so that $\mathrm{x_{n}}$ $\mathrm{\notin}$ $\mathrm{T(\epsilon,}$ $\mathrm{\mathbf{W}^{i})}$ for all $\mathrm{n\in \mathbb{N}}$ big enough. Since $\mathrm{\mathbf{X}}$ and $\mathrm{\mathbf{W}^{i}}$ are compact, we can find a convergent subsequence $\mathrm{\left(x_{n_{j}}\right)_{j}}$ so that $\mathrm{x_{n_{j}}\rightarrow x_{0}}$ where $\mathrm{\pi\left(x_{0}\right)\in \mathbf{W}^{i}}$ and $\mathrm{x_{0}\in \mu^{-1}\left(\xi\right)}$ which follows by $\mathrm{x_{n_{j}}\in \mu^{-1}(n^{-1}\xi^{i}_{n_{j}})}$ and $\mathrm{{n_{j}}^{-1}\xi^{i}_{n_{j}}\rightarrow \xi}$. Hence, we have $\mathrm{x_{0}\in \mu^{-1}\left(\xi\right)\cap \pi^{-1}\left(\mathbf{W}^{i}\right)}$. On the other hand, we have assumed that $\mathrm{x_{n_{j}}\notin T\left(\epsilon,\mathbf{W}^{i}\right)}$ for all $\mathrm{n}$ big enough. However, since $\mathrm{T\left(\epsilon,\mathbf{W}^{i}\right)}$ is an open neighborhood of $\mathrm{\mu^{-1}\left(\xi\right)\cap \pi^{-1}\left(\mathbf{W}^{i}\right)}$ in $\mathrm{\pi^{-1}\left(\mathbf{W}^{i}\right)}$ this yields a contradiction to $\mathrm{x_{0}\in \mu^{-1}\left(\xi\right)\cap \pi^{-1}\left(\mathbf{W}^{i}\right)}$.
\end{remark}

Before we prove {\bf{Theorem 2}}, we need the following preparation.

\begin{lemma} \label{Preparation Lemma} Let $\mathrm{\left(s_{n}^{i}\right)_{n}}$ be a tame sequence as above, $\mathrm{\pi^{-1}\left(y\right)\subset \mathbf{X}^{i}}$ and let $\mathrm{\mathbb{T}.z_{y}}$ be the unique closed orbit in $\mathrm{\pi^{-1}\left(y\right)}$ then it follows that $\mathrm{\varrho^{i}_{n}\vert \pi^{-1}\left(y\right)\cap \mathbb{T}.z_{y}}$ takes on a unique minimum along the set $$\mathrm{T.\mu^{-1}\left(n^{-1}\xi^{i}_{n}\right)\cap \pi^{-1}\left(y\right)}$$ which is contained in $\mathrm{\mathbb{T}.\mu^{-1}\left(\xi\right)\cap \pi^{-1}\left(y\right)}$.
\end{lemma}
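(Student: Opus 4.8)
The plan is to work on a single closed orbit $\mathrm{\mathbb{T}.z_{y}\subset \pi^{-1}(y)}$ and explicitly describe the restriction of $\mathrm{\varrho^{i}_{n}}$ to it using the product structure of the tame eigensection. Recall from the proof of {\bf{Proposition \ref{Proposition Existence of Tame Sequences}}} that for $\mathrm{n}$ big enough $\mathrm{s^{i}_{n}=\prod_{j\in J_{1}}s_{j}^{\,n\nu^{J}_{j,n}}}$ with all $\mathrm{\nu^{J}_{j,n}>0}$, so that $\mathrm{\varrho^{i}_{n}=-\sum_{j\in J_{1}}\nu^{J}_{j,n}\mathbf{log}\,|s_{j}|^{2}}$ on $\mathrm{\mathbf{X}^{i}}$. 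First I would recall the well-known fact (this is the content of the classical Atiyah/Guillemin--Sternberg convexity picture, together with the connection formula \ref{Equation Formula Connection Moment Map} already used in {\bf{Lemma \ref{Lemma Convex Hull Moment Map}}}) that along the closed orbit $\mathrm{\mathbb{T}.z_{y}}$ the function $\mathrm{-\mathbf{log}\,|s_{j}|^{2}}$ is, up to an additive constant, a convex function of the "angular--radial" coordinates whose differential is governed by $\mathrm{\mu-\xi_{j}}$; concretely, if one writes a point of $\mathrm{\mathbb{T}.z_{y}}$ as $\mathrm{\exp(\sqrt{-1}\,\eta).z}$ with $\mathrm{z\in \mu^{-1}(\xi)\cap\mathbb{T}.z_{y}}$ and $\mathrm{\eta\in\mathfrak{t}}$, then $\mathrm{\frac{d}{dt}\big|_{t=0}\big(-\mathbf{log}\,|s_{j}|^{2}\big)(\exp(\sqrt{-1}\,t\,\eta).z)=2\,(\mu^{\eta}(z)-\xi_{j}(\eta))}$, and the Hessian in $\mathrm{\eta}$ is positive semidefinite. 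This is the key input: $\mathrm{\varrho^{i}_{n}}$ restricted to $\mathrm{\mathbb{T}.z_{y}}$ is $\mathrm{T}$-invariant (each $\mathrm{|s_{j}|^{2}}$ is), descends to a convex function on the quotient $\mathrm{\mathbb{T}.z_{y}/T}$, and its critical set is exactly the locus where $\mathrm{\sum_{j\in J_{1}}\nu^{J}_{j,n}(\mu-\xi_{j})=\mu-\sum_{j\in J_{1}}\nu^{J}_{j,n}\xi_{j}=\mu-n^{-1}\xi^{i}_{n}}$ vanishes as a functional on $\mathrm{\mathfrak{t}}$, i.e.\ where $\mathrm{\mu=n^{-1}\xi^{i}_{n}}$.

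Next I would assemble these pieces. By strict convexity transverse to $\mathrm{T}$-orbits — which follows because $\mathrm{\varrho^{i}_{n}}$ is strictly plurisubharmonic and restricts to a genuinely convex (not merely affine) function on $\mathrm{\mathbb{T}.z_{y}/T}$ as long as $\mathrm{\mathbb{T}.z_{y}}$ is not a single fixed point, the degenerate fixed-point case being trivial since then the whole orbit equals $\mathrm{\mu^{-1}(\xi)\cap\pi^{-1}(y)}$ and all the asserted sets coincide — the convex function on $\mathrm{\mathbb{T}.z_{y}/T}$ has a unique minimum, attained exactly on the critical set $\mathrm{\{\mu=n^{-1}\xi^{i}_{n}\}\cap\mathbb{T}.z_{y}}$. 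Pulling back, $\mathrm{\varrho^{i}_{n}|_{\mathbb{T}.z_{y}}}$ attains its unique (as a $\mathrm{T}$-invariant) minimum along $\mathrm{T.\mu^{-1}(n^{-1}\xi^{i}_{n})\cap \mathbb{T}.z_{y}}$, and since $\mathrm{\mathbb{T}.z_{y}=\pi^{-1}(y)\cap\mathbb{T}.z_{y}}$ is the statement's set, this is exactly the first assertion. Finally, for the containment statement I would invoke {\bf{Proposition \ref{Proposition Existence of Tame Sequences}}}, property (4), applied with $\mathrm{\mathbf{X}^{J}=\mathbf{X}^{i}}$: it gives $\mathrm{\mathbf{X}^{i}\cap\mathbb{T}.\mu^{-1}(\xi)=\mathbf{X}^{i}\cap\mathbb{T}.\mu^{-1}(n^{-1}\xi^{i}_{n})}$ for $\mathrm{n}$ large, so intersecting with $\mathrm{\pi^{-1}(y)\subset\mathbf{X}^{i}}$ yields $\mathrm{T.\mu^{-1}(n^{-1}\xi^{i}_{n})\cap\pi^{-1}(y)\subset \mathbb{T}.\mu^{-1}(n^{-1}\xi^{i}_{n})\cap\pi^{-1}(y)=\mathbb{T}.\mu^{-1}(\xi)\cap\pi^{-1}(y)}$, which is precisely the claimed inclusion.

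I expect the main obstacle to be making the "convexity transverse to $\mathrm{T}$-orbits on a closed $\mathrm{\mathbb{T}}$-orbit" argument fully rigorous when $\mathrm{\mathbb{T}.z_{y}}$ is lower-dimensional than $\mathrm{\mathbb{T}}$ (i.e.\ when $\mathrm{z_{y}}$ has a positive-dimensional stabilizer) and when that orbit is not closed in $\mathrm{\mathbf{X}}$ but only closed in $\mathrm{\mathbf{X}^{ss}_{\xi}}$: one must check that $\mathrm{\varrho^{i}_{n}}$ genuinely tends to $\mathrm{+\infty}$ toward the missing fixed points of $\mathrm{cl(\mathbb{T}.z_{y})}$ so that the minimum is attained in $\mathrm{\mathbb{T}.z_{y}}$ itself, and that the convex exhaustion function on $\mathrm{\mathbb{T}.z_{y}/T\cong\mathbb{R}^{k}}$ has no critical points other than its minimum. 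Both follow from properness of $\mathrm{(\varrho^{i}\times\pi)|\mathbf{V}^{i}}$ (the Lemma preceding {\bf{Remark \ref{Preparation Remark}}}) together with the fact that the only critical locus of a convex function is its minimum set, but the bookkeeping relating the abstract convexity on the orbit to the moment-map fibers $\mathrm{\mu^{-1}(n^{-1}\xi^{i}_{n})}$ — using $\mathrm{d\varrho^{i}_{n}\,\widehat{X}_{\eta}=2(\mu^{\eta}-n^{-1}\xi^{i}_{n}(\eta))}$ on the orbit — is where care is needed. Everything else is routine once this local model is in place.
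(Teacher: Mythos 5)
Your proposal is correct and follows essentially the same route as the paper. For the containment you invoke Proposition 2.1, property~(4), which is exactly the third claim of Theorem~1 that the paper cites; the only cosmetic difference is that the paper goes directly from $\mathrm{\mathbb{T}.\mu^{-1}\left(\xi\right)\cap \mathbf{X}^{i}=\mathbb{T}.\mu^{-1}\left(n^{-1}\xi^{i}_{n}\right)\cap \mathbf{X}^{i}}$ while you restate the chain of intersections explicitly. For the minimum claim, the paper simply appeals to the ``known'' fact that on the closed orbit $\mathrm{\mathbb{T}.z_{y}}$ the potential $\mathrm{\varrho^{i}_{n}}$ attains its unique $\mathrm{T}$-invariant minimum along $\mathrm{T.\mu^{-1}\left(n^{-1}\xi^{i}_{n}\right)\cap\pi^{-1}\left(y\right)}$; you unpack that Kempf--Ness/convexity fact by writing $\mathrm{\varrho^{i}_{n}=-\sum_{j\in J_{1}}\nu^{J}_{j,n}\mathbf{log}\,\vert s_{j}\vert^{2}}$, computing the derivative in the $\mathrm{\sqrt{-1}\,\mathfrak{t}}$-direction via the connection formula to identify the critical locus with $\mathrm{\left\{\mu=n^{-1}\xi^{i}_{n}\right\}}$, and then using strict convexity of the restriction to $\mathrm{\mathbb{T}.z_{y}/T}$. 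This is a legitimate and more self-contained version of what the paper takes for granted. One small caveat: your constant $\mathrm{2}$ in the derivative identity $\mathrm{\tfrac{d}{dt}\big\vert_{t=0}\left(-\mathbf{log}\,\vert s_{j}\vert^{2}\right)\left(\mathbf{exp}\left(\sqrt{-1}\,t\,\eta\right).z\right)=2\left(\mu^{\eta}\left(z\right)-\xi_{j}\left(\eta\right)\right)}$ depends on the normalization of $\mathrm{\mu}$ and of the character pairing (the paper's conventions carry factors of $\mathrm{2\pi}$ and $\mathrm{\tfrac{1}{4}}$), so the prefactor should be cross-checked against equation \ref{Equation Formula Connection Moment Map}; this does not affect the location of the critical set, which is all that matters here.
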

\begin{proof} Note that since $\mathrm{\left(s_{n}^{i}\right)_{n}}$ is a tame sequence we have $$\mathrm{\mathbb{T}.\mu^{-1}\left(\xi\right)\cap \mathbf{X}^{i}=\mathbb{T}.\mu^{-1}\left(n^{-1}\xi^{i}\right)\cap \mathbf{X}^{i}}$$ by the third claim of {\bf{Theorem 1}}. This shows that the set $$\mathrm{T.\mu^{-1}\left(n^{-1}\xi^{i}_{n}\right)\cap \pi^{-1}\left(y\right)}$$ is contained in $\mathrm{\mathbb{T}.\mu^{-1}\left(\xi\right)\cap \pi^{-1}\left(y\right)}$ so the second claim is proved.

The first claim is a direct consequence of the fact that the unique minimum of $\mathrm{\varrho^{i}_{n}}$ on $$\mathrm{\mathbb{T}.\mu^{-1}\left(n^{-1}\xi^{i}_{n}\right)\cap \pi^{-1}\left(y\right)}$$ is known to be  equal to $\mathrm{T.\mu^{-1}\left(n^{-1}\xi^{i}_{n}\right)\cap \pi^{-1}\left(y\right)}$.
\end{proof}

We can now prove the uniform Localization Proposition.

\begin{theo_2} \label{Theorem Localization of the Sequence of Potential Functions} \textnormal{[\scshape{Uniform Localization of the Potential Functions}]}\vspace{0.1 cm}\\
Let $\mathrm{\left(s_{n}^{i}\right)_{n}}$ be a tame sequence and $\mathrm{\left(\varrho_{n}^{i}\right)_{n}}$ the associated sequence of strictly plurisubharmonic functions. Let $\mathrm{\mathbf{W}^{i}\subset\pi\left(\mathbf{X}^{i}\right)}$ and $\mathrm{\epsilon>0}$ be as above and let $\mathrm{\delta>0}$ be given. Then there exists $\mathrm{N_{0}\in\mathbb{N}}$ so that $$\mathrm{\varrho_{n}^{i}\left(x\right)\geq\epsilon-\delta}$$ for all $\mathrm{x\in T^{c}\left(\epsilon,\mathbf{W}^{i}\right)}$ and all $\mathrm{n\geq N_{0}}$.
\end{theo_2}
\begin{proof} First of all, by {\bf{Remark \ref{Preparation Remark}}}, we can assume that 
\begin{equation}
\mathrm{\mu^{-1}\left(n^{-1}\xi^{i}_{n}\right)\cap\pi^{-1}\left(\mathbf{W}^{i}\right)\subset T\left(\epsilon,\mathbf{W}^{i}\right)\text{ for all }n\geq N_{0}\left(\epsilon\right).}\label{Critical Subsets Contained in Tube}
\end{equation} Moreover, note that $\mathrm{\varrho^{i}_{n}}$ converges uniformly on relatively compact sets and hence on $\mathrm{T\left(\epsilon,\mathbf{W}^{i}\right)}$. Therefore, we can find an $\mathrm{N_{0}\in \mathbb{N}}$ so that 
\begin{equation}
\mathrm{\varrho^{i}_{n}\left(x\right)\geq\epsilon-\delta}\label{Convergence on Boundary}
\end{equation}
for all $\mathrm{x}$ in the compact subset $\mathrm{\left(\varrho^{i}\times \pi\right)^{-1}\left(\left\{\epsilon\right\}\times \mathbf{W}^{i}\right)}$ and all $\mathrm{n\geq N_{0}}$. We continue the proof by considering the following two cases:

1) First of all let, $\mathrm{x\in T^{c}\left(\epsilon,\mathbf{W}^{i}\right)\cap \mathbb{T}.z_{x}}$ where $\mathrm{\mathbb{T}.z_{x}}$ is the unique closed orbit in the fiber $\mathrm{{\pi^{-1}\left(\pi\left(x\right)\right)}}$. We have to show that $$\mathrm{\varrho^{i}_{n}\vert T^{c}\left(\epsilon,\mathbf{W}^{i}\right)\cap \mathbb{T}.z_{x}>\epsilon-\delta.}$$

Applying {\bf{Lemma \ref{Preparation Lemma}}}, we know that the restriction $\mathrm{\varrho^{i}_{n}\vert \mathbb{T}.z_{x}}$ of the strictly plurisubharmonic function $\mathrm{\varrho^{i}_{n}}$ on the unique closed orbit $\mathrm{\mathbb{T}.z_{x}}$ in the fiber $\mathrm{\pi^{-1}}$ $\mathrm{\left(\pi\left(x\right)\right)}$ $\mathrm{\subset \mathbf{X}^{i}}$ takes on its minimum along $$\mathrm{\mu^{-1}\left(n^{-1}\xi^{i}\right)\cap \pi^{-1}\left(\pi\left(x\right)\right)\subset \mathbb{T}.z_{x}.}$$ 

If $\mathrm{\mathfrak{m}_{x}\coloneqq \mathfrak{\bm{Ann}}\,\mathfrak{t}_{x}\subset \mathfrak{t}^{*}}$ denotes the annihilator $\mathrm{\left\{ \eta\in\mathfrak{t}^{*}:\,\left\langle \eta,\xi\right\rangle =0\text{ for all }\xi\in\mathfrak{t}_{x}\right\} }$ of the isotropy $\mathrm{\mathfrak{t}_{x}}$, then $\mathrm{\mathbb{T}.z_{x}}$ is isomorphic to the homogenous vector bundle with typical fiber $\mathrm{\mathfrak{m}_{x}}$: $$\mathrm{\mathbb{T}.z_{x}\cong T\times^{T_{x_{z}}}\mathfrak{m}_{z}}$$ Note that the zero section in $\mathrm{T\times^{T_{x_{z}}}\mathfrak{m}_{z}}$ is mapped under the above identification to the $\mathrm{T}$-orbit $$\mathrm{T.\mu^{-1}\left(\xi\right)\cap \mathbb{T}.z_{x}.}$$ Moreover, using this identification, it follows that the restriction  of $\mathrm{\varrho^{i}_{n}}$ on $\mathrm{\mathbb{T}.z_{x}}$ yields a smooth function on $\mathrm{T\times^{T_{x_{z}}}\mathfrak{m}_{z}}$ whose restriction on each fiber $\mathrm{[\left\{t\right\}\times \mathfrak{m}_{z}]}$, $\mathrm{t\in T}$ is a strictly convex function with a unique minimum given by $\mathrm{[\left\{t\right\}\times \left\{m_{n}\right\}]}$ for $\mathrm{m_{n}\in \mathfrak{m}_{z}}$. Note that the tube $\mathrm{T\left(\epsilon,\mathbf{W}^{i}\right)\cap \mathbb{T}.z_{x}}$ is isomorphic to a tube of the zero section in $\mathrm{T\times^{T_{x_{0}}}\mathfrak{m}_{x}}$ and also note that this tube contains $\mathrm{[T\times \left\{m_{n}\right\}]}$ for all $\mathrm{n}$ big enough by the remark at the beginning of this proof which is based on {\bf{Remark \ref{Preparation Remark}}} . 

We continue the proof by connecting $\mathrm{x=\left[\left\{ t\right\} \times\left\{ \eta_{x}\right\} \right]}$, for $\mathrm{\eta_{x}\in \mathfrak{m}_{z}}$ suitable, and the unique minimum $\mathrm{m_{n}=[\left\{t\right\}\times \left\{m_{n}\right\}]}$ of $\mathrm{\varrho^{i}_{n}}$ with a line $\mathrm{\Lambda\!:\!\mathbb{R}\rightarrow [\left\{t\right\}\times \mathfrak{m}_{x}]}$ in the vector space $\mathrm{[\left\{t\right\}\times \mathfrak{m}_{x}]}$ so that $\mathrm{\Lambda\left(0\right)=m_{n}}$ and $\mathrm{\Lambda\left(1\right)=x}$. Note that this line intersects $\mathrm{\varrho^{i,-1}\left(\epsilon\right)}$ in, say $\mathrm{\Lambda\left(\tau_{n}\right)=y_{n}}$, where $\mathrm{0<\tau_{n}<x}$, because the minimum of $\mathrm{\varrho^{i}_{n}}$ is contained in the tube $\mathrm{T\left(\epsilon,\mathbf{W}^{i}\right)}$ (for all $\mathrm{n}$ big enough) whereas $\mathrm{x}$ is not by our assumption. To sum up, we have a convex function $\mathrm{\Lambda^{*}\varrho^{i}_{n}}$ on $\mathbb{R}$ with a unique minimum at $\mathrm{0}$ so that $\mathrm{\Lambda^{*}\varrho^{i}_{n}\left(\tau_{n}\right)\geq \epsilon-\delta}$ for all $\mathrm{n}$ big enough by equation \ref{Convergence on Boundary}. Hence, it follows that $\mathrm{\varrho^{i}_{n}\left(x\right)\geq \epsilon-\delta}$ as well for all $\mathrm{n}$ big enough.

2) The next step is to show that the inequality $\mathrm{\varrho^{i}_{n}\left(x\right)\geq\epsilon-\delta }$ also holds for all $\mathrm{x\in T^{c}(\epsilon,}$ $\mathrm{\mathbf{W}^{i})}$ so that $\mathrm{x}$ is not contained in the unique closed orbit $\mathrm{\mathbb{T}.z_{x}}$ of the fiber $\mathrm{\pi^{-1}\left(\pi\left(x\right)\right)}$.

Let us assume that this is not true. By the {\scshape{Hilbert}} Lemma ({cf.\ }\cite{Kra}), we can find a one parameter group $\mathrm{\gamma\!:\!\mathbb{C}\rightarrow \mathbb{T}}$ so that $$\mathrm{\mathrm{\underset{\mathrm{t\rightarrow0}}{\mathrm{lim}}\,\gamma\left(t\right).x\in \mathbb{T}.z_{x}}.}$$ 

Note that neither the pull back $\mathrm{\gamma^{*}\varrho^{i}}$ nor the pull back $\mathrm{\gamma^{*}\varrho^{i}_{n}}$ attains its $\mathrm{S^{1}}$-invariant minimum on $\mathrm{\mathbb{C}^{*}}$ since otherwise it would follow that $\mathrm{\varrho^{i}\vert Im\left(\gamma\vert \mathbb{C}^{*}\right)}$ and $\mathrm{\varrho^{i}_{n}\vert Im\left(\gamma\vert \mathbb{C}^{*}\right)}$ attain their $\mathrm{S^{1}}$-invariant minimum on $\mathrm{\mathbb{T}.x}$. However, this would yield a contradiction to the assumption $\mathrm{\mathbb{T}.x\neq \mathbb{T}.z_{x}}$ and the claim of {\bf{Lemma} \ref{Preparation Lemma}}. Hence, it follows that $\mathrm{t\mapsto \gamma^{*}\varrho^{i}_{n}\left(t\right)}$ and $\mathrm{t\mapsto \gamma^{*}\varrho^{i}\left(t\right)}$ where $\mathrm{t\in \mathbb{C}^{*}}$ are strictly monotone decreasing when $\mathrm{\vert t\vert\rightarrow 0}$ (they can not be strictly increasing since $\mathrm{s^{i}_{n}}$ does not vanish on $\mathrm{\mathbb{T}.z_{x}}$). The present case can be subdivided into the following cases:

2.a) Assume that $$\mathrm{x_{0}=\mathrm{\underset{\mathrm{t\rightarrow0}}{\mathrm{lim}}\,\gamma\left(t\right).x\in \mathbb{T}.z_{x}\cap T^{c}\left(\epsilon,\mathbf{W}^{i}\right)},}$$ then we have $$\mathrm{\varrho^{i}_{n}\left(x_{0}\right)\geq\epsilon-\delta}$$ for all $\mathrm{n\geq N_{0}}$ by case 1). Since $\mathrm{\gamma^{*}\varrho^{i}_{n}}$ is monotone decreasing for $\mathrm{\vert t\vert\rightarrow 0}$ we deduce that $$\mathrm{\delta-\epsilon\geq\gamma^{*}\varrho^{i}_{n}\left(1\right)=\varrho^{i}_{n}\left(x\right)}$$  for all $\mathrm{n\geq N_{0}}$ as claimed.

2.b) Assume that $$\mathrm{x_{0}=\mathrm{\underset{\mathrm{t\rightarrow0}}{\mathrm{lim}}\,\gamma\left(t\right).x\in \mathbb{T}.z_{x}\cap T\left(\epsilon,\mathbf{W}^{i}\right)},}$$ then there exists $\mathrm{t\in \mathbb{C}^{*}}$ so that $\mathrm{\gamma\left(t\right).x\in \varrho^{i,-1}\left(\epsilon\right)}$ and hence $\mathrm{\varrho^{i}_{n}\left(\gamma\left(t\right).x\right)=\epsilon-\delta}$ for all $\mathrm{n\geq N_{0}}$ by \ref{Convergence on Boundary}. As in case 2.a), it then follows that $\mathrm{\varrho^{i}_{n}\left(x\right)\geq\delta-\epsilon}$  for all $\mathrm{n\geq N_{0}}$ as claimed.
\end{proof}

We close this section with the following corollary which slightly generalizes the claim of {\bf{Theorem \ref{Theorem Localization of the Sequence of Potential Functions}}}.

In the context of {\bf{Theorem \ref{Theorem Localization of the Sequence of Potential Functions}}}, fix $\mathrm{m_{0}\in \mathbb{N}}$ and consider the sequence $\mathrm{\big(s^{i}_{n}\cdot s^{i,-1}_{m_{0}}\big)_{n}}$ of meromorphic $\mathrm{\eta^{i,m_{0}}_{n}\coloneqq \xi^{i}_{n}-\xi^{i}_{m_{0}}}$-eigensections. Since we have $\mathrm{s^{i}_{n}\left(x\right)}$ $\mathrm{\neq 0}$ for all $\mathrm{x\in \mathbf{X}^{i}}$ and all $\mathrm{n\in \mathbb{N}}$ and hence in particular for $\mathrm{n=m_{0}}$, we can define $$\mathrm{\varrho^{i,m_{0}}_{n}\coloneqq -\frac{1}{n}\mathbf{log}\,\vert s^{i}_{n}\cdot s^{i,-1}_{m_{0}}\vert^{2}.}$$ It is direct to check that the above definition yields a sequence of smooth, strictly plurisubharmonic, $\mathrm{T}$-invariant functions for all $\mathrm{n\geq m_{0}}$ on the $\mathrm{\pi}$-saturated open set $\mathrm{\mathbf{X}^{i}}$. Furthermore, it is known that $\mathrm{\varrho^{i,m_{0}}_{n}\vert \pi^{-1}\left(y\right)}$ for $\mathrm{y\in \pi\left(\mathbf{X}^{i}\right)}$ takes on its unique minimum along the set $$\mathrm{\mu^{-1}\left(n^{-1}\eta^{i,m_{0}}_{n}\right)\cap \pi^{-1}\left(y\right).}$$ Therefore, after having applied the argumentation of {\bf{Remark \ref{Preparation Remark}}}, we can assume, as at the beginning of the proof of {\bf{Theorem \ref{Theorem Localization of the Sequence of Potential Functions}}}, that $$\mathrm{\mu^{-1}\left(n^{-1}\eta^{i,m_{0}}_{n}\right)\cap\pi^{-1}\left(\mathbf{W}^{i}\right)\subset T\left(\epsilon,\mathbf{W}^{i}\right)\text{ for all }n\text{ big enough.}}$$ The proof of {\bf{Theorem \ref{Theorem Localization of the Sequence of Potential Functions}}} now translates verbatim to the sequence $\mathrm{\left(\varrho^{m_{0}}_{n}\right)_{n}}$ and yields the following corollary.

\begin{cor}\label{Remark Convergence Theorem For The Shifted Sequence} If $\mathrm{\delta>0}$, $\mathrm{m_{0}\in \mathbb{N}}$ fixed, then it follows $$\mathrm{\varrho^{i,m_{0}}_{n}\left(x\right)\geq \epsilon-\delta}$$ for all $\mathrm{x\in T^{c}\left(\epsilon,\mathbf{W}^{i}\right)}$ and all $\mathrm{n}$ big enough.
\end{cor}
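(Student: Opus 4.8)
The plan is to repeat the proof of \textbf{Theorem 2} \emph{verbatim}, with the sequence $\mathrm{\left(\varrho^{i}_{n}\right)_{n}}$ replaced by $\mathrm{\left(\varrho^{i,m_{0}}_{n}\right)_{n}}$ and the weights $\mathrm{\left(\xi^{i}_{n}\right)_{n}}$ replaced by $\mathrm{\left(\eta^{i,m_{0}}_{n}\right)_{n}=\left(\xi^{i}_{n}-\xi^{i}_{m_{0}}\right)_{n}}$. That proof used only three features of $\mathrm{\left(\varrho^{i}_{n}\right)_{n}}$: (i) uniform convergence, with all derivatives, on relatively compact subsets of $\mathrm{\mathbf{X}^{i}}$ to a strictly plurisubharmonic limit; (ii) that $\mathrm{\varrho^{i}_{n}\vert\pi^{-1}\left(y\right)}$ attains its fibrewise minimum along $\mathrm{\mu^{-1}\left(n^{-1}\xi^{i}_{n}\right)\cap\pi^{-1}\left(y\right)}$; and (iii) that these critical sets eventually lie in the tube $\mathrm{T\left(\epsilon,\mathbf{W}^{i}\right)}$. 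For the shifted sequence, (ii) and (iii) are exactly the statements already recorded in the paragraph preceding the corollary (the analogues of \textbf{Lemma \ref{Preparation Lemma}} and \textbf{Remark \ref{Preparation Remark}}), which use the weight approximation $\mathrm{n^{-1}\eta^{i,m_{0}}_{n}=n^{-1}\xi^{i}_{n}-n^{-1}\xi^{i}_{m_{0}}\rightarrow\xi}$ together with $\mathrm{\vert\eta^{i,m_{0}}_{n}-n\,\xi\vert\leq\vert\xi^{i}_{n}-n\,\xi\vert+\vert\xi^{i}_{m_{0}}\vert\in\mathcal{O}\left(1\right)}$. So the one point that still needs an argument is (i).

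For (i) I would use that on the $\mathrm{\pi}$-saturated set $\mathrm{\mathbf{X}^{i}}$, where the fixed section $\mathrm{s^{i}_{m_{0}}}$ has no zeros, one has the pointwise identity $$\mathrm{\varrho^{i,m_{0}}_{n}=\varrho^{i}_{n}+\frac{1}{n}\,\mathbf{log}\,\vert s^{i}_{m_{0}}\vert^{2},}$$ in which $\mathrm{\mathbf{log}\,\vert s^{i}_{m_{0}}\vert^{2}}$ is a single fixed smooth function on $\mathrm{\mathbf{X}^{i}}$. Hence the correction $\mathrm{\frac{1}{n}\mathbf{log}\,\vert s^{i}_{m_{0}}\vert^{2}}$, together with all of its derivatives, tends to $0$ uniformly on every compact subset of $\mathrm{\mathbf{X}^{i}}$, and \textbf{Proposition \ref{Proposition Uniform Convergence Potential Functions in the Abelian Case}} then gives $\mathrm{\varrho^{i,m_{0}}_{n}\rightarrow\varrho^{i}}$ --- in all derivatives, uniformly on compact sets, and to the \emph{same} limit $\mathrm{\varrho^{i}}$ (passing to the common $\mathrm{T}$-invariant normalisation changes this only by a further $\mathcal{O}\left(1/n\right)$ term and plays no role). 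In particular the tube $\mathrm{T\left(\epsilon,\mathbf{W}^{i}\right)}$ and its compact boundary $\mathrm{\left(\varrho^{i}\times\pi\right)^{-1}\left(\left\{\epsilon\right\}\times\mathbf{W}^{i}\right)}$ are unchanged, and for every $\mathrm{\delta>0}$ there is an $\mathrm{N_{0}}$ with $\mathrm{\varrho^{i,m_{0}}_{n}\geq\epsilon-\delta}$ on that boundary for all $\mathrm{n\geq N_{0}}$, the exact analogue of \eqref{Convergence on Boundary}.

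With (i)--(iii) in hand the two-case argument of \textbf{Theorem 2} transfers word for word. For $\mathrm{x\in T^{c}\left(\epsilon,\mathbf{W}^{i}\right)}$ lying on the unique closed orbit $\mathrm{\mathbb{T}.z_{x}}$ of its fibre one uses the homogeneous-bundle description $\mathrm{\mathbb{T}.z_{x}\cong T\times^{T_{z_{x}}}\mathfrak{m}_{z_{x}}}$, on whose fibres $\mathrm{\varrho^{i,m_{0}}_{n}}$ restricts to a strictly convex function with unique minimum $\mathrm{m_{n}}$ at $\mathrm{\mu^{-1}\left(n^{-1}\eta^{i,m_{0}}_{n}\right)\cap\mathbb{T}.z_{x}}$; by (iii), $\mathrm{m_{n}}$ lies in the tube for $\mathrm{n}$ large, so joining $\mathrm{m_{n}}$ to $\mathrm{x}$ by a line segment in the fibre and combining convexity with the boundary estimate at the point where this segment meets $\mathrm{\left(\varrho^{i}\right)^{-1}\left(\epsilon\right)}$ forces $\mathrm{\varrho^{i,m_{0}}_{n}\left(x\right)\geq\epsilon-\delta}$. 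For $\mathrm{x}$ off the closed orbit one invokes the Hilbert Lemma to obtain a one-parameter subgroup $\mathrm{\gamma}$ with $\gamma(t).x$ converging into $\mathrm{\mathbb{T}.z_{x}}$ as $t\to 0$; since for $\mathrm{n}$ large the fibrewise minima of $\mathrm{\varrho^{i,m_{0}}_{n}}$ lie on the closed orbit --- here using that $\mathrm{n^{-1}\eta^{i,m_{0}}_{n}}$ is eventually in $\mathrm{Relint\,\mathfrak{Conv}\,{\bm{\mathfrak{S}}}_{J}}$ by \textbf{Lemma \ref{Lemma Convex Hull Moment Map}} --- the pull-back $\mathrm{\gamma^{*}\varrho^{i,m_{0}}_{n}}$ is strictly monotone decreasing as $\mathrm{\vert t\vert\rightarrow0}$, and the estimate is carried from the closed-orbit case (case 2.a)) or from the level set $\mathrm{\left(\varrho^{i}\right)^{-1}\left(\epsilon\right)}$ (case 2.b)) down to $\mathrm{x}$ exactly as before. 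I do not expect any substantive obstacle: the only genuine, if routine, work is step (i), and the whole content of the corollary is that multiplying $\mathrm{s^{i}_{n}}$ by the fixed, zero-free section $\mathrm{s^{i,-1}_{m_{0}}}$ perturbs $\mathrm{\varrho^{i}_{n}}$ only by the harmless term $\mathrm{\frac{1}{n}\mathbf{log}\,\vert s^{i}_{m_{0}}\vert^{2}}$, leaving untouched every structural feature driving the proof of \textbf{Theorem 2}.
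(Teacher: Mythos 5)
Your proof is correct and follows exactly the paper's intended argument: the paper itself introduces $\mathrm{\varrho^{i,m_{0}}_{n}}$, notes that it is s.p.s.h.\ with fibrewise minima along $\mathrm{\mu^{-1}\left(n^{-1}\eta^{i,m_{0}}_{n}\right)}$, invokes \textbf{Remark \ref{Preparation Remark}} to place those critical sets in the tube, and then simply states that the proof of \textbf{Theorem 2} translates verbatim. You have filled in the one detail the paper leaves implicit, namely the identity $\mathrm{\varrho^{i,m_{0}}_{n}=\varrho^{i}_{n}+\frac{1}{n}\mathbf{log}\,\vert s^{i}_{m_{0}}\vert^{2}}$ showing locally uniform convergence to the same limit $\mathrm{\varrho^{i}}$, but this is the same route, not a different one.
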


\section{Fiber Probability Measure Sequence}

\subsection{Definition of $\mathrm{\widehat{\pi}\!:\!\widehat{\mathbf{\,X\,}}\rightarrow \mathbf{Y}}$ and $\mathrm{\mathbf{Y}_{0}}$}\label{Definition of Y_{0}}

As before, let $\mathrm{\mathbf{X}}$ be a purely $\mathrm{m}$-dimensional, normal $\mathrm{\mathbb{T}}$-variety with K\"ahler structure $\mathrm{\omega}$ and let $\mathrm{\mathbf{Y}=\mathbf{X}^{ss}_{\xi}/\!\!/\mathbb{T}}$ be the associated Hilbert quotient. Note that we can always assume $\mathrm{\mathbf{Y}}$ to be purely dimensional, {i.e.\ }$\mathrm{n=}$ $\mathrm{dim_{\mathbb{C}}\,\mathbf{Y}}$: Since  $\mathrm{\mathbf{X}}$ is normal by our assumption, it follows (cf.\ \cite{He-Hu1}, {p.\ }{\bf{\oldstylenums{124}}}) that  $\mathrm{\mathbf{Y}}$ is normal as well. In particular, it follows that $\mathrm{\mathbf{Y}}$ is locally of pure dimension (cf.\ \cite{Gr-Re}, {p.\ }{\bf{\oldstylenums{125}}}) and by considering the connected components of $\mathrm{\mathbf{Y}}$, which are finite in number, we can confine ourselves to the case where $\mathrm{\mathbf{Y}}$ is of pure dimension $\mathrm{n=}$ $\mathrm{dim_{\mathbb{C}}\,\mathbf{Y}}$.

Now, recall that $\mathrm{\mathbf{X}^{ss}_{\xi}}$ is Zariski open and Zariski dense in $\mathrm{\mathbf{X}}$. Consider the compact variety $\mathrm{\widehat{\mathbf{\,X\,}}}$ \label{Notation X Hat} defined by the normalization of $\mathrm{cl\,{\bf{\Gamma_{\pi}}}}$, where $\mathrm{{\bf{\Gamma_{\pi}}}}$ denotes the graph of $\mathrm{\pi}$, i.e. $$\mathrm{{\bf{\Gamma}}_{\pi}\coloneqq \left\{\left(x,y\right)\in \mathbf{X}^{ss}_{\xi}\times \mathbf{Y}\!:\!\,\pi\left(x\right)=y\right\}\label{Notation Graph of Quotient Map}.}$$ Furthermore, we define the algebraic map $\mathrm{\widehat{\pi}\!:\!\widehat{\mathbf{\,X\,}}\rightarrow \mathbf{Y}}$ \label{Notation Regular Map between cal X and Y} by $\mathrm{\widehat{\pi}\coloneqq p_{\mathbf{Y}}\vert cl\,\mathbf{\Gamma}_{\pi}\circ\zeta}$, where $\mathrm{\zeta\!:\!\widehat{\,\mathbf{X}\,}=}$ $\mathrm{\left(cl\,\mathbf{\Gamma}_{\pi}\right)^{nor}\rightarrow cl\,\mathbf{\Gamma}_{\pi}}$ \label{Notation Normalization Map Zeta} denotes the normalization map and $\mathrm{p_{\mathbf{Y}}}$ the projection map $\mathrm{p_{\mathbf{Y}}\!:\!\mathbf{X}}$ $\mathrm{\times \mathbf{Y}\rightarrow}$ $\mathrm{\mathbf{Y}}$. Moreover, endow $\mathrm{\widehat{\mathbf{\,X\,}}}$ with the $\mathrm{\mathbb{T}}$-action induced by the lift (for its existence see \cite{Gr-Re}, {pp.\! }{\bf{\oldstylenums{164} f.}}) of the $\mathrm{\mathbb{T}}$-action on $\mathrm{cl\,\mathbf{\Gamma}_{\pi}}$, defined by $\mathrm{t.\left(x,y\right)=\left(t.x,y\right)}$ and equip $\mathrm{\widehat{\mathbf{\,X\,}}}$ with a smooth $\mathrm{\left(2,2\right)}$-form $\mathrm{\omega^{\prime}}$ \label{Notation Smooth (2,2,)-form} given by $\mathrm{\omega^{\prime}\coloneqq \left(p_{\mathbf{X}}\vert cl\left({\mathbf{\Gamma}}_{\pi}\right)\circ\zeta\right)^{*}\omega}$. For the sake of completeness, we note the following remark.

\begin{remark} \label{Remark T-Invariance of Fibers} The graph $\mathrm{{\bf{\Gamma}}_{\pi}\mathrm{\subset \mathbf{X}\times \mathbf{Y}}}$ is $\mathrm{\mathbb{T}}$-invariant with respect to the action on $\mathrm{\mathbf{X}\times \mathbf{Y}}$ given by $\mathrm{t.\left(x,y\right)=\left(t.x,y\right)}$. Since $\mathrm{{\bf{\Gamma}}_{\pi}}$ is a Zariski open and Zariski dense subset of $\mathrm{\mathbf{X}}$, it follows that $\mathrm{cl\,{\bf{\Gamma}}_{\pi}}$ is $\mathrm{\mathbb{T}}$-invariant. Moreover, as $\mathrm{\zeta}$ is $\mathrm{\mathbb{T}}$-equivariant, all fibers $\mathrm{\widehat{\pi}^{-1}\left(y\right)}$, $\mathrm{y\in \mathbf{Y}}$ are $\mathrm{\mathbb{T}}$-invariant as well.
\end{remark}

Note that since $\mathrm{\mathbf{X}}$ is a assumed to be of pure dimension $\mathrm{m}$, it follows that $\mathrm{cl\,\mathbf{\Gamma}_{\pi}}$ is likewise a purely $\mathrm{m}$-dimensional subvariety of $\mathrm{\mathbf{X}\times \mathbf{Y}}$. As $\mathrm{\zeta}$ is finite, we deduce that $\mathrm{\widehat{\mathbf{\,X\,}}}$ is of pure dimension $\mathrm{m}$ too.
 
The next step is to find a Zariski open subset  $\mathrm{\mathbf{Y}_{0}\subset \mathbf{Y}}$ so that the fibers of the restricted projection $\mathrm{\widehat{\pi}\vert \widehat{\pi}^{-1}\left(\mathbf{Y}_{0}\right)\!:\! \widehat{\pi}^{-1}\left(\mathbf{Y}_{0}\right)\rightarrow \mathbf{Y}_{0}}$ are all purely $\mathrm{k}$-dimensional varieties. The existence of $\mathrm{{\bf{Y}}_{0}}$ is a direct consequence of known facts in complex analysis and algebraic geometry: By a theorem of {\scshape{Cartan}} and {\scshape{Remmert}} ({cf.\! }\cite{Loj}, {p.\ }{\bf{\oldstylenums{271} f.}}), it follows that the subset $\mathrm{{\bf{E}}\subset \widehat{\mathbf{\,X\,}}}$ defined by $$\mathrm{{\bf{E}}\coloneqq \left\{x\in\widehat{\mathbf{\,X\,}}\!:\, k<dim_{\mathbb{C},x}\,\widehat{\pi}^{-1}\left(\widehat{\pi}\left(x\right)\right)\right\}\subset \widehat{\mathbf{\,X\,}}}$$ is a proper analytic subset of $\mathrm{\widehat{\,\mathbf{X}\,}}$ where $\mathrm{k=m-n=dim_{\mathbb{C}}\widehat{\mathbf{\,X\,}}-dim_{\mathbb{C}}\,\mathbf{Y}}$. Hence, by {\scshape{Chow}}'s Theorem it follows that $\mathrm{\mathbf{E}}$ is a proper algebraic subset. Applying the Direct Image Theorem  (cf.\ \cite{Gr-Re}, {p.\ }{\bf{\oldstylenums{207}}}), one deduces that the image $\mathrm{\widehat{\pi}\left(\mathbf{E}\right)}$ is a proper analytic subset of $\mathrm{\mathbf{Y}}$. In particular it is a proper algebraic subvariety of $\mathrm{\mathbf{Y}}$. Now, set $\mathrm{\mathbf{Y}_{0}\coloneqq \complement\, \widehat{\pi}\left(\mathbf{E}\right)}$ \label{Notation Subset Y_0} and note that all fibers of $\mathrm{\widehat{\pi}}$ over $\mathrm{\mathbf{Y}_{0}}$ are purely $\mathrm{k}$-dimensional by construction.

For later use, we introduce the following notation: Let $\mathrm{\mathbf{X},\mathbf{Y}}$ be purely dimensional complex spaces ($\mathrm{m}$ $\mathrm{=dim_{\mathbb{C}}\,\mathbf{X}}$, $\mathrm{n}$ $\mathrm{=dim_{\mathbb{C}}\,\mathbf{Y}}$) where $\mathrm{\mathbf{Y}}$ is assumed to be normal and let $\mathrm{F\!:\!\mathbf{X}\rightarrow \mathbf{Y}}$ be a holomorphic map so that all non-empty fibers $\mathrm{F^{-1}\left(y\right)}$ are of pure dimension $\mathrm{k=m-n}$. Then $\mathrm{F}$ is called a $\mathrm{k}$-{\itshape{fibering}}. Note that $\mathrm{\pi\vert \pi^{-1}\left(\mathbf{Y}_{0}\right): \pi^{-1}\left(\mathbf{Y}_{0}\right)\rightarrow \mathbf{Y}_{0}}$ is a $\mathrm{k}$-fibering. 

An example for the construction of $\mathrm{\widehat{\,\mathbf{X}\,}}$ as described above is given by the next example.

\begin{example}\label{Example Proper Inclusion} \textnormal{Let $\mathrm{\mathbf{X}=\mathbb{C}\mathbb{P}^{1}\times \mathbb{C}\mathbb{P}^{1}}$ with the K\"aher form $\mathrm{\omega=p_{1}^{*}\omega_{FS}+p_{2}^{*}\omega_{FS}}$ where $\mathrm{p_{i}\!:\!}$ $\mathrm{\mathbf{X}}$ $\mathrm{\rightarrow}$ $\mathrm{\mathbb{C}\mathbb{P}^{1}}$ denotes the $\mathrm{i}$-th projection, $\mathrm{i\in\left\{0,1\right\}}$ equipped with the $\mathrm{\mathbb{T}\cong\mathbb{C}^{*}}$-action given by $$\mathrm{t.\left(\left[z_{0}z_{1}\right],\left[\zeta_{0}\!:\!\zeta_{1}\right]\right)=\left(\left[t.z_{0}\!:\!z_{1}\right],\left[t.\zeta_{0}\!:\!\zeta_{1}\right]\right)}$$ and consider the moment map given by (cf.\ {\bf{Example \ref{Example First Example}}}) $$\mathrm{\mu\left(\left[z_{0}\!:\!z_{1}\right],\left[\zeta_{0}\!:\!\zeta_{1}\right]\right)=\frac{\vert z_{1}\vert^{2}}{\Vert z\Vert^{2}}-\frac{\vert\zeta_{0}\vert^{2}}{\Vert\zeta\Vert^{2}}.}$$ In particular, we have $$\mathrm{\mathbf{X}_{\xi=\frac{1}{2}}^{ss}=\mathbf{X}\setminus\left\{ \left\{ z_{1}=0\right\} \cup\left\{ \zeta_{1}=0\right\} \cup\left\{ \left(\left[0\!:\!1\right],\left[0\!:\!1\right]\right)\right\} \right\} }$$ where $\mathrm{\pi\!:\!\mathbf{X}^{ss}_{0}\rightarrow }$ $\mathrm{\mathbf{X}^{ss}_{0}/\!\!/\mathbb{T}=\mathbf{Y}\cong \mathbb{C}\mathbb{P}^{1}}$ is given by the map $\mathrm{\pi\!:\!\left(\left[z_{0}\!:\!z_{1}\right],\left[\zeta_{0}\!:\!\zeta_{1}\right]\right)\rightarrow[z_{0}}$ $\mathrm{\zeta_{1}\!:\!z_{1}\zeta_{0}]}$. Furthermore, a calculation shows that $$\mathrm{\widehat{\mathbf{\,X\,}}=cl\,\mathbf{\Gamma}_{\pi}=\left\{ z_{0}\zeta_{1}\xi_{1}-z_{1}\zeta_{0}\xi_{0}=0\right\} \subset \mathbf{X}\times\mathbb{C}\mathbb{P}^{1}}$$ if $\mathrm{\left[\xi_{0},\xi_{1}\right]}$ are the homogeneous coordinates of $\mathrm{\mathbb{C}\mathbb{P}^{1}}$. A further analysis of the geometry of $\mathrm{\widehat{\mathbf{\,X\,}}}$ and $\mathrm{\mathbf{Y}}$ reveals that $\mathrm{\mathbf{Y}_{0}=\mathbf{Y}}$. Note that we have $$\mathrm{p_{\mathbf{X}}\left(\widehat{\pi}^{-1}\left(\left[\zeta_{0},\zeta_{1}\right]\right)\right)=cl\left({\pi}^{-1}\left(\left[\zeta_{0},\zeta_{1}\right]\right)\right)\text{ for all }\left[\zeta_{0},\zeta_{1}\right]\text{ such that }\zeta_{0,1}\neq0.}$$ For $\mathrm{[\zeta_{0}]=[1\!:\!0]}$ and $\mathrm{\zeta_{1}=[0\!:\!1]}$, we have a proper inclusion $$\mathrm{p_{\mathbf{X}}\left(\widehat{\pi}^{-1}\left(\left[\zeta_{i}\right]\right)\right)\supsetneqq cl\left(\pi^{-1}\left(\left[\zeta_{i}\right]\right)\right)}$$ for all $\mathrm{i}$ $\mathrm{\in\left\{0,1\right\}}$.\,$\boldsymbol{\Box}$}
\end{example}

\subsection{The Fiber Probability Measure Sequence (Tame Case)}\label{Definition of the Tame Fiber Probability Measure Sequence} 

Let $\mathrm{\mathfrak{U}=\left\{\mathbf{U}_{j}\right\}_{j}}$ be a finite cover of $\mathrm{\mathbf{Y}}$ then, after having chosen a finite refinement $\mathrm{\mathfrak{U}^{\prime}}$ of $\mathrm{\mathfrak{U}}$ (for convenience set $\mathrm{\mathfrak{U}=\mathfrak{U}^{\prime}}$), it follows by {\bf{Section \ref{Existence of Tame Sequences}}} that there exists a tame collection $\mathrm{\left\{\left(s_{n}^{i}\right)\right\}_{i}}$ so that $\mathrm{\mathbf{U}_{j_{i}}\subset\pi\left(\mathbf{X}^{i}\right)=\mathbf{Y}^{i}}$. By changing the index set $\mathrm{J}$ of the finite cover $\mathrm{\mathfrak{U}}$ we can always assume that $\mathrm{\mathbf{U}_{j_{i}}=\mathbf{U}_{i}}$.

Let $\mathrm{\mathbf{Y}_{0}}$ be as in {\bf{Section} \ref{Definition of Y_{0}}} and set $$\mathrm{\Vert s_{n}^{i}\Vert^{2}\!:\!\mathbf{X}^{i}\cap \mathbf{Y}_{0}\rightarrow \mathbb{R}^{\geq 0},\,\Vert s_{n}^{i}\Vert^{2}\left(x\right)\coloneqq \int_{\pi\left(\pi^{-1}\left(x\right)\right)}\vert s_{n}^{i}\vert^{2}\,d\,[\pi_{y}]\label{Notation Fiber Integral Initial Sequence}}$$ where $\mathrm{\int_{\pi^{-1}\left(\pi\left(x\right)\right)}d\,[\pi_{y}]}$ denotes the fiber integral of $\mathrm{\omega^{k}}$ with respect to the $\mathrm{k}$-fibering $\mathrm{\pi\!:\!\mathbf{X}_{0}}$ $\mathrm{\rightarrow \mathbf{Y}_{0}}$ as defined in the work of {\scshape{J. King}} ({cf.\! }\cite{Kin}). 

\begin{remark} Since we have $$\mathrm{\int_{\pi^{-1}\left(y\right)}\vert s_{n}^{i}\vert^{2}\left(\omega\vert\pi^{-1}\left(y\right)\right)^{k}\leq\int_{\widehat{\pi}^{-1}\left(y\right)}\left(\zeta\circ p_{\mathbf{X}}\right)^{*}\vert s_{n}^{i}\vert^{2}\left(\omega^{\prime}\vert\widehat{\pi}^{-1}\left(y\right)\right)^{k}}$$ it follows by the compactness of $\mathrm{\widehat{\pi}^{-1}\left(y\right)}$ where $\mathrm{y\in \mathbf{Y}}$, that $$\mathrm{\Vert s_{n}^{i}\Vert^{2}<\infty \text{ for all }y\in \mathbf{Y}_{0}\subset\mathbf{Y}.}$$
\end{remark}

The next step is to define a sequence of collections of fiber probability densities attached to $\mathrm{\mathfrak{U}=\left\{\mathbf{U}_{i}\right\}_{i}}$ as follows.

\begin{definition} \label{Notation Sequence of Collections of Fiber Distribution Densities}Let $\mathrm{\mathfrak{U}=\left\{\mathbf{U}_{i}\right\}_{i}}$ and $\mathrm{\left\{\left(s_{n}^{i}\right)\right\}_{i}}$ be as above, i.e.\ $\mathrm{\mathbf{U}_{i}\subset\pi\left(\mathbf{X}^{i}\right)}$, then define a sequence of collections of fiber distribution densities on $\mathrm{\pi^{-1}\left(\mathbf{U}_{i}\right)\cap \mathbf{X}_{0}}$ by $$\mathrm{\phi^{i}_{n}\!:\!x\mapsto \phi^{i}_{n}\left(x\right)=\Vert s_{n}^{i}\Vert^{-2}\left(x\right)\vert s_{n}^{i}\vert^{2}\left(x\right).}$$ In the sequel, the terminology $\mathrm{\left\{\phi^{\mathfrak{U}}_{n}\right\}=\left\{\phi^{i}_{n}\right\}_{i}}$ will be used. Furthermore, $\mathrm{\left\{\phi^{\mathfrak{U}}_{n}\right\}}$ will be referred to as the collection of fiber distribution densities associated to $\mathrm{\left\{s_{n}^{i}\right\}_{i}}$.
\end{definition} 

Having defined $\mathrm{\left\{\phi^{\mathfrak{U}}_{n}\right\}}$, it is self-evident to introduce

\begin{definition}\label{Definition Fiber Probability Measure Tame Case} Let $\mathrm{\left\{\phi^{\mathfrak{U}}_{n}\right\}}$ be a sequence of collections of fiber distribution densities associated to a tame collection $\mathrm{\left\{s_{n}^{i}\right\}_{i}}$, then define a sequence of collections of fiber probability measures over $\mathrm{\mathbf{Y}_{0}}$ by $$\mathrm{\bm{\nu}_{n}^{i}\left(y\right)\!:\!\mathbf{A}\mapsto\bm{\nu}_{n}^{i}\left(y\right)\left(\mathbf{A}\right)\coloneqq\int_{\mathbf{A}}d{\bm{\nu}}^{i}_{n}\coloneqq \int_{\mathbf{A}}\phi^{i}_{n}\,d\,[\pi_{y}]\label{Notation Sequence Of Fiber Probability Measures}}$$ where $\mathrm{y\in \mathbf{U}_{i}\cap \mathbf{Y}_{0}}$ and $\mathrm{\mathbf{A}\subset\pi^{-1}\left(y\right)}$ measurable.

As in {\bf{Definition \ref{Notation Sequence of Collections of Fiber Distribution Densities}}}, set $\mathrm{\left\{\bm{\nu}^{\mathfrak{U}}_{n}\right\}=\left\{\bm{\nu}^{i}_{n}\right\}_{i}}$ \label{Notation Collection Of Fiber Probability Measures} and refer to $\mathrm{\left\{\bm{\nu}^{\mathfrak{U}}_{n}\right\}}$ as the collection of fiber probability measures associated to $\mathrm{\left\{s_{n}^{i}\right\}_{i}}$.
\end{definition}

We complete our definitions with

\begin{definition}\label{Definition Sequence Of Collections Of Fiber Probability Densities} Let $\mathrm{\left\{\phi^{\mathfrak{U}}_{n}\right\}}$ be a sequence of collections of fiber distribution densities associated to a tame collection $\mathrm{\left\{s_{n}^{i}\right\}_{i}}$, then define a sequence of collections of cumulative fiber probability densities over $\mathrm{\mathbf{Y}_{0}}$ by $$\mathrm{D_{n}^{i}\left(y,\cdot\right)\!:\!t\mapsto D_{n}^{i}\left(y,t\right)\coloneqq \int_{\left\{\phi^{i}_{n}\geq t\right\}\cap \pi^{-1}\left(\pi\left(x\right)\right)}d\,[\pi_{y}]\label{Notation Cumulative Distribution Densities}}$$ where $\mathrm{y\in \mathbf{U}_{i}\cap \mathbf{Y}_{0}}$.

As in the aforementioned definitions, we set $\mathrm{\left\{D^{\mathfrak{U}}_{n}\right\}=\left\{D^{i}_{n}\right\}_{i}}$ \label{Notation Collection Of Cumulative Fiber Probability Densities} and refer to $\mathrm{\left\{D^{\mathfrak{U}}_{n}\right\}}$ as the collections of cumulative fiber probability densities associated to $\mathrm{\left\{s_{n}^{i}\right\}_{i}}$.
\end{definition}

In {\bf{Section \ref{Section Uniform Convergence Theorems In The Tame Case}}} we will give a prove of the following two convergence results.

\begin{theo_3*}\label{Theorem Uniform Convergence of the Tame Measure Sequence} \textnormal{[\scshape{Uniform Convergence of the Tame Measure Sequence}]}\vspace{0.1 cm}\\
For for every tame collection $\mathrm{\left\{s_{n}^{i}\right\}_{i}}$ there exists a finite cover $\mathrm{\mathfrak{U}}$ of $\mathrm{\mathbf{Y}}$ with $\mathrm{\mathbf{U}_{i}\subset}$ $\mathrm{\pi\left(\mathbf{X}^{i}\right)}$ so that the collection of fiber probability measures $\mathrm{\left\{\bm{\nu}^{\mathfrak{U}}_{n}\right\}}$ associated to $\mathrm{\left\{s^{i}_{n}\right\}_{i}}$ converges uniformly on $\mathrm{\mathbf{Y}_{0}}$ to the fiber Dirac measure of $\mathrm{\mu^{-1}\left(\xi\right)\cap \mathbf{X}_{0}}$, i.e.\ for every $\mathrm{i\in I}$ and every $\mathrm{f\in \mathcal{C}^{0}\left(\mathbf{X}\right)}$, we have $$\begin{xy}
 \xymatrix{\mathrm{\left(y\mapsto \int_{\pi^{-1}\left(y\right)}f\, d\bm{\nu}_{n}^{i}\left(y\right)\right)}\ar[rr]^{\mathrm{\,\,\,\,\,}}&&\mathrm{f_{red}\text{ uniformly on }\mathbf{U}_{i}\cap \mathbf{Y}_{0}.\label{Notation Reduced Function}}
 }
\end{xy}$$
\end{theo_3*}

\begin{theo_4*}\label{Theorem Uniform Convergence of the Tame Distribution Sequence} \textnormal{[\scshape{Uniform Convergence of the Tame Distribution Sequence}]}\vspace{0.1 cm}\\
For every $\mathrm{t\in \mathbb{R}}$ and every tame collection $\mathrm{\left\{s_{n}^{i}\right\}_{i}}$ there exists a finite cover $\mathrm{\mathfrak{U}}$ of $\mathrm{\mathbf{Y}}$ with $\mathrm{\mathbf{U}_{i}\subset}$ $\mathrm{\pi\left(\mathbf{X}^{i}\right)}$ so that the collection of cumulative fiber probability densities $\mathrm{\left\{D^{\mathfrak{U}}_{n}\left(\cdot,t\right)\right\}}$ associated to $\mathrm{\left\{s^{i}_{n}\right\}_{i}}$ converges uniformly on $\mathrm{\mathbf{Y}_{0}}$ to the zero function on $\mathrm{\mathbf{Y}_{0}}$, i.e.\ for every $\mathrm{i\in I}$ we have $$\begin{xy}
 \xymatrix{\mathrm{\left(y\mapsto D_{n}^{i}\left(y,t\right)\right)}\ar[rr]^{\mathrm{\,\,\,\,\,}}&&\mathrm{0\text{ uniformly on }\mathbf{U}_{i}\cap \mathbf{Y}_{0}\subset \pi\left(\mathbf{X}^{i}\right).}
 }
\end{xy}$$
\end{theo_4*}

\subsection{The Fiber Probability Measure Sequence (Non-Tame Case)}\label{Definition of the Fiber Probability Measure Sequence (Non-Tame Case)} Let $\mathrm{\left(s_{n}\right)_{n}}$ be a sequence of $\mathrm{\mathbb{T}}$-eigensections so that $\mathrm{\vert \xi_{n}-n\,\xi\vert\in \mathcal{O}\left(1\right)}$. Moreover, let $\mathrm{\pi\!:\!\mathbf{X}^{ss}_{\xi}\rightarrow \mathbf{Y}=\mathbf{X}^{ss}_{\xi}/\!\!/\mathbb{T}}$ be the projection map attached to the Hilbert quotient associated to the level subset $\mathrm{\mu^{-1}\left(\xi\right)}$ and $\mathrm{\pi_{n}\!:\!\mathbf{X}^{ss}_{n^{-1}\xi_{n}}\rightarrow \mathbf{Y}_{n}=\mathbf{X}^{ss}_{n^{-1}\xi_{n}}/\!\!/\mathbb{T}}$  the corresponding Hilbert quotient associated to the level subset $\mathrm{\mu^{-1}\left(n^{-1}\xi_{n}\right)}$.

The aim of this subsection is to define a sequence $\mathrm{\left(\bm{\nu}_{n}\right)_{n}}$ of fiber probability measures over the base $\mathrm{\mathbf{Y}_{0}}$ by $$\mathrm{\bm{\nu}_{n}\left(y\right)\!:\!\mathbf{A}\mapsto\bm{\nu}_{n}\left(y\right)\left(\mathbf{A}\right)\coloneqq\int_{\mathbf{A}}\frac{\vert s_{n}\vert^{2}}{\Vert s_{n}\Vert^{2}}\, d\,[ \pi_{y}].}$$ However, since $\mathrm{\mathbf{X}\left(s_{n}\right)=\left\{x\in \mathbf{X}\!:\!\,?s_{n}\left(x\right)\neq 0\right\}}$ moves as $\mathrm{n\rightarrow \infty}$, the above measure is not well defined on all of $\mathrm{\bf{Y}}$:\ For example if $\mathrm{y\in \mathbf{Y}}$ is so that $\mathrm{\pi^{-1}\left(y\right)\subset \left\{s_{n}=0\right\}}$, then the above measure is not defined for $\mathrm{n}$ at $\mathrm{y}$. In some cases, it is however possible to circumvent this problem. For this we will first introduce the following definition.

\begin{definition}\label{Definition Removable Singularity} \textnormal{[\scshape{Removable Singularity}]} 
\\A point $\mathrm{y\in \mathbf{Y}}$ is defined to be a removable singularity of order $\mathrm{N_{0}}$ for the fiber measure $\mathrm{\bm{\nu}_{n}}$ if there exists an open neighborhood $\mathrm{\mathbf{U}_{y}\subset \mathbf{Y}}$ of $\mathrm{y}$, a sequence $\mathrm{\left(f_{y,n}\right)_{n}}$ of local holomorphic functions $\mathrm{f_{n}\in \mathcal{O}\left(\mathbf{U}_{y}\right)}$ and $\mathrm{N_{0}\in \mathbb{N}}$ so that $$\mathrm{\widehat{s}_{f_{y,n}}\coloneqq s_{n}\cdot\pi^{*}f_{y,n}^{-1}\label{Notation Local Extension Of S_{n}}}$$ defines a local holomorphic section on $\mathrm{\pi^{-1}\left(\mathbf{U}_{y}\right)}$ for all $\mathrm{n\geq N_{0}}$ which does not vanish identically on $\mathrm{\pi^{-1}\left(y^{\prime}\right)}$ for all $\mathrm{y^{\prime}\in \mathbf{U}_{y}}$. 

In the sequel, we will denote the set of all removable singularities of order $\mathrm{N_{0}}$ by $\mathrm{\mathbf{R}_{N_{0}}}$\label{Notation Removable Singularity Of Order N_0}.
\end{definition}
\begin{remark} The local extension $\mathrm{\widehat{s}_{f_{y,n}}}$ is again a $\mathrm{\xi_{n}}$-eigensection. 
\end{remark}

As a corollary of the definition, we deduce

\begin{cor}\label{Corollary Independence Of Extension} If $\mathrm{y\in\mathbf{Y}_{0}}$ is a removable singularity for $\mathrm{\bm{\nu}_{n}}$ of order $\mathrm{N_{0}\in\mathbb{N}}$, then after having shrunken $\mathrm{\mathbf{U}_{y}}$ appropriately, the quotient $\mathrm{\vert \widehat{s}_{f_{y,n}}\vert^{2}}$ $\mathrm{\Vert \widehat{s}_{f_{y,n}}\Vert^{-2}}$ is independent of the local scaling functions $\mathrm{\left(f_{y,n}\right)_{n}}$, i.e.\ we have $$\mathrm{\frac{\vert \widehat{s}_{f^{0}_{y,n}}\vert^{2}}{\Vert \widehat{s}_{f^{0}_{y,n}}\Vert^{2}}=\frac{\vert \widehat{s}_{f^{1}_{y,n}}\vert^{2}}{\Vert \widehat{s}_{f^{1}_{y,n}}\Vert^{2}}}$$ over $\mathrm{\mathbf{U}_{y}\subset \mathbf{Y}_{0}\cap \mathbf{R}_{N_{0}}}$. 
\end{cor}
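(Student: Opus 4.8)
The plan is to show that any two admissible choices of local scaling functions differ by the pullback along $\pi$ of a unit on the base, and that such a unit cancels out of the normalized fiber density.

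So fix $n\geq N_{0}$ together with two sequences $(f^{0}_{y,n})_{n}$ and $(f^{1}_{y,n})_{n}$ of local holomorphic functions witnessing that $y$ is a removable singularity of order $N_{0}$; after shrinking $\mathbf{U}_{y}$ we may assume both are defined on the same neighbourhood. First I would record the tautological identity
\[
\widehat{s}_{f^{1}_{y,n}} \;=\; s_{n}\cdot\pi^{*}(f^{1}_{y,n})^{-1} \;=\; \widehat{s}_{f^{0}_{y,n}}\cdot\pi^{*}g_{n},\qquad g_{n}\coloneqq \frac{f^{0}_{y,n}}{f^{1}_{y,n}},
\]
which a priori holds only as an identity of meromorphic sections over $\pi^{-1}(\mathbf{U}_{y})$. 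The crux is then to prove that, after a further shrinking of $\mathbf{U}_{y}$, the meromorphic function $g_{n}$ is holomorphic and nowhere vanishing, i.e.\ a unit in $\mathcal{O}(\mathbf{U}_{y})$.

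For this I would argue that $\pi^{*}g_{n}=\widehat{s}_{f^{1}_{y,n}}/\widehat{s}_{f^{0}_{y,n}}$ has no poles: if it had a pole along an irreducible hypersurface of $\pi^{-1}(\mathbf{U}_{y})$, then at a generic point $y^{*}$ of its $\pi$-image we would have $f^{1}_{y,n}(y^{*})=0$, hence $s_{n}|_{\pi^{-1}(y^{*})}=f^{1}_{y,n}(y^{*})\cdot\widehat{s}_{f^{1}_{y,n}}|_{\pi^{-1}(y^{*})}\equiv 0$; but also $s_{n}|_{\pi^{-1}(y^{*})}=f^{0}_{y,n}(y^{*})\cdot\widehat{s}_{f^{0}_{y,n}}|_{\pi^{-1}(y^{*})}$ with $\widehat{s}_{f^{0}_{y,n}}$ not identically zero on that fiber by the definition of a removable singularity, forcing $f^{0}_{y,n}(y^{*})=0$ as well — so the putative pole is really only an indeterminacy point, and $\pi^{*}g_{n}$ is locally bounded. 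Since $\mathbf{X}$, and therefore $\pi^{-1}(\mathbf{U}_{y})$, is normal, Riemann's extension theorem upgrades $\pi^{*}g_{n}$ to a genuine holomorphic function; running the same argument with the roles of $f^{0}_{y,n}$ and $f^{1}_{y,n}$ exchanged shows it is moreover nowhere zero. Finally, being a pullback along $\pi$ it is $\mathbb{T}$-invariant, so the defining property $(\pi_{*}\mathcal{O}_{\mathbf{X}^{ss}_{\xi}})^{\mathbb{T}}=\mathcal{O}_{\mathbf{Y}}$ of the Hilbert quotient lets it descend to a holomorphic, nowhere-vanishing function on $\mathbf{U}_{y}$, which is exactly $g_{n}$.

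Once $g_{n}$ is known to be a unit, the conclusion is a short computation. From $\widehat{s}_{f^{1}_{y,n}}=\pi^{*}(g_{n})\,\widehat{s}_{f^{0}_{y,n}}$ one gets $\vert\widehat{s}_{f^{1}_{y,n}}\vert^{2}=\vert\pi^{*}g_{n}\vert^{2}\,\vert\widehat{s}_{f^{0}_{y,n}}\vert^{2}$, and since $\pi^{*}g_{n}$ is constant on each fiber $\pi^{-1}(y')$ with value $g_{n}(y')$, this fiberwise constant factors out of the fiber integral, giving $\Vert\widehat{s}_{f^{1}_{y,n}}\Vert^{2}(y')=\vert g_{n}(y')\vert^{2}\,\Vert\widehat{s}_{f^{0}_{y,n}}\Vert^{2}(y')$. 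Dividing, the factor $\vert g_{n}\vert^{2}$ cancels and one obtains
\[
\frac{\vert\widehat{s}_{f^{1}_{y,n}}\vert^{2}}{\Vert\widehat{s}_{f^{1}_{y,n}}\Vert^{2}}=\frac{\vert\widehat{s}_{f^{0}_{y,n}}\vert^{2}}{\Vert\widehat{s}_{f^{0}_{y,n}}\Vert^{2}}
\]
over $\mathbf{U}_{y}\subset\mathbf{Y}_{0}\cap\mathbf{R}_{N_{0}}$ (the fiber integrals are finite by the remark preceding the definition, since $y\in\mathbf{Y}_{0}$, and $\Vert\widehat{s}_{f^{0}_{y,n}}\Vert^{2}(y')>0$ because $\widehat{s}_{f^{0}_{y,n}}$ does not vanish identically on the purely $k$-dimensional fiber $\pi^{-1}(y')$), which is the assertion.

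The step I expect to be the main obstacle is the no-pole / local-boundedness claim for $\pi^{*}g_{n}$: making the "restriction of $s_{n}$ to $\pi^{-1}(y')$ varies continuously with $y'$" reasoning rigorous is precisely where the hypothesis $y\in\mathbf{Y}_{0}$ enters — there $\pi$ is a $k$-fibering, all nearby fibers are purely $k$-dimensional, so no fiber degenerates in dimension and the limiting argument on the fiberwise restrictions is legitimate — and then this has to be dovetailed with the normality of $\mathbf{X}$ (for the Riemann extension) and with the ring-of-invariants description of $\mathcal{O}_{\mathbf{Y}}$ (for the descent to the base). The repeated shrinking of $\mathbf{U}_{y}$ serves only to keep all of these steps local.
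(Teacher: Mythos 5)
Your overall strategy---showing the ratio of scaling functions extends to a unit on the base and then cancelling the fiberwise-constant factor out of the fiber integral---is the same as the paper's, and your final cancellation computation is correct. The gap is in the claim that $\pi^{*}g_{n}=\widehat{s}_{f^{1}_{y,n}}/\widehat{s}_{f^{0}_{y,n}}$ is locally bounded. What you actually prove is that at a generic point $y^{*}$ in the $\pi$-image of the putative pole set, $f^{1}_{y,n}(y^{*})=0$ forces $f^{0}_{y,n}(y^{*})=0$, so the two scaling functions vanish on the same set. But set-theoretic agreement of zero loci does not give boundedness of the quotient: take $f^{0}_{y,n}=w$ and $f^{1}_{y,n}=w^{2}$ on a disc---both vanish at $w=0$, yet $f^{0}_{y,n}/f^{1}_{y,n}=1/w$ is unbounded. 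Your assertion that ``the putative pole is really only an indeterminacy point, and $\pi^{*}g_{n}$ is locally bounded'' therefore does not follow from what precedes it; you would need an additional order-of-vanishing comparison, and the closing paragraph about continuity of the fiberwise restrictions does not supply one.

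The paper's proof sidesteps the divisor analysis entirely. Since $\widehat{s}_{f^{1}_{y,n}}$ does not vanish identically on $\pi^{-1}(y)$, one picks a single $x\in\pi^{-1}(y)$ with $\widehat{s}_{f^{1}_{y,n}}(x)\neq 0$, so that $|\widehat{s}_{f^{0}_{y,n}}|^{2}/|\widehat{s}_{f^{1}_{y,n}}|^{2}$ is bounded on a small neighbourhood $\mathbf{V}$ of $x$; because $\pi|\mathbf{X}_{0}$ is an open map and the quotient is constant on fibers, after shrinking so that $\pi(\mathbf{V})=\mathbf{U}_{y}$ this bound propagates to all of $\mathbf{U}_{y}$. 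Exchanging the roles of $f^{0}_{y,n}$ and $f^{1}_{y,n}$ gives the lower bound away from zero, and Riemann's extension theorem on the normal base $\mathbf{Y}$ then yields the non-vanishing holomorphic unit. Your route could in principle be repaired by pushing the divisor comparison to multiplicities---the constraint that neither extension vanishes identically on any fiber does in fact force the orders to agree---but as written the local-boundedness claim is unsupported, and the one-point-plus-openness argument is both shorter and complete.
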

\begin{proof} First of all, since $\mathrm{y}$ is contained in the open set $\mathrm{\mathbf{Y}_{0}\cap \mathbf{R}_{N_{0}}}$, we can assume that $\mathrm{\mathbf{U}_{y}\subset \mathbf{Y}_{0}}$. 

Let $\mathrm{\left(f^{i}_{y,n}\right)_{n}}$, $\mathrm{f^{i}_{y,n}\in \mathcal{O}\left(\mathbf{U}_{y}\right)}$ where $\mathrm{i\in\left\{0,1\right\}}$. As in {\bf{Definition \ref{Definition Removable Singularity}}}, we have $\mathrm{\widehat{s}_{f^{i}_{y,n}}}$ $\mathrm{=s_{n}\cdot}$ $\mathrm{\pi^{*}f^{i,-1}_{y,n}}$. Throughout this proof, we use the abbreviation $\mathrm{f^{i}_{y,n}=f_{i,n}}$ for $\mathrm{i\in\left\{0,1\right\}}$. Using this notation, it follows that 
\begin{equation}\label{Equation Quotient Of Holomorphic Functions}
\mathrm{\widehat{s}^{\phantom{-1}}_{f_{0,n}}=\pi^{*}(f_{1,n}^{\phantom{-1}}f_{0,n}^{-1})\cdot \widehat{s}_{f_{1,n}}}
\end{equation} where $\mathrm{h_{0,1,n}\coloneqq f_{1,n}^{\phantom{-1}}f_{0,n}^{-1}}$ yields a sequence of meromorphic function defined on $\mathrm{\mathbf{U}_{y}}$. We claim that after having shrunken $\mathrm{\mathbf{U}_{y}}$, each $\mathrm{h_{0,1,n}}$ is bounded from above and below away form zero on $\mathrm{\mathbf{U}_{y}}$.

For this note the following: Since $\mathrm{\widehat{s}_{f_{1},n}\vert \pi^{-1}\left(y\right)\not\equiv 0}$ there exists one $\mathrm{x\in \pi^{-1}\left(y\right)}$ so that $\mathrm{\widehat{s}_{f_{1},n}\left(x\right)\neq 0}$ and hence $\mathrm{\vert \widehat{s}_{f_{1},n}\vert^{2}\left(x\right)>0}$. In particular, we find an open neighborhood $\mathrm{\mathbf{V}\subset \mathbf{X}_{0}}$ of $\mathrm{x}$ so that 
\begin{equation}\label{Equation Simplification Of This Proof}
\mathrm{\vert \widehat{s}_{f_{1},n}\vert^{2}(x^{\prime})\geq c>0}
\end{equation} for all $\mathrm{x^{\prime}\in \mathbf{V}}$. By continuity we also have
\begin{equation}\label{Equation Simplification Of This Proof2} 
\mathrm{\vert \widehat{s}_{f_{0},n}\vert^{2}(x^{\prime})\leq C<\infty}
\end{equation} for all $\mathrm{x^{\prime}\in \mathbf{V}}$. 

Since $\mathrm{\mathbf{V}\subset \mathbf{X}_{0}}$ and since $\mathrm{\pi\vert \mathbf{X}_{0}\!:\!\mathbf{X}_{0}\rightarrow \mathbf{Y}_{0}}$ is a $\mathrm{k}$-fibering and hence an open map ({cf.\! }\cite{Loj}, {p.\ }{\bf{\oldstylenums{297} f.}}), we can assume (after having shrunken ${\mathbf{V}}$ and $\mathrm{\mathbf{U}_{y}}$ appropriately) that $\mathrm{\pi\left(\mathbf{V}\right)=\mathbf{U}_{y}}$. Combining \ref{Equation Simplification Of This Proof}, \ref{Equation Simplification Of This Proof2} and \ref{Equation Quotient Of Holomorphic Functions} we deduce $\mathrm{\vert h_{0,1,n}\vert^{2}\left(y^{\prime}\right)\leq C^{\prime}<\infty}$ for all $\mathrm{y^{\prime}\in \mathbf{U}_{y}}$. Reversing the roles of $\mathrm{f_{0}}$ and $\mathrm{f_{1}}$ shows (after having shrunken $\mathrm{\mathbf{U}_{y}}$ again) that $\mathrm{\vert h_{0,1,n}\vert^{2}\left(y^{\prime}\right)\geq c^{\prime}>0}$. Hence, the meromorphic function $\mathrm{h_{0,1,n}= f_{1,n}^{\phantom{-1}}f_{0,n}^{-1}}$ is bounded from above and below away from zero on $\mathrm{\mathbf{U}_{y}}$. As the base $\mathrm{\mathbf{Y}}$ is assumed to be normal, we can apply {\scshape{Riemann}}'s Extension Theorem (cf.\ \cite{Gr-Re}, {p.\ }{\bf{\oldstylenums{144}}}) in order to deduce that $\mathrm{h_{0,1,n}}$ yields a non-vanishing holomorphic function on $\mathrm{\mathbf{U}_{y}}$. All in all we, deduce $$\mathrm{\frac{\vert \widehat{s}_{f^{0}_{y,n}}\vert^{2}}{\Vert \widehat{s}_{f^{0}_{y,n}}\Vert^{2}}\cdot \frac{\pi^{*}\vert h_{0,1,n}\vert^{2}}{\pi^{*}\vert h_{0,1,n}\vert^{2}}=\frac{\vert \widehat{s}_{f^{1}_{y,n}}\vert^{2}}{\Vert \widehat{s}_{f^{1}_{y,n}}\Vert^{2}}}$$ over $\mathrm{\mathbf{U}_{y}}$ as claimed.

\end{proof} In this sense, the quotient $\mathrm{\vert s_{n}\vert^{2}\Vert s_{n}\Vert^{-2}}$ can be uniquely extended onto $\mathrm{\mathbf{R}_{N_{0}}\cap \mathbf{Y}_{0}}$. Assume now that $\mathrm{\mathbf{Y}_{0}\cap \mathbf{R}_{N_{0}}\neq \varnothing}$. Using the independence of $\mathrm{\vert \widehat{s}_{f_{y,n}}\vert^{2}\Vert \widehat{s}_{f_{y,n}}\Vert^{-2}}$ of the chosen sequence $\mathrm{\left(f_{y,n}\right)_{n}}$, $\mathrm{f_{y,n}\in \mathcal{O}\left(\mathbf{U}_{y}\right)}$ shown in {\bf{Corollary \ref{Corollary Independence Of Extension}}}, we can define

\begin{definition} \label{Notation Sequence of Collections of Fiber Distribution Densities Initial Sequence}Let $\mathrm{\left(s_{n}\right)_{n}}$ be sequence of $\mathrm{\xi_{n}}$-eigensections whose rescaled weights approximate $\mathrm{\xi}$, then define a sequence of fiber distribution densities over $\mathrm{\mathbf{Y}_{0}\cap \mathbf{R}_{N_{0}}}$ by $$\mathrm{\phi_{n}\!:\!x\mapsto \phi_{n}\left(x\right)\coloneqq \Vert \widehat{s}_{f_{y,n}}\Vert^{-2}\left(x\right)\vert  \widehat{s}_{f_{y,n}}\vert^{2}\left(x\right)}$$ where $\mathrm{\left(f_{y,n}\right)_{n}}$ is a sequence of holomorphic functions $\mathrm{f_{y,n}\in \mathcal{O}\left(\mathbf{U}_{y}\right)}$ as in {\bf{Definition \ref{Definition Removable Singularity}}}.
\end{definition} 

As in the tame case we introduce the following two definitions.

\begin{definition}\label{Definition Fiber Probability Measure Non Tame Case} Let $\mathrm{\left(\phi_{n}\right)_{n}}$ be the sequence of fiber distribution densities associated to $\mathrm{\left(s_{n}\right)_{n}}$ as defined in {\bf{Definition \ref{Notation Sequence of Collections of Fiber Distribution Densities Initial Sequence}}}, then we define a sequence of fiber probability measures parametrized over $\mathrm{\mathbf{Y}_{0}\cap \mathbf{R}_{N_{0}}}$ by $$\mathrm{\bm{\nu}_{n}\left(y\right)\!:\!\mathbf{A}\mapsto\bm{\nu}_{n}\left(y\right)\left(\mathbf{A}\right)\coloneqq\int_{\mathbf{A}}d{\bm{\nu}}_{n}\coloneqq \int_{\mathbf{A}}\phi_{n}\,d\,[\pi_{y}]}$$ for $\mathrm{\mathbf{A}\subset\pi^{-1}\left(y\right)}$ measurable where $\mathrm{y\in \mathbf{Y}_{0}\cap \mathbf{R}_{N_{0}}}$.
\end{definition}

\begin{definition} Let $\mathrm{\left(s_{n}\right)_{n}}$ be sequence of $\mathrm{\xi_{n}}$-eigensections whose rescaled weights approximate $\mathrm{\xi}$, then we define a sequence of cumulative fiber probability densities over $\mathrm{\mathbf{Y}_{0}\cap \mathbf{R}_{N_{0}}}$ by $$\mathrm{D_{n}\left(y,\cdot\right)\!:\!t\mapsto D_{n}\left(y,t\right)\coloneqq \int_{\left\{\phi_{n}\geq t\right\}\cap \pi^{-1}\left(\pi\left(x\right)\right)}d\,[\pi_{y}]}$$ where $\mathrm{y\in \mathbf{Y}_{0}\cap \mathbf{R}_{N_{0}}}$.
\end{definition}

The aim of {\bf{Section \ref{Section Uniform Convergence Theorems In The Non-Tame Case}}} is to give a proof of the following two convergence results.

\begin{theo_5.b*}\label{theo_5.b*}\textnormal{[\scshape{Uniform Convergence of the Initial Distribution Sequence}]}\vspace{0.1 cm}\\
For fixed $\mathrm{t\in \mathbb{R}}$ the sequence $\mathrm{\left(D_{n}\left(\cdot,t\right)\right)_{n}}$ converges uniformly on $\mathrm{\mathbf{Y}_{0}\cap\mathbf{R}_{N_{0}}}$ to the zero function.
\end{theo_5.b*}

\begin{theo_6.b*}\label{theo_6.b*}\textnormal{[\scshape{Uniform Convergence of the Initial Measure Sequence}]}\vspace{0.10 cm}\\
Let $\mathrm{f\in \mathcal{C}^{0}\!\left(\mathbf{X}\right)}$ then the sequence $$\mathrm{\left(y\mapsto \int_{\pi^{-1}\left(y\right)}f\,d\bm{\nu}_{n}\left(y\right)\right)_{n}}$$ converges uniformly over $\mathrm{\mathbf{Y}_{0}\cap \mathbf{R}_{N_{0}}}$ to the reduced function $\mathrm{f_{red}}$.
\end{theo_6.b*}

We close this section by showing that in general there exists no $\mathrm{N_{0}\in \mathbb{N}}$ so that $\mathrm{\mathbf{Y}=\mathbf{R}_{N_{0}}}$. Furthermore, the example shows that even the extreme case $\mathrm{\mathbf{R}_{N_{0}}=\varnothing}$ for all $\mathrm{N_{0}\in \mathbb{N}}$ is possible.

\begin{example}\label{Example Existence Of Removable Singularities} \textnormal{Let $\mathrm{\mathbf{X}\subset \mathbb{C}\mathbb{P}^{k}\times \mathbb{C}\mathbb{P}^{k}}$ equipped with the $\mathrm{\mathbb{T}=\mathbb{C}^{*}}$ action given by $$\mathrm{t.\left([\zeta_{0}\!:\!{\dots}\!:\!\zeta_{k}],[z_{0}\!:\!{\dots}\!:\!z_{k}]\right)=\left([\zeta_{0}\!:\!{\dots}\!:\!\zeta_{k}],[t^{-1}z_{0}\!:\!{\dots}\!:\!t^{-1}z_{k-1}\!:\!z_{k}]\right).}$$ In the sequel, we will consider the $\mathrm{\mathbb{T}}$-linearization $\mathrm{\mathbf{L}=p_{0}^{*}\mathcal{O}_{\mathbb{C}\mathbb{P}^{k}}\left(1\right)\otimes p_{1}^{*}\mathcal{O}_{\mathbb{C}\mathbb{P}^{k}}\left(1\right)}$ of the $\mathrm{\mathbb{T}}$-action on $\mathrm{\mathbf{X}}$ induced by the trivial $\mathrm{\mathbb{T}}$-action on the first factor $\mathrm{\mathcal{O}\left(1\right)_{\mathbb{C}\mathbb{P}^{k}}\rightarrow \mathbb{C}\mathbb{P}^{k}}$ given by $$\mathrm{t.[\zeta_{0}\!:\!{\dots}\!:\!\zeta_{k}\!:\!\zeta]=[\zeta_{0}\!:\!{\dots}\!:\!\zeta_{k}\!:\!\zeta]}$$ where we have used the identification $\mathrm{\mathcal{O}\left(1\right)_{\mathbb{C}\mathbb{P}^{k}}\cong \mathbb{C}\mathbb{P}^{k+1}\setminus\left\{[0\!:\!\dots\!:\!0\!:\!1]\right\}}$ and the $\mathrm{\mathbb{T}}$-action on the second factor $\mathrm{\mathcal{O}_{\mathbb{C}\mathbb{P}^{k}}\left(1\right)\rightarrow \mathbb{C}\mathbb{P}^{k}}$ given by $$\mathrm{t.[z_{0}\!:\!{\dots}\!:\!z_{k}\!:\!\zeta]=[t^{-1}z_{0}\!:\!{\dots}\!:\!t^{-1}z_{k-1}\!:\!z_{k}\!:\!\zeta].}$$ A calculation shows that the $\mathrm{\xi=0}$-level of the associated moment map is given by the set $$\mathrm{\mu^{-1}\left(0\right)=\mathbb{C}\mathbb{P}^{k}\times \left\{[0\!:\!{\dots}\!:\!0\!:\!1]\right\}}$$ and it is also direct to verify that one can identify $\mathrm{\mathbf{Y}=\mathbf{X}^{ss}_{\xi=0}/\!\!/\mathbb{T}\cong \mathbb{C}\mathbb{P}^{k}}$ where $\mathrm{\mathbf{X}^{ss}_{0}\cong}$ $\mathrm{\mathbb{C}\mathbb{P}^{k}\times \mathbb{C}^{k}}$. Furthermore, each fiber of the quotient map $\mathrm{\pi}$ is isomorphic to $\mathrm{\mathbb{C}^{k}}$ equipped with the inverse diagonal action.}

\textnormal{We will now consider the sequence $\mathrm{\left(s_{n}\right)_{n}}$ whose rescaled weights converge to $\mathrm{\xi=0}$ defined by $$\mathrm{s_{n}=\sum_{i=0}^{k-1}\zeta^{n}_{i}\,z_{i}\,z_{k}^{n-1}\in H^{0}\left(\mathbf{X},\mathbf{L}^{n}\right).}$$ Note that we have $$\mathrm{s_{n}\vert \pi^{-1}\left([0\!:\!{\dots}\!:\!0\!:\!1]\right)\equiv 0 \text{ for all }n\in \mathbb{N}.}$$} 

\textnormal{Using the homogenous standard coordinates $\mathrm{\zeta^{\prime}_{i}=\frac{\zeta_{i}}{\zeta_{k}}}$, $\mathrm{z^{\prime}_{i}=\frac{z_{i}}{z_{k}}}$, $\mathrm{0\leq i\leq k-1}$ on the open subset $$\mathrm{\mathbf{U}_{k,k}=\left\{\left([z],[\zeta]\right)\!:\!\,\zeta_{k}\neq 0,\,z_{k}\neq 0\right\}\subset \mathbf{X}^{ss}_{0}\subset \mathbb{C}\mathbb{P}^{k}\times \mathbb{C}\mathbb{P}^{k},}$$ the restriction $\mathrm{\pi\vert \mathbf{U}_{k,k}\rightarrow \mathbf{V}_{k}\cong \mathbb{C}^{k}}$ of the quotient map $\mathrm{\pi\!:\!\mathbf{X}^{ss}_{0}\rightarrow \mathbb{C}\mathbb{P}^{k}}$ is given by the projection map $\mathrm{\pi\vert \mathbf{U}_{k,k}\!:\!\mathbf{U}_{k,k}\cong\mathbb{C}^{k}\times \mathbb{C}^{k}\rightarrow \mathbb{C}^{k}}$. With respect to this trivialization the sequence $\mathrm{s_{n}\vert \mathbf{U}_{k,k}}$ is given by $$\mathrm{s_{n}\vert {\mathbf{U}_{k,k}}=\sum_{i=0}^{k-1}\zeta^{\prime,n}_{i}\,z^{\prime}_{i}}$$ where $\mathrm{\pi^{-1}\left([0\!:\!{\dots}\!:\!0\!:\!1]\right)=\left\{0\right\}\times \mathbb{C}^{k}}$. To shorten notation, we will just write $\mathrm{s_{n}\vert \mathbf{U}_{k,k}}$ $\mathrm{=s_{n}}$ throughout the rest of this example and set $\mathrm{\zeta^{\prime}_{i}=\zeta_{i}}$, $\mathrm{z^{\prime}_{i}=z_{i}}$ for all $\mathrm{0\leq i\leq k-1}$.}

\textnormal{Assume now that $\mathrm{\zeta_{0}=[0\!:\!{\dots}\!:\!0\!:\!1]}$ is a removable singularity for the fiber measure $\mathrm{\bm{\nu}_{n}}$ (we will fix $\mathrm{n}$ henceforth), i.e.\ there exists a non-vanishing holomorphic function $\mathrm{f\in \mathcal{O}\left(\mathbf{U}\right)}$ defined on an open neighborhood $\mathrm{\mathbf{U}\subset \mathbb{C}^{k}}$ of $\mathrm{\zeta_{0}}$ so that $\mathrm{\widehat{s}_{n}\coloneqq s_{n}\cdot\pi^{*}f^{-1}_{n}}$ is holomorphic and does not vanish identically on $\mathrm{\pi^{-1}\left(\zeta^{\prime}\right)}$ for all $\mathrm{y^{\prime}\in \mathbf{U}}$. Note that, after having shrunken $\mathrm{\mathbf{U}}$, we can assume that $\mathrm{\zeta_{0}}$ is an isolated zero of the function $\mathrm{f}$. In particular, the restriction $\mathrm{\widehat{s}_{n}\vert \pi^{-1}\left(\zeta^{\prime}\right)}$ defines a non-vanishing linear one form on $\mathrm{\pi^{-1}\left(\zeta^{\prime}\right)\cong\mathbb{C}^{k}}$ for all $\mathrm{\zeta^{\prime}\in \mathbf{U}}$. Using this, it follows that for each sequence $\mathrm{\left(\zeta_{m}\right)_{m}}$ in $\mathrm{\mathbf{U}}$ converging to $\mathrm{\zeta_{0}}$, the sequence of one-codimensional subspaces in $\mathrm{\mathbb{C}^{k}\cong \pi^{-1}\left(\zeta_{m}\right)}$ given by $\mathrm{{\bm{\mathfrak{H}}}\left(\zeta_{m}\right)=\left\{x\in\mathbb{C}^{k}\!:\!\,\widehat{s}_{n}\left(\zeta_{m}\right)=0\right\} }$ must converge to a uniquely defined one-codimensional subspace which is independent of the choice of $\mathrm{\left(\zeta_{m}\right)_{m}}$. However, this is a contradiction to the equation $\mathrm{\widehat{s}_{n}=s_{n}\cdot }$ $\mathrm{\pi^{*}f^{-1}}$ and the fact that $\mathrm{f}$ is non-zero on $\mathrm{\mathbf{U}\setminus\left\{\zeta_{0}\right\}}$. For example, consider the collection of sequences given by $$\mathrm{\left\{ \zeta_{m}^{i}\right\} _{i}=\left\{ \left(0,\dots,\mathrm{m^{-1}},\dots,0\right)_{m}\right\}_{i},}$$ then we have $$\mathrm{{\bm{\mathfrak{H}}}\left(\zeta_{m}^{i}\right)=\left\{ x\in\mathbb{C}^{k}\!:\!\,\widehat{s}_{n}\left(\zeta_{m}^{i}\right)=0\right\} \rightarrow\left\{ x\in\mathbb{C}^{k}\!:\!\, z_{i}=0\right\}}$$ so the limit is not independent of the chosen sequence. Hence, we deduce a contradiction and it follows that $\mathrm{\zeta_{0}}$ is a not a removable singularity for any $\mathrm{N_{0}\in \mathbb{N}}$ in the sense of {\bf{Definition \ref{Definition Removable Singularity}}}, i.e we have $\mathrm{\zeta_{0}\notin \mathbf{R}_{N_{0}}}$ for all $\mathrm{N_{0}}$.} 

\textnormal{Furthermore, by slightly changing the above sequence $\mathrm{\left(s_{n}\right)_{n}}$, we can show that $\mathrm{\mathbf{R}_{N_{0}}=\varnothing}$ for each $\mathrm{N_{0}}$:\ Choose a dense sequence $\mathrm{\left(\zeta_{n}\right)_{n}}$ in the quotient $\mathrm{\mathbf{Y}\cong \mathbb{C}\mathbb{P}^{k}}$ and let $\mathrm{\left(\Phi_{n}\right)_{n}}$, $\mathrm{\Phi_{n}\in}$ $\mathrm{Aut\left(\mathbf{Y}\right)}$ be a sequence of projective transformations so that $\mathrm{\Phi_{n}\left(\zeta_{0}\right)=\zeta_{n}}$. Define a new sequence of eigensections by $$\mathrm{s^{\prime}_{n}\left([\zeta],[z]\right)\coloneqq s_{1}\left([\Phi_{n}\left(\zeta\right)],[z]\right)}$$ and consider the sequence given by $$\mathrm{s_{n}\coloneqq \prod_{i=1}^{n}s^{\prime}_{i}.}$$ It is direct to see that $\mathrm{\mathbf{R}_{N_{0}}=\varnothing}$ because for each open neighborhood $\mathrm{\mathbf{U}}$ of any point $\mathrm{y\in \mathbf{Y}}$, the subset $\mathrm{\mathbf{U}\cap \left\{\zeta_{n}\right\}}$ is dense by construction and $\mathrm{\zeta_{n}}$ is non-removable. $\boldsymbol{\Box}$}
\end{example}

As a consequence of this example we conclude

\begin{remark} There are examples of approximating sequences $\mathrm{\left(s_{n}\right)_{n}}$ so that $\mathrm{\mathbf{R}_{N_{0}}=\varnothing}$ for all $\mathrm{N_{0}\in \mathbb{N}_{0}}$.
\end{remark}

As a further-reaching question, one could ask whether the set of all $\mathrm{\xi}$-approximating sequences of $\mathrm{\xi_{n}}$-eigensections $\mathrm{\left(s_{n}\right)_{n}}$, with the property that $\mathrm{\mathbf{R}_{N_{0}}=\varnothing}$ for all $\mathrm{N_{0}}$, is "thin" as a subset of $\mathrm{\bigoplus_{n=0}^{\infty}H^{0}\left(\mathbf{X},\mathbf{L}^{n}\right)}$. 

It turns out that there is no definitive answer to this question: In the context of {\bf{Example \ref{Example First Example}}}, one can show that each singularity of $\mathrm{s\in H^{0}\left(\mathbf{X},\mathbf{L}^{n}\right)}$ is removable and hence $\mathrm{\mathbf{Y}=\mathbf{R}_{N_{0}}}$ for all $\mathrm{N_{0}\in \mathbb{N}}$ and any choice of $\mathrm{\left(s_{n}\right)_{n}}$. On the other hand, if $\mathrm{k=2}$ in {\bf{Example \ref{Example Existence Of Removable Singularities}}}, it turns out that a $\mathrm{\xi_{n}}$-eigensection $\mathrm{s_{n}\in H^{0}\left(\mathbf{X},\mathbf{L}^{n}\right)}$, which has been randomly chosen with respect to a choice of a Lebesgue measure on $\mathrm{H^{0}\left(\mathbf{X},\mathbf{L}^{n}\right)}$ (induced by a choice of a basis), has almost surely at least one non-removable singularity $\mathrm{y_{n}\in \mathbf{Y}}$. Moreover, if one randomly chooses a $\mathrm{\xi}$-approximating sequence of $\mathrm{\xi_{n}}$-eigensections in this setting, it turns out that the set $\mathrm{\left\{y_{n}\right\}_{n\in \mathbb{N}}}$ is almost surely dense in $\mathrm{\mathbf{Y}}$.

\section{The $\mathrm{k}$-Fibering $\mathrm{\Pi\!:\!\widetilde{\bf{\,X\,}}\rightarrow \widetilde{\bf{\,Y\,}}}$}

\subsection{Construction of the $\mathrm{k}$-Fibering $\mathrm{\Pi\!:\!\widetilde{\bf{\,X\,}}\rightarrow \widetilde{\bf{\,Y\,}}}$}

Let $\mathrm{\widehat{\pi}\!:\!\widehat{\mathbf{\,X\,}}\rightarrow \mathbf{Y}}$ be the holomorphic map between the purely dimensional varieties $\mathrm{\widehat{\mathbf{\,X\,}}}$ and $\mathrm{\mathbf{Y}}$ as defined in {\bf{Section \ref{Definition of Y_{0}}}} and recall that there exits a Zariski-dense subset $\mathrm{\mathbf{Y}_{0}\subset \mathbf{Y}}$ so that all fibers $\mathrm{\widehat{\pi}^{-1}\left(y\right)}$ for $\mathrm{y\in \mathbf{Y}_{0}}$ are purely $\mathrm{k}$-dimensional, compact subvarieties not necessarily irreducible. Recall that we have assumed $\mathrm{\mathbf{X}}$ to be normal and hence, it follows (cf.\ \cite{He-Hu1}, {p.\ }{\bf{\oldstylenums{124}}}) that  $\mathrm{\mathbf{Y}}$ is normal and therefore, in particular, the open subset $\mathrm{\mathbf{Y}_{0}}$ as well.

By \cite{Bar1} the following is known in the above context: There exists a holomorphic map $\mathrm{\varphi^{\widehat{\pi}}:\mathbf{Y}_{0}\rightarrow\mathcal{C}^{k}(\mathbf{\widehat{\,\bm{X}\,}})}$ \label{Notation Regular Map Universal Property Chow Scheme} into the cycle space $\mathrm{\mathcal{C}^{k}(\mathbf{\widehat{\,\bm{X}\,}})}$ \label{Notation Cycle Space Of All k-Dim Cycles} of all compact $\mathrm{k}$-dimensional cycles $$\mathrm{{\bm{\mathfrak{C}}}=\sum_{i\in I}n_{i}\mathbf{C}_{i}, n_{i}\in \mathbb{N}, \mathbf{C}_{i}\subset \widehat{\bf{\,X\,}} \text{ globally irreducible subspaces of }\widehat{\bf{\,X\,}}\label{Notation Cycle Of Dimension K}}$$ of dimension $\mathrm{k}$ so that the support\footnote{The support $\mathrm{\vert{\bm{\mathfrak{C}}}\vert}$ of a cycle $\mathrm{{\bm{\mathfrak{C}}}=\sum_{i\in I}n_{i}\mathbf{C}_{i}}$ is defined by $\mathrm{\vert{\bm{\mathfrak{C}}}\vert=\bigcup_{i\in I}\mathbf{C}_{i}}$\label{Notation Support Of A Cycle}.} $\mathrm{\vert \varphi^{\widehat{\pi}}\left(y\right)\vert}$ of the cycle $\mathrm{{\bm{\mathfrak{C}}}_{y}\coloneqq \varphi^{\widehat{\pi}}\left(y\right)}$ \label{Notation Cycle Of Dimension K Associated To The Fiber Of} for $\mathrm{y\in {\bf{Y}}_{0}}$ is equal to the set theoretic fiber $\mathrm{\widehat{\pi}^{-1}\left(y\right)}$, i.e.\ we have 
\begin{equation}\label{Equation Equality Cycles Barlet Construction}
\mathrm{\vert {\bm{\mathfrak{C}}}_{y}\vert=\widehat{\pi}^{-1}\left(y\right)} \text{ for all }\mathrm{y\in \mathbf{Y}_{0}.}
\end{equation} Furthermore, since $\mathrm{\widehat{\bf{\,X\,}}}$ and $\mathrm{\mathbf{Y}}$ are compact, there exists (cf.\ \cite{Bar1}) a proper modification $\mathrm{\sigma\!:\!\widetilde{\,\bf{Y}\,}\rightarrow \mathbf{Y}}$ with center $\mathrm{\mathbf{Y}\setminus \mathbf{Y}_{0}}$, a proper modification $\mathrm{\Sigma\!:\!\widetilde{\,\mathbf{X}\,}\rightarrow \widehat{\,\bf{X}\,}}$\label{Notation Blow Up Space} \label{Notation Blow Up Map onto X} with center $\mathrm{\widehat{\pi}^{-1}\left(\mathbf{Y}\setminus \mathbf{Y}_{0}\right)}$ and a surjective holomorphic map $\mathrm{\Pi\!:\!\widetilde{\,\mathbf{X}\,}\rightarrow \widetilde{\,\mathbf{Y}\,}}$ \label{Notation Blown Up Projection Map} so that the following diagram commutes: $$\begin{xy}\label{Commutating Diagram}
\xymatrix{
\mathrm{\widehat{\mathbf{\,X\,}}}\ar[d]^{\mathrm{\widehat{\pi}}} &&& \mathrm{\widetilde{\bf{\,X\,}}}\ar[d]^{\mathrm{\Pi}}\ar[lll]_{\mathrm{\Sigma}}\\
\mathrm{\mathbf{Y}}&&& \mathrm{\widetilde{\bf{\,Y\,}}}\ar[lll]_{\mathrm{\sigma}}\\
}
\end{xy}
$$ The compact, complex space $\mathrm{\widetilde{\bf{\,Y\,}}}$ is given by 
\begin{equation}
\mathrm{\widetilde{\,\bf{Y}\,}\coloneqq  cl\,\left\{\left(y,{\bm{\mathfrak{C}}}\right)\in {\bf{Y}}_{0}\times\mathcal{C}^{k}(\widehat{\,\bf{X}\,})\!:\!\, \varphi^{\widehat{\pi}}\left(y\right)={\bm{\mathfrak{C}}}\right\} \subset \mathbf{Y}\times\mathcal{C}^{k}(\widehat{\,\bf{X}\,})}\label{Notation Blown Up Space Associated to Y}
\end{equation} and the holomorphic map $\mathrm{\sigma\!:\!\widetilde{\,\bf{Y}\,}\rightarrow \mathbf{Y}}$ \label{Notation Blow Up Map onto Y} is defined by $\mathrm{\sigma\coloneqq p_{\mathbf{Y}}\vert\widetilde{\,\bf{Y}\,}}$. Moreover, if $\mathrm{{\bm{\mathfrak{X}}}\subset \mathcal{C}^{k}(\widehat{\,\bf{X}\,})\times}$ $\mathrm{\widehat{\,\bf{X}\,}}$\label{Notation Universal Space} denotes the universal space defined by $\mathrm{{\bm{\mathfrak{X}}}\coloneqq \left\{\big({\bm{\mathfrak{C}}},x\right)\in \mathcal{C}^{k}(\widehat{\,\bf{X}\,})\times \widehat{\,\bf{X}\,}: x\in \vert {\bm{\mathfrak{C}}}\vert\big\}}$, then $\mathrm{\widetilde{\bf{\,X\,}}}$ is compact and given by $$\mathrm{\widetilde{\bf{\,X\,}}=\left(\mathbf{Y}\times {\bm{\mathfrak{X}}}\right)\cap \big(\widetilde{\bf{\,Y\,}}\times\widehat{\bf{\,X\,}}\big)}$$ where $\mathrm{\Sigma}$ is the restriction of the projection $\mathrm{p_{}\!:\!\mathbf{Y}\times\mathcal{C}^{k}(\widehat{\,\bf{X}\,})\times\widehat{\bf{\,X\,}}\rightarrow \widehat{\bf{\,X\,}}}$ to $\mathrm{\widetilde{\bf{\,X\,}}}$. Note that the above construction, whose details can be found in \cite{Bar1}, implies that the fiber $\mathrm{\Pi^{-1}\left(y,{\bm{\mathfrak{C}}}\right)\in}$ $\mathrm{ \widetilde{\bf{\,X\,}}}$ identifies with $\mathrm{\vert{\bm{\mathfrak{C}}}\vert\subset \widehat{\,\bf{X}\,}}$. Using this identification, we will simply write $\mathrm{\Pi^{-1}\left(y,\bm{\mathfrak{C}}\right)=\vert \bm{\mathfrak{{C}}}\vert}$ henceforth.

\begin{remark} Note that $\mathrm{\Pi\!:\!\widetilde{\bf{\,X\,}}\rightarrow \widetilde{\bf{\,Y\,}}}$ is a holomorphic map whose fibers are purely $\mathrm{k}$-dimensional by construction. Furthermore, $\mathrm{\widetilde{\bf{\,X\,}}}$ and $\widetilde{\bf{\,Y\,}}$ are both purely dimensional where $\mathrm{dim_{\mathbb{C}}\widetilde{\bf{\,X\,}}}$ $\mathrm{=dim_{\mathbb{C}}\bf{\,X\,}}$ and  $\mathrm{dim_{\mathbb{C}}\widetilde{\bf{\,Y\,}}=dim_{\mathbb{C}}\bf{\,Y\,}}$ (which is a well known fact of the theory of proper modifications, cf.\ {\textnormal{\cite{Gr-Re}}}, {p.\ }{\bf{\oldstylenums{214}}}). 
\end{remark}

Moreover, we have the following lemma.

\begin{lemma}\label{Invariance of Cycles} The support $\mathrm{\Pi^{-1}\left(y,\bm{\mathfrak{C}}\right)=\vert \bm{\mathfrak{{C}}}\vert}$ is $\mathrm{\mathbb{T}}$-invariant subset of $\mathrm{\widehat{\pi}^{-1}\left(y\right)}$.
\end{lemma}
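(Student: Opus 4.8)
The plan is to trace the $\mathbb{T}$-action through each stage of the Barlet construction and check that it is preserved. First I would recall from Remark (T-Invariance of Fibers) that all fibers $\widehat{\pi}^{-1}(y)$ of $\widehat{\pi}\!:\!\widehat{\mathbf{\,X\,}}\rightarrow \mathbf{Y}$ are already $\mathbb{T}$-invariant, since $\mathrm{cl}\,\mathbf{\Gamma}_\pi$ is $\mathbb{T}$-invariant and the normalization map $\zeta$ is $\mathbb{T}$-equivariant. The goal is to upgrade this to the statement that each cycle-fiber $\vert\bm{\mathfrak{C}}\vert=\Pi^{-1}(y,\bm{\mathfrak{C}})$ is $\mathbb{T}$-invariant as a subset of $\widehat{\pi}^{-1}(y)$.

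The key observation is that the holomorphic map $\varphi^{\widehat{\pi}}\!:\!\mathbf{Y}_0\rightarrow\mathcal{C}^k(\widehat{\mathbf{\,X\,}})$ is $\mathbb{T}$-equivariant with respect to the trivial $\mathbb{T}$-action on $\mathbf{Y}_0$ and the action of $\mathbb{T}$ on the cycle space $\mathcal{C}^k(\widehat{\mathbf{\,X\,}})$ induced by the action on $\widehat{\mathbf{\,X\,}}$ (a group acting holomorphically on a compact space acts holomorphically on its cycle space, pushing a cycle $\bm{\mathfrak{C}}=\sum n_i\mathbf{C}_i$ to $t.\bm{\mathfrak{C}}=\sum n_i (t.\mathbf{C}_i)$). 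Indeed, for $y\in\mathbf{Y}_0$ the cycle $\varphi^{\widehat{\pi}}(y)$ is the fundamental cycle of the set-theoretic fiber $\widehat{\pi}^{-1}(y)$ with its natural multiplicities by \eqref{Equation Equality Cycles Barlet Construction}; since $\widehat{\pi}^{-1}(y)$ is $\mathbb{T}$-invariant and $t\!:\!\widehat{\mathbf{\,X\,}}\rightarrow\widehat{\mathbf{\,X\,}}$ is a biholomorphism, it permutes the irreducible components of $\widehat{\pi}^{-1}(y)$ and preserves their multiplicities, so $t.\varphi^{\widehat{\pi}}(y)=\varphi^{\widehat{\pi}}(y)$ for all $t\in\mathbb{T}$. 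Hence $\mathrm{Im}\,\varphi^{\widehat{\pi}}$ lies in the fixed-point locus of the $\mathbb{T}$-action on $\mathcal{C}^k(\widehat{\mathbf{\,X\,}})$, and by continuity (and holomorphy, using \cite{Bar1}) the same holds on the closure $\widetilde{\,\mathbf{Y}\,}$: every cycle $\bm{\mathfrak{C}}$ with $(y,\bm{\mathfrak{C}})\in\widetilde{\,\mathbf{Y}\,}$ satisfies $t.\bm{\mathfrak{C}}=\bm{\mathfrak{C}}$, i.e. $t.\vert\bm{\mathfrak{C}}\vert=\vert\bm{\mathfrak{C}}\vert$.

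Finally I would conclude by unwinding the identification $\Pi^{-1}(y,\bm{\mathfrak{C}})=\vert\bm{\mathfrak{C}}\vert$ that was set up just before the lemma: under the description $\widetilde{\bf{\,X\,}}=(\mathbf{Y}\times\bm{\mathfrak{X}})\cap(\widetilde{\bf{\,Y\,}}\times\widehat{\bf{\,X\,}})$ with $\Sigma$ the restriction of the projection onto $\widehat{\bf{\,X\,}}$, the fiber over $(y,\bm{\mathfrak{C}})$ maps isomorphically onto $\vert\bm{\mathfrak{C}}\vert\subset\widehat{\pi}^{-1}(y)$, and the $\mathbb{T}$-action on $\widetilde{\bf{\,X\,}}$ is the one induced (through $\Sigma$) from the action on $\widehat{\bf{\,X\,}}$ fixing the $\widetilde{\bf{\,Y\,}}$-coordinate. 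Since $\vert\bm{\mathfrak{C}}\vert$ is $\mathbb{T}$-invariant by the previous paragraph and is a subset of the $\mathbb{T}$-invariant set $\widehat{\pi}^{-1}(y)$, the claim follows. The one point requiring a little care — the main (mild) obstacle — is justifying that the $\mathbb{T}$-action on $\mathcal{C}^k(\widehat{\mathbf{\,X\,}})$ is well-defined and holomorphic and that $\varphi^{\widehat{\pi}}$ is genuinely equivariant rather than merely equivariant on a dense set; this is where one invokes the functoriality of the Barlet cycle-space construction from \cite{Bar1} together with the density of $\mathbf{Y}_0$ in $\widetilde{\,\mathbf{Y}\,}$ and the separatedness of the cycle space.
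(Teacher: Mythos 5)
Your proof is correct and reaches the same conclusion, but it travels one level higher than the paper's own argument. The paper works with the underlying \emph{sets}: it picks a sequence $\mathrm{(y_n)_n}$ in $\mathrm{\mathbf{Y}_0}$ with $\mathrm{(y_n,\bm{\mathfrak{C}}_{y_n})\rightarrow(y,\bm{\mathfrak{C}})}$, then, for a given $\mathrm{x_0\in\vert\bm{\mathfrak{C}}\vert}$, finds $\mathrm{x_n\in\vert\bm{\mathfrak{C}}_{y_n}\vert=\widehat{\pi}^{-1}(y_n)}$ with $\mathrm{x_n\rightarrow x_0}$, applies $\mathrm{\mathbb{T}}$-invariance of $\mathrm{\widehat{\pi}^{-1}(y_n)}$ to get $\mathrm{t.x_n\in\vert\bm{\mathfrak{C}}_{y_n}\vert}$, and passes to the Hausdorff limit to conclude $\mathrm{t.x_0\in\vert\bm{\mathfrak{C}}\vert}$. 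You instead work with the \emph{cycles themselves}: you observe that the induced $\mathrm{\mathbb{T}}$-action on the Barlet cycle space $\mathrm{\mathcal{C}^{k}(\widehat{\bf{\,X\,}})}$ fixes $\mathrm{\varphi^{\widehat{\pi}}(y)}$ for every $\mathrm{y\in\mathbf{Y}_0}$ (since $\mathrm{t}$ permutes the components of $\mathrm{\widehat{\pi}^{-1}(y)}$ preserving multiplicities), hence the image of $\mathrm{\varphi^{\widehat{\pi}}}$ lies in the $\mathrm{\mathbb{T}}$-fixed locus, which is closed, and you conclude that every cycle appearing in $\mathrm{\widetilde{\bf{\,Y\,}}}$ is $\mathrm{\mathbb{T}}$-fixed. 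Your route actually proves a formally stronger statement --- invariance of the cycle with multiplicities, not merely of its support --- and trades a hands-on Hausdorff-convergence check for an appeal to two structural facts: that the holomorphic $\mathrm{\mathbb{T}}$-action on $\mathrm{\widehat{\bf{\,X\,}}}$ induces a holomorphic action on $\mathrm{\mathcal{C}^{k}(\widehat{\bf{\,X\,}})}$ (functoriality of the Barlet construction), and that the fixed locus of a continuous action on a Hausdorff space is closed. The paper's argument deliberately avoids invoking the action on the cycle space, which keeps it self-contained; yours is cleaner once one is willing to cite that functoriality, and it packages the sequential argument into a single closedness statement. One small point worth making explicit in your write-up: the containment $\mathrm{\vert\bm{\mathfrak{C}}\vert\subset\widehat{\pi}^{-1}(y)}$, which the lemma also asserts, should be noted as an immediate consequence of the commutativity of the modification diagram, exactly as the paper does in its opening sentence; you allude to it only in passing at the end.
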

\begin{proof} By the commutativity of the previous diagram, we deduce that $\mathrm{\vert{\bm{\mathfrak{C}}} \vert\subset \widehat{\pi}^{-1}\left(y\right)}$, hence it remains to verify that $\mathrm{\vert{\bm{\mathfrak{C}}} \vert}$ is $\mathrm{\mathbb{T}}$-invariant. In order to prove this, we can proceed as follows: Since the set $$\mathrm{\left\{\left(y,{\bm{\mathfrak{C}}}\right)\in {{\bf{Y}}}_{0}\times\mathcal{C}^{k}(\widehat{\,\bf{X}\,})\!:\!\, \varphi^{\widehat{\pi}}\left(y\right)={\bm{\mathfrak{C}}}\right\}}$$ is Euclidean dense in $\mathrm{\widetilde{\bf{\,Y\,}}}$, we can choose a sequence $\mathrm{\left(y_{n}\right)_{n}}$\label{Notation y,frak C_y} in $\mathrm{\mathbf{Y}_{0}}$ so that $\mathrm{\left(y_{n},{\bm{\mathfrak{C}}}_{y_{n}}\right)\rightarrow}$ $\mathrm{\left(y,{\bm{\mathfrak{C}}}\right)}$ where $\mathrm{{\bm{\mathfrak{C}}}_{y_{n}}}$ are the cycles whose underlying sets are equal to $\mathrm{\widehat{\pi}^{-1}\left(y_{n}\right)}$ by property \ref{Equation Equality Cycles Barlet Construction}. The $\mathrm{\mathbb{T}}$-invariance of the limit cycle $\mathrm{{\bm{\mathfrak{C}}}}$ follows by the following reasoning: Let $\mathrm{x=t.x_{0}\in }$ $\mathrm{\mathbb{T}.\mathfrak {C}}$ where $\mathrm{x_{0}\in{\bm{\mathfrak{C}}} }$. Then, since $\mathrm{{\bm{\mathfrak{C}}}_{n}\rightarrow {\bm{\mathfrak{C}}}}$ means convergence in the Hausdorff topology of the underlying support, there exists a sequence $\mathrm{x_{n}\in {\bm{\mathfrak{C}}}_{n}}$ so that $\mathrm{x_{n}\rightarrow x_{0}}$. Since $\mathrm{\vert {\bm{\mathfrak{C}}}_{y}\vert=\widehat{\pi}^{-1}\left(y\right)}$ for all $\mathrm{y\in \mathbf{Y}_{0}}$, it follows that $\mathrm{\vert {\bm{\mathfrak{C}}}_{y_{n}}\vert=\widehat{\pi}^{-1}\left(y_{n}\right)}$ for all $\mathrm{n\in \mathbb{N}}$. Using the $\mathrm{\mathbb{T}}$-invariance of $\mathrm{\widehat{\pi}^{-1}\left(y\right)}$ (cf.\ {\bf{Remark \ref{Remark T-Invariance of Fibers}}}), we deduce that $\mathrm{t.x_{n}\in\vert {\bm{\mathfrak{C}}}_{y_{n}}\vert}$. By the continuity of the action it follows that $\mathrm{t.x_{n}\rightarrow t.x_{0}}$. So $\mathrm{\left(t.x_{n}\right)_{n}}$ is a convergent sequence with limit $\mathrm{t.x_{0}}$ where $\mathrm{t.x_{n}\in {\bm{\mathfrak{C}}}_{n}}$ and $\mathrm{{\bm{\mathfrak{C}}}_{n}\rightarrow {\bm{\mathfrak{C}}}}$. By the definition of the Hausdorff topology, it then follows that $\mathrm{t.x_{0}\in{\bm{\mathfrak{C}}}}$ and hence $\mathrm{\mathbb{T}.{\bm{\mathfrak{C}}}\subset {\bm{\mathfrak{C}}}}$ which proves the claim $\mathrm{\mathbb{T}.{\bm{\mathfrak{C}}}={\bm{\mathfrak{C}}}}$. 
\end{proof}

We close this section with the following remark and example.

\begin{remark} In general ({cf.\! }{\bf{Example \ref{Example Blowing Up Example}}}) $\mathrm{\vert \bm{\mathfrak{{C}}}\vert}$ is a proper subset of $\mathrm{\widehat{\pi}^{-1}\left(y\right)}$.
\end{remark}
 
\begin{example}\label{Example Blowing Up Example} \textnormal{Let $\mathrm{\mathbf{X}=\mathbb{C}\mathbb{P}^{3}}$ equipped with the $\mathrm{\mathbb{T}=\mathbb{C}^{*}}$ action given by $$\mathrm{t.[z_{0}\!:\!z_{1}\!:\!z_{2}\!:\!z_{3}]=[t^{-1}z_{0}\!:\!t\,z_{1}\!:\!t\,z_{2}\!:\!z_{3}]}$$ and consider the Hilbert quotient $$\mathrm{\pi\!:\!\mathbf{X}^{ss}_{0}=\mathbb{C}\mathbb{P}^{3}\setminus\left(\left\{[1\!:\!0\!:\!0\!:\!0]\right\}\cup\left\{z_{0}=z_{3}=0\right\}\right)\rightarrow \mathbf{Y}\cong\mathbb{C}\mathbb{P}^{2}}$$ associated to the $\mathrm{0=\xi}$-level set of the moment map $$\mathrm{\mu\!:\![z]\mapsto\Vert z\Vert^{-2}\left(-\vert z_{0}\vert^{2}+\vert z_{1}\vert^{2}+\vert z_{2}\vert^{2}\right).}$$ The corresponding projection map $\mathrm{\pi}$ is given by $\mathrm{\pi\!:\![z]\mapsto[\zeta]=[z_{0}z_{1}\!:\!z_{0}z_{2}\!:\!z_{3}^2]}$ where $\mathrm{[\zeta]\in }$ $\mathrm{\mathbf{Y}\cong \mathbb{C}\mathbb{P}^{2}}$. Note that all fibers $\mathrm{\widehat{\pi}^{-1}\left([\zeta]\right)}$ over $\mathrm{\mathbf{Y}_{0}=\mathbb{C}\mathbb{P}^{2}\setminus \left\{[0\!:\!0\!:\!1]\right\}}$ are of pure dimension one and of degree two. Moreover, it is direct to verify that these fibers can be parameterized by $$\mathrm{\gamma_{[\zeta]}\!:\!t\mapsto [\zeta_{2}t_{1}^{2}\!:\!\zeta_{0}t_{0}^{2}\!:\!\zeta_{1}t_{0}^{2}\!:\!\zeta_{2}t_{0}t_{1}]\text{ for }\zeta\in \mathbf{Y}_{0}}$$ and that they are given as the zero set of the following system of equations\!:\! $$\mathrm{\zeta_{2}z_{0}z_{1}-\zeta_{0}z^{2}_{3}=0,\,\zeta_{2}z_{0}z_{2}-\zeta_{1}z_{3}^{2}=0\text{ where }\zeta\in \mathbf{Y}_{0}.}$$ Let $\mathrm{\mathbf{U}_{2}=\left\{[\zeta]\in \mathbb{C}\mathbb{P}^{2}\!:\!\,\zeta_{2}\neq 0\right\}}$ and set $\mathrm{c_{0}\coloneqq \zeta_{2}^{-1}\zeta_{0}}$, $\mathrm{c_{1}\coloneqq \zeta_{2}^{-1}\zeta_{1}}$ and consider $\mathrm{\mathbf{U}_{2}^{*}\coloneqq}$ $\mathrm{\mathbf{Y}_{0}\cap \mathbf{U}_{2}}$ which we can identify with $\mathrm{\mathbb{C}^{2}\setminus \left\{0\right\}}$. As mentioned before, there exists a holomorphic map $\mathrm{\varphi^{\widehat{\pi}}\!:\!\mathbf{U}^{*}_{2}\hookrightarrow \mathcal{C}^{1}(\widehat{\,\mathbf{X}\,})}$. It turns out that all fibers of $\mathrm{\widehat{\pi}}$ are compact subvarieties of degree $\mathrm{2}$ in $\mathrm{\mathbb{C}\mathbb{P}^{3}}$. Hence, it follows that the image of $\mathrm{\varphi^{\widehat{\pi}}}$ is contained in the cycle space component which can be identified with the compact connected {\scshape{Chow}} Variety $\mathrm{\mathcal{C}_{1,2}\left(\mathbb{C}\mathbb{P}^{3}\right)}$ of all $\mathrm{1}$-dimensional cycles in $\mathrm{\mathbb{C}\mathbb{P}^{3}}$ of degree $\mathrm{2}$ which itself is realized as closed variety in the projective space $\mathrm{\mathbb{C}\mathbb{P}^{\nu_{3,1,2}}}$ (for a rigorous definition cf.\ \cite{Sha}).} 

\textnormal{Recall that the Chow coordinates of a cycle $\mathrm{{\bm{\mathfrak{C}}}}$ in $\mathrm{\mathbf{X}\subset \mathbb{C}\mathbb{P}^{m}}$ of degree $\mathrm{d}$ and dimension $\mathrm{k}$ are given by the coefficients of the Chow form $\mathrm{\mathfrak{F}_{{\bm{\mathfrak{C}}},\mathbb{C}\mathbb{P}^{m}}}$ \label{Notation Chow Form}, i.e.\ by the coefficients of a polynomial homogenous in $\mathrm{k+1}$ groups $\mathrm{\xi_{0}^{(i)},\dots,\xi^{(i)}_{m}}$, $\mathrm{i\in \left\{0,\dots,k\right\}}$ of $\mathrm{m+1}$ indeterminates of degree $\mathrm{d}$ modulo multiplication with a non-vanishing complex number $\mathrm{\lambda\in \mathbb{C}^{*}}$ (cf.\ \cite{Sha}). In the sequel, let $\mathrm{{\bm{\mathfrak{C}}}_{c}}$ for $\mathrm{c\in \mathbb{C}^{2}\setminus \left\{0\right\}}$ and let $\mathrm{\mathfrak{F}_{{\bm{\mathfrak{C}}}_{c},\mathbb{C}\mathbb{P}^{3}}}$ be the corresponding Chow form.
A calculation shows that
$$\mathrm{\begin{array}{rl}
 & \mathrm{\mathfrak{F}_{{\bm{\mathfrak{C}}}_{c},\mathbb{C}\mathbb{P}^{3}}\left(\xi_{0}^{\left(0\right)},\xi_{1}^{\left(0\right)},\xi_{2}^{\left(0\right)},\xi_{3}^{\left(0\right)},\xi_{0}^{\left(1\right)},\xi_{1}^{\left(1\right)},\xi_{2}^{\left(1\right)},\xi_{3}^{\left(1\right)}\right)}\\[0.4 cm]
=& \mathrm{c_{0}^{2}\,{\xi_{1}^{(0)}}^{2}{\xi_{0}^{(1)}}^{2}+c_{1}^{2}\,{\xi_{2}^{\left(0\right)}}^{2}{\xi_{0}^{\left(1\right)}}^{2}+c_{0}^{2}\,{\xi_{0}^{\left(0\right)}}^{2}{\xi_{1}^{\left(1\right)}}^{2}+c_{1}^{2}\,{\xi_{0}^{\left(0\right)}}^{2}{\xi_{2}^{\left(1\right)}}^{2}+2\, c_{0}\,c_{1}\,{\xi_{0}^{\left(0\right)}}^{2}\xi_{1}^{\left(1\right)}\xi_{2}^{\left(1\right)}}\\[0.4 cm]
&\mathrm{+c_{0}\,\xi_{0}^{\left(0\right)}\xi_{1}^{\left(0\right)}{\xi_{3}^{\left(1\right)}}^{2}+c_{1}\,\xi_{0}^{\left(0\right)}\xi_{2}^{\left(0\right)}{\xi_{3}^{\left(1\right)}}^{2}+c_{0}\,{\xi_{3}^{\left(0\right)}}^{2}\xi_{0}^{\left(1\right)}\xi_{1}^{\left(1\right)}+c_{1}\,{\xi_{3}^{\left(0\right)}}^{2}\xi_{0}^{\left(1\right)}\xi_{2}^{\left(1\right)}}\\[0.4 cm]
 &\mathrm{+2\,c_{0}\,c_{1}\,\xi_{1}^{\left(0\right)}\xi_{2}^{\left(0\right)}{\xi_{0}^{\left(1\right)}}^{2}-c_{0}\,\xi_{0}^{\left(0\right)}\xi_{3}^{\left(0\right)}\xi_{1}^{\left(1\right)}\xi_{3}^{\left(1\right)}-c_{1}\,\xi_{0}^{\left(0\right)}\xi_{3}^{\left(0\right)}\xi_{2}^{\left(1\right)}\xi_{3}^{\left(1\right)}-c_{0}\,\xi_{1}^{\left(0\right)}\xi_{3}^{\left(0\right)}\xi_{0}^{\left(1\right)}\xi_{3}^{\left(1\right)}}\\[0.4 cm]
 &\mathrm{-c_{1}\,\xi_{2}^{\left(0\right)}\xi_{3}^{\left(0\right)}\xi_{0}^{\left(1\right)}\xi_{3}^{\left(1\right)}-2\,c_{0}^{2}\,\xi_{0}^{\left(0\right)}\xi_{1}^{\left(0\right)}\xi_{0}^{\left(1\right)}\xi_{1}^{\left(1\right)}-2\,c_{0}\,c_{1}\,\xi_{0}^{\left(0\right)}\xi_{1}^{\left(0\right)}\xi_{0}^{\left(1\right)}\xi_{2}^{\left(1\right)}}\\[0.4 cm]
 &\mathrm{-2\,c_{0}\,c_{1}\,\xi_{0}^{\left(0\right)}\xi_{2}^{\left(0\right)}\xi_{0}^{\left(1\right)}\xi_{1}^{\left(1\right)}-2\,c_{1}^{2}\,\xi_{0}^{\left(0\right)}\xi_{2}^{\left(0\right)}\xi_{0}^{\left(1\right)}\xi_{2}^{\left(1\right)}}\\
\end{array}}$$ so the map $\mathrm{\varphi^{\widehat{\pi}}\!:\!\mathbb{C}^{2}\setminus \left\{0\right\}\rightarrow \mathcal{C}_{1,2}\left(\mathbb{C}\mathbb{P}^{3}\right)}$ is given by $$\mathrm{\begin{array}{rcl}
\mathrm{\varphi^{\widehat{\pi}}\!:\!\mathbb{C}^{2}\setminus \left\{0\right\}\ni\left(c_{1},c_{2}\right)} & \mapsto & \mathrm{[c_{0}^{2}\!:\!c_{0}^{2}\!:\!\!-2\, c_{0}^{2}\!:\!c_{1}^{2}\!:\!c_{1}^{2}\!:\!\!-2\, c_{1}^{2}\!:\!2\, c_{0}c_{1}\!:\!2\, c_{0}c_{1}\!:\!\,-2\, c_{0}c_{1}\!:\!}\\[0,2 cm]
 &  & \mathrm{-2\, c_{0}c_{1}\!:\!-c_{0}\!:\!\!-c_{0}\!:\!c_{0}\!:\!c_{0}\!:\!\!-c_{1}\!:\!\!-c_{1}\!:\!c_{1}\!:\!c_{1}\!:\!0\!:\!...\!:\!0].}\end{array}}$$ The closure of the graph $$\mathrm{\left\{\left(c,{\bm{\mathfrak{C}}}\right)\!:\!\,c\in\mathbb{C}^{2}\setminus \left\{0\right\},\,\varphi^{\widehat{\pi}}\left(c\right)={\bm{\mathfrak{C}}}\right\}\subset \mathbb{C}^{2}\times \mathcal{C}_{1,2}\left(\mathbb{C}\mathbb{P}^{3}\right)}$$ of the map $\mathrm{\varphi^{\widehat{\pi}}}$ turns out to be isomorphic to the blow-up $\mathrm{ {\bm{\mathfrak{Bl}}}\left(0,\mathbb{C}^{2}\right)}$ of the origin $\mathrm{0\in \mathbb{C}^{2}}$. This can be seen in the following way: Let $$\mathrm{\varphi\!:\!\mathbb{C}^{2}\setminus\left\{0\right\}\ni\left(c_{0},c_{1}\right)\mapsto \left(\left(c_{0},c_{1}\right),[c_{0}\!:\!c_{1}]\right) \in {\bm{ {\bm{\mathfrak{Bl}}}}}\left(0,\mathbb{C}^{2}\right)}$$ where $\mathrm{ {\bm{\mathfrak{Bl}}}\left(0,\mathbb{C}^{2}\right)=\left\{\left(\left(c_{0},c_{1}\right),[c^{\prime}_{0}\!:\!c^{\prime}_{1}]\right)\!:\!\,c_{0}c^{\prime}_{1}-c_{1}c^{\prime}_{0}=0\right\}\subset \mathbb{C}^{2}\times \mathbb{C}\mathbb{P}^{1}}$. We have $$\mathrm{cl\left(\varphi\left(\mathbb{C}^{2}\setminus\{0\}\right)\right)= {\bm{\mathfrak{Bl}}}\left(0,\mathbb{C}^{2}\right).}$$ We can embed $\mathrm{ {\bm{\mathfrak{Bl}}}\left(0,\mathbb{C}^{2}\right)\subset \mathbb{C}^{2}\times \mathbb{C}\mathbb{P}^{1}\subset \mathbb{C}\mathbb{P}^{2}\times \mathbb{C}\mathbb{P}^{1}}$ into $\mathrm{\mathbb{C}\mathbb{P}^{5}}$ using the {\scshape{Segre}} map. The composition of $\mathrm{\varphi}$ with the {\scshape{Segre}} embedding yields an embedding $\mathrm{\widehat{\varphi}\!:\!\mathbb{C}^{2}\setminus}$ $\mathrm{\left\{0\right\}\hookrightarrow \mathbb{C}\mathbb{P}^{5}}$ given by $$\mathrm{\widehat{\varphi}\!:\!\mathbb{C}^{2}\setminus\left\{0\right\}\mapsto [c_{0}\!:\!c_{1}\!:\!c_{0}^2\!:\!c_{0}c_{1}\!:\!c_{0}c_{1}\!:\!c_{1}^{2}].}$$ It is direct to see that there exists a projective transformation $\mathrm{\Phi}$ of $\mathrm{\mathbb{C}\mathbb{P}^{\nu_{3,1,2}}}$ so that the following diagram commutes:
$$\begin{xy}
\xymatrix{
\mathrm{\mathbb{C}^{2}\setminus\left\{0\right\}}  \ar@{^{(}->}[d]  \ar@{^{(}->}[rrrrr]^{\zeta_{\mathbb{C}^{2}\setminus\left\{0\right\}}} &&&&&\mathrm{\mathcal{C}_{1,2}\left(\mathbb{C}\mathbb{P}^{3}\right)\subset}\hspace{-0.9 cm}& \mathrm{\mathbb{C}\mathbb{P}^{\nu_{3,1,2}}}\\
\mathrm{ {\bm{\mathfrak{Bl}}}\left(0,\mathbb{C}^{2}\right)}&**[r] \hspace{-1cm}\subset \mathbb{C}\mathbb{P}^{2}\times \mathbb{C}\mathbb{P}^{1}\ar@{^{(}->}[rrrr]^-{\text{{\scshape{Segre}} map}}&&& &\mathrm{\mathbb{C}\mathbb{P}^{5}\subset }\hspace{ -1.85 cm} & \mathrm{\mathbb{C}\mathbb{P}^{\nu_{3,1,2}}}\ar[u]_{\Phi} \\
}
\end{xy}
$$ Let $\mathrm{\left(\left(c_{n},{\bm{\mathfrak{C}}}_{c_{n}}\right)\right)_{n}}$ be the sequence in $\mathrm{\widetilde{\bf{\,Y\,}}\cap\sigma^{-1}\left(\mathbb{C}^{2}\setminus{0}\right)}$ given by $\mathrm{c_{n}=n^{-1}\left(c^{\prime}_{0},c_{1}^{\prime}\right)}$. The above formulas show that $\mathrm{\left({\bm{\mathfrak{C}}}_{c_{n}}\right)_{n}}$ converges to $$\mathrm{{\bm{\mathfrak{C}}}_{c^{\prime}}=[0\!:\!0\!:\!0\!:\!0\!:\!0\!:\!0\!:\!0\!:\!0\!:\!-c^{\prime}_{0}\!:\!-c^{\prime}_{0}\!:\!c^{\prime}_{0}\!:\!c^{\prime}_{0}\!:\!-c^{\prime}_{1}\!:\!-c^{\prime}_{1}\!:\!c^{\prime}_{1}\!:\!c^{\prime}_{1}\!:\!0\!:\!...\!:\!0]}$$ which corresponds to the point $\mathrm{\left(\left(0,0\right),[c^{\prime}_{1}\!:\!c^{\prime}_{2}]\right)\in  {\bm{\mathfrak{Bl}}}\left(0,\mathbb{C}^{2}\right)}$ under the above identification. In particular we have $\mathrm{\sigma\left(0,{\bm{\mathfrak{C}}}_{c^{\prime}}\right)=[0\!:\!0\!:\!1]\in \mathbb{C}^{2}}$. In order to determine the limit cycle $$\mathrm{{\bm{\mathfrak{C}}}_{c^{\prime}}\in \widetilde{\bf{\,Y\,}}\cap\sigma^{-1}\left(\mathbf{U}_{2}\right)}$$ we consider the Chow form $\mathrm{\mathfrak{F}_{{\bm{\mathfrak{C}}}_{c^{\prime}},\mathbb{C}\mathbb{P}^{3}}}$ 
$$\mathrm{\begin{array}{rl}
& \mathrm{\mathfrak{F}_{{\bm{\mathfrak{C}}}_{c^{\prime}},\mathbb{C}\mathbb{P}^{3}}\left(\xi_{0}^{\left(0\right)},\xi_{1}^{\left(0\right)},\xi_{2}^{\left(0\right)},\xi_{3}^{\left(0\right)},\xi_{0}^{\left(1\right)},\xi_{1}^{\left(1\right)},\xi_{2}^{\left(1\right)},\xi_{3}^{\left(1\right)}\right)}\\[0.4 cm]
=& \mathrm{c^{\prime}_{0}\,\xi_{0}^{\left(0\right)}\xi_{1}^{\left(0\right)}{\xi_{3}^{\left(1\right)}}^{2}+c^{\prime}_{1}\,\xi_{0}^{\left(0\right)}\xi_{2}^{\left(0\right)}{\xi_{3}^{\left(1\right)}}^{2}+c^{\prime}_{0}\,{\xi_{3}^{\left(0\right)}}^{2}\xi_{0}^{\left(1\right)}\xi_{1}^{\left(1\right)}+c^{\prime}_{1}\,{\xi_{3}^{\left(0\right)}}^{2}\xi_{0}^{\left(1\right)}\xi_{2}^{\left(1\right)}-c^{\prime}_{0}\,\xi_{0}^{\left(0\right)}\xi_{3}^{\left(0\right)}\xi_{1}^{\left(1\right)}\xi_{3}^{\left(1\right)}}\\[0.4 cm]
&\mathrm{-c^{\prime}_{1}\,\xi_{0}^{\left(0\right)}\xi_{3}^{\left(0\right)}\xi_{2}^{\left(1\right)}\xi_{3}^{\left(1\right)}-c^{\prime}_{0}\,\xi_{1}^{\left(0\right)}\xi_{3}^{\left(0\right)}\xi_{0}^{\left(1\right)}\xi_{3}^{\left(1\right)}-c^{\prime}_{1}\,\xi_{2}^{\left(0\right)}\xi_{3}^{\left(0\right)}\xi_{0}^{\left(1\right)}\xi_{3}^{\left(1\right)}}\end{array}}$$ which turns out to be reducible:}

\textnormal{$$\mathrm{\begin{array}{rl}
& \mathrm{\mathfrak{F}_{{\bm{\mathfrak{C}}}_{c^{\prime}},\mathbb{C}\mathbb{P}^{3}}\left(\xi_{0}^{\left(0\right)},\xi_{1}^{\left(0\right)},\xi_{2}^{\left(0\right)},\xi_{3}^{\left(0\right)},\xi_{0}^{\left(1\right)},\xi_{1}^{\left(1\right)},\xi_{2}^{\left(1\right)},\xi_{3}^{\left(1\right)}\right)}\\[0.4 cm]
=&\mathrm{\mathrm{\overset{\mathfrak{F}_{\widetilde{{\bm{\mathfrak{C}}}}_{c^{\prime}},\mathbb{C}\mathbb{P}^{3}}}{\overbrace{\left(\mathrm{c^{\prime}_{0}\,\xi_{1}^{\left(0\right)}\xi_{3}^{\left(1\right)}+c_{1}^{\prime}\,\xi_{2}^{\left(0\right)}\xi_{3}^{\left(1\right)}-c^{\prime}_{0}\,\xi_{3}^{\left(0\right)}\xi_{1}^{\left(1\right)}-c^{\prime}_{1}\,\xi_{3}^{\left(0\right)}\xi_{2}^{\left(1\right)}}\right)}}}\cdot\overset{\mathfrak{F}_{{\bm{\mathfrak{C}}}_{0},\mathbb{C}\mathbb{P}^{3}}}{\overbrace{\left(\mathrm{\xi_{0}^{\left(0\right)}\xi_{3}^{\left(1\right)}-\xi_{3}^{\left(0\right)}\xi_{0}^{\left(1\right)}}\right)}}}\\
\end{array}}$$ A direct computation shows that $\mathrm{\mathfrak{F}_{\widetilde{{\bm{\mathfrak{C}}}}_{c^{\prime}},\mathbb{C}\mathbb{P}^{3}}}$ is the Chow form associated to the line $$\mathrm{\widetilde{{\bm{\mathfrak{C}}}}_{c^{\prime}}=\left\{z_{0}=0,\,z_{4}=0, \,c^{\prime}_{2}z_{1}-c_{1}^{\prime}z_{2}=0\right\}}$$ and $\mathrm{\mathfrak{F}_{{\bm{\mathfrak{C}}}_{0},\mathbb{C}\mathbb{P}^{3}}}$ is the corresponding Chow form of the line $$\mathrm{{\bm{\mathfrak{C}}}_{0}=\left\{z_{1}=0,\,z_{2}=0,\,z_{3}=0\right\}}$$ where $\mathrm{{\bm{\mathfrak{C}}}_{c^{\prime}}=\widetilde{{\bm{\mathfrak{C}}}}_{c^{\prime}}+ {\bm{\mathfrak{C}}}_{0}}$. In particular, note that $\mathrm{{\bm{\mathfrak{C}}}_{c^{\prime}}\neq \widehat{\pi}^{-1}\left([0\!:\!0\!:\!1]\right)}$ where $$\mathrm{\widehat{\pi}^{-1}\left([0\!:\!0\!:\!1]\right)=\left\{z_{0}=0\right\}\cup\left\{z_{1}=z_{2}=z_{3}=0\right\}.\,\boldsymbol{\Box}}$$}
\end{example}

\subsection{Fiber Integral Properties of the $\mathrm{k}$-Fibering $\mathrm{\Pi\!:\!\widetilde{\bf{\,X\,}}\rightarrow \widetilde{\bf{\,Y\,}}}$}

As in {\bf{Section \ref{Uniform Localization Proposition}}}, let $\mathrm{\mathbf{X}^{i}}$ be the $\mathrm{\mathbb{T}}$-invariant Zariski open subset of $\mathrm{\mathbf{X}\left(s_{n}^{i}\right)\subset\mathbf{X}^{ss}_{\xi}}$ of {\bf{Theorem 1}}, where $\mathrm{\mathbf{X}\left(s_{n}^{i}\right)}$ is the $\mathrm{n}$-stable complement of the zero set of the tame sequence $\mathrm{\left(s^{i}_{n}\right)_{n}}$. As before, let $\mathrm{\varrho^{i}\!:\!\mathbf{X}^{i}}$ $\mathrm{\rightarrow \mathbb{R}}$ be the normalized s.p.s.h.\ limit function and recall the definition of the compact tube $\mathrm{T\left(\epsilon,\mathbf{W}^{i}\right)}$ $\mathrm{\subset \mathbf{X}^{i}}$ for $\mathrm{\mathbf{W}^{i}\subset \pi\left(\mathbf{X}^{i}\right)}$ a compact neighborhood and $\mathrm{\epsilon>0}$ given in {\bf{Section \ref{Uniform Localization Proposition}}}: $$\mathrm{T\left(\epsilon,\mathbf{W}^{i}\right)= \left(\varrho^{i}\times \pi\right)^{-1}\left([0,\epsilon]\times \mathbf{W}^{i}\right).}$$ Define the corresponding tube $\mathrm{\widetilde{\,T\,}\left(\epsilon,\mathbf{W}^{i}\right)}$ \label{Notation Compact Corresponding Neighborhood Tube} in $\mathrm{\widetilde{\bf{\,X\,}}}$ by $$\mathrm{\widetilde{\,T\,}\left(\epsilon,\mathbf{W}^{i}\right)\coloneqq \Sigma^{-1}\left(T\left(\epsilon,\mathbf{W}^{i}\right)\right)}$$ where we have used the fact that $\mathrm{\mathbf{X}^{ss}_{\xi}\supset T\left(\epsilon,\mathbf{W}^{i}\right)}$ is naturally embedded in $\mathrm{\widehat{\mathbf{\,\bm{X}\,}}}$ via $\mathrm{\zeta^{-1}\vert \mathbf{X}^{ss}_{\xi}}$ (recall that $\mathrm{\zeta\vert \zeta^{-1}\big(\mathbf{X}^{ss}_{\xi}\big)}$ is biholomorphic because $\mathrm{\mathbf{X}^{ss}_{\xi}}$ is assumed to be normal). Note that $\mathrm{\widetilde{\,T\,}\left(\epsilon,\mathbf{W}^{i}\right)}$ projects down via $\mathrm{\Pi}$ onto the compact neighborhood $\mathrm{\widetilde{\bf{W}}^{i}\coloneqq\sigma^{-1}\left(\mathbf{W}^{i}\right)}$. \label{Notation cal W^i}

The first aim of this section is to show that the fiber integral $\mathrm{vol\left(\pi^{-1}\left(y\right)\right)=\int_{\pi^{-1}\left(y\right)}d\,[\pi_{y}]}$ \label{Notation Volume of a Fiber Over Y_0} is bounded as $\mathrm{y}$ varies in $\mathrm{\mathbf{Y}_{0}}$.
\begin{lemma}\label{Lemma Equality of Volume} For all $\mathrm{y\in \mathbf{W}^{i}\cap \mathbf{Y}_{0}}$ we have $$\mathrm{vol\left(T\left(\epsilon,{\mathbf{W}}^{i}\right)\cap\pi^{-1}\left(y\right)\right)=vol\left(\widetilde{\,T\,}\left(\epsilon,{\bf{W}}^{i}\right)\cap\Pi^{-1}\left(y,{\bm{\mathfrak{C}}}_{y}\right)\right)}$$ where the right hand side is the fiber integral of the projection taken with respect to $\mathrm{\Omega^{k}}$ where $\mathrm{\Omega\coloneqq\Sigma^{*}\omega^{\prime}}$ \label{Notation Capital Omega}.

Furthermore, we have $$\mathrm{vol\left(\pi^{-1}\left(y\right)\right)\leq vol\left(\Pi^{-1}\left(y,{\bm{\mathfrak{C}}}_{y}\right)\right)}$$ for all $\mathrm{y \in\mathbf{Y}_{0}}$.
\end{lemma}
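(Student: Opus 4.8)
The plan is to reduce the lemma to two structural facts about the modifications involved and then quote the invariance of King's fiber integral under biholomorphisms together with the semipositivity of pulled-back Kähler forms. First I would record that, $\mathbf{X}^{ss}_{\xi}$ being normal, the normalization $\zeta$ is biholomorphic over the graph $\mathbf{\Gamma}_{\pi}$, so $\zeta^{-1}$ restricts to a biholomorphic open embedding $\iota\colon\mathbf{X}^{ss}_{\xi}\hookrightarrow\widehat{\mathbf{\,X\,}}$ with $\widehat{\pi}\circ\iota=\pi$ and $\iota^{*}\omega'=\omega$ (under $\iota$ the composite $p_{\mathbf{X}}|\,cl\,\mathbf{\Gamma}_{\pi}\circ\zeta$ becomes the identity of $\mathbf{X}^{ss}_{\xi}$). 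Second, since $\Sigma\colon\widetilde{\mathbf{\,X\,}}\to\widehat{\mathbf{\,X\,}}$ has center $\widehat{\pi}^{-1}(\mathbf{Y}\setminus\mathbf{Y}_{0})$, it is biholomorphic over $\widehat{\pi}^{-1}(\mathbf{Y}_{0})$; together with $\Omega=\Sigma^{*}\omega'$ and the identification $\Pi^{-1}(y,\bm{\mathfrak{C}}_{y})=|\bm{\mathfrak{C}}_{y}|=\widehat{\pi}^{-1}(y)$ (property \ref{Equation Equality Cycles Barlet Construction}), this yields, for each $y\in\mathbf{Y}_{0}$, a biholomorphism $\Pi^{-1}(y,\bm{\mathfrak{C}}_{y})\to\widehat{\pi}^{-1}(y)$ carrying $\Omega$ to $\omega'$. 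Finally I would note that over $\mathbf{Y}_{0}$ the Barlet classifying cycle $\bm{\mathfrak{C}}_{y}=\varphi^{\widehat{\pi}}(y)$ is the fiber cycle of the $k$-fibering $\widehat{\pi}$ at $y$ with its natural multiplicities (so the flat map $\Pi$ has exactly these as fiber multiplicities), and that on $|\bm{\mathfrak{C}}_{y}|\cap\iota(\mathbf{X}^{ss}_{\xi})$ these agree with the fiber multiplicities of the $k$-fibering $\pi\colon\mathbf{X}_{0}\to\mathbf{Y}_{0}$, since a fiber-cycle multiplicity is determined generically along each component and $\widehat{\pi}$ coincides with $\pi$ on that dense open set.

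For the first equality, since $T(\epsilon,\mathbf{W}^{i})\subset\mathbf{X}^{i}\subset\mathbf{X}^{ss}_{\xi}$ I would use $\iota$ to identify $T(\epsilon,\mathbf{W}^{i})\cap\widehat{\pi}^{-1}(y)$ with $T(\epsilon,\mathbf{W}^{i})\cap\pi^{-1}(y)$, while by definition $\widetilde{\,T\,}(\epsilon,\mathbf{W}^{i})\cap\Pi^{-1}(y,\bm{\mathfrak{C}}_{y})=\Sigma^{-1}\bigl(T(\epsilon,\mathbf{W}^{i})\cap\widehat{\pi}^{-1}(y)\bigr)$. Chaining $\Sigma^{-1}$ with $\iota^{-1}$, and transporting the $(k,k)$-forms ($\Omega\mapsto\omega'\mapsto\omega$) and the fiber multiplicities along the way, King's change-of-variables formula for fiber integrals (\cite{Kin}) then gives the asserted identity for every $y\in\mathbf{W}^{i}\cap\mathbf{Y}_{0}$.

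For the inequality, fix $y\in\mathbf{Y}_{0}$ and transport $\pi^{-1}(y)$ into $\widehat{\mathbf{\,X\,}}$ by $\iota$: this shows $\mathrm{vol}(\pi^{-1}(y))$ equals the integral of $\omega'^{k}$, taken with the $\widehat{\pi}$-fiber-cycle multiplicity, over the Zariski open subset $\iota(\mathbf{X}^{ss}_{\xi})\cap\widehat{\pi}^{-1}(y)=\iota(\pi^{-1}(y))$ of $|\bm{\mathfrak{C}}_{y}|=\widehat{\pi}^{-1}(y)$. Since $\omega'=(p_{\mathbf{X}}|\,cl\,\mathbf{\Gamma}_{\pi}\circ\zeta)^{*}\omega$ is a smooth semipositive $(1,1)$-form, $\omega'^{k}$ restricts to a nonnegative measure on the $k$-dimensional variety $|\bm{\mathfrak{C}}_{y}|$, so enlarging the domain of integration from that open piece to all of $|\bm{\mathfrak{C}}_{y}|$ only increases the value; transporting back by the biholomorphism $\Sigma$ over $\mathbf{Y}_{0}$ (which carries $\omega'$ to $\Omega$ and preserves fiber multiplicities) identifies the result with $\mathrm{vol}(\Pi^{-1}(y,\bm{\mathfrak{C}}_{y}))$. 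Hence $\mathrm{vol}(\pi^{-1}(y))\le\mathrm{vol}(\Pi^{-1}(y,\bm{\mathfrak{C}}_{y}))$; in particular the left-hand side is finite, which is the weightless analogue of the remark preceding the lemma.

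The hard part will be the bookkeeping of multiplicities in King's fiber integration: one must check that over $\mathbf{Y}_{0}$ the cycle $\bm{\mathfrak{C}}_{y}$ really carries the fiber-cycle multiplicities of $\widehat{\pi}$ — equivalently that $\Pi$ is flat with these as its fiber multiplicities — and that they restrict correctly to the $\pi$-multiplicities on $\iota(\mathbf{X}^{ss}_{\xi})$. Once this is pinned down, the rest is a routine combination of the biholomorphism-invariance of fiber integrals with the semipositivity of a pulled-back Kähler form, and the inequality comes solely from discarding the boundary components of $|\bm{\mathfrak{C}}_{y}|$, i.e.\ those not met by $\iota(\mathbf{X}^{ss}_{\xi})$.
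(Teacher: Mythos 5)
Your proposal is correct and proceeds along essentially the same lines as the paper's proof: both exploit that $\zeta$ is biholomorphic over $\mathbf{X}^{ss}_{\xi}$ (normality) and $\Sigma$ is biholomorphic over $\widehat{\pi}^{-1}(\mathbf{Y}_0)$ (modification with center in the complement), so $(\zeta\circ\Sigma)^{-1}$ identifies $\pi^{-1}(y)$ with an open subset of $\Pi^{-1}(y,\bm{\mathfrak{C}}_y)$ carrying $\omega$ to $\Omega$, which gives the first equality by pull-back of the tube and the second by discarding the complementary pieces of $|\bm{\mathfrak{C}}_y|$. The only difference is that you make explicit the multiplicity bookkeeping for King's fiber integral, which the paper passes over in silence; your observation that these multiplicities are preserved under the biholomorphism over $\mathbf{Y}_0$ is exactly what the paper is implicitly using, so this is a point in favor of your write-up rather than a divergence of method.
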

\begin{proof} This is a direct consequence of the following reasoning: Recall that $\mathrm{\Sigma}$ is a modification with center $\mathrm{\widehat{\pi}^{-1}\left(\mathbf{Y}\setminus \mathbf{Y}_{0}\right)}$ and $\mathrm{\zeta\vert\zeta^{-1}\big(\mathbf{X}^{ss}_{\xi}\big)}$ is an isomorphism (as before we consider $\mathrm{\mathbf{X}^{ss}_{\xi}}$ embedded in $\mathrm{\mathbf{\Gamma}_{\pi}}$). Hence, it follows that the open subset $$\mathrm{\Pi^{-1}\left(\zeta^{-1}\left(\mathbf{X}_{0}\right)\right), \text{ where }\mathbf{X}_{0}=\pi^{-1}\left(\mathbf{Y}_{0}\right)}$$ in $\mathrm{\widehat{\,\mathbf{X}\,}}$ is mapped isomorphically on $\mathrm{\pi^{-1}\left(\mathbf{Y}_{0}\right)}$ by $\mathrm{\zeta\circ\Sigma}$. Therefore, if $\mathrm{y\in \mathbf{Y}_{0}}$, we deduce that $\mathrm{\pi^{-1}\left(y\right)}$ is biholomorphic to $$\mathrm{\Pi^{-1}\left(y,{\bm{\mathfrak{C}}}_{y}\right)\cap \Sigma^{-1}\big(\zeta^{-1}\left(\mathbf{X}^{ss}_{\xi}\right)\big)}$$ via $\mathrm{\left(\zeta\circ \Sigma\right)^{-1}}$. Using the fact that $\mathrm{\widetilde{\,T\,}\left(\epsilon,{\bf{W}}^{i}\right)}$ is defined as the pull back of $\mathrm{T\left(\epsilon,{\bf{W}}^{i}\right)}$ and that $\mathrm{\Omega=(\zeta\circ}$ $\mathrm{\Sigma)^{*}\omega}$, it follows that the volume of $\mathrm{T\left(\epsilon,{\mathbf{W}}^{i}\right)\cap\pi^{-1}\left(y\right)}$ with respect to $\mathrm{\omega}$ is equal to the volume of $\mathrm{\widetilde{\,T\,}\left(\epsilon,{\bf{W}}^{i}\right)\cap\Pi^{-1}\left(y,{\bm{\mathfrak{C}}}_{y}\right)}$ with respect to $\mathrm{\Omega}$, which proves the first claim.

The second claim is an immediate consequence of the above argumentation: Via $\mathrm{\left(\zeta\circ \Sigma\right)^{-1}}$, the fiber $\mathrm{\pi^{-1}\left(y\right)}$, where $\mathrm{y\in \mathbf{Y}_{0}}$, is biholomorphic to $$\mathrm{\Pi^{-1}\left(y,{\bm{\mathfrak{C}}}_{y}\right)\cap \Sigma^{-1}\big(\zeta^{-1}\left(\mathbf{X}^{ss}_{\xi}\right)\big).}$$ In particular, it can be seen as a subset of $\mathrm{\Pi^{-1}\left(y,{\bm{\mathfrak{C}}}_{y}\right)}$ realized by $\mathrm{\left(\zeta\circ \Sigma\right)^{-1}}$.  
\end{proof}

\begin{remark} In general, the inequality in {\bf{Lemma \ref{Lemma Equality of Volume}}} is a strict inequality. This is exhibited in {\bf{Example \ref{Example Proper Inclusion}}}, where $$\mathrm{cl\left(\pi^{-1}\left(\left[1\!:\!0\right]\right)\right)=\left\{ z_{1}=0\right\} \text{ and }cl\left(\pi^{-1}\left(\left[0\!:\!1\right]\right)\right)=\left\{ \zeta_{1}=0\right\}}$$ on the one hand and $$\mathrm{p_{\mathbf{X}}\left(\widehat{\pi}^{-1}\left(\left[1\!:\!0\right]\right)\right)=\left\{ z_{1}=0\right\} \cup\left\{ \zeta_{0}=0\right\} \text{ resp. }}$$ $$\mathrm{p_{\mathbf{X}}\left(\widehat{\pi}^{-1}\left(\left[0\!:\!1\right]\right)\right)=\left\{ \zeta_{1}=0\right\} \cup\left\{ z_{0}=0\right\} }$$ on the other hand. Hence, $\mathrm{cl\left(\pi^{-1}\left(\left[\zeta_{i}\right]\right)\right)}$ $\mathrm{i\in\left\{0,1\right\}}$ is properly contained as an irreducible component in $\mathrm{p_{\mathbf{X}}\left(\widehat{\pi}^{-1}\left([\zeta_{i}]\right)\right)}$.
\end{remark}

As a direct consequence of {\bf{Lemma \ref{Lemma Equality of Volume}}}, we deduce the following two corollaries.

\begin{cor}\label{Boundedness of Fiber Integral} Let $\mathrm{\mathbf{Y}_{0}}$ be as in {\bf{Section \ref{Definition of Y_{0}}}}, then there exists a constant $\mathrm{C>0}$ so that $$\mathrm{vol\left(\pi^{-1}\left(y\right)\right)=\int_{\pi^{-1}\left(y\right)}\,d\,[\pi_{y}]\leq C}$$ for all $\mathrm{y\in \mathbf{Y}_{0}}$.
\end{cor}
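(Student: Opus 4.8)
The plan is to reduce the statement, via {\bf{Lemma \ref{Lemma Equality of Volume}}}, to the boundedness of the fiber-volume function of the $\mathrm{k}$-fibering $\mathrm{\Pi\!:\!\widetilde{\bf{\,X\,}}\rightarrow \widetilde{\bf{\,Y\,}}}$, and then to use that $\mathrm{\widetilde{\bf{\,Y\,}}}$ is compact. This corollary is precisely where the Barlet flattening of the previous subsection (cf.\ \cite{Bar1}) pays off: its whole point was to realize the fibering $\mathrm{\pi\vert \mathbf{X}_{0}}$ over the \emph{noncompact} base $\mathrm{\mathbf{Y}_{0}}$ as a piece of a $\mathrm{k}$-fibering over the \emph{compact} base $\mathrm{\widetilde{\bf{\,Y\,}}}$. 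By {\bf{Lemma \ref{Lemma Equality of Volume}}} we already have $$\mathrm{vol\left(\pi^{-1}\left(y\right)\right)\leq vol\left(\Pi^{-1}\left(y,{\bm{\mathfrak{C}}}_{y}\right)\right)=\int_{\Pi^{-1}\left(y,{\bm{\mathfrak{C}}}_{y}\right)}\Omega^{k}\text{ for all }y\in \mathbf{Y}_{0},}$$ where $\mathrm{\Omega=\Sigma^{*}\omega^{\prime}}$ and the fiber integral on the right is taken with the multiplicities of the $\mathrm{k}$-fibering $\mathrm{\Pi}$ as in \cite{Kin}; moreover $\mathrm{\left(y,{\bm{\mathfrak{C}}}_{y}\right)=\left(y,\varphi^{\widehat{\pi}}\left(y\right)\right)\in \widetilde{\bf{\,Y\,}}}$ for every $\mathrm{y\in \mathbf{Y}_{0}}$ by the definition of $\mathrm{\widetilde{\bf{\,Y\,}}}$. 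Hence it suffices to bound, uniformly in $\mathrm{\widetilde{y}\in \widetilde{\bf{\,Y\,}}}$, the fiber volume $\mathrm{\widetilde{y}\mapsto vol\left(\Pi^{-1}\left(\widetilde{y}\right)\right)=\int_{\Pi^{-1}\left(\widetilde{y}\right)}\Omega^{k}}$.

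First I would recall (cf.\ the Remark preceding {\bf{Lemma \ref{Invariance of Cycles}}}) that $\mathrm{\Pi}$ is a proper holomorphic map between purely dimensional complex spaces all of whose fibers are compact of pure dimension $\mathrm{k}$ — that is, a $\mathrm{k}$-fibering — and that $\mathrm{\Omega^{k}}$ is a smooth form on the \emph{compact} space $\mathrm{\widetilde{\bf{\,X\,}}}$. Consequently the fiber integral $$\mathrm{\widetilde{y}\longmapsto vol\left(\Pi^{-1}\left(\widetilde{y}\right)\right)=\int_{\Pi^{-1}\left(\widetilde{y}\right)}\Omega^{k}}$$ is a well-defined, nonnegative, real-valued function on $\mathrm{\widetilde{\bf{\,Y\,}}}$, and by the continuity of fiber integration along a $\mathrm{k}$-fibering (cf.\ \cite{Kin}) it is continuous on $\mathrm{\widetilde{\bf{\,Y\,}}}$; equivalently, one composes the continuous projection $\mathrm{\widetilde{\bf{\,Y\,}}\ni\left(y,{\bm{\mathfrak{C}}}\right)\mapsto {\bm{\mathfrak{C}}}\in \mathcal{C}^{k}(\widehat{\bf{\,X\,}})}$ with the map $\mathrm{{\bm{\mathfrak{C}}}\mapsto \int_{\vert {\bm{\mathfrak{C}}}\vert}\left(\omega^{\prime}\right)^{k}}$, which depends continuously on the cycle $\mathrm{{\bm{\mathfrak{C}}}}$ in the Barlet topology. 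The only point to watch is that $\mathrm{\Omega}$ is the $\mathrm{\Sigma}$-pull-back of $\mathrm{\omega^{\prime}}$ and hence only semipositive — it degenerates along the exceptional set of $\mathrm{\Sigma}$ — but this is immaterial here, since continuity (and not strict positivity) of the fiber integral is all that is needed; if one prefers to invoke \cite{Kin} under its cleanest hypotheses, one may first pass to the normalization of $\mathrm{\widetilde{\bf{\,Y\,}}}$ (and $\mathrm{\widetilde{\bf{\,X\,}}}$), which is again compact.

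Since $\mathrm{\widetilde{\bf{\,Y\,}}}$ is compact, this continuous function attains a finite maximum $$\mathrm{C\coloneqq \max_{\widetilde{y}\in \widetilde{\bf{\,Y\,}}}\,vol\left(\Pi^{-1}\left(\widetilde{y}\right)\right)>0,}$$ and feeding this back into {\bf{Lemma \ref{Lemma Equality of Volume}}} yields $\mathrm{vol\left(\pi^{-1}\left(y\right)\right)\leq vol\left(\Pi^{-1}\left(y,{\bm{\mathfrak{C}}}_{y}\right)\right)\leq C}$ for all $\mathrm{y\in \mathbf{Y}_{0}}$, which is the assertion. I do not expect a genuine obstacle here: all the real work has already been absorbed into the Barlet flattening $\mathrm{\Pi}$ and into {\bf{Lemma \ref{Lemma Equality of Volume}}}, and the corollary reduces to the statement that a continuous function on a compact space is bounded, applied to the fiber-volume function of $\mathrm{\Pi}$. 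The one thing that must be verified — and that the construction of the previous subsection was designed to furnish — is that $\mathrm{\Pi}$ is indeed a proper $\mathrm{k}$-fibering with compact total and base spaces, so that the continuity of fiber integration is actually available.
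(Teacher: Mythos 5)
Your proof is correct, but it takes a different route from the paper for the final step. Both proofs start identically from the second claim of \textbf{Lemma \ref{Lemma Equality of Volume}}, $\mathrm{vol\left(\pi^{-1}\left(y\right)\right)\leq vol\left(\Pi^{-1}\left(y,{\bm{\mathfrak{C}}}_{y}\right)\right)}$ for $\mathrm{y\in \mathbf{Y}_{0}}$. The paper then finishes in one stroke: since the projection of $\mathrm{\widetilde{\bf{\,Y\,}}}$ to $\mathrm{\mathcal{C}^{k}\big(\widehat{\bf{\,X\,}}\big)}$ is compact, it invokes the result of \textsc{Barlet} (cited as \cite{Bar2}) that the volumes of cycles contained in a compact subset of the cycle space are uniformly bounded. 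You instead derive the bound from the continuity of the fiber-volume function $\mathrm{\widetilde{y}\mapsto \int_{\Pi^{-1}\left(\widetilde{y}\right)}\Omega^{k}}$ on the compact base $\mathrm{\widetilde{\bf{\,Y\,}}}$, appealing to King's continuity of fiber integration \cite{Kin}, with the honest caveat that $\mathrm{\widetilde{\bf{\,Y\,}}}$ may fail to be normal so one should first pass to its normalization. That maneuver is exactly what the paper does later in \textbf{Lemma \ref{Lemma Fiber Integration}} (en route to \textbf{Proposition \ref{Continuity of Fiber Integral}}), so your argument is consistent with the paper's toolkit; you have essentially proved a weak form of \textbf{Proposition \ref{Continuity of Fiber Integral}} and used it to conclude, whereas the paper keeps this corollary cheap by going directly through the Barlet cycle-volume bound. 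You even flag the Barlet alternative yourself, as the continuity of $\mathrm{{\bm{\mathfrak{C}}}\mapsto \int_{\vert {\bm{\mathfrak{C}}}\vert}\left(\omega^{\prime}\right)^{k}}$, so the two routes are visibly the same estimate seen from two sides: one quotes the cycle-space result as a black box, the other rebuilds its boundedness consequence from King plus compactness. Both are sound; the paper's is shorter and keeps the normalization step where it is actually needed, while yours is more self-contained within the machinery the paper has already formalized.
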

\begin{proof} By the second claim of {\bf{Lemma \ref{Lemma Equality of Volume}}}, we have $$\mathrm{vol\left(\pi^{-1}\left(y\right)\right)\leq vol\left(\Pi^{-1}\left(y,{\bm{\mathfrak{C}}}_{y}\right)\right)\text{ for all }y\in\mathbf{Y}_{0}.}$$ Since the projection of $\mathrm{\widetilde{\bf{\,Y\,}}}$ on $\mathrm{\mathcal{C}^{k}\big(\widehat{\bf{\,X\,}}\big)}$ is a compact subset, the claim then follows by the fact that the volumes of all cycles which are contained in a compact subset of $\mathrm{\mathcal{C}^{k}\big(\widehat{\bf{\,X\,}}\big)}$ are uniformly bounded from above ({cf.\! }\cite{Bar2}).
\end{proof}

\begin{lemma}\label{Lemma Intersection Zero Measure} Let $\mathrm{\left(y,{\bm{\mathfrak{C}}}\right)\in\widetilde{\bf{\,Y\,}}}$, then $\mathrm{\Pi^{-1}\left(y,{\bm{\mathfrak{C}}}_{y}\right)\cap \widetilde{\,T\,}\left(0,\mathbf{W}^{i}\right)}$ is of measure zero concerning the measure induced by $\mathrm{\Omega}$.

Moreover, the restriction of the form $\mathrm{\Omega}$ on $\mathrm{\Pi^{-1}\left(y,{\bm{\mathfrak{C}}}\right)\cap \widetilde{\,T\,}\left(\epsilon,\mathbf{W}^{i}\right)}$ where $\mathrm{\epsilon>0}$ is non-zero.
\end{lemma}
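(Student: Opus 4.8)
The plan is to work with the explicit shape of the form $\Omega$. Write $q\coloneqq p_{\mathbf{X}}\vert cl\,\mathbf{\Gamma}_{\pi}\circ\zeta\circ\Sigma\colon\widetilde{\mathbf{X}}\to\mathbf{X}$, so that $\Omega=\Sigma^{*}\omega'=q^{*}\omega$. The decisive preliminary observation is that $q$ restricted to a single fibre $\Pi^{-1}(y,\bm{\mathfrak{C}})$ is a \emph{finite} map onto its image. Indeed, under the identification $\Sigma^{-1}(y,\bm{\mathfrak{C}})=\vert\bm{\mathfrak{C}}\vert$ one has $\vert\bm{\mathfrak{C}}\vert\subset\widehat{\pi}^{-1}(y)$, hence $\zeta(\vert\bm{\mathfrak{C}}\vert)$ lies in the slice $cl\,\mathbf{\Gamma}_{\pi}\cap(\mathbf{X}\times\{y\})$ on which $p_{\mathbf{X}}$ is injective; since $\zeta$ is finite and $\vert\bm{\mathfrak{C}}\vert$ is of pure dimension $k$ by Barlet's construction, the image $\mathbf{V}\coloneqq q(\Pi^{-1}(y,\bm{\mathfrak{C}}))$ is a pure $k$-dimensional analytic subset of $\mathbf{X}$ and $q\vert\Pi^{-1}(y,\bm{\mathfrak{C}})$ is finite onto it. Consequently $\Omega^{k}=q^{*}\omega^{k}$ restricts on $\Pi^{-1}(y,\bm{\mathfrak{C}})$ to a measure that is strictly positive on the open dense locus where $dq$ is injective and vanishes identically off it; so every non-empty Euclidean-open subset of $\Pi^{-1}(y,\bm{\mathfrak{C}})$ has positive $\Omega^{k}$-volume, and the $q$-preimage of any $\omega^{k}$-null subset of $\mathbf{V}$ is $\Omega^{k}$-null.

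For the first assertion, recall that on $\mathbf{X}^{i}$ the normalised potential satisfies $\varrho^{i}\geq0$ and, by the proof of \textbf{Theorem \ref{Theorem Localization of the Sequence of Potential Functions}}, attains the value $0$ in each fibre exactly along $\mu^{-1}(\xi)\cap\pi^{-1}(y)$. Unwinding the definition of $\widetilde{T}(0,\mathbf{W}^{i})=\Sigma^{-1}(T(0,\mathbf{W}^{i}))$ together with the fact that $p_{\mathbf{X}}\circ\zeta$ is the identity on the copy of $\mathbf{X}^{ss}_{\xi}$ inside $\widehat{\mathbf{X}}$, one gets $q\big(\Pi^{-1}(y,\bm{\mathfrak{C}})\cap\widetilde{T}(0,\mathbf{W}^{i})\big)\subset\mathbf{N}\coloneqq\{\varrho^{i}=0\}\cap\mathbf{V}\cap\mathbf{X}^{i}$. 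Since $\varrho^{i}$ is smooth and strictly plurisubharmonic on $\mathbf{X}^{i}$ (\textbf{Proposition \ref{Proposition Uniform Convergence Potential Functions in the Abelian Case}}), its restriction to the complex subvariety $\mathbf{V}\cap\mathbf{X}^{i}$ is again strictly plurisubharmonic and $\geq0$, and the zero locus of such a function on a positive-dimensional complex space is Lebesgue-null (at a zero it attains a minimum where, by positivity of the complex Hessian, it grows at least quadratically transversally to a real hypersurface). Hence $\mathbf{N}$ is null in $\mathbf{V}$, and by the first paragraph $\Pi^{-1}(y,\bm{\mathfrak{C}})\cap\widetilde{T}(0,\mathbf{W}^{i})\subset q^{-1}(\mathbf{N})$ is $\Omega^{k}$-null.

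For the second assertion we may assume $y$ lies in the interior of $\mathbf{W}^{i}$ and that the set in question is non-empty. The tube $\widetilde{T}(\epsilon,\mathbf{W}^{i})$ with $\epsilon>0$ contains a full Euclidean-open neighbourhood in $\widetilde{\mathbf{X}}$ of $\widetilde{T}(0,\mathbf{W}^{i})$, and $\Pi^{-1}(y,\bm{\mathfrak{C}})\cap\widetilde{T}(0,\mathbf{W}^{i})$ is non-empty: picking $y_{n}\in\mathbf{Y}_{0}$ with $y_{n}\to y$, the critical-orbit points $\zeta^{-1}(z_{y_{n}})\in\widehat{\pi}^{-1}(y_{n})=\vert\bm{\mathfrak{C}}_{y_{n}}\vert$ lie in the closed set $\mu^{-1}(\xi)$ and, since $\vert\bm{\mathfrak{C}}_{y_{n}}\vert\to\vert\bm{\mathfrak{C}}\vert$ in the Hausdorff topology of supports, accumulate at a point of $\vert\bm{\mathfrak{C}}\vert\cap\mu^{-1}(\xi)\cap\widetilde{T}(\epsilon,\mathbf{W}^{i})$. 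Therefore $\Pi^{-1}(y,\bm{\mathfrak{C}})\cap\widetilde{T}(\epsilon,\mathbf{W}^{i})$ contains a non-empty open subset of $\Pi^{-1}(y,\bm{\mathfrak{C}})$, which by the first paragraph has positive $\Omega^{k}$-volume; in particular $\Omega$ does not restrict to the zero form there.

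The main obstacle is the first paragraph: one must guarantee that $q$ does not collapse a $\Pi$-fibre to lower dimension, for otherwise $\Omega^{k}$ could vanish identically on it and neither statement would make sense. This is precisely what the dimensional-theoretical flattening of \textsc{Barlet} provides — the (limit) cycles $\bm{\mathfrak{C}}$ are honestly $k$-dimensional and their supports sit inside a single $\widehat{\pi}$-fibre, so after $\zeta$ they land in a slice $\mathbf{X}\times\{y\}$ and $q$ stays finite along them. A secondary technical point is the non-emptiness argument over $\mathbf{Y}\setminus\mathbf{Y}_{0}$, which is handled by Hausdorff-continuity of cycle supports and the closedness of $\mu^{-1}(\xi)$ (equivalently $\mu^{-1}(\xi)\subset\mathbf{X}^{ss}_{\xi}$); everything else reduces to the two routine facts that zero sets of strictly plurisubharmonic functions are null and that non-empty open subsets of a pure-dimensional complex space have positive Kähler volume.
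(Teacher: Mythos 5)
Your proof is correct, and the second claim is handled in essentially the same spirit as the paper (biholomorphism over $\mathrm{\mathbf{X}^{ss}_{\xi}}$ plus positivity of $\mathrm{\omega^{k}}$ on $\mathrm{k}$-dimensional complex subspaces), but your argument for the first claim is genuinely different from the paper's. The paper observes that $\mathrm{\pi^{-1}(y)\cap T(0,\mathbf{W}^{i})=\mu^{-1}(\xi)\cap\pi^{-1}(y)}$ is the $\mathrm{T}$-orbit $\mathrm{T.z_{y}}$, which sits as a \emph{totally real} submanifold inside the unique closed $\mathrm{\mathbb{T}}$-orbit $\mathrm{\mathbb{T}.z_{y}}$; being totally real in a complex manifold of dimension $\mathrm{\leq k}$ it has real dimension $\mathrm{\leq k<2k}$, hence zero $\mathrm{\Omega^{k}}$-measure. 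You instead identify $\mathrm{T(0,\mathbf{W}^{i})}$ as the zero level set of the nonnegative strictly plurisubharmonic function $\mathrm{\varrho^{i}}$ and appeal to the fact that such zero sets are Lebesgue-null on any positive-dimensional complex space. Both are valid; the paper's route exploits the Hamiltonian orbit geometry and is arguably more structural, while yours is closer to the analytic machinery already running throughout Section III and works without ever mentioning orbits. A small cosmetic point: you cite \textbf{Proposition \ref{Proposition Uniform Convergence Potential Functions in the Abelian Case}} for strict plurisubharmonicity, but recall that the $\mathrm{\varrho^{i}}$ used in defining $\mathrm{T(\epsilon,\mathbf{W}^{i})}$ is the \emph{normalized} $\mathrm{\hat{\varrho}^{i}=\varrho^{i}-\pi^{*}\varrho^{i}_{red}}$; this differs from the original by the pullback of a function from the base, which is constant along each fiber, so the restriction to $\mathrm{\mathbf{V}\cap\mathbf{X}^{i}}$ is still strictly plurisubharmonic and your conclusion stands, but the argument deserves that one remark. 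You also supply an explicit non-emptiness argument for the $\mathrm{\epsilon>0}$ intersection via Hausdorff-continuity of cycle supports, which the paper leaves implicit; this is worth having made precise.
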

\begin{proof} First of all note, that we can view $\mathrm{\Pi^{-1}\left(y,{\bm{\mathfrak{C}}}\right)\cap \widetilde{\,T\,}\left(\epsilon,\mathbf{W}^{i}\right)}$ as a $\mathrm{\mathbb{T}}$-invariant, closed, $\mathrm{k}$-dimensional complex subspace in $\mathrm{\pi^{-1}\left(y\right)\cap T\left(\epsilon,\mathbf{W}^{i}\right)}$. In fact, each fiber $\mathrm{\Pi^{-1}\left(y,{\bm{\mathfrak{C}}}\right)}$ is given by the $\mathrm{\mathbb{T}}$-invariant, $\mathrm{k}$-dimensional subvariety $\mathrm{\vert{\bm{\mathfrak{C}}}\vert\subset \widehat{\,\bf{X}\,}}$. The identification is then induced by $\mathrm{p_{\mathbf{X}}\vert cl\left({\mathbf{\Gamma}}_{\pi}\right)\circ\zeta}$ which is biholomorphic over $\mathrm{\mathbf{X}^{ss}_{\xi}\supset T\left(\epsilon,{\mathbf{W}}^{i}\right)}$. Now, the second claim of the lemma is an immediate consequence of this fact combined with $\mathrm{\omega^{\prime}=\left(p_{\mathbf{X}}\vert cl\left({\mathbf{\Gamma}}_{\pi}\right)\circ\zeta\right)^{*}\omega}$. 

The first claim follows from the fact that the minimal closed, $\mathrm{\mathbb{T}}$-invariant complex space of $\mathrm{\pi^{-1}\left(y\right)\cap T\left(\epsilon,\mathbf{W}^{i}\right)}$ containing $$\mathrm{\pi^{-1}\left(y\right)\cap T\left(0,\mathbf{W}^{i}\right)=\pi^{-1}\left(y\right)\cap \mu^{-1}\left(\xi\right)}$$ is given by the unique closed orbit $\mathrm{\mathbb{T}.z_{y}}$ which contains $\mathrm{\pi^{-1}\left(y\right)\cap T\left(0,\mathbf{W}^{i}\right)}$ as a total real submanifold.
\end{proof}

The rest of this section is devoted to the proof of the existence of uniform estimates concerning the fiber volume.

\begin{prop}\label{Continuity of Fiber Integral} Let $\mathrm{\Delta>0}$ and $\mathrm{\mathbf{W}^{i}\subset \pi\left(\mathbf{X}^{i}\right)}$ be as above, then there exists $\mathrm{\epsilon_{\Delta}>0}$ so that $$\mathrm{vol\left(\pi^{-1}\left(y\right)\cap T\left(\epsilon_{\Delta},\mathbf{W}^{i}\right) \right)\leq\Delta}$$ for all $\mathrm{y\in \mathbf{W}^{i}\cap \mathbf{Y}_{0}}$. 

Moreover, if $\mathrm{\epsilon>0}$, then there exists $\mathrm{\delta>0}$ so that $$\mathrm{\delta\leq vol\left(\pi^{-1}\left(y\right)\cap T\left(\epsilon,\mathbf{W}^{i}\right) \right)}$$ for all $\mathrm{y\in \mathbf{W}^{i}\cap \mathbf{Y}_{0}}$.
\end{prop}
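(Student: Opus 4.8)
The plan is to transfer both inequalities, fibre by fibre, to the compact flattening $\Pi\colon\widetilde{\mathbf X}\to\widetilde{\mathbf Y}$, whose fibres are all compact and purely $k$-dimensional and on which fibre (cycle) integration of continuous forms depends continuously on the point of $\widetilde{\mathbf Y}$. Two preliminary remarks make this possible. First, since $\varrho^i\ge 0$ and $\pi^{-1}(y)\subset\pi^{-1}(\mathbf W^i)$ whenever $y\in\mathbf W^i$, one has $T(\epsilon,\mathbf W^i)\cap\pi^{-1}(y)=\{\varrho^i\le\epsilon\}\cap\pi^{-1}(y)$, so the quantity to be bounded is intrinsic to $y$ and $\epsilon$. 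Second, by {\bf Lemma~\ref{Lemma Equality of Volume}} this number equals $\mathrm{vol}\big(\widetilde{\,T\,}(\epsilon,\mathbf W^i)\cap\Pi^{-1}(y,\bm{\mathfrak C}_y)\big)$ for $y\in\mathbf W^i\cap\mathbf Y_0$, and by construction of $\widetilde{\mathbf Y}$ the points $(y,\bm{\mathfrak C}_y)$ with $y\in\mathbf W^i\cap\mathbf Y_0$ are dense in the \emph{compact} set $\widetilde{\mathbf W}^i=\sigma^{-1}(\mathbf W^i)$. It therefore suffices to estimate the function $(y,\bm{\mathfrak C})\mapsto\mathrm{vol}\big(\widetilde{\,T\,}(\epsilon,\mathbf W^i)\cap\Pi^{-1}(y,\bm{\mathfrak C})\big)$ uniformly over $\widetilde{\mathbf W}^i$.

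For the upper estimate I would dominate $\mathbf 1_{\widetilde{\,T\,}(\epsilon,\mathbf W^i)}$ by a continuous cutoff. Fix a decreasing continuous $\psi\colon[0,\infty)\to[0,1]$ with $\psi\equiv 1$ on $[0,1]$ and $\psi\equiv 0$ on $[2,\infty)$, and put $\chi_\epsilon:=\psi\big((\varrho^i\circ\Sigma)/\epsilon\big)$ on $\Sigma^{-1}(\mathbf X^i)$, extended by $0$. Since $\mathbf X^i$ is $\pi$-saturated ({\bf Theorem~1}) and $\varrho^i\times\pi$ is proper over $\mathbf W^i$ (properness lemma of Section~\ref{Uniform Localization Proposition}), $\varrho^i\circ\Sigma$ tends to $+\infty$ at the boundary of $\Sigma^{-1}(\mathbf X^i)$ inside $\Pi^{-1}(\widetilde{\mathbf W}^i)$, so $\chi_\epsilon$ is continuous there; it dominates $\mathbf 1_{\widetilde{\,T\,}(\epsilon,\mathbf W^i)}$, is nonincreasing in $\epsilon$, and decreases pointwise to $\mathbf 1_{\{\varrho^i\circ\Sigma=0\}}$. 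Set $g_\epsilon(y,\bm{\mathfrak C}):=\int_{\Pi^{-1}(y,\bm{\mathfrak C})}\chi_\epsilon\,\Omega^k$. By continuity of fibre integration (cf.\ \cite{Kin}, \cite{Bar1}, \cite{Bar2}) each $g_\epsilon$ is continuous on $\widetilde{\mathbf W}^i$; by the monotone convergence theorem — the fibres have finite volume by {\bf Corollary~\ref{Boundedness of Fiber Integral}} — together with {\bf Lemma~\ref{Lemma Intersection Zero Measure}}, which says $\widetilde{\,T\,}(0,\mathbf W^i)=\{\varrho^i\circ\Sigma=0\}$ meets every fibre in a null set, $g_\epsilon\searrow 0$ pointwise. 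Dini's theorem gives $\sup_{\widetilde{\mathbf W}^i}g_\epsilon\to 0$, so any $\epsilon_\Delta$ with $\sup g_{\epsilon_\Delta}\le\Delta$ works, because $\mathrm{vol}(\pi^{-1}(y)\cap T(\epsilon_\Delta,\mathbf W^i))\le g_{\epsilon_\Delta}(y,\bm{\mathfrak C}_y)$ for $y\in\mathbf W^i\cap\mathbf Y_0$ (using once more the biholomorphism of {\bf Lemma~\ref{Lemma Equality of Volume}} over $\mathbf Y_0$).

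For the lower estimate I would instead use the open tube $\{\varrho^i\circ\Sigma<\epsilon\}$ and set $\underline L_\epsilon(y,\bm{\mathfrak C}):=\mathrm{vol}\big(\{\varrho^i\circ\Sigma<\epsilon\}\cap\Pi^{-1}(y,\bm{\mathfrak C})\big)$, finite on $\widetilde{\mathbf W}^i$ by {\bf Corollary~\ref{Boundedness of Fiber Integral}}. Expressing $\underline L_\epsilon(y,\bm{\mathfrak C})$ as the supremum of $\int_{\Pi^{-1}(y,\bm{\mathfrak C})}\underline\chi\,\Omega^k$ over continuous $\underline\chi$ with $0\le\underline\chi\le 1$ and compact support in $\{\varrho^i\circ\Sigma<\epsilon\}$ (inner regularity of the fibre measure on an open set) and again invoking continuity of fibre integration, $\underline L_\epsilon$ is lower semicontinuous on $\widetilde{\mathbf W}^i$. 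It is strictly positive there: for $(y,\bm{\mathfrak C})\in\widetilde{\mathbf W}^i$ the set $\{\varrho^i\circ\Sigma<\epsilon\}\cap\Pi^{-1}(y,\bm{\mathfrak C})$ contains $\widetilde{\,T\,}(\epsilon/2,\mathbf W^i)\cap\Pi^{-1}(y,\bm{\mathfrak C})$, on which — by {\bf Lemma~\ref{Lemma Intersection Zero Measure}} — the semipositive form $\Omega$ is non-zero, so this $k$-dimensional complex subspace has positive volume. A lower semicontinuous positive function on the compact $\widetilde{\mathbf W}^i$ attains a positive minimum $\delta$; and for $y\in\mathbf W^i\cap\mathbf Y_0$ one has $\mathrm{vol}(\pi^{-1}(y)\cap T(\epsilon,\mathbf W^i))=\mathrm{vol}(\{\varrho^i\le\epsilon\}\cap\pi^{-1}(y))\ge\mathrm{vol}(\{\varrho^i<\epsilon\}\cap\pi^{-1}(y))=\underline L_\epsilon(y,\bm{\mathfrak C}_y)\ge\delta$, the middle equality again coming from the biholomorphism of {\bf Lemma~\ref{Lemma Equality of Volume}} over $\mathbf Y_0$. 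This $\delta$ is the one claimed.

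The whole uniformity in $y$ has thus been shifted onto {\bf Lemma~\ref{Lemma Equality of Volume}} and {\bf Lemma~\ref{Lemma Intersection Zero Measure}}, and what remains is analytic bookkeeping. I expect the real obstacle to be twofold. First, producing $\chi_\epsilon$ (and the auxiliary cutoffs $\underline\chi$) as genuinely continuous functions on the possibly singular compact space $\widetilde{\mathbf X}$: this forces one to exploit that $\varrho^i$ lives only on $\mathbf X^i$ together with the properness of $\varrho^i\times\pi$ over compact base neighbourhoods and the $\pi$-saturatedness of $\mathbf X^i$, so that the extension by zero is continuous over $\Pi^{-1}(\widetilde{\mathbf W}^i)$. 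Second, the precise form of the continuity of fibre integration for $\Pi\colon\widetilde{\mathbf X}\to\widetilde{\mathbf Y}$: since $\widetilde{\mathbf Y}$ is only a modification of the normal space $\mathbf Y$ and need not itself be normal, this is cleanest when phrased through the tautological family over the compact cycle space $\mathcal C^k(\widehat{\mathbf X})$ and Barlet's continuity of cycle integration of continuous forms (\cite{Bar1}, \cite{Bar2}).
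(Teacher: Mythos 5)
Your proposal is correct and follows essentially the same strategy as the paper: pass to the compact flattening via \textbf{Lemma~\ref{Lemma Equality of Volume}}, squeeze the indicator of the tube between monotone families of continuous cutoffs, and apply Dini's theorem together with continuity of fiber integration over the compact set $\widetilde{\mathbf{W}}^{i}=\sigma^{-1}(\mathbf{W}^{i})$. Two small implementation differences: the paper uses a discrete sequence of smooth cutoffs $\psi_n$ attached to a sequence $\epsilon_n\downarrow 0$ rather than your one-parameter family $\chi_\epsilon$, and for the lower bound it takes a single fixed $\Omega_{n_0}=\psi_{n_0}\Omega^{k}$ whose fiber integral is a continuous, strictly positive function on $\widetilde{\mathbf{W}}^{i}$ bounded above by the tube volume, rather than going through your inner-regularity/lower-semicontinuity argument — equivalent in substance. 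The one point you flag at the end but leave open, namely continuity of fiber integration when $\widetilde{\mathbf{Y}}$ need not be normal, is precisely what the paper's \textbf{Lemma~\ref{Lemma Fiber Integration}} settles: it pulls $\Pi$ back along the normalization $\xi\colon\widetilde{\mathbf{Y}}^{nor}\to\widetilde{\mathbf{Y}}$, applies King's theorem and Dini over the compact normal base $\widetilde{\mathbf{W}}^{nor}=\xi^{-1}(\widetilde{\mathbf{W}}^{i})$, and notes that the fiber integrals are unchanged under this pullback. Your alternative of running the continuity argument over the cycle space $\mathcal{C}^{k}(\widehat{\mathbf{X}})$ via Barlet's theory of cycle integration would serve the same purpose.
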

\begin{proof} Choose a sequence $\mathrm{\epsilon_{n}\rightarrow 0}$ and a sequence $\mathrm{\left(\psi_{n}\right)_{n}}$ of smooth cut-off functions on $\mathrm{\widetilde{\bf{\,X\,}}}$ so that $$\mathrm{\psi_{n}\vert \widetilde{\,T\,}\left(\epsilon_{n},\mathbf{W}^{i}\right)\equiv 1\text{ and }supp\,\psi_{n}\cap  \widetilde{\,T\,}^{c}\left(\epsilon_{n+1},\mathbf{W}^{i}\right)=\varnothing}$$ which is possible since $\mathrm{\widetilde{\,T\,}\left(\epsilon_{n},\mathbf{W}^{i}\right)}$ is relatively compact. In particular, we have $$\mathrm{\Pi^{-1}\big(\widetilde{\bf{W}}^{i}\big)\cap supp\,\psi_{n}\subset \widetilde{\,T\,}\left(\epsilon_{n+1},\mathbf{W}^{i}\right)}$$ and therefore $$\mathrm{\Pi^{-1}\big(\widetilde{\bf{W}}^{i}\big)\cap supp\,\psi_{n}\downarrow \widetilde{\,T\,}\left(0,\mathbf{W}^{i}\right) \text{ as }n\rightarrow \infty.}$$ Hence, by the first claim of {\bf{Lemma \ref{Lemma Intersection Zero Measure}}}, the intersection of the set $\mathrm{supp\,\psi_{n}}$ and a fiber of $\mathrm{\Pi}$ over $\mathrm{\widetilde{\bf{W}}^{i}}$ converges monotonically decreasing to a set of measure zero with respect to the measure induced by $\mathrm{\Omega}$. Set $\mathrm{\Omega_{n}\coloneqq \psi_{n}\Omega^{k}}$.

To sum up, $\mathrm{\left(\Omega_{n}\right)_{n}}$ is a sequence of smooth $\mathrm{\left(k,k\right)}$-forms on $\mathrm{\widetilde{\bf{\,X\,}}}$ with compact support where $\mathrm{\Pi\!:\!\widetilde{\bf{\,X\,}}\rightarrow \widetilde{\bf{\,Y\,}}}$ is a holomorphic map between purely dimensional compact complex spaces so that each fiber $\mathrm{\Pi^{-1}\left(y\right)}$ is purely $\mathrm{k}$-dimensional. Furthermore, for each $\mathrm{\left(y,{\bm{\mathfrak{C}}}\right)\in\widetilde{\bf{W}}^{i}\subset \widetilde{\bf{\,Y\,}}}$ we know that $$\mathrm{\int_{\Pi^{-1}\left(y,{\bm{\mathfrak{C}}}\right)}\Omega_{n}\downarrow 0.}$$ 

In order to finish the proof of {\bf{Proposition \ref{Continuity of Fiber Integral}}}, we need the following lemma, which we will proved at the end of this section.

\begin{lemma}\label{Lemma Fiber Integration} Let $\mathrm{\Pi\!:\!\widetilde{\bf{\,X\,}}\rightarrow \widetilde{\bf{\,Y\,}}}$ be a $\mathrm{k}$-fibering between compact spaces and let $\mathrm{{\widetilde{\bf{W}}}\subset \widetilde{\bf{\,Y\,}}}$ be a closed subset.

\begin{enumerate}  
\item Let $\mathrm{\left(\Omega_{n}\right)_{n}}$ be a sequence of smooth, positive $\mathrm{\left(k,k\right)}$-forms on $\mathrm{\widetilde{\bf{\,X\,}}}$ and assume that $$\mathrm{\int_{\Pi^{-1}\left(y\right)}\Omega_{n}\downarrow 0}$$ for each $\mathrm{y\in{\widetilde{\bf{W}}}\subset \widetilde{\bf{\,X\,}}}$ and let $\mathrm{\Delta>0}$. Then there exists $\mathrm{n_{\Delta}\in \mathbb{N}}$ so that $$\mathrm{\mathrm{\int_{\Pi^{-1}\left(y\right)}\Omega_{n}\leq\Delta}\text{ for all }y\in {\widetilde{\bf{W}}}\text{ and all }n\geq n_{\Delta}.}$$
\item If $\mathrm{\Omega}$ is a smooth, positive $\mathrm{\left(k,k\right)}$-form on $\mathrm{\widetilde{\bf{\,X\,}}}$ so that $$\mathrm{\int_{\Pi^{-1}\left(y\right)}\Omega>0}$$ for all $\mathrm{y\in {\widetilde{\bf{W}}}}$. Then there exists $\mathrm{\delta>0}$ so that $$\mathrm{\delta\leq\int_{\Pi^{-1}\left(y\right)}\Omega\text{ for all }y\in {\widetilde{\bf{W}}}.}$$
\end{enumerate} 
\end{lemma}

Applying the first statement of the above lemma to the fiber integral yields the existence of $\mathrm{n_{\Delta}\in \mathbb{N}}$ so that $$\mathrm{\left(y\mapsto\int_{\Pi^{-1}\left(y,{\bm{\mathfrak{C}}}\right)}\Omega_{n}\right)\leq\Delta\text{ for all }n\geq n_{\Delta}\text{ and all }\left(y,{\bm{\mathfrak{C}}}\right)\in {\widetilde{\bf{W}}}^{i}.}$$ By the choice $\mathrm{\psi_{n}\vert\widetilde{\,T\,}\left(\epsilon_{n},\mathbf{W}^{i}\right)\equiv1}$, we deduce
\begin{equation*} 
\begin{split}
\mathrm{\left(y,{\bm{\mathfrak{C}}}\right)\mapsto \int_{\Pi^{-1}\left(y,{\bm{\mathfrak{C}}}\right)}\Omega_{n}}&=\mathrm{\int_{\Pi^{-1}\left(y,{\bm{\mathfrak{C}}}\right)}\psi_{n}\Omega^{k}}\\[0,2 cm]
&\geq\mathrm{vol\left(\widetilde{\,T\,}\left(\epsilon_{n},\mathbf{W}^{i}\right)\cap \Pi^{-1}\left(y,{\bm{\mathfrak{C}}}_{y}\right)\right)}\\
\end{split}
\end{equation*} for all $\mathrm{\left(y,{\bm{\mathfrak{C}}}\right)\in {\widetilde{\bf{W}}}^{i}}$. Now, set $\mathrm{n=n_{\triangle}}$, resp.\ $\mathrm{\epsilon=\epsilon_{n_{\triangle}}}$ and note that for all $\mathrm{y\in \mathbf{W}^{i}\cap \mathbf{Y}_{0}}$ we have $$\mathrm{vol\left(T\left(\epsilon,\mathbf{W}^{i}\right)\cap\pi^{-1}\left(y\right)\right)=vol\left(\widetilde{\,T\,}\left(\epsilon,\mathbf{W}^{i}\right)\cap\Pi^{-1}\left(y,{\bm{\mathfrak{C}}}_{y}\right)\right)}$$ by {\bf{Lemma \ref{Lemma Equality of Volume}}}. Hence, it follows that $$\mathrm{vol\left(T\left(\epsilon,\mathbf{W}^{i}\right)\cap\pi^{-1}\left(y\right)\right)\leq\Delta\text{ for all }y\in \mathbf{W}^{i}\cap \mathbf{Y}_{0}}$$ as claimed.

The second claim is a direct consequence of the second claim of the above lemma applied to the smooth form $\mathrm{\Omega_{n_{0}}}$, where $\mathrm{n_{0}\in \mathbb{N}}$ is chosen so that $\mathrm{\epsilon_{n_{0}+1}<\epsilon}$. In fact, we have
\begin{equation}\label{Equation wisa fwwq}
\begin{split}
\mathrm{\left(y,{\bm{\mathfrak{C}}}\right)\mapsto \int_{\Pi^{-1}\left(y,{\bm{\mathfrak{C}}}\right)}\Omega_{n_{0}}}&=\mathrm{\int_{\Pi^{-1}\left(y,{\bm{\mathfrak{C}}}\right)}\psi_{n_{0}}\Omega^{k}}\\[0,2 cm]
&\leq\mathrm{vol\left(\widetilde{\,T\,}\left(\epsilon,\mathbf{W}^{i}\right)\cap \Pi^{-1}\left(y,{\bm{\mathfrak{C}}}_{y}\right)\right)}\\
\end{split}
\end{equation} for all $\mathrm{\left(y,{\bm{\mathfrak{C}}}\right)\in {\widetilde{\bf{W}}}^{i}}$ which follows by $\mathrm{\Pi^{-1}\big(\widetilde{\bf{W}}^{i}\big)\cap supp\,\psi_{n}\subset \widetilde{\,T\,}\left(\epsilon_{n+1},\mathbf{W}^{i}\right)}$. Note that $\mathrm{\Omega_{n_{0}}}$ has compact support and that the left hand side of \ref{Equation wisa fwwq} is non-zero for all  $\mathrm{\left(y,{\bm{\mathfrak{C}}}\right)\in {\widetilde{\bf{W}}}^{i}}$ which follows by the second claim of {\bf{Lemma \ref{Lemma Intersection Zero Measure}}} applied to $\mathrm{\epsilon=\epsilon_{n_{0}}}$: $$\mathrm{0<vol\left(\widetilde{\,T\,}\left(\epsilon_{n_{0}},\mathbf{W}^{i}\right)\cap \Pi^{-1}\left(y,{\bm{\mathfrak{C}}}_{y}\right)\right)\leq\int_{\Pi^{-1}\left(y,{\bm{\mathfrak{C}}}\right)}\Omega_{n_{0}}\text{ for all }\left(y,{\bm{\mathfrak{C}}}\right)\in {\widetilde{\bf{W}}}^{i}.}$$ Applying the equality $$\mathrm{vol\left(T\left(\epsilon,\mathbf{W}^{i}\right)\cap\pi^{-1}\left(y\right)\right)=vol\left(\widetilde{\,T\,}\left(\epsilon,\mathbf{W}^{i}\right)\cap\Pi^{-1}\left(y,{\bm{\mathfrak{C}}}_{y}\right)\right)}$$ for all $\mathrm{y\in \mathbf{W}^{i}\cap \mathbf{Y}_{0}}$ and using the second claim of {\bf{Lemma \ref{Lemma Fiber Integration}}} completes the proof.
\end{proof}

It remains to prove {\bf{Lemma \ref{Lemma Fiber Integration}}}.

\begin{proof} (of {\bf{Lemma \ref{Lemma Fiber Integration}}}) Let $\mathrm{\xi\!:\!\widetilde{\bf{\,Y\,}}^{nor}\rightarrow \widetilde{\bf{\,Y\,}}}$ \label{Notation Normalization Map Xi} be the normalization of $\mathrm{\widetilde{\bf{\,Y\,}}}$ and consider the pull back space $\mathrm{\xi^{*}\widetilde{\bf{\,X\,}}}$ of $\mathrm{\widetilde{\bf{\,X\,}}}$ which is defined to be the complex space given by $$\mathrm{\xi^{*}\widetilde{\bf{\,X\,}}\coloneqq\left\{ \left(x,\widehat{y}\right)\!:\!\,\Pi\left(x\right)=\xi\left(\widehat{y}\right)\right\} \subset\widetilde{\bf{\,X\,}}\times\widetilde{\bf{\,Y\,}}^{nor}.}$$ Note that we have a projection map $\mathrm{\xi^{*}\Pi\!:\!\xi^{*}\widetilde{\bf{\,X\,}}\rightarrow \widetilde{\bf{\,Y\,}}^{nor}}$ which is given by the map $\mathrm{\Pi\times }$ $\mathrm{Id_{\widetilde{\bf{\,Y\,}}^{nor}}}$ and whose fibers are purely $\mathrm{k}$-dimensional since they are exactly given by the fibers of the projection associated to the universal space $\mathrm{\widetilde{\bf{\,X\,}}}$. Moreover, note that each smooth $\mathrm{\left(k,k\right)}$-form $\mathrm{\Omega}$ defined on $\mathrm{\widetilde{\bf{\,X\,}}}$ induces a smooth $\mathrm{\left(k,k\right)}$-form on $\mathrm{\xi^{*}\widetilde{\bf{\,X\,}}}$ via its pull back with respect to the projection map $\mathrm{p_{\widetilde{\bf{\,X\,}}}\vert \xi^{*}\widetilde{\bf{\,X\,}}\!:\!\xi^{*}\widetilde{\bf{\,X\,}}\rightarrow \widetilde{\bf{\,X\,}}}$. In the sequel, we will label this form by $\mathrm{\Omega^{nor}}$. Since $\mathrm{\xi^{*}\widetilde{\bf{\,X\,}}}$ is compact as a closed subset of the compact space $\mathrm{\widetilde{\bf{\,X\,}}\times\widetilde{\bf{\,Y\,}}^{nor}}$, it follows that $\mathrm{\Omega^{nor}}$ has compact support. Furthermore, the inverse image $\mathrm{\widetilde{\bf{W}}^{nor}\coloneqq\xi^{-1}\big(\widetilde{\bf{W}}\big)}$ of the compact subset $\mathrm{\bf{W}}$ is likewise compact for the same reason.

We will now prove the first claim of the lemma. For this, note that 
\begin{equation}\label{Equation for This Proof}
\mathrm{\int_{\Pi^{-1}\left(y\right)}\Omega_{n}=\int_{\Pi^{nor,-1}\left(\widehat{y}\right)}\Omega_{n}^{nor} \text{ for all }\widehat{y}\text{ with }\xi\left(\widehat{y}\right)=y.}
\end{equation}

Since $\mathrm{\Omega^{nor}_{n}}$ defines a sequence of smooth $\mathrm{\left(k,k\right)}$-forms with compact support and since the map fulfills all requirements of theorem in \cite{Kin}, {pp.\ }{\bf{\oldstylenums{185}-\oldstylenums{220}}}, we deduce that the fiber integral $$\mathrm{\widehat{y}\mapsto\int_{\Pi^{nor,-1}\left(\widehat{y}\right)}\Omega_{n}^{nor}}$$ defines a continuous function over the base $\mathrm{\widetilde{\bf{\,Y\,}}^{nor}}$. So the sequence given by $$\mathrm{\left(\widehat{y}\mapsto\int_{\Pi^{nor,-1}\left(\widehat{y}\right)}\Omega_{n}^{nor}\right)_{n}}$$ defines a sequence of continuous functions over $\mathrm{\widetilde{\bf{\,Y\,}}^{nor}}$ which will converge for each point by the assumption of the lemma to the zero function. By {\scshape{Dini}}'s convergence theorem of strictly decreasing sequence of continuous functions, it follows that this sequence of functions converges uniformly over the compact subset $\mathrm{\widetilde{\bf{W}}^{nor}}$ to the zero function. Hence, for each $\mathrm{\Delta>0}$ we can find $\mathrm{n_{\Delta}\in \mathbb{N}}$ so that $$\mathrm{\int_{\Pi^{nor,-1}\left(\widehat{y}\right)}\Omega_{n}^{nor}\leq\Delta\text{ for all }\widehat{y}\in \widetilde{\bf{W}}^{nor}\text{ and all }n\geq n_{\Delta}.}$$ By equation \ref{Equation for This Proof} and the fact that $\mathrm{\widetilde{\bf{W}}^{nor}=\xi^{-1}\big(\widetilde{\bf{W}}\big)}$ the first claim is shown.

The second claim is a direct consequence of the fact that $\mathrm{\Omega^{nor}}$ has compact support and by the assumption of the lemma which is given by $$\mathrm{0<\int_{\Pi^{-1}\left(y\right)}\Omega\text{ for all }y\in \widetilde{\bf{W}}.}$$ Hence, we deduce that $$\mathrm{0<\int_{\Pi^{nor,-1}\left(\widehat{y}\right)}\Omega^{nor}\text{ for all }\widehat{y}\text{ so that }\xi\left(\widehat{y}\right)=y\in \widetilde{\bf{W}}}$$ by \ref{Equation for This Proof}. Note that $\mathrm{\widehat{y}\mapsto\int_{\Pi^{nor,-1}\left(\widehat{y}\right)}\Omega^{nor}}$ is a continuous function which does not vanish over $\mathrm{\widetilde{\bf{\,W\,}}^{nor}}$ by the above inequality. Hence, there exists $\mathrm{\delta>0}$ so that $\mathrm{\delta\leq }$ $\mathrm{\int_{\Pi^{nor,-1}\left(\widehat{y}\right)}\Omega^{nor}}$ for all $\mathrm{\widehat{y}}$ in the compact subset $\mathrm{\widetilde{\bf{W}}^{nor}}$. This proves the claim by applying \ref{Equation for This Proof} again.
\end{proof}
 
For the sake of completeness, we will close this section by stating the theorem of {\scshape{J. King}}, which we have used in the proof of the preceding proposition.

\begin{theo} \textnormal{(cf.\ \cite{Kin}, {pp.\ }{\bf{\oldstylenums{185}-\oldstylenums{220}}})}\label{Theorem King}\vspace{0.15 cm}\\
Let $\mathrm{F\!:\!\mathbf{X}\rightarrow \mathbf{Y}}$ be a $\mathrm{k}$-fibering between complex purely dimensional spaces $\mathrm{\mathbf{X},\mathbf{Y}}$ where $\mathrm{m=}$ $\mathrm{dim_{\mathbb{C}}\,\mathbf{X}}$, $\mathrm{n=dim_{\mathbb{C}}\,\mathbf{Y}}$ and assume that $\mathrm{\mathbf{Y}}$ is normal. If $\mathrm{\Omega}$ is a continuous, complex valued $\mathrm{\left(k,k\right)}$-form on $\mathrm{\mathbf{X}}$ with compact support, then the fiber integral $$\mathrm{y\mapsto \int_{F^{-1}\left(y\right)}\Omega\,d\,[F_{y}]\coloneqq \int_{F^{-1}\left(y\right)}\nu_{F}\,\Omega\vert F^{-1}\left(y\right)}$$ \label{Notation Fiber Integration with Respect to a k-Fibering} defines a continuous function on $\mathrm{\mathbf{Y}}$ where $\mathrm{\nu_{F}}$ denotes the order\label{Notation Order Of F At Point}\,\footnote{For a rigorous definition of the order of a $\mathrm{k}$-fibering in the point $\mathrm{x\in \mathbf{X}}$ {cf.\! }\cite{Kin}.} of the $\mathrm{k}$-fibering $\mathrm{F\!:\!\mathbf{X}\rightarrow \mathbf{Y}}$.
\end{theo}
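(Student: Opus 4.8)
The statement is local on $\mathbf{Y}$, so the plan is to prove continuity of $y\mapsto\int_{F^{-1}(y)}\Omega\,d[F_{y}]$ at an arbitrary fixed point $y_{0}\in\mathbf{Y}$, by reducing via a finite adapted atlas to an explicit branched‑covering description of the fibres and then invoking dominated convergence. First I would cover the compact set $\mathrm{supp}\,\Omega\cap F^{-1}(y_{0})$ — and, after shrinking, a neighbourhood of it in $\mathbf{X}$ — by finitely many charts $\mathbf{U}_{\alpha}$, each carrying a local closed embedding $\mathbf{U}_{\alpha}\hookrightarrow\mathbf{V}_{\alpha}\times\mathbb{C}^{p_{\alpha}}$ over an open $\mathbf{V}_{\alpha}\subset\mathbf{Y}$ with $y_{0}\in\mathbf{V}_{\alpha}$, so that $F|_{\mathbf{U}_{\alpha}}$ is the restriction of the projection $p_{\mathbf{V}_{\alpha}}$. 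Choosing a partition of unity subordinate to $\{\mathbf{U}_{\alpha}\}$, it then suffices to prove continuity of $y\mapsto\int_{F^{-1}(y)\cap\mathbf{U}_{\alpha}}\nu_{F}\,\chi\Omega$ for a single chart and a fixed cut‑off $\chi$ with compact support in $\mathbf{U}_{\alpha}$.

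In one chart the real work is a ``local parametrisation with parameters''. Since $F$ is a $k$‑fibering, every non‑empty fibre $F^{-1}(y)\cap\mathbf{U}_{\alpha}$ is a purely $k$‑dimensional analytic subset of $\{y\}\times\mathbb{C}^{p_{\alpha}}$; after a generic linear change of coordinates $\mathbb{C}^{p_{\alpha}}=\mathbb{C}^{k}\times\mathbb{C}^{p_{\alpha}-k}$ and further shrinking to $\mathbf{V}_{\alpha}\times\Delta^{k}\times\Delta^{p_{\alpha}-k}$ I would arrange that $\mathbf{U}_{\alpha}$ projects properly and surjectively, with finite fibres, onto $\mathbf{V}_{\alpha}\times\Delta^{k}$. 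Applying the Weierstrass preparation theorem to generators of the ideal of $\mathbf{U}_{\alpha}$ over $\mathcal{O}_{\mathbf{V}_{\alpha}\times\Delta^{k}}$ exhibits $\mathbf{U}_{\alpha}$, over a dense Zariski‑open part of $\mathbf{V}_{\alpha}\times\Delta^{k}$, as an unramified finite cover with sheets $g_{1}(y,w),\dots,g_{d}(y,w)$ carrying King multiplicities $\nu_{1},\dots,\nu_{d}$, in such a way that the elementary symmetric functions of the $g_{j}$ counted with the $\nu_{j}$ are holomorphic on $\mathbf{V}_{\alpha}\times\Delta^{k}$; here the normality of $\mathbf{Y}$ enters, via Riemann's extension theorem, to upgrade the a priori only weakly holomorphic coefficients to genuine holomorphic ones. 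With this description the chart contribution to the fibre integral becomes
\[
  y\longmapsto\int_{\Delta^{k}}\ \sum_{j=1}^{d}\nu_{j}(y,w)\,\sigma_{j}(y,\cdot)^{*}(\chi\Omega)\ (w),\qquad \sigma_{j}(y,w)=\bigl(y,w,g_{j}(y,w)\bigr),
\]
the integrand being the push‑down to $\Delta^{k}$ of the pull‑back of $\chi\Omega$ to the fibre along the local sections $\sigma_{j}$.

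Finally I would establish continuity at $y_{0}$ by dominated convergence. For a sequence $y_{m}\to y_{0}$: off the branch locus $\mathbf{B}\subset\mathbf{V}_{\alpha}\times\Delta^{k}$ — a proper analytic subset, hence of Lebesgue measure zero in each slice $\{y\}\times\Delta^{k}$ — the sheets $g_{j}(y_{m},w)$ converge to $g_{j}(y_{0},w)$ by continuity of the roots of the monic Weierstrass polynomials in their coefficients, and the local multiplicities stabilise, so the integrand converges pointwise almost everywhere on $\Delta^{k}$; it is uniformly bounded because $\chi\Omega$ is continuous with compact support and $\sum_{j}\nu_{j}$ is bounded on a relatively compact neighbourhood of $\overline{\Delta^{k}}$ in the base. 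Hence the chart integrals converge, and summing over the finite cover gives continuity of the global fibre integral at $y_{0}$. The main obstacle is the second step: producing the branched‑covering normal form \emph{uniformly in the parameter} $y$ while keeping precise track of King's order function — that is, showing the whole family $\{F^{-1}(y)\}$ can be simultaneously flattened into a Weierstrass presentation with holomorphic symmetric coefficients. It is exactly here that the hypothesis that $F$ be a genuine $k$‑fibering (constant fibre dimension) and that $\mathbf{Y}$ be normal are indispensable; by contrast the reduction to charts and the final dominated‑convergence argument are routine.
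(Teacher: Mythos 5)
The paper does not actually prove this statement: immediately before stating it, the text reads ``For the sake of completeness, we will close this section by stating the theorem of {\scshape J.\,King}, which we have used in the proof of the preceding proposition,'' and the result is attributed to \cite{Kin}, {pp.\ }\oldstylenums{185}--\oldstylenums{220}, with no argument supplied. So there is no ``paper's own proof'' to compare your sketch against; anything you wrote was necessarily going to be an independent reconstruction of King's argument.

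As such a reconstruction your outline is broadly along the right lines --- localisation by a partition of unity, reduction to a branched-covering (Weierstrass) presentation of the fibres over a disc $\Delta^{k}$ in the base of the chart, and a dominated-convergence passage to the limit. Two places, however, are stated more confidently than they are actually established. First, you assert that the branch locus $\mathbf{B}\subset\mathbf{V}_{\alpha}\times\Delta^{k}$ has measure zero \emph{in each slice} $\{y\}\times\Delta^{k}$, but a proper analytic subset of the product can contain an entire slice $\{y_{0}\}\times\Delta^{k}$ (e.g.\ when the discriminant of the Weierstrass polynomial vanishes identically on that fibre), so the pointwise-a.e.\ convergence of the sheets $g_{j}(y_{m},\cdot)\to g_{j}(y_{0},\cdot)$ and, above all, the claim that ``the local multiplicities stabilise'' need a genuine argument rather than a remark; the whole delicacy of King's proof lies exactly in controlling the multiplicity function $\nu_{F}$ across the degeneracy locus. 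Second, the role you assign to the normality of $\mathbf{Y}$ --- upgrading weakly holomorphic symmetric coefficients to holomorphic ones by Riemann extension --- is plausible but not clearly where the hypothesis does its work: what one needs is that the (a priori only generically defined or merely weakly continuous) fibre integral extends continuously across the singular locus of $\mathbf{Y}$ and across the set where the fibres degenerate, and that is an extension argument on the base, not merely on the Weierstrass coefficients. In short, the reduction to charts and the final DCT step are fine, but the ``uniform-in-$y$'' Weierstrass normal form with correct accounting of $\nu_{F}$ --- the step you yourself flag as the main obstacle --- is the part that actually constitutes King's theorem, and your sketch does not yet supply it.
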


\section{Uniform Convergence in the Tame Case}\label{Section Uniform Convergence Theorems In The Tame Case}
\subsection{Uniform Convergence of the Fiber Probability Measures}

The aim of this section is to prove {\bf{Theorem 3}}. For this recall, that for each $\mathrm{x\in \mu^{-1}\left(\xi\right)}$ there exists a $\mathrm{\mathbb{T}}$-invariant Zariski open subset $\mathrm{\mathbf{X}^{i}\subset \mathbf{X}^{ss}_{\xi}}$ which is contained in the $\mathrm{n}$-stable complement of the zero set of the tame sequence $\mathrm{\left(s^{i}_{n}\right)_{n}}$ given by $\mathrm{\mathbf{X}\left(s_{n}^{i}\right)\subset\mathbf{X}^{ss}_{\xi}}$ ({cf.\! }{\bf{Theorem 1}}). Moreover, let $\mathrm{\varrho^{i}\!:\!\mathbf{X}^{i}\rightarrow \mathbb{R}}$ be the normalized s.p.s.h.\ limit function as defined at the beginning of {\bf{Section \ref{Uniform Localization Proposition}}} and recall that there exists a compact neighborhood $\mathrm{\mathbf{W}^{i}\subset \pi\left(\mathbf{X}^{i}\right)}$ so that $\mathrm{T\left(\epsilon,\mathbf{W}^{i}\right)\subset \mathbf{X}^{i}}$ given by $$\mathrm{T\left(\epsilon,\mathbf{W}^{i}\right)=\left(\varrho^{i}\times \pi\right)^{-1}\left([0,\epsilon]\times \mathbf{W}^{i}\right)\text{ (cf.\ {\bf{Section \ref{Uniform Localization Proposition}}})}}$$ defining a compact $\mathrm{T}$-invariant tube for all $\mathrm{\epsilon\geq 0}$.
 
We begin the proof with the following lemma.

\begin{lemma}\label{Lemma Reduced Function} Let $\mathrm{f\in \mathcal{C}^{0}\!\left(\mathbf{X}\right)}$ be a $\mathrm{T}$-invariant, continuous function. Given $\mathrm{\sigma>0}$, there exists $\mathrm{\epsilon_{\sigma}>0}$ so that $$\mathrm{\left|f\vert\left(\pi^{-1}\left(y\right)\cap T\left(\epsilon_{\sigma},\mathbf{W}^{i}\right)\right)-f_{red}\left(y\right)\right|\leq\sigma}$$ for all $\mathrm{y\in \mathbf{W}^{i}\subset \mathbf{Y}^{i}}$.
\end{lemma}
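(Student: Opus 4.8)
The plan is to exploit the uniform localization result together with the compactness of the tube. First I would recall what $f_{\mathrm{red}}$ is: since $f$ is $\mathrm{T}$-invariant, $\overline f = f$, so $f_{\mathrm{red}}(y)$ is just the common value of $f$ along the single $\mathrm{T}$-orbit $T.x_y = \mu^{-1}(\xi)\cap\pi^{-1}(y)$. Thus the claim is equivalent to saying that $f$ restricted to the thin tube $\pi^{-1}(y)\cap T(\epsilon_\sigma,\mathbf{W}^i)$ stays within $\sigma$ of its value on the core $\mu^{-1}(\xi)\cap\pi^{-1}(y)$, uniformly in $y\in\mathbf{W}^i$. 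This is a statement about uniform continuity of $f$ on the compact set $T(\epsilon,\mathbf{W}^i)$, shrunk in the $\varrho^i$-direction.

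The key steps, in order: (1) Observe that $\mu^{-1}(\xi)\cap\pi^{-1}(\mathbf{W}^i) = T(0,\mathbf{W}^i)$ by definition of the tube and the fact that $\varrho^i$ attains its fiberwise $\mathrm{T}$-invariant minimum exactly along $\mu^{-1}(\xi)\cap\pi^{-1}(y)$ — here one uses that $\varrho^i$ was \emph{normalized} ($\varrho^i = \hat\varrho^i = \varrho^i - \pi^*\varrho^i_{\mathrm{red}}$) so that the minimum value is $0$ on every fiber. (2) The set $T(1,\mathbf{W}^i)$ (say) is relatively compact in $\mathbf{X}^i$ by the properness lemma for $\varrho^i\times\pi$, hence $f$ is uniformly continuous on its closure. (3) Pick $\delta>0$ from uniform continuity of $f$ so that $d(x,x')<\delta \Rightarrow |f(x)-f(x')|<\sigma$ on this compact set. (4) Argue that there exists $\epsilon_\sigma>0$ such that every point of $T(\epsilon_\sigma,\mathbf{W}^i)$ lies within distance $\delta$ of $T(0,\mathbf{W}^i)$: if not, there would be a sequence $x_n$ with $\varrho^i(x_n)\to 0$, $\pi(x_n)\in\mathbf{W}^i$, but $d(x_n, T(0,\mathbf{W}^i))\ge\delta$; passing to a convergent subsequence (using compactness of $\mathbf{W}^i$ and relative compactness of $T(1,\mathbf{W}^i)$) gives a limit $x_0$ with $\varrho^i(x_0)=0$ and $\pi(x_0)\in\mathbf{W}^i$, i.e.\ $x_0\in T(0,\mathbf{W}^i)$, contradicting $d(x_n,T(0,\mathbf{W}^i))\ge\delta$. (5) Combine: for $x\in\pi^{-1}(y)\cap T(\epsilon_\sigma,\mathbf{W}^i)$, choose a nearest point $x_0\in T(0,\mathbf{W}^i)$; since $f$ is $\mathrm{T}$-invariant and $\pi(x_0)$ need not equal $y$, instead argue directly on the fiber — better, within the fiber $\pi^{-1}(y)$, connect $x$ to the orbit $T.x_y$ as in the proof of Theorem 2 (a line in the affine fiber $[\{t\}\times\mathfrak{m}_z]$), and note that as $\epsilon_\sigma\to 0$ the endpoint of this segment and $x$ both lie in a $\delta$-ball once $\epsilon_\sigma$ is small, so $|f(x) - f(x_y)| = |f(x)-f_{\mathrm{red}}(y)|<\sigma$.

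The main obstacle I anticipate is step (4)–(5) done \emph{uniformly in $y$}: naively, "nearest point in $T(0,\mathbf{W}^i)$" may lie over a different base point $y'\ne y$, and $f_{\mathrm{red}}$ is compared fiberwise. The cleanest fix is to phrase everything via the compactness of the tube $T(1,\mathbf{W}^i)$ in the \emph{total space} and uniform continuity of $f$ there, together with the observation that for any $x$ in the fiber $\pi^{-1}(y)$ the relevant comparison point $x_y\in\mu^{-1}(\xi)\cap\pi^{-1}(y)$ is forced to be close to $x$ once $\varrho^i(x)$ is small — precisely because, by Lemma \ref{Preparation Lemma}, $\varrho^i\vert\pi^{-1}(y)$ (restricted to the closed orbit containing $x$) is strictly convex with minimum $0$ at $T.x_y$, so a uniform bound $\varrho^i(x)\le\epsilon_\sigma$ forces a uniform distance bound $d(x, T.x_y)\le\delta$ by a compactness argument across all fibers over $\mathbf{W}^i$ (again using relative compactness of $T(1,\mathbf{W}^i)$ to make the convexity estimate uniform). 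Granting that uniform distance bound, $\mathrm{T}$-invariance of $f$ and uniform continuity on the compact tube finish the proof.
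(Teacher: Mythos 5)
You correctly identify the central obstacle in your own proposal — the nearest point $x_0\in T(0,\mathbf{W}^i)$ to a given $x\in\pi^{-1}(y)$ need not lie over $y$ — but the fix you sketch in the final paragraph does not close the gap. First, the convexity you invoke from Lemma~\ref{Preparation Lemma} and the proof of Theorem~2 is established only along the unique \emph{closed} orbit $\mathbb{T}.z_y\subset\pi^{-1}(y)$, via the identification $\mathbb{T}.z_y\cong T\times^{T_{z}}\mathfrak{m}_{z}$; it gives no direct control over points of $\pi^{-1}(y)$ that lie in non-closed orbits, which is exactly where the proof of Theorem~2 has to switch to a separate one-parameter-subgroup argument. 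Second, the assertion ``a uniform bound $\varrho^i(x)\le\epsilon_\sigma$ forces a uniform distance bound $d(x,T.x_y)\le\delta$ by a compactness argument across all fibers'' is not a step you have carried out; it is essentially the entire content of what the lemma asks for, restated as a metric claim, and proving it requires knowing that the critical orbits $T.x_{y_n}=\mu^{-1}(\xi)\cap\pi^{-1}(y_n)$ converge (in Hausdorff distance) to $T.x_y$ as $y_n\to y$ — a fact you would have to justify, not assume.

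The paper avoids all of this by running the compactness argument \emph{directly} on the quantity of interest $\left|f(x_n)-f_{\mathrm{red}}(\pi(x_n))\right|$ rather than on a metric surrogate. Assuming the claim fails, one extracts $\epsilon_n\downarrow 0$, $y_n\in\mathbf{W}^i$ and lifts $x_n\in T(\epsilon_n,\mathbf{W}^i)\cap\pi^{-1}(y_n)$ with $\left|f(x_n)-f_{\mathrm{red}}(y_n)\right|\ge\sigma$; since the $T(\epsilon_n,\mathbf{W}^i)$ are nested compact sets shrinking to $\mu^{-1}(\xi)\cap\pi^{-1}(\mathbf{W}^i)$, a subsequence gives $x_n\to x$ with $\pi(x)\in\mathbf{W}^i$ and $x\in\mu^{-1}(\xi)$, and the continuity of $f$, of $\pi$, and crucially of $f_{\mathrm{red}}$ on $\mathbf{Y}$ forces $\left|f(x_n)-f_{\mathrm{red}}(y_n)\right|\to\left|f(x)-f_{\mathrm{red}}(\pi(x))\right|=0$, a contradiction. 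Note that continuity of $f_{\mathrm{red}}$ (it is the push-down of the $T$-invariant continuous $f\vert\mu^{-1}(\xi)$ along the quotient $\mu^{-1}(\xi)\to\mathbf{Y}$) is exactly the ingredient that resolves the base-point-shifting problem you noticed: if you still want to salvage your distance-based route, the triangle inequality $\left|f(x)-f_{\mathrm{red}}(y)\right|\le\left|f(x)-f(x_0)\right|+\left|f_{\mathrm{red}}(\pi(x_0))-f_{\mathrm{red}}(y)\right|$ together with uniform continuity of $f$ on $cl\,T(1,\mathbf{W}^i)$ and of $f_{\mathrm{red}}$ on $\mathbf{W}^i$ does the job, without any appeal to convexity.
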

\begin{proof} Let us assume that $\mathrm{\epsilon_{\sigma}>0}$ with the above property does not exists. Then we can find a sequence $\mathrm{\left(\epsilon_{\sigma,n}\right)_{n}}$ of positive numbers with $\mathrm{\epsilon_{\sigma,n}\rightarrow 0}$, a sequence $\mathrm{\left(y_{n}\right)_{n}}$ in $\mathrm{\mathbf{W}^{i}}$ and a lifted sequence $\mathrm{\left(x_{n}\right)_{n}}$ in $\mathrm{\pi^{-1}\left(\mathbf{W}^{i}\right)}$ (i.e.\ $\mathrm{\pi\left(x_{n}\right)=y_{n}}$) where $$\mathrm{x_{n}\in T\left(\epsilon_{\sigma,n},\mathbf{W}^{i}\right)}$$ and so that $$\mathrm{\left\vert f\left(x_{n}\right)-f_{red}\left(y_{n}\right) \right\vert\geq \epsilon>0}$$ for all $\mathrm{n}$. 

By the compactness of $\mathrm{\mathbf{W}^{i}}$, we can assume that $\mathrm{y_{n}\rightarrow y\in \mathbf{W}^{i}}$. Furthermore, by the compactness of $\mathrm{T\left(\epsilon_{\sigma,n},\mathbf{W}^{i}\right)}$, we have can assume that the lifted sequence $\mathrm{\left(x_{n}\right)_{n}}$, which is contained in the sequence of the compact nested subsets given by $\mathrm{T\left(\epsilon_{\sigma,n},\mathbf{W}^{i}\right)}$, is convergent as well, i.e.\ $\mathrm{x_{n}\rightarrow x}$. Since $$\mathrm{T\left(\epsilon_{\sigma,n},\mathbf{W}^{i}\right)\downarrow \mu^{-1}\left(\xi\right)\cap \pi^{-1}\left(\mathbf{W}^{i}\right)}$$ we have $\mathrm{x_{n}\rightarrow x\in  \mu^{-1}\left(\xi\right)\cap \pi^{-1}\left(\mathbf{W}^{i}\right)}$.

So, all in all, we have found a convergent sequence $\mathrm{\left(x_{n}\right)_{n}}$ in $\mathrm{\pi^{-1}\left(\mathbf{W}^{i}\right)}$ so that $$\mathrm{x_{n}\rightarrow x\in \mu^{-1}\left(\xi\right)\cap \pi^{-1}\left(\mathbf{W}^{i}\right) }$$ and $$\mathrm{\vert f\left(x_{n}\right)-f_{red}\left(\pi\left(x_{n}\right)\right)\vert=\left\vert f\left(x_{n}\right)-f_{red}\left(y_{n}\right) \right\vert\geq \epsilon>0}$$ for all $\mathrm{n}$. From the continuity of $\mathrm{\pi, f}$ and the fact that $\mathrm{f_{red}\left(\pi\left(x\right)\right)=f_{red}\left(y\right)}$ $\mathrm{=f\left(x\right)}$ we deduce a contradiction.
\end{proof}

After this preparation, we can prove the uniform convergence of the measure sequence with respect to the weak topology. For this recall that $\mathrm{\overline{f}}$ for $\mathrm{f\in \mathcal{C}^{0}\!\left(\mathbf{X}\right)}$ is defined to be the averaged function given by $\mathrm{\overline{f}\left(x\right)=\int_{T}f\left(t.x\right)\,d\nu_{T}}$ where denotes the {\scshape{Haar}} measure. Moreover, recall the definition of the sequence $\mathrm{\left({\bm{\nu}_{n}^{i}}\right)_{n}}$ of fiber probability measures attached to the tame sequence $\mathrm{\left(s_{n}^{i}\right)_{n}}$ (cf.\,?{\bf{Definition \ref{Definition Fiber Probability Measure Tame Case}}}).

\begin{prop}\label{Proposition Uniform Convergence} Let $\mathrm{f\in \mathcal{C}^{0}\!\left(\mathbf{X}\right)}$ and $\mathrm{\mathbf{W}^{i}\subset}$ $\mathrm{\mathbf{Y}^{i}=}$ $\mathrm{\pi\left(\mathbf{X}^{i}\right)}$ as before. Then the sequence of functions on $\mathrm{\mathbf{W}^{i}\cap \mathbf{Y}_{0}}$ given by $$\mathrm{y\mapsto \int_{\pi^{-1}\left(y\right)}f\,d\bm{\nu}_{n}^{i}\left(y\right)}$$ converges uniformly on $\mathrm{\mathbf{W}^{i}\cap \mathbf{Y}_{0}}$ to the reduced function $\mathrm{f_{red}\vert \mathbf{W}^{i}\cap \mathbf{Y}_{0}\!:\!\mathbf{W}^{i}\cap \mathbf{Y}_{0} \rightarrow \mathbb{R}.}$
\end{prop}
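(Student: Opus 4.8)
The plan is to combine the uniform localization of the potential functions (Theorem~2) with the uniform fiber-volume estimates coming from the Barlet flattening (Proposition~\ref{Continuity of Fiber Integral}), and to reduce the case of a general continuous $\mathrm{f}$ to a $\mathrm{T}$-invariant one by $\mathrm{T}$-averaging. First I would observe that since each $\mathrm{\bm{\nu}_{n}^{i}\left(y\right)}$ is a $\mathrm{T}$-invariant probability measure on $\mathrm{\pi^{-1}\left(y\right)}$ (the density $\mathrm{\phi^{i}_{n}}$ is built from $\mathrm{\vert s^{i}_{n}\vert^{2}}$, which is $\mathrm{T}$-invariant), we have $\mathrm{\int_{\pi^{-1}\left(y\right)}f\,d\bm{\nu}^{i}_{n}\left(y\right)=\int_{\pi^{-1}\left(y\right)}\overline{f}\,d\bm{\nu}^{i}_{n}\left(y\right)}$ for every $\mathrm{f\in\mathcal{C}^{0}\!\left(\mathbf{X}\right)}$, where $\mathrm{\overline{f}\left(x\right)=\int_{T}f\left(t.x\right)\,d\nu_{T}}$. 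Since $\mathrm{\mathbf{X}}$ is compact, $\mathrm{\overline{f}}$ is again continuous and now $\mathrm{T}$-invariant, and $\mathrm{\overline{f}_{red}=f_{red}}$ by definition of the reduced function. Hence it suffices to prove the statement for $\mathrm{T}$-invariant $\mathrm{f}$, which is exactly the setting of Lemma~\ref{Lemma Reduced Function}.

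Next I would fix $\mathrm{\sigma>0}$ and split the fiber integral over a compact tube and its complement. By Lemma~\ref{Lemma Reduced Function}, choose $\mathrm{\epsilon=\epsilon_{\sigma}>0}$ so that $\mathrm{\left|f\left(x\right)-f_{red}\left(y\right)\right|\leq\sigma}$ for all $\mathrm{x\in\pi^{-1}\left(y\right)\cap T\left(\epsilon,\mathbf{W}^{i}\right)}$ and all $\mathrm{y\in\mathbf{W}^{i}}$. Writing $\mathrm{\bm{\nu}^{i}_{n}\left(y\right)=\bm{\nu}^{i}_{n}\left(y\right)\big|_{T\left(\epsilon,\mathbf{W}^{i}\right)}+\bm{\nu}^{i}_{n}\left(y\right)\big|_{T^{c}\left(\epsilon,\mathbf{W}^{i}\right)}}$, the error on the inner part is bounded by $\mathrm{\sigma}$ plus $\mathrm{\left(\sup|f|+\sup|f_{red}|\right)}$ times the measure of the complement, so everything reduces to showing that the tail mass $\mathrm{\bm{\nu}^{i}_{n}\left(y\right)\!\left(T^{c}\left(\epsilon,\mathbf{W}^{i}\right)\cap\pi^{-1}\left(y\right)\right)}$ goes to zero uniformly in $\mathrm{y\in\mathbf{W}^{i}\cap\mathbf{Y}_{0}}$. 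For this I would estimate the numerator and denominator of $\mathrm{\phi^{i}_{n}=\Vert s^{i}_{n}\Vert^{-2}\vert s^{i}_{n}\vert^{2}}$ separately. Recall $\mathrm{\vert s^{i}_{n}\vert^{2}=e^{-n\varrho^{i}_{n}}}$ with the normalization $\mathrm{\varrho^{i}_{n}=\hat\varrho^{i}_{n}}$; since the normalized $\mathrm{\varrho^{i}_{n}}$ vanishes along $\mathrm{T.\mu^{-1}\left(n^{-1}\xi^{i}_{n}\right)\cap\pi^{-1}\left(y\right)}$ (Lemma~\ref{Preparation Lemma}, together with the normalization $\mathrm{\hat\varrho^{i}_{n}=\varrho^{i}_{n}-\pi^{*}\varrho^{i}_{n,red}}$), and since $\mathrm{\varrho^{i}_{n}\to\varrho^{i}}$ uniformly on the compact tube $\mathrm{T(2\epsilon,\mathbf{W}^{i})}$, the denominator obeys a uniform lower bound $\mathrm{\Vert s^{i}_{n}\Vert^{2}\left(y\right)\geq \int_{\pi^{-1}\left(y\right)\cap T\left(\epsilon/2,\mathbf{W}^{i}\right)}e^{-n\varrho^{i}_{n}}\,d[\pi_{y}]\geq e^{-n\sigma'}\cdot vol\!\left(\pi^{-1}\left(y\right)\cap T\left(\epsilon/2,\mathbf{W}^{i}\right)\right)}$ for suitable small $\mathrm{\sigma'}$, and by the second claim of Proposition~\ref{Continuity of Fiber Integral} the volume here is uniformly bounded below by some $\mathrm{\delta>0}$. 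Meanwhile, on $\mathrm{T^{c}\left(\epsilon,\mathbf{W}^{i}\right)}$ Theorem~2 gives $\mathrm{\varrho^{i}_{n}\geq\epsilon-\delta'}$ for all large $\mathrm{n}$, hence $\mathrm{\vert s^{i}_{n}\vert^{2}\leq e^{-n\left(\epsilon-\delta'\right)}}$ there; combined with the uniform upper bound on $\mathrm{vol\left(\pi^{-1}\left(y\right)\right)}$ from Corollary~\ref{Boundedness of Fiber Integral}, the numerator of the tail integral is $\mathrm{\leq C\,e^{-n\left(\epsilon-\delta'\right)}}$. Choosing $\mathrm{\delta',\sigma'}$ small enough that $\mathrm{\epsilon-\delta'>\sigma'}$ yields $\mathrm{\bm{\nu}^{i}_{n}\left(y\right)\!\left(T^{c}\right)\leq \delta^{-1}C\,e^{-n\left(\epsilon-\delta'-\sigma'\right)}\to 0}$ uniformly over $\mathrm{\mathbf{W}^{i}\cap\mathbf{Y}_{0}}$.

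Putting these together: for all large $\mathrm{n}$ and all $\mathrm{y\in\mathbf{W}^{i}\cap\mathbf{Y}_{0}}$,
\[
\left|\int_{\pi^{-1}\left(y\right)}f\,d\bm{\nu}^{i}_{n}\left(y\right)-f_{red}\left(y\right)\right|
\leq \sigma + 2\sup_{\mathbf{X}}|f|\cdot\bm{\nu}^{i}_{n}\left(y\right)\!\left(T^{c}\left(\epsilon,\mathbf{W}^{i}\right)\right),
\]
and the right-hand side is $\mathrm{\leq 2\sigma}$ for $\mathrm{n}$ large, independently of $\mathrm{y}$. Since $\mathrm{\sigma}$ was arbitrary this is the asserted uniform convergence; because finitely many $\mathrm{\mathbf{W}^{i}}$ cover $\mathrm{\mathbf{Y}}$ (so finitely many $\mathrm{\mathbf{U}_{i}\subset\pi\left(\mathbf{X}^{i}\right)}$ with $\mathrm{\mathbf{U}_{i}\subset\mathbf{W}^{i}}$ cover $\mathrm{\mathbf{Y}}$), this also delivers Theorem~3 in the stated form over each $\mathrm{\mathbf{U}_{i}\cap\mathbf{Y}_{0}}$. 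The main obstacle I anticipate is the denominator lower bound: one must ensure that the minimizing set $\mathrm{T.\mu^{-1}\left(n^{-1}\xi^{i}_{n}\right)\cap\pi^{-1}\left(y\right)}$ actually sits inside the compact tube $\mathrm{T\left(\epsilon/2,\mathbf{W}^{i}\right)}$ uniformly in $\mathrm{y}$ and $\mathrm{n}$ (this is Remark~\ref{Preparation Remark}), and that a fixed positive fraction of fiber volume is captured there — which is precisely where the Barlet flattening is essential, since on the non-compact base $\mathrm{\mathbf{Y}_{0}}$ the fiber volumes could a priori degenerate, and it is Proposition~\ref{Continuity of Fiber Integral} (proved via Lemma~\ref{Lemma Fiber Integration} and Dini's theorem on the compact $\mathrm{\widetilde{\bf{\,Y\,}}^{nor}}$) that rules this out.
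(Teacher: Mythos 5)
Your proof is correct and is essentially the paper's own argument: the same $\mathrm{T}$-averaging reduction, the same split of the fiber integral across $\mathrm{T\left(\epsilon_{\sigma},\mathbf{W}^{i}\right)}$ and its complement, the same use of Theorem~2 to kill the numerator on the complement, the same use of the uniform convergence $\mathrm{\varrho^{i}_{n}\rightarrow\varrho^{i}}$ plus the second part of Proposition~\ref{Continuity of Fiber Integral} to bound the normalizing denominator from below, and Corollary~\ref{Boundedness of Fiber Integral} for the upper fiber-volume bound. The only cosmetic difference is that you phrase the tail estimate as a bound on $\mathrm{\bm{\nu}^{i}_{n}\left(y\right)\!\left(T^{c}\right)}$ rather than carrying $\mathrm{C\left(f\right)=\max\left|f-\pi^{*}f_{red}\right|}$ through the inequality as the paper does; and your remark that the normalized $\mathrm{\hat{\varrho}^{i}_{n}}$ vanishes along $\mathrm{T.\mu^{-1}\!\left(n^{-1}\xi^{i}_{n}\right)}$ is slightly imprecise (the normalization subtracts the value on $\mathrm{\mu^{-1}\!\left(\xi\right)}$, and the minimizing set is the nearby level $\mathrm{\mu^{-1}\!\left(n^{-1}\xi^{i}_{n}\right)}$), but this does not affect the estimate you actually use, which matches the paper's.
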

\begin{proof} First, let us show that it is enough to prove the claim for continuous $\mathrm{T}$-invariant functions: Let us assume that the claim is valid for all continuous functions which are $\mathrm{T}$-invariant and assume that $\mathrm{f\in \mathcal{C}^{0}\!\left(\mathbf{X}\right)}$ is arbitrary. Using the $\mathrm{T}$-invariance of $\mathrm{\bm{\nu}_{n}}$ and $\mathrm{\nu_{T}}$ and the fact $\mathrm{\int_{T}d\nu_{T}=1}$ it follows that: $$\mathrm{\int_{t\in T}\left[\int_{\pi^{-1}\left(y\right)}t^{*}f\,d\bm{\nu}_{n}\left(y\right)\right]\,d\nu_{T}=\int_{\pi^{-1}\left(y\right)}f\,d\bm{\nu}_{n}\left(y\right).}$$ Interchanging the order of integration yields 
\begin{equation}\label{Equation Interchanging Integration}
\mathrm{\int_{\pi^{-1}\left(y\right)}f\,d\bm{\nu}_{n}\left(y\right)=\int_{\pi^{-1}\left(y\right)}\left[\int_{t\in T}t^{*}f\,d\nu_{T}\right]\,d\bm{\nu}_{n}\left(y\right)=\int_{\pi^{-1}\left(y\right)}\overline{f}\,d\bm{\nu}_{n}\left(y\right).}
\end{equation} Since $\mathrm{\overline{f}}$ is $\mathrm{T}$-invariant and continuous and since we have assumed that the claim is true for those functions, we deduce by \ref{Equation Interchanging Integration} that $$\mathrm{\left(\int_{\pi^{-1}\left(y\right)}f\,d\bm{\nu}_{n}\left(y\right)\right)=\left(y\mapsto \int_{\pi^{-1}\left(y\right)}\overline{f}\,d\bm{\nu}_{n}\left(y\right)\right)}$$ converges uniformly on $\mathrm{\mathbf{W}^{i}\cap \mathbf{Y}_{0}}$ to the reduced function $\mathrm{f_{red}=\vert \mathbf{W}^{i}\cap \mathbf{Y}_{0}\!:\!\mathbf{W}^{i}\cap \mathbf{Y}_{0} \rightarrow \mathbb{R}}$. Hence, it remains to verify the claim for continuous, $\mathrm{T}$-invariant functions.

If $\mathrm{\sigma>0}$ and $\mathrm{f\in\mathcal{C}^{0}\!\left(\mathbf{X}\right)}$, then by {\bf{Lemma \ref{Lemma Reduced Function}}}, there exists $\mathrm{\epsilon_{\sigma}>0}$ so that $$\mathrm{\left|f\vert\left(\pi^{-1}\left(y\right)\cap T\left(\epsilon_{\sigma},\mathbf{W}^{i}\right)\right)-f_{red}\left(y\right)\right|\leq\frac{\sigma}{2}}$$ for all $\mathrm{y\in \mathbf{W}^{i}\subset \mathbf{Y}^{i}}$. By {\bf{Theorem 2}} we can find $\mathrm{N_{0}\in \mathbb{N}}$ so that for all $\mathrm{n\geq N_{0}}$ we have $$\mathrm{\varrho^{i}_{n}\left(x\right)\geq \frac{\epsilon_{\sigma}}{2}}$$ for all $\mathrm{x\in  T^{c}\left(\epsilon_{\sigma},\mathbf{W}^{i}\right)}$. Therefore, we deduce that
\begin{equation*}
\begin{split}
\mathrm{\left\vert \int_{\pi^{-1}\left(y\right)}f\,d\bm{\nu}^{i}_{n}\left(y\right)-f_{red}\left(y\right)\right\vert} &\leq \mathrm{\frac{\sigma}{2}\frac{\int_{\pi^{-1}\left(y\right)\cap T\left(\epsilon_{\sigma},\mathbf{W}^{i}\right)}\left|s_{n}^{i}\right|^{2}\,d\,[\pi_{y}]}{\left\Vert s_{n}^{i}\right\Vert ^{2}\left(y\right)}}\\[0,2 cm]
&+ \mathrm{e^{-n\,\frac{\epsilon_{\sigma}}{2}}\,C\left(f\right)\,\frac{\int_{\pi^{-1}\left(y\right)\cap T^{c}\left(\epsilon_{\sigma},\mathbf{W}^{i}\right)}\,d\,[\pi_{y}]}{\left\Vert s_{n}^{i}\right\Vert ^{2}\left(y\right)}}\end{split}
\end{equation*} for all $\mathrm{y\in \mathbf{W}^{i}\cap \mathbf{Y}_{0}}$ where $\mathrm{C\left(f\right)\coloneqq \underset{x\in \mathbf{X}}{max}\left|f-\pi^{*}f_{red}\right|<\infty}$.

By {\bf{Proposition \ref{Proposition Uniform Convergence Potential Functions in the Abelian Case}}} the sequence $\mathrm{\varrho^{i}_{n}}$ converges uniformly on compact subsets of $\mathrm{\pi^{-1}\left(\mathbf{Y}^{i}\right)=\mathbf{X}^{i}}$ to $\mathrm{\varrho^{i}}$ and hence, for each $\mathrm{\delta>0}$, there exists $\mathrm{N^{\prime}_{0}\in \mathbb{N}}$ so that $$\mathrm{\varrho^{i}_{n}\leq \epsilon^{\prime}+\delta}$$ for all $\mathrm{x\in T\left(\epsilon^{\prime},\mathbf{W}^{i}\right)}$. Here we will choose $\mathrm{\delta>0}$ and $\mathrm{\epsilon^{\prime}>0}$ so that  $\mathrm{\epsilon^{\prime}+\delta<\frac{\epsilon_{\sigma}}{2}}$. It then follows for all $\mathrm{y\in \mathbf{W}^{i}\cap \mathbf{Y}_{0}}$
\begin{equation*}
\begin{split}
\mathrm{\left\Vert s_{n}^{i}\right\Vert^{2}\left(y\right)} &\geq \mathrm{\int_{\pi^{-1}\left(y\right)\cap T\left(\epsilon^{\prime},\mathbf{W}^{i}\right)}\left|s_{n}^{i}\right|^{2}\,d\,[\pi_{y}]}\\[0,2 cm]
&\geq \mathrm{e^{-n\,\left(\epsilon^{\prime}+\delta\right)}\,\int_{\pi^{-1}\left(y\right)\cap T\left(\epsilon^{\prime},\mathbf{W}^{i}\right)}\,d\,[\pi_{y}].}\end{split}
\end{equation*}
So we deduce \begin{equation*}
\begin{split}
\mathrm{\left\vert \int_{\pi^{-1}\left(y\right)}f\,d\bm{\nu}^{i}_{n}\left(y\right)-f_{red}\left(y\right)\right\vert} &\leq \mathrm{\frac{\sigma}{2}\frac{\int_{\pi^{-1}\left(y\right)\cap T\left(\epsilon_{\sigma},\mathbf{W}^{i}\right)}\left|s_{n}^{i}\right|^{2}\,d\,[\pi_{y}]}{\left\Vert s_{n}^{i}\right\Vert ^{2}\left(y\right)}}\\[0,2 cm]
&+ \mathrm{e^{-n\,\left(\frac{\epsilon_{\sigma}}{2}-\epsilon^{\prime}-\delta\right)}\,C\left(f\right)\,\frac{\int_{\pi^{-1}\left(y\right)\cap T^{c}\left(\epsilon_{\sigma},\mathbf{W}^{i}\right)}\,d\,[\pi_{y}]}{\int_{\pi^{-1}\left(y\right)\cap T\left(\epsilon^{\prime},\mathbf{W}^{i}\right)}\,d\,[\pi_{y}]}}\end{split} 
\end{equation*} for all $\mathrm{y\in \mathbf{W}^{i}\cap \mathbf{Y}_{0}}$.
Using {\bf{Corollary \ref{Boundedness of Fiber Integral}}}, we know that the dominator of the last term is bounded for all $\mathrm{y\in \mathbf{Y}_{0}}$ and by the second claim of {\bf{Proposition \ref{Continuity of Fiber Integral}}}, we know that the denominator is bounded away form zero as $\mathrm{y}$ varies in $\mathrm{\mathbf{W}^{i}\cap \mathbf{Y}_{0}}$. Therefore, we find a constant $\mathrm{\Gamma<\infty}$ so that 
\begin{equation*}
\begin{split}
\mathrm{\left\vert \int_{\pi^{-1}\left(y\right)}f\,d\bm{\nu}^{i}_{n}\left(y\right)-f_{red}\left(y\right)\right\vert} &\leq \mathrm{\frac{\sigma}{2}\frac{\int_{\pi^{-1}\left(y\right)\cap T\left(\epsilon_{\sigma},\mathbf{W}^{i}\right)}\left|s_{n}^{i}\right|^{2}\,d\,[\pi_{y}]}{\left\Vert s_{n}^{i}\right\Vert ^{2}\left(y\right)}}\\[0,2 cm]
&+ \mathrm{e^{-n\,\left(\frac{\epsilon_{\sigma}}{2}-\epsilon^{\prime}-\delta\right)}\,C\left(f\right)\,\Gamma.}\end{split} 
\end{equation*} Since $$\mathrm{\frac{\int_{\pi^{-1}\left(y\right)\cap T\left(\epsilon_{\sigma},\mathbf{W}^{i}\right)}\left|s_{n}^{i}\right|^{2}\,d\,[\pi_{y}]}{\left\Vert s_{n}^{i}\right\Vert ^{2}\left(y\right)}\leq 1}$$ for all $\mathrm{n\in \mathbb{N}}$ and all $\mathrm{y\in \mathbf{W}^{i}\cap \mathbf{Y}_{0}}$, we find $$\mathrm{\left\vert \int_{\pi^{-1}\left(y\right)}f\,d\bm{\nu}^{i}_{n}\left(y\right)-f_{red}\left(y\right)\right\vert \leq \frac{\sigma}{2}+e^{-n\,\left(\frac{\epsilon_{\sigma}}{2}-\epsilon^{\prime}-\delta\right)}\,C\left(f\right)\,\Gamma.}$$ Since $\mathrm{\frac{\epsilon_{\sigma}}{2}-\epsilon^{\prime}-\delta>0}$ there exists $\mathrm{N_{0}^{\prime\prime}\geq N^{\prime}_{0}}$ so that $$\mathrm{\left\vert \int_{\pi^{-1}\left(y\right)}f\,d\bm{\nu}^{i}_{n}\left(y\right)-f_{red}\left(y\right)\right\vert \leq \sigma}$$ for all $\mathrm{n\geq N^{\prime\prime}_{0}}$ as claimed.
\end{proof}

We sum up the results in

\begin{theo_3*}\label{Theorem Uniform Convergence of the Tame Measure Sequence} \textnormal{[\scshape{Uniform Convergence of the Tame Measure Sequence}]}\vspace{0.1 cm}\\
For for every tame collection $\mathrm{\left\{s_{n}^{i}\right\}_{i}}$ there exists a finite cover $\mathrm{\mathfrak{U}}$ of $\mathrm{\mathbf{Y}}$ with $\mathrm{\mathbf{U}_{i}\subset}$ $\mathrm{\pi\left(\mathbf{X}^{i}\right)}$ so that the collection of fiber probability measures $\mathrm{\left\{\bm{\nu}^{\mathfrak{U}}_{n}\right\}}$ associated to $\mathrm{\left\{s^{i}_{n}\right\}_{i}}$ converges uniformly on $\mathrm{\mathbf{Y}_{0}}$ to the fiber Dirac measure of $\mathrm{\mu^{-1}\left(\xi\right)\cap \mathbf{X}_{0}}$, i.e.\ for every $\mathrm{i\in I}$ and every $\mathrm{f\in \mathcal{C}^{0}\left(\mathbf{X}\right)}$, we have $$\begin{xy}
 \xymatrix{\mathrm{\left(y\mapsto \int_{\pi^{-1}\left(y\right)}f\, d\bm{\nu}_{n}^{i}\left(y\right)\right)}\ar[rr]^{\mathrm{\,\,\,\,\,}}&&\mathrm{f_{red}\text{ uniformly on }\mathbf{U}_{i}\cap \mathbf{Y}_{0}.\label{Notation Reduced Function}}
 }
\end{xy}$$
\end{theo_3*}
\begin{proof} Let $\mathrm{\left\{s^{i}_{n}\right\}_{i}}$ be a tame collection and $\mathrm{\mathfrak{U}}$ be a finite open cover of $\mathrm{\mathbf{Y}}$ so that $$\mathrm{\mathbf{U}^{i} \subset \mathbf{W}^{i}\subset \mathbf{Y}^{i}=\pi\left(\mathbf{X}^{i}\right)}$$ where $\mathrm{\mathbf{W}^{i}}$ is a compact neighborhood as defined at the beginning of {\bf{Section \ref{Uniform Localization Proposition}}}.

By {\bf{Proposition \ref{Proposition Uniform Convergence}}} there exists $\mathrm{N^{i}_{0}\in \mathbb{N}}$ so that $$\mathrm{\left|\int_{\pi^{-1}\left(y\right)}f\, d\bm{\nu}_{n}^{i}\left(y\right)-f_{red}\left(y\right)\right|\leq\epsilon}$$ for all $\mathrm{y\in \mathbf{Y}_{0}\cap \mathbf{W}^{i}}$ and all $\mathrm{n\geq N^{i}_{0}}$ which proves the claim.
\end{proof}

\subsection{Uniform Convergence of the Fiber Distribution Densities}

In this section we will give a proof of {\bf{Theorem 4}}. For this, let $\mathrm{\left(D_{n}^{i}\left(\cdot,t\right)\right)_{n}}$ be the sequence of cumulative fiber distribution functions on $\mathrm{\mathbf{W}^{i}\cap \mathbf{Y}_{0}}$ as defined in {\bf{Definition \ref{Definition Sequence Of Collections Of Fiber Probability Densities}}} associated to a tame sequence  $\mathrm{\left(s_{n}^{i}\right)_{n}}$ and let $\mathrm{\mathbf{W}^{i}\subset \pi\left(\mathbf{X}^{i}\right)}$ be a compact neighborhood so that the $\mathrm{T}$-invariant tube $\mathrm{T\left(\epsilon,\mathbf{W}^{i}\right)\subset \mathbf{X}^{i}}$ as defined in {\bf{Section \ref{Uniform Localization Proposition}}} is compact.
 
\begin{prop}\label{Proposition Uniform Convergence of The Distribution Sequence} The sequence $\mathrm{\left(D_{n}^{i}\left(\cdot,t\right)\right)_{n}}$ of distribution functions on $\mathrm{\mathbf{W}^{i}\cap \mathbf{Y}_{0}}$ converges uniformly on $\mathrm{\mathbf{W}^{i}\cap \mathbf{Y}_{0}}$ to the zero function for all $\mathrm{t\geq 0}$.
\end{prop}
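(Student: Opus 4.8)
The plan is to imitate the structure of the proof of the companion statement \textbf{Proposition \ref{Proposition Uniform Convergence}}, but now controlling a cumulative integral of the type $D_n^i(y,t)=\int_{\{\phi_n^i\geq t\}\cap\pi^{-1}(y)}d[\pi_y]$ rather than a weighted integral of a test function. The key structural observation is that, after normalizing as in \textbf{Section \ref{Uniform Localization Proposition}} (so that $\varrho^i_n=-\tfrac1n\mathbf{log}\,\vert s_n^i\vert^2$ attains its $T$-invariant minimum along $\mu^{-1}(\xi)\cap\pi^{-1}(y)$), the superlevel set $\{\phi^i_n\geq t\}$ shrinks toward $\mu^{-1}(\xi)$ as $n\to\infty$. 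More precisely, for $t>0$ fixed, if $x\in\{\phi^i_n\geq t\}\cap\pi^{-1}(y)$ then $\vert s_n^i\vert^2(x)\geq t\,\Vert s_n^i\Vert^2(y)$, and since $\Vert s_n^i\Vert^2(y)=\int_{\pi^{-1}(y)}\vert s_n^i\vert^2\,d[\pi_y]\geq e^{-n(\epsilon'+\delta)}\,\mathrm{vol}(\pi^{-1}(y)\cap T(\epsilon',\mathbf{W}^i))$ by the lower bound already established in the proof of \textbf{Proposition \ref{Proposition Uniform Convergence}}, we get $\varrho^i_n(x)\leq \epsilon'+\delta-\tfrac1n\log t+\tfrac1n\vert\log\mathrm{vol}(\cdots)\vert$. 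Using \textbf{Corollary \ref{Boundedness of Fiber Integral}} and the second claim of \textbf{Proposition \ref{Continuity of Fiber Integral}}, the volume factor is uniformly bounded above and below over $\mathbf{W}^i\cap\mathbf{Y}_0$, so for any prescribed $\epsilon_0>0$ there is $N_0$ with $\varrho^i_n(x)\leq\epsilon_0$ for all $n\geq N_0$, uniformly in $y$. Hence $\{\phi^i_n\geq t\}\cap\pi^{-1}(y)\subset T(\epsilon_0,\mathbf{W}^i)\cap\pi^{-1}(y)$ for all large $n$, uniformly over $\mathbf{W}^i\cap\mathbf{Y}_0$.

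The second step converts this inclusion into the desired bound on $D_n^i(y,t)$. By monotonicity of the fiber integral,
\[
D_n^i(y,t)=\int_{\{\phi^i_n\geq t\}\cap\pi^{-1}(y)}d[\pi_y]\leq \mathrm{vol}\bigl(T(\epsilon_0,\mathbf{W}^i)\cap\pi^{-1}(y)\bigr)
\]
for all $n$ large. Now invoke the first claim of \textbf{Proposition \ref{Continuity of Fiber Integral}}: given $\Delta>0$, there is $\epsilon_\Delta>0$ so that $\mathrm{vol}(T(\epsilon_\Delta,\mathbf{W}^i)\cap\pi^{-1}(y))\leq\Delta$ for all $y\in\mathbf{W}^i\cap\mathbf{Y}_0$. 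Choosing $\epsilon_0=\epsilon_\Delta$ in the first step, we obtain $D_n^i(y,t)\leq\Delta$ for all $n\geq N_0(\Delta)$ and all $y\in\mathbf{W}^i\cap\mathbf{Y}_0$, which is exactly uniform convergence to the zero function. The case $t=0$ is handled separately (or trivially excluded, since $D_n^i(y,0)$ is just the total fiber volume, which need not tend to zero) — one should note that the statement for $t=0$ must be read modulo the measure-zero set $\mu^{-1}(\xi)\cap\pi^{-1}(y)$, using the first claim of \textbf{Lemma \ref{Lemma Intersection Zero Measure}} that $\Pi^{-1}(y,\bm{\mathfrak{C}})\cap\widetilde{\,T\,}(0,\mathbf{W}^i)$ has measure zero; actually the cleanest formulation restricts to $t>0$, and the limiting behavior at $t=0$ is what expresses the Dirac concentration.

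The main obstacle is the uniformity in $y$ over the \emph{non-compact} base $\mathbf{W}^i\cap\mathbf{Y}_0$: a naive pointwise argument would only give $N_0$ depending on $y$. This is precisely the difficulty that the construction of the flattening $\Pi\!:\!\widetilde{\bf{\,X\,}}\rightarrow\widetilde{\bf{\,Y\,}}$ of \textbf{Section \ref{Uniform Localization Proposition}} was designed to resolve. All the uniform volume estimates I use — boundedness of $\mathrm{vol}(\pi^{-1}(y))$ (\textbf{Corollary \ref{Boundedness of Fiber Integral}}), the two-sided control of $\mathrm{vol}(\pi^{-1}(y)\cap T(\epsilon,\mathbf{W}^i))$ (\textbf{Proposition \ref{Continuity of Fiber Integral}}), and the uniform localization of $\varrho^i_n$ (\textbf{Theorem \ref{Theorem Localization of the Sequence of Potential Functions}}) — are already available uniformly over $\mathbf{W}^i\cap\mathbf{Y}_0$ because they are pulled back from the compact spaces $\widetilde{\bf{\,X\,}}$, $\widetilde{\bf{\,Y\,}}$ via \textbf{Lemma \ref{Lemma Equality of Volume}} and Dini's theorem (\textbf{Lemma \ref{Lemma Fiber Integration}}). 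So the proof is essentially a bookkeeping assembly of these three ingredients; no new hard analytic input is required beyond what \textbf{Theorem 2}, \textbf{Proposition \ref{Continuity of Fiber Integral}} and \textbf{Corollary \ref{Boundedness of Fiber Integral}} already provide. Finally, to pass from $\mathbf{W}^i\cap\mathbf{Y}_0$ to $\mathbf{U}_i\cap\mathbf{Y}_0$ one simply restricts, as in the proof of \textbf{Theorem 3}, choosing the finite cover $\mathfrak{U}$ with $\mathbf{U}_i\subset\mathbf{W}^i$.
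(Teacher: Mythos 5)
Your proposal follows essentially the same architecture as the paper's proof: first show that the superlevel set $\mathrm{\{\phi^{i}_{n}\geq t\}}$ is eventually absorbed in a small tube $\mathrm{T(\epsilon,\mathbf{W}^{i})}$, then control its fiber volume via the first part of \textbf{Proposition \ref{Continuity of Fiber Integral}}. The ingredients you cite (the lower bound on $\mathrm{\Vert s^{i}_{n}\Vert^{2}}$ coming from uniform convergence of $\mathrm{\varrho^{i}_{n}}$ on the compact small tube, \textbf{Corollary \ref{Boundedness of Fiber Integral}}, and the two-sided volume control) are the right ones, and your estimate $\mathrm{\varrho^{i}_{n}(x)\leq\epsilon'+\delta-\tfrac{1}{n}\log t+\tfrac{1}{n}\vert\log\mathrm{vol}(\cdots)\vert}$ on the superlevel set is correct; it is the same inequality that appears in the paper, merely read from the other side (you work directly on $\mathrm{\{\phi^{i}_{n}\geq t\}}$, the paper works on $\mathrm{T^{c}(\epsilon_{\sigma},\mathbf{W}^{i})}$ and proves $\mathrm{\phi^{i}_{n}<t}$ there). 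Your observation about $\mathrm{t=0}$ is also correct: $\mathrm{D^{i}_{n}(y,0)=\mathrm{vol}(\pi^{-1}(y))}$, which does not tend to zero, so the statement really needs $\mathrm{t>0}$, and indeed the paper's argument at the step $\mathrm{\phi^{i}_{n}\leq t}$ on $\mathrm{T^{c}}$ silently assumes $\mathrm{t>0}$.

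There is, however, one step you state too quickly, and it is exactly the step where the paper begins its proof. From ``$\mathrm{\varrho^{i}_{n}(x)\leq\epsilon_{0}}$ for all $\mathrm{n\geq N_{0}}$'' you pass immediately to ``hence $\mathrm{\{\phi^{i}_{n}\geq t\}\cap\pi^{-1}(y)\subset T(\epsilon_{0},\mathbf{W}^{i})}$''. But the tube $\mathrm{T(\epsilon_{0},\mathbf{W}^{i})=(\varrho^{i}\times\pi)^{-1}([0,\epsilon_{0}]\times\mathbf{W}^{i})}$ is cut out by the \emph{limit} potential $\mathrm{\varrho^{i}}$, not by $\mathrm{\varrho^{i}_{n}}$, and on the non-compact complement $\mathrm{T^{c}(\epsilon_{0},\mathbf{W}^{i})}$ you do not have a uniform comparison between $\mathrm{\varrho^{i}_{n}}$ and $\mathrm{\varrho^{i}}$ for free. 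The missing link is the contrapositive of \textbf{Theorem 2}: given $\mathrm{\delta'>0}$, for $\mathrm{n}$ large, $\mathrm{\varrho^{i}_{n}(x)<\epsilon_{0}-\delta'}$ forces $\mathrm{x\notin T^{c}(\epsilon_{0},\mathbf{W}^{i})}$. So you must arrange your bound on $\mathrm{\varrho^{i}_{n}}$ over the superlevel set to land \emph{strictly} below $\mathrm{\epsilon_{0}-\delta'}$ (take $\mathrm{\epsilon'+\delta}$ small relative to $\mathrm{\epsilon_{0}}$), and then invoke Theorem 2 explicitly to obtain the inclusion. You do list ``uniform localization of $\mathrm{\varrho^{i}_{n}}$'' among your ingredients, but you file it among the volume estimates (which it is not) and never actually feed it into the chain of implications; in the paper's proof it is the opening move, and without it the step from a bound on $\mathrm{\varrho^{i}_{n}}$ to membership in a $\mathrm{\varrho^{i}}$-tube is unjustified.
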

\begin{proof} First of all fix $\mathrm{\sigma>0}$. Then by the first part of {\bf{Proposition \ref {Continuity of Fiber Integral}}} there exists $\mathrm{\epsilon_{\sigma}>0}$ so that $$\mathrm{vol\left(\pi^{-1}\left(y\right)\cap T\left(\epsilon_{\sigma},\mathbf{W}^{i}\right)\right)\leq \sigma}$$ for all $\mathrm{y\in \mathbf{W}^{i}\cap \mathbf{Y}_{0}}$. Hence, we are finished as soon as we have proved that there exists $\mathrm{N_{0}\in \mathbb{N}}$ so that $$\mathrm{\left\{x\in \pi^{-1}\left(\mathbf{W}^{i}\cap \mathbf{Y}_{0}\right)\!:\!\,\frac{\vert s_{n}\vert^{2}}{\Vert s_{n}\Vert^{2}}\left(x\right)\geq t\right\}\subset  T\left(\epsilon_{\sigma},\mathbf{W}^{i}\right)}$$ for all $\mathrm{n\geq N_{0}}$.

By {\bf{Theorem 2}} we find $\mathrm{N_{0}\in \mathbb{N}}$, so that $\mathrm{\varrho_{n}^{i}\left(x\right)\geq \frac{\epsilon_{\sigma}}{2}}$ for all $\mathrm{x\in T^{c}(\epsilon_{\sigma},\mathbf{W}^{i})}$ and all $\mathrm{n\geq N_{0}}$. Therefore, we deduce \begin{equation}\label{Equation Proof}
\mathrm{\frac{\left|s_{n}\right|^{2}}{\left\Vert s_{n}\right\Vert ^{2}}\leq\frac{e^{-n\,\frac{\epsilon_{\sigma}}{2}}}{\left\Vert s_{n}\right\Vert ^{2}}}
\end{equation}
for all $\mathrm{n\geq N_{0}}$ and all all $\mathrm{x\in T^{c}\left(\epsilon_{\sigma},\mathbf{W}^{i}\right)\cap \pi^{-1}\left(\mathbf{Y}_{0}\right)}$.
Moreover, since $\mathrm{\varrho^{i}_{n}}$ converges uniformly to $\mathrm{\varrho^{i}}$ on the compact subsets like $\mathrm{T\left(\frac{\epsilon_{\sigma}}{5},\mathbf{W}^{i}\right)}$ ({cf.\! }{\bf{Proposition \ref{Proposition Uniform Convergence Potential Functions in the Abelian Case}}}), there exists $\mathrm{N^{\prime}_{0}\geq N_{0}}$ so that $$\mathrm{\varrho^{i}_{n}\left(z\right)\leq \varrho^{i}\left(z\right)+\frac{\epsilon_{\sigma}}{5}}$$ for all $\mathrm{z\in T\left(\frac{\epsilon_{\sigma}}{5},\mathbf{W}^{i}\right)}$ and all $\mathrm{n\geq N_{0}^{\prime}}$, i.e.\ $$\mathrm{\varrho^{i}_{n}\left(z\right)\leq \frac{2}{5} \,\epsilon_{\sigma}\text{ on }T\left(\frac{\epsilon_{\sigma}}{5},\mathbf{W}^{i}\right).}$$ So we deduce $$\mathrm{\Vert s_{n}\Vert^{2}\left(y\right)\geq e^{-n\,\frac{2}{5}\,\epsilon_{\sigma}} \int_{\pi^{-1}\left(y\right)\cap T\left(\frac{\epsilon_{\sigma}}{5},\mathbf{W}^{i}\right)}\,d\,[\pi_{y}]}$$ for all $\mathrm{y\in \mathbf{W}^{i}\cap \mathbf{Y}_{0}}$. If we substitute this into equation \ref{Equation Proof}, it follows that $$\mathrm{\mathrm{\frac{\left|s_{n}\right|^{2}}{\left\Vert s_{n}\right\Vert ^{2}}\left(x\right)\leq\frac{e^{-n\,\frac{1}{10}\,\epsilon_{\sigma}}}{\int_{\pi^{-1}\left(\pi\left(x\right)\right)\cap T\left(\frac{\epsilon_{\sigma}}{5},\mathbf{W}^{i}\right)}\,d\,[\pi_{y}]}}}$$ for all $\mathrm{x\in T^{c}\left(\epsilon_{\sigma},\mathbf{W}^{i}\right)\cap \pi^{-1}\left(\mathbf{Y}_{0}\right)}$ and all $\mathrm{n\geq N_{0}^{\prime}}$. By the second part of {\bf{Proposition \ref{Continuity of Fiber Integral}}}, we know that the denominator is bounded away form zero as $\mathrm{x}$ varies in $\mathrm{\pi^{-1}\left(\mathbf{W}^{i}\cap \mathbf{Y}_{0}\right)}$ and therefore we find $\mathrm{N_{0}\in \mathbb{N}}$, $\mathrm{N_{0}\geq N^{\prime}_{0}}$ so that $$\mathrm{\mathrm{\frac{\left|s_{n}\right|^{2}}{\left\Vert s_{n}\right\Vert ^{2}}\left(x\right)\leq t}}$$ for all $\mathrm{x\in T^{c}\left(\epsilon_{\sigma},\mathbf{W}^{i}\right)\cap \pi^{-1}\left(\mathbf{Y}_{0}\right)}$ and all $\mathrm{n\geq N_{0}}$. So it follows that  $$\mathrm{\left\{x\in \pi^{-1}\left(\mathbf{W}^{i}\cap \mathbf{Y}_{0}\right)\!:\!\,\frac{\vert s_{n}\vert^{2}}{\Vert s_{n}\Vert^{2}}\left(x\right)\geq t\right\}\subset  T\left(\epsilon_{\sigma},\mathbf{W}^{i}\right)}$$ for all $\mathrm{n\geq N_{0}}$  and therefore $$\mathrm{D_{n}\left(y,t\right)\leq vol\left\{\pi^{-1}\left(y\right)\cap T\left(\epsilon_{\sigma},\mathbf{W}^{i}\right)\right\} \leq \sigma .}$$ for all $\mathrm{y\in \mathbf{W}^{i}\cap \mathbf{Y}_{0}}$ and all $\mathrm{n\geq N_{0}}$.
\end{proof}

To sum up, we have proved

\begin{theo_4*}\label{Theorem Uniform Convergence of the Tame Distribution Sequence} \textnormal{[\scshape{Uniform Convergence of the Tame Distribution Sequence}]}\vspace{0.1 cm}\\
For every $\mathrm{t\in \mathbb{R}}$ and every tame collection $\mathrm{\left\{s_{n}^{i}\right\}_{i}}$ there exists a finite cover $\mathrm{\mathfrak{U}}$ of $\mathrm{\mathbf{Y}}$ with $\mathrm{\mathbf{U}_{i}\subset}$ $\mathrm{\pi\left(\mathbf{X}^{i}\right)}$ so that the collection of cumulative fiber probability densities $\mathrm{\left\{D^{\mathfrak{U}}_{n}\left(\cdot,t\right)\right\}}$ associated to $\mathrm{\left\{s^{i}_{n}\right\}_{i}}$ converges uniformly on $\mathrm{\mathbf{Y}_{0}}$ to the zero function on $\mathrm{\mathbf{Y}_{0}}$, i.e.\ for every $\mathrm{i\in I}$ we have $$\begin{xy}
 \xymatrix{\mathrm{\left(y\mapsto D_{n}^{i}\left(y,t\right)\right)}\ar[rr]^{\mathrm{\,\,\,\,\,}}&&\mathrm{0\text{ uniformly on }\mathbf{U}_{i}\cap \mathbf{Y}_{0}\subset \pi\left(\mathbf{X}^{i}\right).}
 }
\end{xy}$$
\end{theo_4*}
\begin{proof} As in the proof of {\bf{Theorem 3}}, using the compactness of $\mathrm{\mathbf{Y}}$, we can find by means of {\bf{Theorem 1}} a tame collection $\mathrm{\left\{s^{i}_{n}\right\}_{i}}$ and an finite open cover $\mathrm{\mathfrak{U}}$ of $\mathrm{\mathbf{Y}}$ so that $$\mathrm{\mathbf{U}^{i} \subset \mathbf{W}^{i}\subset \mathbf{Y}^{i}=\pi\left(\mathbf{X}^{i}\right)}$$ where $\mathrm{\mathbf{W}^{i}}$ is a compact neighborhood as defined at the beginning of {\bf{Section \ref{Uniform Localization Proposition}}}. The claim then follows by applying {\bf{Proposition \ref{Proposition Uniform Convergence}}} to the sequence $\mathrm{\left(D^{i}_{n}\left(\cdot,t\right)\right)_{n}}$.
\end{proof}

\section{Uniform Convergence in the Non-Tame Case}\label{Section Uniform Convergence Theorems In The Non-Tame Case}

Throughout this section we make the following assumption.
\begin{agree}\label{General Agreement}
There exists $\mathrm{N_{0}\in \mathbb{N}}$ so that $\mathrm{\mathbf{R}_{N_{0}}\subset \mathbf{Y}}$ is non-empty.
\end{agree}

Recall that $\mathrm{\mathbf{R}_{N_{0}}\cap \mathbf{Y}_{0}}$ is open. Furthermore, if $\mathrm{y\in\mathbf{R}_{N_{0}}\cap \mathbf{Y}_{0}}$, there exists an open neighborhood $\mathrm{{\mathbf{U}}_{y}\subset\mathbf{R}_{N_{0}}\cap \mathbf{Y}_{0}}$ and a sequence of local, holomorphic $\mathrm{\xi_{n}}$-eigensections $\mathrm{\widehat{s}_{f_{y},n}}$ defined on the open $\mathrm{\pi}$-saturated subset $\mathrm{\pi^{-1}\left(\mathbf{U}_{y}\right)}$ so that $\mathrm{\widehat{s}_{f_{y},n}\vert \pi^{-1}\left(y^{\prime}\right)\not \equiv0}$ for all $\mathrm{n\geq N_{0}}$ and all $\mathrm{y^{\prime}\in \mathbf{U}_{y}}$. Here $\mathrm{\widehat{s}_{f_{y,n}}}$ is given by (cf.\ {\bf{Section \ref{Definition of the Fiber Probability Measure Sequence (Non-Tame Case)}}}) $$\mathrm{\widehat{s}_{f_{y,n}}=s_{n}\cdot\pi^{*}f^{-1}_{y,n}\text{ for all }y^{\prime}\in \mathbf{U}_{y}\subset \mathbf{Y}}$$ where $\mathrm{\left(f_{y,n}\right)_{n}}$ is a sequence of holomorphic functions $\mathrm{f_{y,n}\in \mathcal{O}\left(\mathbf{U}_{y}\right)}$.

Using {\bf{Theorem 2}}, there exists a tame sequence $\mathrm{\left(s^{i}_{n}\right)_{n}}$ so that $\mathrm{y\in \mathbf{Y}^{i}=\pi\left(\mathbf{X}^{i}\right)}$. Note that after having shrunken $\mathrm{\mathbf{U}_{y}}$, we can always assume that $\mathrm{\mathbf{U}_{y}\subset \mathbf{Y}^{i}}$. We now define a sequence $\mathrm{\big(\triangle_{f_{y,n}}^{i}\big)_{n}}$ of holomorphic $\mathrm{\xi_{n}-\xi^{i}_{n}}$-eigenfunctions on $\mathrm{\pi^{-1}\left(\mathbf{U}_{y}\right)}$ by $$\mathrm{s^{i}_{n}\cdot\triangle_{f_{y,n}}^{i}=\widehat{s}_{f_{y,n}}\label{Notation Difference Function}.}$$ Note that $\mathrm{\triangle^{i}_{f_{y,n}}\vert \pi^{-1}\left(y^{\prime}\right)\not \equiv0}$ for all $\mathrm{n\geq N_{0}}$ and all $\mathrm{y^{\prime}\in \mathbf{U}_{y}}$. 

Moreover, recall that the sequence $$\mathrm{\phi_{n}\!:\!x\mapsto \phi_{n}\left(x\right)\coloneqq \Vert \widehat{s}_{f_{y,n}}\Vert^{-2}\left(x\right)\vert  \widehat{s}_{f_{y,n}}\vert^{2}\left(x\right)}$$ is independent of the choice of $\mathrm{\left(f_{y,n}\right)_{n}}$, $\mathrm{f_{y,n}\in \mathcal{O}\left(\mathbf{U}_{y}\right)}$, over $\mathrm{\mathbf{U}_{y}\subset \mathbf{R}_{N_{0}}\cap \mathbf{Y}_{0}}$ ({cf.\! }{\bf{Corollary \ref{Corollary Independence Of Extension}}}). Therefore, we will not longer specify $\mathrm{\left(f_{y,n}\right)_{n}}$ and just write $\mathrm{\triangle^{i}_{f_{y,n}}=\triangle^{i}_{y,n}}$ and $\mathrm{s_{n}=\widehat{s}_{f_{y,n}}}$.

\subsection{Analysis of $\mathrm{\triangle^{i}_{n}\vert \pi^{-1}\left(y\right)}$}

The decomposition $\mathrm{s_{n}=s^{i}_{n}\cdot \triangle^{i}_{y,n}}$ introduced above will play a crucial role in the proof of $\mathrm{\mathbf{Theorem\, 5.a,\, b}}$ and $\mathrm{\mathbf{Theorem\, 6.a,\,b}}$. Since the sequence of local functions given by $\mathrm{\triangle^{i}_{y,n}}$ can be seen as measure of the difference between the initial sequence of eigensections $\mathrm{s_{n}}$ and the tame sequence $\mathrm{\left(s^{i}_{n}\right)_{n}}$, it is desirable to control the growth of $\mathrm{\triangle^{i}_{y,n}}$. However, in general the restrictions of the sequence of functions given by $\mathrm{\triangle^{i}_{y,n}}$ to the fibers of the quotient map $\mathrm{\pi}$ turns out to be unbounded. The aim of this section is to prove (cf. {\bf{Proposition \ref{Theorem Estimates Tales of Triangle}}}) that there always exists $\mathrm{m_{0}\in \mathbb{N}}$ so that $\mathrm{s^{i}_{m_{0}}\cdot\triangle^{i}_{y,n}}$ is uniformly bounded over the quotient and takes on its maximum on the fibers in a given neighborhood of $\mathrm{\mu^{-1}\left(\xi\right)}$ for all $\mathrm{n}$ big enough.

\begin{prop} \label{Theorem Estimates Tales of Triangle} Let $\mathrm{\mathbf{W}\subset\mathbf{Y}}$ be a compact neighborhood (we can assume that $\mathrm{\mathbf{W}\subset \mathbf{Y}^{i}}$), then there exists $\mathrm{m_{0}\in \mathbb{N}}$ so that the restriction of $\mathrm{\vert s^{i}_{m_{0}}\cdot\triangle_{y,n}^{i}\vert^{2}}$ on $\mathrm{\pi^{-1}\left(y\right)}$ takes on its maximum in $\mathrm{\pi^{-1}\left(y\right)\cap}$ $\mathrm{ T\left(\epsilon,\mathbf{W}\right)}$ for all $\mathrm{n}$ big enough and all $\mathrm{y\in \mathbf{W}\cap \mathbf{R}_{N_{0}}}$.
\end{prop}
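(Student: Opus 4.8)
## Proof Strategy for Proposition~\ref{Theorem Estimates Tales of Triangle}

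The plan is to analyze the behavior of $\mathrm{\triangle^{i}_{y,n}}$ fiberwise using the one-parameter subgroup technology from the proof of \textbf{Theorem~\ref{Theorem Localization of the Sequence of Potential Functions}}, together with the key fact that $\mathrm{\triangle^{i}_{y,n}}$ does not vanish identically on any fiber $\mathrm{\pi^{-1}(y')}$ for $\mathrm{y'\in\mathbf{U}_y}$ and $\mathrm{n\geq N_0}$. First I would reduce to a single fiber. Fix $\mathrm{y\in\mathbf{W}\cap\mathbf{R}_{N_0}}$ and let $\mathrm{\mathbb{T}.z_y}$ be the unique closed orbit in $\mathrm{\pi^{-1}(y)}$. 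Since $\mathrm{\triangle^{i}_{y,n}}$ is a $\mathrm{(\xi_n-\xi^{i}_{n})}$-eigenfunction which is holomorphic on $\mathrm{\pi^{-1}(\mathbf{U}_y)}$, the function $\mathrm{\vert\triangle^{i}_{y,n}\vert^{2}}$ is $\mathrm{T}$-invariant, and its restriction to any $\mathrm{\mathbb{T}}$-orbit is, after pulling back along a one-parameter group $\mathrm{\gamma\colon\mathbb{C}^{*}\to\mathbb{T}}$, of the form $\mathrm{t\mapsto c\,\vert t\vert^{2\langle\xi_n-\xi^{i}_{n},\gamma\rangle}}$ times a correction that is controlled by the fact that $\mathrm{\triangle^{i}_{y,n}}$ extends holomorphically across the boundary divisors of the orbit closure. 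The point is that $\mathrm{\vert\xi_n-n\xi\vert\in\mathcal{O}(1)}$ and $\mathrm{\vert\xi^{i}_{n}-n\xi\vert\in\mathcal{O}(1)}$, so $\mathrm{\vert\xi_n-\xi^{i}_{n}\vert\in\mathcal{O}(1)}$; the exponent governing the growth of $\mathrm{\vert\triangle^{i}_{y,n}\vert^{2}}$ along any orbit direction is therefore bounded uniformly in $\mathrm{n}$.

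Next I would bring in the potential $\mathrm{\varrho^{i}}$. Recall $\mathrm{\widehat{s}_{f_{y,n}}=s^{i}_{n}\cdot\triangle^{i}_{y,n}}$, so $\mathrm{\vert s^{i}_{m_0}\cdot\triangle^{i}_{y,n}\vert^{2}=\vert s^{i}_{m_0}\vert^{2}\cdot\vert s^{i}_{n}\vert^{-2}\cdot\vert s_n\vert^{2}}$. Taking $\mathrm{-\frac{1}{n}\log}$ of $\mathrm{\vert s^{i}_{m_0}\cdot\triangle^{i}_{y,n}\vert^{2}}$ yields (up to the bounded term $\mathrm{-\frac{1}{n}\log\vert s^{i}_{m_0}\vert^{2}}$, which has a fixed zero divisor and is continuous on $\mathrm{\mathbf{X}^{i}}$) the quantity $\mathrm{\varrho^{i,m_0}_{n}}$ from \textbf{Corollary~\ref{Remark Convergence Theorem For The Shifted Sequence}} applied to the shifted sequence, or more directly $\mathrm{\varrho^{i}_{n}-\frac{1}{n}\log\vert s_n\vert^{2}}$. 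The key structural input is that on each fiber $\mathrm{\pi^{-1}(y)}$, the function $\mathrm{-\frac{1}{n}\log\vert s^{i}_{m_0}\cdot\triangle^{i}_{y,n}\vert^{2}}$ is a strictly plurisubharmonic, $\mathrm{T}$-invariant function whose restriction to the closed orbit $\mathrm{\mathbb{T}.z_y}$ is strictly convex along the slice directions $\mathrm{\mathfrak{m}_z}$ (using the homogeneous-bundle description $\mathrm{\mathbb{T}.z_y\cong T\times^{T_z}\mathfrak{m}_z}$ exactly as in the proof of Theorem~2), hence attains a unique fiberwise minimum of its negative-log, i.e. $\mathrm{\vert s^{i}_{m_0}\cdot\triangle^{i}_{y,n}\vert^{2}}$ attains a unique maximum along a $\mathrm{T}$-orbit, located at $\mathrm{\mu^{-1}(n^{-1}\eta^{i,m_0}_{n})\cap\pi^{-1}(y)}$ where $\mathrm{\eta^{i,m_0}_{n}}$ is the relevant weight difference. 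By the argument of \textbf{Remark~\ref{Preparation Remark}} (compactness of $\mathrm{\mathbf{X}}$ and $\mathrm{\mathbf{W}}$, convergence of rescaled weights), this maximizing orbit lies inside $\mathrm{T(\epsilon,\mathbf{W})}$ for all $\mathrm{n}$ large, uniformly in $\mathrm{y\in\mathbf{W}\cap\mathbf{R}_{N_0}}$. The Hilbert-lemma step then propagates the conclusion from the closed orbit to the whole fiber: for $\mathrm{x\in\pi^{-1}(y)}$ not on $\mathrm{\mathbb{T}.z_y}$, choosing $\mathrm{\gamma}$ with $\mathrm{\lim_{t\to 0}\gamma(t).x\in\mathbb{T}.z_y}$ makes $\mathrm{t\mapsto\vert s^{i}_{m_0}\cdot\triangle^{i}_{y,n}\vert^{2}(\gamma(t).x)}$ monotone (non-increasing toward the limit point cannot happen on the side away from the critical orbit), so the fiberwise maximum is never attained strictly outside, hence lies in the tube.

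The remaining issue is the choice of $\mathrm{m_0}$ and the uniform boundedness claim. Here the role of $\mathrm{m_0}$ is to absorb the potentially unbounded growth of $\mathrm{\triangle^{i}_{y,n}}$ near the boundary of the orbit closures: the zero divisor of $\mathrm{s^{i}_{m_0}}$ on $\mathrm{\mathbf{X}^{i}}$ is concentrated on the complement $\mathrm{\bigcup_{j\in J_1}\{s_j=0\}}$, which is precisely where the poles/growth of $\mathrm{\triangle^{i}_{y,n}}$ can occur, and the exponent bound $\mathrm{\vert\xi_n-\xi^{i}_{n}\vert\in\mathcal{O}(1)}$ means a fixed $\mathrm{m_0}$ (independent of $\mathrm{n}$) suffices to dominate. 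Concretely, I would write $\mathrm{\triangle^{i}_{y,n}}$ locally in a trivialization of $\mathrm{\mathbf{L}}$ adapted to the eigensections $\mathrm{s_j}$, estimate its order of growth along each boundary divisor by the uniformly bounded weight difference, and then choose $\mathrm{m_0}$ larger than this bound; then $\mathrm{\vert s^{i}_{m_0}\cdot\triangle^{i}_{y,n}\vert^{2}}$ extends continuously across the relevant divisors and, being $\mathrm{T}$-invariant with a fiberwise maximum in the relatively compact tube $\mathrm{T(\epsilon,\mathbf{W})}$, is bounded on $\mathrm{\pi^{-1}(\mathbf{W}\cap\mathbf{R}_{N_0})}$ by its maximum over the compact set $\mathrm{T(\epsilon,\mathbf{W})}$ (using the uniform convergence of $\mathrm{\varrho^{i}_{n}}$ and $\mathrm{\varrho^{i,m_0}_{n}}$ on relatively compact sets from \textbf{Proposition~\ref{Proposition Uniform Convergence Potential Functions in the Abelian Case}}). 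I expect the main obstacle to be making the ``uniform exponent bound forces a fixed $\mathrm{m_0}$'' step fully rigorous: one must show that a single $\mathrm{m_0}$, chosen once and for all from the $\mathcal{O}(1)$-bound on $\mathrm{\vert\xi_n-\xi^{i}_{n}\vert}$ and the finitely many boundary divisors, works simultaneously for all $\mathrm{n}$ and all $\mathrm{y}$ in the noncompact set $\mathrm{\mathbf{W}\cap\mathbf{R}_{N_0}}$ — this is where the $\mathrm{k}$-fibering structure of Section~IV and the compactness of $\mathrm{\mathbf{W}}$ have to be invoked to glue the local divisorial estimates into a global one.
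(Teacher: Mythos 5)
Your proposal correctly recognizes the key structural ingredient — that $\mathrm{s^{i}_{m_0}\cdot\triangle^{i}_{y,n}}$ is a local $\mathrm{\mathbb{T}}$-eigensection whose fiberwise maximum should sit on a moment level set — but there are several concrete errors that derail the argument. First, the location of the maximum: $\mathrm{s^{i}_{m_0}\cdot\triangle^{i}_{y,n}}$ is a local section of $\mathrm{\mathbf{L}^{m_0}}$, \emph{not} of $\mathrm{\mathbf{L}^{n}}$, with $\mathrm{\mathbb{T}}$-weight $\mathrm{\xi^{i}_{m_0}+(\xi_n-\xi^{i}_n)}$, so the Kempf--Ness mechanism places the fiberwise maximum at $\mathrm{\mu^{-1}\bigl(m_0^{-1}(\xi^{i}_{m_0}+\xi_n-\xi^{i}_n)\bigr)}$ — normalize by $\mathrm{m_0}$, not by $\mathrm{n}$. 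Your formula $\mathrm{\mu^{-1}\bigl(n^{-1}\eta^{i,m_0}_{n}\bigr)}$ has both the wrong normalization and, since in the paper $\mathrm{\eta^{i,m_0}_n=\xi^{i}_n-\xi^{i}_{m_0}}$ is the weight of $\mathrm{s^i_n\cdot s^{i,-1}_{m_0}}$, the wrong weight. Second, you skip the finite reduction: since $\mathrm{|\xi_n-\xi^{i}_n|\in\mathcal{O}(1)}$ and the weights sit in the lattice $\mathrm{\mathfrak{t}^{*}_{\mathbb{Z}}}$, the set $\mathrm{\{\xi_n-\xi^{i}_n\}_n}$ is finite, so one may take $\mathrm{\xi_n-\xi^{i}_n=\xi_0}$ to be constant. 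This makes the weight $\mathrm{\eta_{m_0}=\xi^{i}_{m_0}+\xi_0}$ of $\mathrm{s^{i}_{m_0}\cdot\triangle^{i}_{y,n}}$ completely independent of $\mathrm{n}$, which is what allows all the $\mathrm{m_0}$-dependent choices to be made once and for all.

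Third, and most importantly, you misattribute the role of $\mathrm{m_0}$: the factor $\mathrm{s^{i}_{m_0}}$ is not absorbing poles of $\mathrm{\triangle^{i}_{y,n}}$. By the removable-singularity definition $\mathrm{\triangle^{i}_{y,n}}$ is already holomorphic on all of $\mathrm{\pi^{-1}(\mathbf{U}_y)}$, and $\mathrm{s^{i}_{m_0}}$ has no zeros on $\mathrm{\mathbf{X}^{i}}$ at all (that is property (2) of the tame sequence); so the divisorial estimate you sketch has nothing to act on. The actual role of $\mathrm{m_0}$, in the paper's proof, is to make the shifted GIT structure compatible with the original one: \textbf{Lemma~\ref{Lemma Fibers at Stage n Are Completely Contained In}} guarantees, for $\mathrm{m_0}$ large, $\mathrm{\mathbf{X}^{ss}_{m_0^{-1}\eta_{m_0}}\subset\mathbf{X}^{ss}_\xi}$ and that every $\mathrm{\pi_{m_0}}$-fiber lies inside a $\mathrm{\pi}$-fiber. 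Then \textbf{Remark~\ref{Remark Minimum Of Potential Functions}}, applied on the $\mathrm{\pi_{m_0}}$-saturated pullback of $\mathrm{\mathbf{U}_y}$, pins the fiberwise maximum of the eigensection $\mathrm{\triangle^{i}_{y,m_0,n}=s^{i}_{m_0}\cdot\triangle^{i}_{y,n}}$ onto $\mathrm{\mu^{-1}(m_0^{-1}\eta_{m_0})}$, after which a density and continuity argument transfers the statement to the $\mathrm{\pi}$-fibers, and the conclusion follows from $\mathrm{\mu^{-1}(m_0^{-1}\eta_{m_0})\cap\pi^{-1}(\mathbf{W}^i)\subset T(\epsilon,\mathbf{W}^i)}$ for $\mathrm{m_0}$ large (as in \textbf{Remark~\ref{Preparation Remark}}). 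The closed-orbit and one-parameter-subgroup arguments from Theorem~2 that you try to reuse are not needed here and, in the form you describe, would not close the gap.
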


Before we prove the above claim, we first have to show the following lemma.

\begin{lemma} \label{Lemma Fibers at Stage n Are Completely Contained In} If $\mathrm{\left(\eta_{m}\right)_{m}}$ is an arbitrary sequence in $\mathrm{\mathfrak{t}^{*}}$ so that $\mathrm{\frac{1}{m}\eta_{m}\rightarrow \xi}$, then there exists $\mathrm{N_{0}\in \mathbb{N}}$ so that $$\mathrm{\mathbf{X}^{ss}_{m^{-1}\eta_{m}}\subset \mathbf{X}^{ss}_{\xi},}$$ for all $\mathrm{m\geq N_{0}}$.

Furthermore, for all $\mathrm{m\geq N_{0}}$, each fiber of $\mathrm{\pi_{m}}$ is entirely contained in a fiber of $\mathrm{\pi}$.
\end{lemma}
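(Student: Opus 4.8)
\textbf{Proof plan for Lemma \ref{Lemma Fibers at Stage n Are Completely Contained In}.}

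The plan is to exploit the semicontinuity of the moment polytopes $\mathrm{\mu(cl(\mathbb{T}.x))}$ together with the finiteness of the fixed point set $\mathrm{\mathbf{Fix}^{\mathbb{T}}}$. First I would recall (as in \eqref{Equation Convex Hull}) that for every $\mathrm{x\in \mathbf{X}}$ the set $\mathrm{\mu(cl(\mathbb{T}.x))}$ is a convex polytope whose vertices lie in the finite set $\mathrm{\mu(\mathbf{Fix}^{\mathbb{T}})}$; hence there are only finitely many possible polytopes $\mathrm{P_{1},\dots,P_{r}}$ arising as some $\mathrm{\mu(cl(\mathbb{T}.x))}$. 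Now $\mathrm{x\in \mathbf{X}^{ss}_{\eta}}$ means precisely $\mathrm{\eta\in\mu(cl(\mathbb{T}.x))}$, so $\mathrm{\mathbf{X}^{ss}_{m^{-1}\eta_{m}}\not\subset\mathbf{X}^{ss}_{\xi}}$ would force the existence of an $\mathrm{x_{m}}$ with $\mathrm{m^{-1}\eta_{m}\in\mu(cl(\mathbb{T}.x_{m}))}$ but $\mathrm{\xi\notin\mu(cl(\mathbb{T}.x_{m}))}$. Passing to a subsequence along which the finite-valued data $\mathrm{\mu(cl(\mathbb{T}.x_{m}))}$ is constantly equal to some polytope $\mathrm{P}$, we would get $\mathrm{m^{-1}\eta_{m}\in P}$ for all those $\mathrm{m}$, hence $\mathrm{\xi=\lim m^{-1}\eta_{m}\in P}$ since $\mathrm{P}$ is closed, contradicting $\mathrm{\xi\notin P}$. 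This yields $\mathrm{N_{0}}$ as required. (Alternatively, and perhaps more cleanly, one may use that the ``unstable locus'' complement is cut out by the vanishing of the finitely many $\mathrm{\mathbb{T}}$-eigensections $\mathrm{s_{1},\dots,s_{m}}$ of $\mathrm{H^{0}(\mathbf{X},\mathbf{L})}$ whose characters $\mathrm{\xi_{j}}$ lie in an appropriate halfspace determined by the chamber containing $\mathrm{\xi}$; for $\mathrm{m}$ large $\mathrm{m^{-1}\eta_{m}}$ lies in the same chamber, giving the inclusion directly.)

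For the second assertion I would argue via the characteristic property (2) of the Hilbert quotient: $\mathrm{\pi_{m}(x)=\pi_{m}(x')}$ iff $\mathrm{cl(\mathbb{T}.x)\cap cl(\mathbb{T}.x')\neq\varnothing}$, and the same orbit-closure intersection condition characterizes the fibers of $\mathrm{\pi}$. So it suffices to observe that if $\mathrm{x,x'\in\mathbf{X}^{ss}_{m^{-1}\eta_{m}}}$ (which, by the first part, are also in $\mathbf{X}^{ss}_{\xi}$) satisfy $\mathrm{cl(\mathbb{T}.x)\cap cl(\mathbb{T}.x')\neq\varnothing}$, then trivially the same holds viewing them as points of $\mathrm{\mathbf{X}^{ss}_{\xi}}$, hence $\mathrm{\pi(x)=\pi(x')}$. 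Thus each $\mathrm{\pi_{m}}$-fiber, being an equivalence class for $\mathrm{\sim}$ restricted to $\mathrm{\mathbf{X}^{ss}_{m^{-1}\eta_{m}}\subset\mathbf{X}^{ss}_{\xi}}$, is contained in a single $\mathrm{\sim}$-class on $\mathrm{\mathbf{X}^{ss}_{\xi}}$, i.e.\ in a single $\mathrm{\pi}$-fiber.

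The main obstacle is the first part: one must make precise that ``only finitely many moment polytopes occur'' and that membership of a point $\mathrm{\xi}$ in such a polytope is a closed condition stable under the approximation $\mathrm{m^{-1}\eta_{m}\to\xi}$. The subsequence/pigeonhole argument on the finitely many polytopes handles this, but care is needed because the relevant statement is that $\mathrm{\mathbf{X}^{ss}_{m^{-1}\eta_{m}}}$ is \emph{contained} in $\mathrm{\mathbf{X}^{ss}_{\xi}}$, not equal to it — in general the semistable locus can strictly shrink as the weight moves to a more special position, which is exactly why we only get an inclusion and only for $\mathrm{m}$ large. The second part is essentially formal once the first is in place, since it is just transitivity of the orbit-closure relation together with property (2).
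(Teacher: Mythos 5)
Your argument is correct but takes a genuinely different route from the paper's for the first inclusion. You appeal to the combinatorial side: $\mathrm{\mu(\mathbf{Fix}^{\mathbb{T}})}$ is finite, so by equation \ref{Equation Convex Hull} only finitely many closed polytopes $\mathrm{\mu(cl(\mathbb{T}.x))}$ can occur, and your pigeonhole/subsequence argument forces $\mathrm{\xi}$ into any such polytope that captures infinitely many of the $\mathrm{m^{-1}\eta_m}$, yielding a contradiction. The paper instead argues from the topology of the moment map level sets: since $\mathrm{\mu^{-1}(m^{-1}\eta_m)\rightarrow\mu^{-1}(\xi)}$ and $\mathrm{\mathbf{X}^{ss}_{\xi}}$ is an open neighborhood of $\mathrm{\mu^{-1}(\xi)}$, one gets $\mathrm{\mu^{-1}(m^{-1}\eta_m)\subset\mathbf{X}^{ss}_{\xi}}$ for $\mathrm{m}$ large; then for $\mathrm{x\in\mathbf{X}^{ss}_{m^{-1}\eta_m}}$, a point $\mathrm{y\in cl(\mathbb{T}.x)\cap\mu^{-1}(m^{-1}\eta_m)}$ lands in $\mathrm{\mathbf{X}^{ss}_{\xi}}$, and since $\mathrm{cl(\mathbb{T}.y)\subset cl(\mathbb{T}.x)}$ already meets $\mathrm{\mu^{-1}(\xi)}$, so does $\mathrm{cl(\mathbb{T}.x)}$. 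Your version is more transparently combinatorial and tracks exactly which orbit types could obstruct the inclusion; the paper's is shorter and avoids extracting a subsequence. For the second assertion your appeal to characteristic property (2) is essentially a point-set rephrasing of the paper's use of the universal property of the Hilbert quotient, which produces a comparison map $\mathrm{\varphi_m\colon\mathbf{Y}_m\rightarrow\mathbf{Y}}$ with $\mathrm{\pi=\varphi_m\circ\pi_m}$ on $\mathrm{\mathbf{X}^{ss}_{m^{-1}\eta_m}}$; your observation that the orbit-closure intersection in $\mathrm{\mathbf{X}^{ss}_{m^{-1}\eta_m}}$ is only easier to achieve when read inside the larger $\mathrm{\mathbf{X}^{ss}_{\xi}}$ is exactly the check needed to make that version airtight.
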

\begin{proof} Since $\mathrm{m^{-1}\eta_{m}\rightarrow \xi}$ it follows that the sequence of compact sets given by $\mathrm{\mu^{-1}(m^{-1}}$ $\mathrm{\eta_{m})}$ converges to $\mathrm{\mu^{-1}\left(\xi\right)}$. From the fact $\mathrm{\mathbf{X}^{ss}_{\xi}}$ is an open neighborhood of $\mathrm{\mu^{-1}\left(\xi\right)}$ we deduce the existence of $\mathrm{N_{0}\in \mathbb{N}}$ so that $\mathrm{\mu^{-1}\left(m^{-1}\eta_{m}\right)\subset \mathbf{X}^{ss}_{\xi}}$ for all $\mathrm{m\geq N_{0}}$. As a consequence, we conclude $$\mathrm{\mathbf{X}^{ss}_{m^{-1}\eta_{m}}\subset \mathbf{X}^{ss}_{\xi}}$$ for all $\mathrm{m\geq N_{0}}$: Let $\mathrm{x\in \mathbf{X}^{ss}_{m^{-1}\eta_{m}}}$ (in the sequel fix $\mathrm{m\geq N_{0}}$). Then by the definition of set of semistable points, we can find $\mathrm{y}$ so that $$\mathrm{y\in cl\left(\mathbb{T}.x\right)\cap \mu^{-1}\left(m^{-1}\eta_{m}\right).}$$ Since $\mathrm{y\in \mu^{-1}\left(m^{-1}\eta_{m}\right)\subset {\mathbf{X}}^{ss}_{\xi}}$, it follows that $$\mathrm{cl\left(\mathbb{T}.y\right)\cap \mu^{-1}\left(\xi\right)\neq \varnothing.}$$ However, since $\mathrm{cl\left(\mathbb{T}.y\right)\subset cl\left(\mathbb{T}.x\right)}$, it follows that $\mathrm{cl\left(\mathbb{T}.x\right)\cap \mu^{-1}\left(\xi\right)\neq \varnothing}$ which proves that $\mathrm{x\in \mathbf{X}^{ss}_{\xi}}$ and hence $\mathrm{\mathbf{X}^{ss}_{m^{-1}\eta_{m}}\subset \mathbf{X}^{ss}_{\xi}}$ as claimed.

Using the fact that $\mathrm{\mathbf{X}^{ss}_{m^{-1}\eta_{m}}\subset \mathbf{X}^{ss}_{\xi}}$ and the universality of the Hilbert quotient map $\mathrm{\pi_{m}}$, there exists an algebraic map $\mathrm{\varphi_{m}\!:\!\mathbf{Y}_{m}\coloneqq \mathbf{X}^{ss}_{m^{-1}\eta_{m}}/\!\!/\mathbb{T}\rightarrow \mathbf{Y}}$ for each $\mathrm{m\geq N_{0}}$ so that the following diagram commutes:
$$\begin{xy}
\xymatrix{
\mathrm{\mathbf{X}^{ss}_{m^{-1}\eta_{m}}}  \ar@{^{(}->}[d]  \ar[r]^{\mathrm{\pi_{m}}} &\mathrm{\mathbf{Y}_{m}}\ar[dd]^{\mathrm{\varphi_{m}}}\\
\mathrm{\mathbf{X}^{ss}_{\xi}}\ar[rd]^{\mathrm{\pi}}& & \\
&\mathrm{\mathbf{Y}}\\
}
\end{xy}
$$ This proves the second claim.
\end{proof}

Before we proceed with the proof of {\bf{Proposition \ref{Theorem Estimates Tales of Triangle}}}, we note the following remark.

\begin{remark}\label{Remark Minimum Of Potential Functions} Let $\mathrm{\mathbf{U}\subset \mathbf{Y}_{m}}$ be an open subset, $\mathrm{\mathbf{V}\coloneqq \pi_{m}^{-1}\left(\mathbf{U}\right)}$ and let $$\mathrm{\sigma\in H^{0}\left(\mathbf{V},\mathbf{L}^{m}\vert \mathbf{V}\right)}$$ be a local $\mathrm{\eta_{m}}$-eigensection over $\mathrm{\mathbf{V}}$. In this situation, it follows for $\mathrm{y\in{\bf{U}}}$ that the restriction of $\mathrm{\vert \sigma\vert^{2}}$ to $\mathrm{\pi^{-1}_{m}\left(y\right)}$ takes on its maximum on $\mathrm{\mu^{-1}\left(m^{-1}\eta_{m}\right)\cap \pi^{-1}_{m}\left(y\right)}$. To see this, recall that by the construction of the algebraic Hilbert quotient, we can always find a global $\mathrm{N\cdot\eta_{m}}$-eigensection $\mathrm{\sigma^{\prime}\in H^{0}\left(\mathbf{X},\mathbf{L}^{N\cdot m}\right)}$ for $\mathrm{N}$ big enough so that $\mathrm{\sigma^{\prime}\left(x\right)\neq 0}$ for all $\mathrm{x\in \pi_{m}^{-1}\left(y\right)}$. Therefore, $\mathrm{\sigma^{\prime,-1} \cdot\sigma^{N}}$ defines a holomorphic $\mathrm{\mathbb{T}}$-invariant function on $\mathrm{\pi^{-1}_{m}\left(y\right)}$. 

The next step is to consider two possibilities: If $\mathrm{\sigma^{\prime,-1}\cdot \sigma^{N}\equiv 0}$ then it follows that $\mathrm{\sigma\equiv 0}$ and hence the claim is true. Otherwise, it follows that $\mathrm{\sigma\left(x\right)\neq 0}$ for all $\mathrm{x\in \pi_{m}^{-1}\left(y\right)}$. In this case, the claim follows by the theory of the Hilbert quotient because $\mathrm{-\mathbf{log}\,\vert \sigma\vert^{2}}$ defines a smooth plurisubharmonic potential on $\mathrm{\pi^{-1}\left(y\right)}$ of the shifted moment map data and in this case the claim is known.
\end{remark}

\begin{proof} (of {\bf{Proposition \ref{Theorem Estimates Tales of Triangle}}}) Let $\mathrm{y\in \mathbf{W}\cap \mathbf{R}_{N_{0}}}$. First of all, note that $\mathrm{s^{i}_{m}\cdot \triangle^{i}_{y,n}}$ defines a local holomorphic $\mathrm{\eta_{m,n}\coloneqq \xi^{i}_{m}+\left(\xi_{n}-\xi^{i}_{n}\right)}$ eigensection over $\mathrm{\pi^{-1}\left(\mathbf{U}_{y}\right)\supset\pi^{-1}\left(y\right)}$ for all $\mathrm{n}$. As $\mathrm{\vert \xi_{n}-\xi^{i}_{n}\vert\in \mathcal{O}\left(1\right)}$, it follows that the set $\mathrm{\left\{\xi_{n}-\xi_{n}^{i}\right\}_{n}\subset \mathfrak{t}^{*}_{\mathbb{Z}}}$ is finite. Hence, it is enough to prove the claim under the assumption that $\mathrm{\xi_{n}-\xi^{i}_{n}}$ is a constant weight $\mathrm{\xi_{0}\in \mathfrak{t}^{*}}$. In the sequel, set $\mathrm{\eta_{m}\coloneqq \xi^{i}_{m}+\xi_{0}}$. Since $\mathrm{m^{-1}\eta_{m}\rightarrow \xi}$ we can apply {\bf{Lemma \ref{Lemma Fibers at Stage n Are Completely Contained In}}}, in order to find $\mathrm{m_{0}\in \mathbb{N}}$ so that $\mathrm{\mathbf{X}^{ss}_{m_{0}^{-1}\eta_{m_{0}}}\subset \mathbf{X}^{ss}_{\xi}}$. Set $$\mathrm{\triangle^{i}_{y,m_{0},n}\coloneqq s^{i}_{m_{0}}\cdot \triangle^{i}_{y,n}}$$ and note that $\mathrm{\triangle^{i}_{y,m_{0},n}}$ induces a local holomorphic $\mathrm{\xi^{i}_{m}+\xi_{0}}$-eigensection over the open, $\mathrm{\pi_{m_{0}}}$-saturated subset $$\mathrm{\mathbf{V}_{m_{0},y}\coloneqq \left(\varphi_{m_{0}}\circ\pi_{m_{0}}\right)^{-1}\left(\mathbf{U}_{y}\right)\subset \mathbf{X}^{ss}_{m_{0}^{-1}\eta_{m_{0}}}}$$ for all $\mathrm{n}$ big enough. In fact: By the above assumption, $\mathrm{\triangle^{i}_{y,n}}$ is of fixed weight $\mathrm{\xi_{0}}$ for all $\mathrm{n\in \mathbb{N}}$.

Note that by {\bf{Remark \ref{Remark Minimum Of Potential Functions}}}, it follows that the strictly plurisubharmonic function given by $\mathrm{-\mathbf{log}\,\vert \triangle^{i}_{y,m_{0},n}\vert^{2}}$ takes on its uniquely defined minimum on $\mathrm{\pi^{-1}_{m_{0}}\left(\widetilde{y}\right)\cap \mu^{-1}\left(m_{0}^{-1}\eta_{m_{0}}\right)}$ for all $\mathrm{n}$ big enough and all $\mathrm{\widetilde{y}\in \varphi_{m_{0}}^{-1}\left(\mathbf{U}_{y}\right)}$. Equivalently, the restriction of $\mathrm{\vert \triangle^{i}_{y,m_{0},n}\vert^{2}}$ to on $\mathrm{\pi^{-1}_{m_{0}}\left(\widetilde{y}\right)}$ takes on its uniquely defined maximum on $\mathrm{\pi^{-1}_{m_{0}}\left(\widetilde{y}\right)\cap \mu^{-1}\left(m_{0}^{-1}\eta_{m_{0}}\right)}$ for all $\mathrm{n}$ big enough and all $\mathrm{\widetilde{y}\in \varphi_{m_{0}}^{-1}\left(\mathbf{U}_{y}\right)}$.  

By the commutative diagram of {\bf{Lemma \ref{Lemma Fibers at Stage n Are Completely Contained In}}}, it then follows that the restriction of $\mathrm{\vert \triangle^{i}_{y,m_{0},n}\vert^{2}}$ to $\mathrm{\pi^{-1}\left(y^{\prime}\right)\cap \mathbf{X}^{ss}_{m_{0}^{-1}\eta_{m_{0}}}}$ takes on its maximum in $\mathrm{\mu^{-1}\left(m_{0}^{-1}\eta_{m_{0}}\right)\cap \pi^{-1}\left(y^{\prime}\right)}$ $\mathrm{\cap \mathbf{X}^{ss}_{m_{0}^{-1}\eta_{m_{0}}}}$ for all $\mathrm{y^{\prime}\in \mathbf{U}_{y}}$ and all $\mathrm{n}$ big enough. Since $\mathrm{\mathbf{X}^{ss}_{m_{0}^{-1}\eta_{m_{0}}}}$ is Zariski dense in $\mathrm{\mathbf{X}^{ss}_{\xi}}$ it follows by continuity that the restriction of $\mathrm{\vert \triangle^{i}_{y,m_{0},n}\vert^{2}}$ to $\mathrm{\pi^{-1}\left(y^{\prime}\right)}$ takes on its maximum on $$\mathrm{\mu^{-1}\left(m_{0}^{-1}\eta_{m_{0}}\right)\cap \pi^{-1}\left(y^{\prime}\right)}$$ for all $\mathrm{y^{\prime}\in \mathbf{U}_{y}}$ and all $\mathrm{n}$ big enough - in particular this holds for $\mathrm{y\in \mathbf{U}_{y}}$ itself. The claim then follows by the fact that we can always assume that $$\mathrm{\mu^{-1}\left(m_{0}^{-1}\eta_{m_{0}}\right)\cap \pi^{-1}\left(\mathbf{W}^{i}\right)\subset T\left(\epsilon,\mathbf{W}^{i}\right)}$$ for $\mathrm{m_{0}}$ big enough.
\end{proof}

We close this section with the proof of

\begin{lemma}\label{Lemma The Maximum Of Triangle Is Fiberwisely Not Zero} Let $\mathrm{\mathbf{W}\subset\mathbf{Y}}$ be a compact neighborhood of $\mathrm{y_{0}\in \mathbf{Y}}$ (we can assume that $\mathrm{\mathbf{W}\subset \mathbf{Y}^{i}}$), and $\mathrm{\epsilon>0}$, then there exists $\mathrm{N_{0}\in \mathbb{N}}$ so that $$\mathrm{\underset{x\in\pi^{-1}\left(y\right)\cap T\left(\epsilon,\mathbf{W}\right)}{max}\,\left|\triangle_{y,n}^{i}\right|^{2}\left(x\right)>0}$$ for all $\mathrm{y\in \mathbf{W}\cap \mathbf{R}_{N_{0}}}$ and all $\mathrm{n}$ big enough.

In particular, it follows that $$\mathrm{\underset{x\in\pi^{-1}\left(y\right)\cap T\left(\epsilon,\mathbf{W}\right)}{max}\,\left|s^{i}_{m_{0}}\cdot\triangle_{y,n}^{i}\right|^{2}\left(x\right)>0}$$ for all $\mathrm{y\in \mathbf{W}\cap \mathbf{R}_{N_{0}}}$ and all $\mathrm{n}$ big enough.
\end{lemma}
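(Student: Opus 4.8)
The plan is to read this off directly from Proposition~\ref{Theorem Estimates Tales of Triangle} together with the non-vanishing facts already recorded in this section, so the argument is short. First I would fix $y\in\mathbf{W}\cap\mathbf{R}_{N_0}$ with an associated neighbourhood $\mathbf{U}_y\subset\mathbf{Y}^i$ and recall that, by the very definition of a removable singularity of order $N_0$ (Definition~\ref{Definition Removable Singularity}), the extension $\widehat{s}_{f_{y,n}}=s_n\cdot\pi^{*}f^{-1}_{y,n}$ does not vanish identically on $\pi^{-1}(y)$ for any $n\geq N_0$; since $s^i_n$ is nowhere zero on the $\pi$-saturated set $\mathbf{X}^i\supset\pi^{-1}(\mathbf{U}_y)$, the quotient $\triangle^i_{y,n}=\widehat{s}_{f_{y,n}}/s^i_n$ is a holomorphic eigenfunction on $\pi^{-1}(\mathbf{U}_y)$ with $\triangle^i_{y,n}\vert\pi^{-1}(y)\not\equiv 0$ for all $n\geq N_0$.

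Next I would choose $m_0\in\mathbb{N}$ large enough that simultaneously $\mathbf{X}^i\subset\mathbf{X}(s^i_{m_0})$ (possible by {\bf{Theorem 1}}) and the conclusion of Proposition~\ref{Theorem Estimates Tales of Triangle} holds for this $m_0$. Because $\mathbf{X}^i$ is $\pi$-saturated and $\mathbf{W}\subset\mathbf{Y}^i$ we have $\pi^{-1}(y)\subset\mathbf{X}^i\subset\mathbf{X}(s^i_{m_0})$, so $s^i_{m_0}$ is nowhere zero on $\pi^{-1}(y)$; together with $\triangle^i_{y,n}\vert\pi^{-1}(y)\not\equiv 0$ this shows that $s^i_{m_0}\cdot\triangle^i_{y,n}$ is not identically zero on $\pi^{-1}(y)$, hence $\max_{x\in\pi^{-1}(y)}\vert s^i_{m_0}\cdot\triangle^i_{y,n}\vert^2(x)>0$ for all $n\geq N_0$.

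Now Proposition~\ref{Theorem Estimates Tales of Triangle} applies: for $n$ big enough the restriction of $\vert s^i_{m_0}\cdot\triangle^i_{y,n}\vert^2$ to $\pi^{-1}(y)$ attains its (positive) maximum at some point $x_0\in\pi^{-1}(y)\cap T(\epsilon,\mathbf{W})$, which already gives the second assertion of the lemma, $\max_{x\in\pi^{-1}(y)\cap T(\epsilon,\mathbf{W})}\vert s^i_{m_0}\cdot\triangle^i_{y,n}\vert^2(x)>0$. Writing $\vert s^i_{m_0}\cdot\triangle^i_{y,n}\vert^2(x_0)=\vert s^i_{m_0}\vert^2(x_0)\cdot\vert\triangle^i_{y,n}\vert^2(x_0)$ with both factors non-negative, positivity of the product forces $\vert\triangle^i_{y,n}\vert^2(x_0)>0$, so $\max_{x\in\pi^{-1}(y)\cap T(\epsilon,\mathbf{W})}\vert\triangle^i_{y,n}\vert^2(x)\geq\vert\triangle^i_{y,n}\vert^2(x_0)>0$, which is the first assertion. (Alternatively one can use that $\vert s^i_{m_0}\vert^2$ is bounded above on the relatively compact tube $T(\epsilon,\mathbf{W})\subset\mathbf{X}^i$ to compare the two fibrewise maxima, but isolating the point $x_0$ is cleaner.)

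Since essentially all of the substance lives in Proposition~\ref{Theorem Estimates Tales of Triangle}, there is no deep obstacle here; the only point that needs a remark is uniformity of the thresholds in $y$, i.e.\ that a single $m_0$ and a single range of admissible $n$ work for every $y\in\mathbf{W}\cap\mathbf{R}_{N_0}$ at once. This is harmless: $\vert\xi_n-\xi^i_n\vert\in\mathcal{O}(1)$ forces the weights $\xi_n-\xi^i_n$ to lie in a finite subset of $\mathfrak{t}^{*}_{\mathbb{Z}}$, and Proposition~\ref{Theorem Estimates Tales of Triangle} is itself already formulated uniformly over $\mathbf{W}\cap\mathbf{R}_{N_0}$ and over large $n$, so that uniformity is simply inherited.
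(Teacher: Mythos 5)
Your proof is correct, but it takes a genuinely different route from the paper. The paper argues directly and self-containedly: it decomposes the fiber $\mathrm{\pi^{-1}(y)}$ into its global irreducible components $\mathrm{\mathbf{C}_{j,y}}$, uses a {\scshape{Hilbert}}-Lemma degeneration to show that each $\mathrm{\mathbf{C}_{j,y}}$ (being closed and $\mathrm{\mathbb{T}}$-invariant) meets $\mathrm{\mu^{-1}\left(\xi\right)}$, hence meets the tube $\mathrm{T\left(\epsilon,\mathbf{W}\right)}$ in a non-empty open subset of $\mathrm{\mathbf{C}_{j,y}}$, and then invokes the Identity Principle componentwise to contradict $\mathrm{\triangle^{i}_{y,n}\vert\pi^{-1}\left(y\right)\not\equiv 0}$. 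You instead leverage \textbf{Proposition \ref{Theorem Estimates Tales of Triangle}} (the ``maximum is attained in the tube'' statement) together with the nowhere-vanishing of $\mathrm{s^{i}_{m_{0}}}$ and the non-identical-vanishing of $\mathrm{\triangle^{i}_{y,n}}$. Your version is shorter and does use only material already established earlier in the section (the lemma appears after that proposition), so there is no circularity; the one logical care you already flag correctly is that the Proposition alone does not give positivity --- it only locates the maximum --- and the positivity must come separately from $\mathrm{\triangle^{i}_{y,n}\vert\pi^{-1}\left(y\right)\not\equiv 0}$, which you supply. What the paper's route buys: it proves the first assertion for $\mathrm{\triangle^{i}_{y,n}}$ alone, without needing to pass through $\mathrm{s^{i}_{m_{0}}}$ or to have any particular $\mathrm{m_{0}}$ at hand, and it establishes a slightly stronger geometric fact (every irreducible component of $\mathrm{\pi^{-1}\left(y\right)}$ meets $\mathrm{T\left(\epsilon,\mathbf{W}\right)}$ in a non-empty open set), keeping this lemma logically independent of \textbf{Proposition \ref{Theorem Estimates Tales of Triangle}}. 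What your route buys: it is more economical, reusing an already-proved uniform statement rather than rederiving the non-vanishing on the tube from scratch.
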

\begin{proof} First of all note that the second claim is a direct consequence of the first because $\mathrm{\vert s^{i}_{m_{0}}\vert^{2}\left(x\right)>0}$ for all $\mathrm{x\in \mathbf{X}^{i}}$ and all $\mathrm{m_{0}\in \mathbb{N}}$.

We already know that $\mathrm{\triangle^{i}_{y,n}\vert \pi^{-1}\left(y\right)\not\equiv 0}$ for all $\mathrm{n\in \mathbb{N}}$ and all $\mathrm{y\in \mathbf{W}\cap \mathbf{R}_{N_{0}}}$. Throughout the proof we will fix $\mathrm{y}$ and let $$\mathrm{\pi^{-1}\left(y\right)=\bigcup_{j}\mathbf{C}_{j,y}}$$ be the decomposition of $\mathrm{\pi^{-1}\left(y\right)}$ in its global irreducible components $\mathrm{\mathbf{C}_{j,y}}$. It is direct to see that these components are $\mathrm{\mathbb{T}}$-invariant and hence each $\mathrm{\mathbf{C}_{j,y}}$ intersects $\mathrm{\mu^{-1}\left(\xi\right)}$ non-trivially:\ In fact, let $\mathrm{z\in \mathbf{C}_{j,y}}$ and choose a one parameter subgroup $\mathrm{\gamma\!:\!\mathbb{C}^{*}\rightarrow \mathbb{T}}$ so that $$\mathrm{z_{0}=\underset{t\rightarrow0}{lim}\,\gamma\left(t\right).z\in \mathbb{T}.x_{y}}$$ where $\mathrm{\mathbb{T}.x_{y}}$ is the unique closed orbit in the fiber $\mathrm{\pi^{-1}\left(y\right)}$. Since $\mathrm{\mathbf{C}_{j,y}}$ is closed and $\mathrm{\mathbb{T}}$ invariant it follows that $\mathrm{z_{0}\in \mathbb{T}.x_{y}\cap \mathbf{C}_{j,y}}$. Again by the invariance of $\mathrm{\mathbf{C}_{j,y}}$ and the fact that $\mathrm{\mu^{-1}\left(\xi\right)\cap \mathbb{T}.x_{y}=\mu^{-1}\left(\xi\right)\cap \pi^{-1}\left(y\right)}$ it follows that $\mathrm{\mu^{-1}\left(\xi\right)\cap \mathbf{C}_{j,y}\neq \varnothing}$. In particular, the open set $$\mathrm{T\left(\epsilon,\mathbf{W}\right)\cap \mathbf{C}_{j,y}}$$ is always non-empty in $\mathrm{\mathbf{C}_{j,y}}$. So if $$\mathrm{\underset{x\in\pi^{-1}\left(y\right)\cap T\left(\epsilon,\mathbf{W}\right)}{max}\,\left|\triangle_{y,n}^{i}\right|^{2}\left(x\right)=0}$$ for $\mathrm{y\in \mathbf{W}\cap\mathbf{R}_{N_{0}}}$, it would follow that $\mathrm{\triangle^{i}_{y,n}}$ would vanish identically on each non-empty open subset $\mathrm{T\left(\epsilon,\mathbf{W}\right)\cap \mathbf{C}_{j,y}}$ of the irreducible component $\mathrm{\mathbf{C}_{j,y}}$. Hence, it would follow by the Identity Principle that $\mathrm{\triangle^{i}_{y,n}\vert \mathbf{C}_{j,y}\equiv 0}$ for all $\mathrm{j}$ and therefore $\mathrm{\triangle^{i}_{y,n}\vert  \pi^{-1}\left(y\right)\equiv 0}$ in contradiction to $\mathrm{\triangle^{i}_{y,n}\vert \pi^{-1}\left(y\right)\not\equiv 0}$.
\end{proof}

\subsection{A Local Proposition Concerning Fiber Integration}\label{A Local Proposition Concerning Fiber Integration}
In this section we will prove a technical proposition ({cf.\ }{\bf{Proposition \ref{Proposition Local Version Of Proposition}}}) which will be of crucial importance when proving {\bf{Theorem 5.a,\ b}} and {\bf{Theorem 6.a,\ b}}.

A first step towards {\bf{Proposition \ref{Proposition Local Version Of Proposition}}} is the following technical lemma.
 
\begin{lemma}\label{Lemma Bound Of Norm Squares Of Roots} Let $$\mathrm{z^{d}+\alpha_{d-1}z^{d-1}+\dots+\alpha_{0},\,\text{ where }\alpha_{i}\in \mathbb{C}\text{ for }0\leq i\leq d-1}$$ be a monic polynomial of degree $\mathrm{d}$ and let $\mathrm{\zeta_{i}}$, $\mathrm{1\leq i\leq d}$ the corresponding roots, then there exists a constant $\mathrm{c_{d}>0}$ which only depends of the degree $\mathrm{d}$, so that $$\mathrm{\sum_{i=1}^{d}\vert \zeta_{i}\vert^{2}\geq c_{d}\left(\sum_{i=0}^{d-1}\vert \alpha_{i}\vert^{2}\right)^{\frac{1}{d}}.}$$
\end{lemma}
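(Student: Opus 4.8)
The plan is to reduce the inequality to a compactness argument on the unit sphere of coefficient vectors, exploiting homogeneity. Write $P(z) = z^d + \alpha_{d-1}z^{d-1} + \dots + \alpha_0 = \prod_{i=1}^d (z - \zeta_i)$, so that by Vieta's formulas each $\alpha_j$ is (up to sign) the $(d-j)$-th elementary symmetric polynomial in the roots $\zeta_1, \dots, \zeta_d$. Both sides of the claimed inequality are continuous functions of $(\zeta_1, \dots, \zeta_d) \in \mathbb{C}^d$: the left side $R(\zeta) := \sum_i |\zeta_i|^2$ is manifestly continuous, and the right side is $c_d \bigl(\sum_j |\alpha_j|^2\bigr)^{1/d}$, a continuous function of the $\alpha_j$'s which are themselves polynomials in the $\zeta_i$'s. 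The key observation is the scaling behaviour: if we replace $\zeta_i$ by $\lambda \zeta_i$ for $\lambda > 0$, then $\alpha_j$ is multiplied by $\lambda^{d-j}$, so $R$ scales by $\lambda^2$ while $\bigl(\sum_j |\alpha_j|^2\bigr)^{1/d}$ — although not homogeneous in a single power of $\lambda$ — is bounded above and below by constant multiples of $\lambda^2 \max_j(\dots)$ only after more care.

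\textbf{Key steps.} First I would handle the homogeneity more carefully. Set $A(\zeta) := \sum_{j=0}^{d-1} |\alpha_j(\zeta)|^2$. Under $\zeta \mapsto \lambda \zeta$ we get $\alpha_j \mapsto \lambda^{d-j} \alpha_j$, hence $A(\lambda\zeta) = \sum_j \lambda^{2(d-j)} |\alpha_j(\zeta)|^2$. For $0 < \lambda \le 1$ we have $\lambda^{2d} \le \lambda^{2(d-j)} \le \lambda^2$, and for $\lambda \ge 1$ the reverse; in either regime $A(\lambda\zeta)^{1/d}$ is comparable to $\lambda^2$ times $A(\zeta)^{1/d}$ only up to the spread of exponents, which is not good enough for an exact inequality. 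The cleaner route: normalize by $R(\zeta)$. If $R(\zeta) = 0$ then all $\zeta_i = 0$, so all $\alpha_j = 0$ and both sides vanish — the inequality holds trivially. Otherwise, rescale so that $R(\zeta) = 1$, i.e. restrict to the unit sphere $S = \{\zeta \in \mathbb{C}^d : \sum_i |\zeta_i|^2 = 1\}$, which is compact. On $S$ we must show $1 \ge c_d A(\zeta)^{1/d}$, equivalently $A(\zeta) \le c_d^{-d}$, which is automatic since $A$ is continuous on the compact set $S$ and hence bounded; take $c_d := (\max_{S} A)^{-1/d}$ if that max is positive. The point requiring attention is whether the rescaled inequality on $S$ actually implies the original for general $\zeta$: given $\zeta \ne 0$, write $\zeta = r \eta$ with $r = R(\zeta)^{1/2} > 0$ and $\eta \in S$; then $R(\zeta) = r^2$ while $A(\zeta) = \sum_j r^{2(d-j)} |\alpha_j(\eta)|^2$. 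I claim $A(\zeta)^{1/d} \le \max(r^2, r^{2d/d}) \cdot (\text{something})$ — this needs the elementary bound $\sum_j r^{2(d-j)} |\alpha_j(\eta)|^2 \le (\max(1, r^2))^{d} \cdot \max(1, r^{-2(d-1)}) \cdots$; to avoid this mess, I would instead argue directly: $A(\zeta) = \sum_j |\alpha_j(\zeta)|^2$ and each $|\alpha_j(\zeta)| \le \binom{d}{d-j} (\max_i |\zeta_i|)^{d-j} \le \binom{d}{d-j} R(\zeta)^{(d-j)/2}$, so $A(\zeta) \le C_d' \sum_j R(\zeta)^{d-j} \le C_d'' \max(R(\zeta), R(\zeta)^d)$, giving $A(\zeta)^{1/d} \le (C_d'')^{1/d} \max(R(\zeta)^{1/d}, R(\zeta))$.

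\textbf{Resolution.} The above shows $A(\zeta)^{1/d}$ is controlled by $R(\zeta)$ when $R(\zeta) \ge 1$ but only by $R(\zeta)^{1/d}$ when $R(\zeta) \le 1$ — the wrong direction for small $\zeta$. So the purely elementary estimate does not suffice, and the compactness/homogeneity argument is genuinely needed. The correct clean statement: consider $F(\zeta) := A(\zeta) / R(\zeta)^d$ for $\zeta \ne 0$, which is invariant under the scaling $\zeta \mapsto \lambda\zeta$ precisely because $A(\lambda\zeta)$ is \emph{not} homogeneous — wait, it is not invariant either. The honest fix: I would use that $A(\zeta) \ge $ const $\cdot \prod (\text{individual contributions})$ is false in general (Mahler-type inequalities go the other way), so the only robust approach is: the function $\zeta \mapsto A(\zeta)$ restricted to the sphere $S$ attains a finite maximum $M \ge 0$ by compactness and continuity, and it attains a finite value at every point; if $M = 0$ then $A \equiv 0$ on $S$ hence (by scaling $A(\lambda\eta) = \sum_j \lambda^{2(d-j)} A_j$ with each $A_j = |\alpha_j(\eta)|^2 = 0$) $A \equiv 0$ everywhere and the inequality is trivial with any $c_d$. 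If $M > 0$, I would establish the original inequality by the following dichotomy on $r = R(\zeta)^{1/2}$: when $r \ge 1$, $A(\zeta) = \sum_j r^{2(d-j)} A_j(\eta) \le r^{2(d-1)} \sum_j A_j(\eta) \cdot r^2 \le r^{2d} M$ (using $r^{2(d-j)} \le r^{2d}$ for $j \ge 0$, $r \ge 1$), so $A(\zeta)^{1/d} \le r^2 M^{1/d} = R(\zeta) M^{1/d}$; when $r \le 1$, $r^{2(d-j)} \le r^{2}$ for $j \le d-1$, so $A(\zeta) \le r^2 M$, giving $A(\zeta)^{1/d} \le r^{2/d} M^{1/d} = R(\zeta)^{1/d} M^{1/d}$ — still the wrong exponent for small $r$. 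Thus the clean inequality as stated, $\sum |\zeta_i|^2 \ge c_d A^{1/d}$, can only hold for small $\zeta$ if $A$ vanishes to sufficiently high order, which it does: $A(\zeta) = \sum_{j} r^{2(d-j)}A_j(\eta)$ and the \emph{lowest} power of $r$ appearing is $r^{2}$ (from $j = d-1$), so $A(\zeta) \le r^2 \sum_j A_j(\eta) \le r^2 M$; but we need $A(\zeta) \le c_d^{-d} r^{2d}$. These are compatible for $r \le 1$ \emph{only} if additionally the $j = d-1$ coefficient is itself controlled, i.e. $|\alpha_{d-1}|^2 \le$ const $\cdot r^{2d-2} \cdot (\dots)$. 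I would therefore suspect the intended reading uses a \emph{different} normalization of $A$ or that $c_d$ is allowed to depend on a bound for the $\zeta_i$; the main obstacle is pinning down exactly this homogeneity bookkeeping. My plan is: (1) reduce to $R(\zeta) = 1$ by the scaling $\zeta \mapsto \zeta / R(\zeta)^{1/2}$, noting $R$ is $2$-homogeneous and $\alpha_j$ is $(d-j)$-homogeneous so $A$ restricted to fixed $R$ is continuous; (2) on the compact sphere $S$, the continuous function $A$ is bounded above by some $M < \infty$, set $c_d := \max(1, M)^{-1/d}$; (3) verify the inequality holds on $S$ since $A(\zeta)^{1/d} \le M^{1/d} \le c_d^{-1} = c_d^{-1} R(\zeta)$; (4) extend off $S$ by homogeneity, using the fact that for the extension the worst case is when all mass sits in the top-degree coefficient, so the exponent $1/d$ is exactly matched — this last verification, correctly managing which power of the scaling factor appears, is the main obstacle and the only non-routine part.

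\bigskip

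\noindent\textbf{Remark on the plan.} In writing the final proof I expect the clean argument to run: since both sides are continuous and the inequality is preserved under $\zeta_i \mapsto \lambda\zeta_i$ after replacing $c_d$ by a scaled version — which forces us to check the scaling is consistent — it suffices to treat $\max_i |\zeta_i| = 1$, where compactness of $\{|\zeta| : \max_i |\zeta_i| = 1\}$ together with the observation that $A(\zeta) > 0$ whenever some $\zeta_i \ne 0$ (because not all elementary symmetric functions can vanish unless all roots vanish) gives a uniform positive lower bound for $R(\zeta)/A(\zeta)^{1/d}$. The genuinely delicate point, which I flag as the crux, is confirming that $A(\zeta)^{1/d}$ and $R(\zeta)$ scale with the \emph{same} power under $\zeta \mapsto \lambda\zeta$ after taking the $d$-th root — namely the top symmetric function $\alpha_0 = \pm\prod\zeta_i$ scales by $\lambda^d$, so $|\alpha_0|^{2/d}$ scales by $\lambda^2$, matching $R$; the lower $\alpha_j$ scale by smaller powers but contribute, after the $1/d$ root, at most $\lambda^{2(d-j)/d} \le \lambda^2$ for $\lambda \ge 1$ and the reverse for $\lambda \le 1$, so the inequality $R \ge c_d A^{1/d}$ is preserved under scaling up (where $R$ grows at least as fast) and one checks the base case at unit scale. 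I would organize the final write-up around this scaling-plus-compactness skeleton.
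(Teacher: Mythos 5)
Your repeated inability to make the exponents match is not a defect of your method: the lemma as stated is false for every $d\geq 2$, and the paper's own proof contains precisely the gap your bookkeeping keeps exposing. For a counterexample take $d=2$ and $\zeta_1=\zeta_2=\epsilon>0$; then $\alpha_1=-2\epsilon$, $\alpha_0=\epsilon^2$, the left side is $2\epsilon^2$, while the right side is $c_2(4\epsilon^2+\epsilon^4)^{1/2}\geq 2c_2\epsilon$, so the claimed bound would force $\epsilon\geq c_2$ — impossible for a single $c_2>0$ valid for all $\epsilon>0$. This is exactly the "$j=d-1$ coefficient dominating at small scale" effect you pinned down: $\alpha_{d-1}=-\sum_i\zeta_i$ is linear in the roots, so $|\alpha_{d-1}|^{2/d}$ contributes a term of order $\|\zeta\|^{2/d}$ to the right side, which outgrows $\|\zeta\|^2$ as $\zeta\to 0$ once $d\geq 2$.

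The paper's proof produces, by continuity of the Vieta map $F$ at the origin, a radius $c_d$ with ${\bm{\Delta}}_{c_d}\subset\mathrm{Int}\,F^{-1}({\bm{\Delta}}_1)$, and then passes from the scaling identity $F(\lambda\zeta)=(\lambda^d\mathfrak{P}_0(\zeta),\dots,\lambda\,\mathfrak{P}_{d-1}(\zeta))$ to the assertion that ${\bm{\Delta}}_{\lambda^{1/d}c_d}\subset\mathrm{Int}\,F^{-1}({\bm{\Delta}}_\lambda)$ for all $\lambda\geq 0$. As your accounting shows, the $j$-th component of $F(\mu\zeta)$ (with $\mu=\lambda^{1/d}$) carries the factor $\mu^{d-j}$, and bounding $\mu^{2(d-j)}$ by $\mu^{2d}$ uniformly in $0\leq j\leq d-1$ requires $\mu\geq 1$; for $\mu<1$ the $j=d-1$ term with the smallest exponent dominates and the asserted inclusion fails. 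So the paper's deduction is valid only for $\lambda\geq 1$, i.e.\ for polynomials whose coefficient vector is not too small, and that restricted version is the most either your argument or the paper's can yield. Your instinct that "the intended reading uses a different normalization or that $c_d$ is allowed to depend on a bound for the $\zeta_i$" is correct: the lemma needs such a hypothesis, and the downstream uses in Lemma \ref{Lemma Lower Bound Of Integral} and Proposition \ref{Proposition Local Version Of Proposition} should be rechecked against whichever conditional form one adopts.
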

\begin{proof} Consider the holomorphic map $\mathrm{F\!:\!\mathbb{C}^{d}\rightarrow \mathbb{C}^{d}}$ defined by $$\mathrm{F\!:\!\left(\zeta_{1},\dots,\zeta_{d}\right)\mapsto \left(\mathfrak{P}_{0}\left(\zeta_{1},\dots,\zeta_{d}\right),\dots,\mathfrak{P}_{d-1}\left(\zeta_{1},\dots,\zeta_{d}\right)\right)}$$ where $\mathrm{\left\{\mathfrak{P}_{\ell}\right\}_{0\leq \ell\leq d-1}}$ is the set of all elementary symmetric polynomials, i.e.\ $$\mathrm{\begin{array}{rcl}
\mathrm{\mathfrak{P}_{d-1}\left(\zeta_{1},\dots,\zeta_{d}\right)} & \mathrm{=} & \mathrm{\left(-1\right)\,\sum_{1\leq j\leq d}\zeta_{j}}\\[0.2 cm]
\mathrm{\mathfrak{P}_{d-2}\left(\zeta_{1},\dots,\zeta_{d}\right)} & \mathrm{=} & \mathrm{\left(+1\right)\,\sum_{1\leq j_{1}<j_{2}\leq d}\zeta_{j_{1}}\zeta_{j_{2}}}\\[0.2 cm]
 & \mathrm{\vdots}\\[0.2 cm]
\mathrm{\mathfrak{P}_{d-\ell}\left(\zeta_{1},\dots,\zeta_{d}\right)} & \mathrm{=} & \mathrm{\left(-1\right)^{\ell}\sum_{1\leq j_{1}<\dots<j_{\ell}\leq d}\zeta_{j_{1}\leq d}\dots\zeta_{j_{\ell}}}\\[0.2 cm]
 & \mathrm{\vdots}\\[0.2 cm]
\mathrm{\mathfrak{P}_{0}\left(\zeta_{1},\dots,\zeta_{d}\right)} & = & \mathrm{\left(-1\right)^{d}\zeta_{1}\dots\zeta_{m}}\end{array}}$$ If $\mathrm{\alpha=\left(\alpha_{0},\dots,\alpha_{d-1}\right)\in \mathbb{C}^{d}}$, then by {\scshape{Vieta}}'s Formula we deduce that $$\mathrm{F^{-1}\left(\alpha\right)=\left\{\zeta\in \mathbb{C}\!:\!\, \zeta^{d}+\alpha_{d-1}\,\zeta^{d-1}+\alpha_{d-2}\,\zeta^{d-2}+\dots +\alpha_{0}=0 \right\}}$$ so the inverse image of $\mathrm{\alpha\in \mathbb{C}^{d}}$ contains exactly all roots of the polynomial with coefficients $\mathrm{\alpha_{\ell}}$ for $\mathrm{0\leq\ell\leq d-1}$. In particular, it follows that $\mathrm{F^{-1}\left(0\right)=\left\{0\right\}}$ and by continuity there exists $\mathrm{c_{d}> 0}$ so that $\mathrm{{\bm{\Delta}}_{c_{d}}\subset Int\,F^{-1}\left({\bm{\Delta}}_{1}\right)}$ where $\mathrm{{\bm{\Delta}}_{\lambda}\subset \mathbb{C}^{d}}$ is the closed ball of radius $\mathrm{\lambda>0}$ in $\mathrm{\mathbb{C}^{d}}$. It is direct to check that $$\mathrm{F\left(\lambda\,\zeta_{1},\dots,\lambda\,\zeta_{d}\right)=\left(\lambda^{d}\,\mathfrak{P}_{0}\left(\zeta_{1},\dots,\zeta_{d}\right),\dots,\lambda\,\mathfrak{P}_{d-1}\left(\zeta_{1},\dots,\zeta_{d}\right)\right)\text{ for all }\lambda\in \mathbb{C}}$$ and hence, we deduce that $$\mathrm{{\bm{\Delta}}_{\lambda^{\frac{1}{d}}\,c_{d}}\subset Int\,F^{-1}\left({\bm{\Delta}}_{\lambda}\right)}$$ for all $\mathrm{\lambda\geq 0}$. This can be reformulated as follows: If $\mathrm{z^{d}+\alpha_{d-1}z^{d-1}+\dots+\alpha_{0}}$ is a monic polynomial of degree $\mathrm{d}$ so that $\mathrm{\sum_{i=0}^{d-1}\vert \alpha_{i}\vert^{2}=\lambda}$, {i.e.\! }$\mathrm{\alpha\in bd\, {\bm{\Delta}}_{\lambda}}$, then $\mathrm{F^{-1}\left(\alpha\right)\notin{\bm{\Delta}}_{\lambda^{\frac{1}{d}}\,c_{d}}}$, i.e. $$\mathrm{\sum_{i=1}^{d}\vert \zeta_{i}\vert^{2}\geq \lambda^{\frac{1}{d}}\cdot c_{d}}$$ where $\mathrm{F^{-1}\left(\alpha\right)=\left\{\zeta_{i}\right\}_{1\leq j\leq d}}$ is the set of the corresponding roots and therefore $$\mathrm{\sum_{i=1}^{d}\vert \zeta_{i}\vert^{2}\geq c_{d}\left(\sum_{i=0}^{d-1}\vert \alpha_{i}\vert^{2}\right)^{\frac{1}{d}}}$$ as claimed.
\end{proof}

Let $\mathrm{F\!:\!\mathbf{X}\rightarrow \mathbf{Y}}$\label{Notation D-Sheeted Covering} be $\mathrm{k}$-fibering, i.e.\ a holomorphic map between purely dimensional complex spaces where $\mathrm{m=dim_{\mathbb{C}}\,\mathbf{X}}$ and $\mathrm{n=dim_{\mathbb{C}}\,\mathbf{Y}}$ so that $\mathrm{F^{-1}\left(y\right)}$ is a purely dimensional complex space of dimension $\mathrm{k=m-n}$ for all $\mathrm{y\in \mathbf{Y}}$. The relevant examples of $\mathrm{k}$-fiberings, which we have in mind, are given by $\mathrm{\widehat{\pi}\!:\!\widehat{\mathbf{\,X\,}}\rightarrow \mathbf{Y}}$, $\mathrm{\pi\vert \mathbf{X}_{0}\!:\!\mathbf{X}_{0}\rightarrow \mathbf{Y}_{0}}$ where $\mathrm{\mathbf{X}_{0}=\pi^{-1}\left(\mathbf{Y}_{0}\right)}$ and $\mathrm{\Pi\!:\!\widetilde{\bf{\,X\,}}\rightarrow \widetilde{\bf{\,Y\,}}}$.

If $\mathrm{x_{0}\in \mathbf{X}}$ and $\mathrm{F\!:\!\mathbf{X}\rightarrow \mathbf{Y}}$ a $\mathrm{k}$-fibering, then there exists ({cf.\ }\cite{Kin}, {p.\ }{\bf{\oldstylenums{205}}}) an open neighborhood $\mathrm{\mathbf{U}\subset \mathbf{X}}$ of $\mathrm{x_{0}}$ which can be realized via an isomorphism $\mathrm{\Phi}$ as a closed analytic subset $\mathrm{\mathbf{Z}\subset\mathbf{Q}= \mathbf{Q}_{0}\times \mathbf{Q}_{1}\subset \mathbb{C}^{\upkappa}\times \mathbb{C}^{k}}$ of a relatively compact, open product set $\mathrm{\mathbf{Q}_{0}\times \mathbf{Q}_{1}}$ and an open neighborhood $\mathrm{\mathbf{B}\subset \mathbf{Y}}$ of $\mathrm{y_{0}=F\left(x_{0}\right)\in \mathbf{Y}}$ so that the following holds: If $$\mathrm{z^{\left(0\right)}=\big(z^{\left(0\right)}_{1},\dots,z^{\left(0\right)}_{\upkappa}\big),\text{ resp. }z^{\left(1\right)}=\big(z^{\left(1\right)}_{1},\dots,z^{\left(1\right)}_{k}\big).}$$ are the standard coordinates on $\mathrm{\mathbb{C}^{\upkappa}}$, resp.\ on $\mathrm{\mathbb{C}^{k}}$, then the restriction of the projection map $\mathrm{p\!:\!\mathbb{C}^{\upkappa}\times \mathbb{C}^{k}\rightarrow}$ $\mathrm{\mathbb{C}^{k}}$ to the closed $\mathrm{k}$-dimensional space $\mathrm{\mathbf{Z}_{y}\coloneqq \Phi\left(F^{-1}\left(y\right)\cap \mathbf{U}\right)}$ of $\mathrm{\mathbf{Q}=\mathbf{Q}_{0}\times \mathbf{Q}_{1}}$ induces a $\mathrm{d}$-sheeted covering map $\mathrm{p\vert\mathbf{\mathbf{Z}}_{y} \!:\!\mathbf{\mathbf{Z}}_{y}\rightarrow \mathbf{Q}_{1}}$ onto $\mathrm{\mathbf{Q}_{1}}$ for all $\mathrm{y\in \mathbf{B}\subset \mathbf{Y}}$. Moreover, by the theory of finite $\mathrm{d}$-sheeted coverings the following is known ({cf.\ }\cite{Gr-Re}, {pp.\ }{\bf{\oldstylenums{133}-\oldstylenums{146}}}): For each $\mathrm{d}$-sheeted covering map $\mathrm{p\vert \mathbf{Z}_{y}\rightarrow \mathbf{Q}_{1}}$ where $\mathrm{y\in \mathbf{B}}$, the inclusion $$\mathrm{\mathcal{O}\left(\mathbf{Q}_{1}\right)\subset \left(p\vert \mathbf{Z}_{y}\right)_{*}\mathcal{O}\left(\mathbf{Z}_{y}\right)}$$ is a finite, integral ring extension so that  for all for each $\mathrm{f\in \mathcal{O}\left(\mathbf{Z}_{y}\right)}$, there exists  $\mathrm{\alpha_{f,j}\in \mathcal{O}\left(\mathbf{Q}_{1}\right)}$, $\mathrm{0\leq j\leq d-1}$ with
\begin{equation}\label{Equation Polynomial Equation Induces By Finite Covers}
\mathrm{f^{d}+\left(p\vert \mathbf{Z}_{y}\right)^{*}\alpha_{f,d-1}\,f^{d-1}+\dots+\left(p\vert \mathbf{Z}_{y}\right)^{*}\alpha_{f,0}=0}
\end{equation} on $\mathrm{\mathbf{Z}_{y}}$. 

\begin{example} \textnormal{Let $\mathrm{\widehat{\mathbf{\,X\,}}=\left\{ z_{0}\zeta_{1}\xi_{1}-z_{1}\zeta_{0}\xi_{0}=0\right\} \subset \mathbf{X}\times\mathbb{C}\mathbb{P}^{1}}$ be as in example {\bf{Example \ref{Example Proper Inclusion}}} where $\mathrm{\widehat{\mathbf{\,Y\,}}\cong \mathbf{Y}\cong \mathbb{C}\mathbb{P}^{1}}$ and $\mathrm{\Pi=\widehat{\pi}=p_{\mathbb{C}\mathbb{P}^{1}}}$ and consider the open neighborhood $\mathrm{\mathbf{U}\subset\widehat{\mathbf{\,X\,}}}$ of $\mathrm{\left([1\!:\!0],[0\!:\!1],[1\!:\!0]\right)}$ which is isomorphic to the affine variety $\mathrm{\left\{\xi-z\zeta=0\right\}\subset \mathbb{C}^{3}}$ where $\mathrm{z=z_{0}^{-1}z_{1}}$, $\mathrm{\zeta=\zeta_{1}^{-1}\zeta_{0}}$, $\mathrm{\xi =\xi_{0}^{-1}\xi_{1}}$. After a linear change of coordinates it follows that $\mathrm{\widehat{\mathbf{\,X\,}}\cap \mathbf{U}\cong\left\{\xi-z^{\prime,2}-\zeta^{\prime,2}=0\right\}}$ and we can choose $\mathrm{\mathbb{C}^{2}\times \mathbb{C}=\mathbf{B}_{0}\times \mathbf{B}_{1}}$ where $\mathrm{\zeta^{\prime}, \xi}$ are the coordinates of the first factor and $\mathrm{z^{\prime}}$ of the second one in order to deduce a two sheeted, global projection $\mathrm{p\vert \mathbf{Z}_{\xi}:\mathbf{Z}_{\xi}\rightarrow \mathbf{Q}_{1}}$ onto $\mathrm{\mathbf{Q}_{1}=\mathbb{C}}$ for all $\mathrm{\xi\in \mathbf{Y}\setminus \left\{[0\!:\!1]\right\}}$.}

\textnormal{If $\mathrm{f\in \mathcal{O}\left(\mathbf{Z}_{\xi}\right)}$ is given by the restriction of the polynomial $\mathrm{p\left(z^{\prime}\right)=\sum_{j=0}^{m}c_{j}z^{\prime,j}}$ to $\mathrm{\mathbf{Z}_{\xi}}$ one can verify that $$\mathrm{\alpha_{f,1}\left(z^{\prime}\right)=-2\sum_{\substack{\textnormal{j=0}\\[0.05 cm] \textnormal{even}}}^{m}c_{j}\left(z^{\prime,2}-\xi\right)^{\frac{j}{2}}}$$ and $$\mathrm{\alpha_{f,0}\left(z^{\prime}\right)=\sum_{j=0}^{m}\left(-1\right)^{j}c_{j}^{2}\left(z^{\prime,2}-\xi\right)^{j}+2\sum^{m}_{\substack{\textnormal{0=j}<\textnormal{k}\\[0.05cm] \textnormal{j,k even}}}c_{j}c_{k}\left(z^{\prime,2}-\xi\right)^{\frac{1}{2}\left(j+k\right)}. \,\boldsymbol{\Box}}$$}
\end{example}

Now let $\mathrm{\mathbf{V}\subset \mathbb{C}^{\upkappa}\times \mathbb{C}^{k}}$ be an open neighborhood of $\mathrm{x=0\in \mathbb{C}^{\upkappa}\times \mathbb{C}^{k}}$ containing $\mathrm{\mathbf{Q}}$ and equip $\mathrm{\mathbf{V}}$ with a smooth K\"ahler form $\mathrm{\omega}$.

With the help of {\bf{Lemma \ref{Lemma Bound Of Norm Squares Of Roots}}} and the existence of \ref{Equation Polynomial Equation Induces By Finite Covers}, we deduce the following lemma.

\begin{lemma} \label{Lemma Lower Bound Of Integral} Let $\mathrm{y\in\mathbf{B}}$ and $\mathrm{f\in \mathcal{O}\left(\mathbf{Z}_{y}\right)}$ where $\mathrm{\alpha_{f,j}}$ $\mathrm{\in \mathcal{O}\left(\mathbf{Q}_{1}\right)}$, $\mathrm{1\leq j\leq d-1}$ are as above. Then it follows $$\mathrm{\int_{\mathbf{Z}_{y}}\vert f\vert^{2} \,d\,[\mathbf{Z}_{y}]\geq d\cdot c_{k}\int_{\mathbf{Q}_{1}}\left(\sum_{j=0}^{d-1}\vert \alpha_{f,j}\vert^{2}\right)^{\frac{1}{d}}\,\omega_{0}^{q}}$$ where $\mathrm{\omega_{0}}$ denotes the standard K\"ahler form, $\mathrm{c_{d}}$ the constant of {\bf{Lemma \ref{Lemma Bound Of Norm Squares Of Roots}}} and $\mathrm{d}$ the degree of the covering map.
\end{lemma}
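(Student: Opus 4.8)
The plan is to reduce the asserted integral inequality to a pointwise estimate over the base polydisc $\mathbf{Q}_1$, exploiting the structure of the $d$-sheeted branched covering $p|\mathbf{Z}_y\colon\mathbf{Z}_y\to\mathbf{Q}_1$, and then to apply Lemma~\ref{Lemma Bound Of Norm Squares Of Roots} fibrewise. First I would fix $y\in\mathbf{B}$ and recall the structure theory of finite analytic coverings (\cite{Gr-Re}, pp.~\oldstylenums{133}--\oldstylenums{146}): there is a proper analytic --- hence Lebesgue-null --- subset $\mathbf{B}_y\subset\mathbf{Q}_1$ such that $p|\mathbf{Z}_y$ restricts to an unbranched $d$-sheeted covering over $\mathbf{Q}_1\setminus\mathbf{B}_y$, over which $\mathbf{Z}_y$ decomposes locally into $d$ holomorphic graphs; write $(p|\mathbf{Z}_y)^{-1}(w)=\{x_1(w),\dots,x_d(w)\}$ for $w\in\mathbf{Q}_1\setminus\mathbf{B}_y$. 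The coefficients $\alpha_{f,j}$ occurring in \ref{Equation Polynomial Equation Induces By Finite Covers} may (and will) be taken to be, up to sign, the elementary symmetric functions of $f(x_1(w)),\dots,f(x_d(w))$; by Riemann's extension theorem these extend holomorphically across $\mathbf{B}_y$, so that for every $w\in\mathbf{Q}_1\setminus\mathbf{B}_y$ the numbers $f(x_i(w))$ are precisely the $d$ roots of the monic polynomial $z^d+\alpha_{f,d-1}(w)z^{d-1}+\dots+\alpha_{f,0}(w)$. Lemma~\ref{Lemma Bound Of Norm Squares Of Roots} then gives the pointwise bound
\[
\sum_{i=1}^d\bigl|f(x_i(w))\bigr|^2\;\geq\;c_d\Bigl(\sum_{j=0}^{d-1}|\alpha_{f,j}(w)|^2\Bigr)^{1/d},\qquad w\in\mathbf{Q}_1\setminus\mathbf{B}_y.
\]

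Next I would pass from this to the integral inequality by pushing the fibre-integration measure forward under $p|\mathbf{Z}_y$. On the smooth locus of $\mathbf{Z}_y$ lying over $\mathbf{Q}_1\setminus\mathbf{B}_y$ the space is locally the graph of a holomorphic map into $\mathbf{Q}_0\subset\mathbb{C}^{\upkappa}$, so the induced Euclidean volume form $\omega_{\mathrm{eucl}}^k|\mathbf{Z}_y$ dominates $(p|\mathbf{Z}_y)^*\omega_0^k$ pointwise there (the metric induced on a graph over $\mathbf{Q}_1$ exceeds the flat metric on $\mathbf{Q}_1$, whence the Jacobian of $p|\mathbf{Z}_y$ is $\geq 1$); and since $\mathbf{V}$ may be taken relatively compact and $\omega$ is Kähler, $\omega\geq c\,\omega_{\mathrm{eucl}}$ on $\mathbf{V}$ for some $c>0$, so $\omega^k|\mathbf{Z}_y\geq c^k\,(p|\mathbf{Z}_y)^*\omega_0^k$ on that locus. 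Since $\mathbf{B}_y$ and $\operatorname{Sing}(\mathbf{Z}_y)$ are negligible for the measure induced by $\omega^k$, the change-of-variables formula for the $d$-sheeted covering yields
\[
\int_{\mathbf{Z}_y}|f|^2\,d[\mathbf{Z}_y]\;\geq\;c^k\!\int_{\mathbf{Z}_y}|f|^2\,(p|\mathbf{Z}_y)^*\omega_0^k\;=\;c^k\!\int_{\mathbf{Q}_1}\Bigl(\sum_{i=1}^d|f(x_i(w))|^2\Bigr)\omega_0^k,
\]
and combining with the pointwise bound above,
\[
\int_{\mathbf{Z}_y}|f|^2\,d[\mathbf{Z}_y]\;\geq\;c^k\,c_d\!\int_{\mathbf{Q}_1}\Bigl(\sum_{j=0}^{d-1}|\alpha_{f,j}|^2\Bigr)^{1/d}\omega_0^k.
\]
Absorbing the comparison constant $c^k$ and the sheet number into the constant denoted $c_k$ in the statement --- and retaining the factor $d$ produced by the $d$-fold count in the push-forward (equivalently, by the order $\nu_F$ of the covering entering $d[\mathbf{Z}_y]$) --- gives the estimate in the asserted form.

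The step I expect to be the main obstacle is the fibre-integration bookkeeping: making precise the change-of-variables identity $\int_{\mathbf{Z}_y}g\,(p|\mathbf{Z}_y)^*\omega_0^k=\int_{\mathbf{Q}_1}\bigl(\sum_{x\in(p|\mathbf{Z}_y)^{-1}(w)}g(x)\bigr)\omega_0^k$ for the possibly branched, possibly singular covering $p|\mathbf{Z}_y$, verifying that both $\mathbf{B}_y$ and $\operatorname{Sing}(\mathbf{Z}_y)$ are null for the relevant measures, and establishing the pointwise lower bound on the Jacobian of $p|\mathbf{Z}_y$ on the étale locus from the graph description. Once these analytic points are settled, the remainder is a fibrewise application of the elementary Lemma~\ref{Lemma Bound Of Norm Squares Of Roots}, and the identification of the values $f(x_i(w))$ with the roots of a degree-$d$ monic polynomial with holomorphic coefficients is nothing but the standard structure of finite analytic coverings, requiring no new input.
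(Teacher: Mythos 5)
Your proof is correct and takes essentially the same route as the paper: identify the values of $\mathrm{f}$ over a base point with the roots of the monic integral-dependence polynomial from \ref{Equation Polynomial Equation Induces By Finite Covers}, apply Lemma \ref{Lemma Bound Of Norm Squares Of Roots} pointwise on $\mathrm{\mathbf{Q}_1}$, and pass to the integral via the $\mathrm{d}$-sheeted covering structure --- the step the paper delegates wholesale to a citation of King. The only discrepancy is bookkeeping: you introduce and then absorb a uniform form-comparison constant $\mathrm{c^k}$ relating $\mathrm{\omega}$ to $\mathrm{\omega_{\mathrm{eucl}}}$ which the paper tacitly suppresses in its statement of the inequality, but since only the positivity and uniformity in $\mathrm{y\in\mathbf{B}}$ of the resulting constant is used in Proposition \ref{Proposition Local Version Of Proposition}, this is harmless.
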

\begin{proof} It is known (cf.\ \cite{Kin}, {pp.\ }{\bf{\oldstylenums{185}-\oldstylenums{220}}}) that the right hand side of the above inequality can be be bounded from below by $$\mathrm{d\,\int_{\mathbf{Q}_{1}}\left(z^{\left(1\right)}\mapsto \sum_{j=1}^{d}\left\vert \zeta_{f,j}\big(z^{\left(1\right)}\big)\right\vert^{2}\right)\,\omega^{q}_{0}}$$ where $\mathrm{\zeta_{f,j}\left(z^{\left(1\right)}\right)}$ are the $\mathrm{d}$ roots of the polynomial equation $$\mathrm{z^{d}+\alpha_{f,d-1}\big(z^{\left(1\right)}\big)\,?z^{d-1}+\dots+\alpha_{f,0}\big(z^{\left(1\right)}\big)=0.}$$ The claim then follows by using the inequality proved in {\bf{Lemma \ref{Lemma Bound Of Norm Squares Of Roots}}}.
\end{proof}

After this preparation, we can now prove the announced proposition.

\begin{prop} \label{Proposition Local Version Of Proposition} Let $\mathrm{\left(y_{n}\right)_{n}}$ be a sequence in $\mathrm{\mathbf{B}}$ converging to $\mathrm{y_{0}\in \mathbf{B}}$ and let $\mathrm{\left(f_{n}\right)_{n}}$, $\mathrm{f_{n}\in \mathcal{O}\left(\mathbf{Z}_{y_{n}}\right)}$ be a sequence of uniformly bounded holomorphic functions so that $$\mathrm{\int_{\mathbf{Z}_{n}}\vert f_{n}\vert^{2}\,d\,[\mathbf{Z}_{n}]\rightarrow 0.}$$

Then it follows that for each compact subset $\mathrm{\mathbf{K}\subset \mathbf{Q}_{1}}$ and each $\mathrm{\epsilon>0}$ there exists $\mathrm{N_{\epsilon}\left(\mathbf{K}\right)}$ $\mathrm{\in\mathbb{N}}$ so that $$\mathrm{\vert f_{n}\vert^{2}\leq \epsilon\text{ on }p^{-1}\left(\mathbf{K}\right)\cap \mathbf{Z}_{n}\text{ for all }n\geq N_{\epsilon}\left(\mathbf{K}\right).}$$
\end{prop}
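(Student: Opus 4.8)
The plan is to transfer the problem from $f_n$ itself to the coefficient functions $\alpha_{f_n,j}\in\mathcal{O}(\mathbf{Q}_1)$ appearing in the integral dependence relation \ref{Equation Polynomial Equation Induces By Finite Covers}, to extract \emph{locally uniform} decay of these coefficients from the vanishing of the fiber integrals, and then to recover the claimed sup-bound on $f_n$ by the elementary estimate on the roots of a monic polynomial. Throughout, the key point is that the sheet number $d$ of $p|\mathbf{Z}_y$ is the same for every $y\in\mathbf{B}$, so all the relevant estimates are uniform in $n$.

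First I would record the consequences of the hypotheses. Since $d$ is independent of $y$, Lemma \ref{Lemma Lower Bound Of Integral} applies to $f=f_n$ on $\mathbf{Z}_n=\mathbf{Z}_{y_n}$ and shows that $\int_{\mathbf{Q}_1}\big(\sum_{j=0}^{d-1}|\alpha_{f_n,j}|^2\big)^{1/d}\,\omega_0^{q}$ is bounded by a fixed constant multiple of $\int_{\mathbf{Z}_n}|f_n|^2\,d\,[\mathbf{Z}_n]$, hence tends to $0$. In particular, using $\big(\sum_l|\alpha_{f_n,l}|^2\big)^{1/d}\ge|\alpha_{f_n,j}|^{2/d}$, we get $\int_{\mathbf{Q}_1}|\alpha_{f_n,j}|^{2/d}\,\omega_0^{q}\to 0$ for each fixed $j$. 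Moreover, since $(f_n)_n$ is uniformly bounded, say $|f_n|\le M$ on $\mathbf{Z}_n$, and the $d$ roots $\zeta_{f_n,1}(z^{(1)}),\dots,\zeta_{f_n,d}(z^{(1)})$ of the polynomial in \ref{Equation Polynomial Equation Induces By Finite Covers} are precisely the values of $f_n$ on the fiber $(p|\mathbf{Z}_n)^{-1}(z^{(1)})$, the coefficients $\alpha_{f_n,j}$ are elementary symmetric functions of those roots and therefore bounded on $\mathbf{Q}_1$ by a constant $M'=M'(M,d)$ independent of $n$; in particular $\{\alpha_{f_n,j}\}_n$ is, for each $j$, a locally bounded family of holomorphic functions on $\mathbf{Q}_1$.

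Next I would upgrade the $L^{2/d}$-decay to locally uniform decay. For holomorphic $\alpha$ the function $|\alpha|^{2/d}$ is plurisubharmonic, hence satisfies the sub-mean-value inequality: for $z$ in a fixed compact $\mathbf{K}\subset\mathbf{Q}_1$ and $r<\operatorname{dist}(\mathbf{K},\partial\mathbf{Q}_1)$ one has $|\alpha_{f_n,j}(z)|^{2/d}\le C_r\int_{B(z,r)}|\alpha_{f_n,j}|^{2/d}\,\omega_0^{q}\le C_r\int_{\mathbf{Q}_1}|\alpha_{f_n,j}|^{2/d}\,\omega_0^{q}$ with $C_r$ depending only on $r$ and $q$. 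Hence $\sup_{\mathbf{K}}|\alpha_{f_n,j}|\to 0$ as $n\to\infty$ for each $j$. (One could equally extract this via Montel's theorem and the identity theorem.) To finish, take $x\in p^{-1}(\mathbf{K})\cap\mathbf{Z}_n$ and put $z^{(1)}=p(x)\in\mathbf{K}$; then $f_n(x)$ is a root of the monic polynomial $z^{d}+\alpha_{f_n,d-1}(z^{(1)})z^{d-1}+\dots+\alpha_{f_n,0}(z^{(1)})$, so the Cauchy-type bound $|f_n(x)|\le 2\max_{0\le j\le d-1}|\alpha_{f_n,j}(z^{(1)})|^{1/(d-j)}$ holds. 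Combined with $\sup_{\mathbf{K}}|\alpha_{f_n,j}|\to 0$ this gives $\sup_{p^{-1}(\mathbf{K})\cap\mathbf{Z}_n}|f_n|\to 0$, and in particular $|f_n|^2\le\epsilon$ on $p^{-1}(\mathbf{K})\cap\mathbf{Z}_n$ for all $n\ge N_\epsilon(\mathbf{K})$.

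The step I expect to be the main obstacle is the passage from the integral hypothesis to uniform control of the coefficient functions: one must make sure that the $\alpha_{f_n,j}$ really are holomorphic on all of $\mathbf{Q}_1$ (this is exactly what the integral ring extension \ref{Equation Polynomial Equation Induces By Finite Covers} provides, valid over the whole base irrespective of the branch locus of the covering), that the uniform boundedness of the $\alpha_{f_n,j}$ survives the fact that the domains $\mathbf{Z}_{y_n}$ move with $n$, and that Lemma \ref{Lemma Lower Bound Of Integral} is applicable with one and the same constant for all $n$ — which is precisely why the constancy of the sheet number $d$ over $\mathbf{B}$ is used. Once these points are in place, the remaining ingredients (plurisubharmonicity of $|\alpha|^{2/d}$ and the monic-polynomial root estimate) are routine.
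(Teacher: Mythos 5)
Your proposal is correct and follows the same overall route as the paper: pass to the coefficient functions $\mathrm{\alpha_{f_n,j}\in\mathcal{O}(\mathbf{Q}_1)}$ via the integral dependence relation, use Lemma \ref{Lemma Lower Bound Of Integral} to convert the vanishing fiber integral into an $\mathrm{L^{2/d}}$-decay of the coefficients, note that uniform boundedness of $\mathrm{(f_n)_n}$ forces uniform boundedness of the $\mathrm{\alpha_{f_n,j}}$, and finish with the monic-polynomial root bound $\mathrm{\vert f_n\vert\le 2\,\max_j\vert\alpha_{f_n,j}\vert^{1/(d-j)}}$. The one place where you deviate is the upgrade from integral decay to locally uniform decay of the coefficients. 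The paper invokes Montel to extract a subsequence $\mathrm{\alpha_{n,j}\to\alpha_j}$ uniformly on $\mathrm{\mathbf{K}}$, then argues by contradiction that each limit $\mathrm{\alpha_j\equiv 0}$; strictly speaking this only controls a subsequence, and one must additionally observe that every subsequence has a sub-subsequence converging to $\mathrm{0}$, hence the full sequence does — a standard but unstated step. Your argument replaces this with the sub-mean-value inequality for the plurisubharmonic function $\mathrm{\vert\alpha_{f_n,j}\vert^{2/d}}$, which bounds $\mathrm{\sup_{\mathbf{K}}\vert\alpha_{f_n,j}\vert^{2/d}}$ directly by the integral over $\mathrm{\mathbf{Q}_1}$ and therefore yields uniform decay of the full sequence in one stroke, without compactness of the family or a subsequence extraction. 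This is cleaner and slightly more economical; the paper's Montel argument is equally valid once the omitted "every subsequence" step is supplied, but it uses the uniform boundedness twice (once for the root estimate, once for normal-family compactness), whereas yours uses it only for the root estimate. Both proofs rest identically on the constancy of the sheet number $\mathrm{d}$ over $\mathrm{\mathbf{B}}$, which you rightly flag as the load-bearing hypothesis.
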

\begin{proof} First of all set $\mathrm{\alpha_{n,j}\coloneqq\alpha_{f_{n},j}\in \mathcal{O}\left(\mathbf{Z}_{y_{n}}\right)}$ for $\mathrm{0\leq j\leq d-1}$. By the assumption combined with {\bf{Lemma \ref{Lemma Lower Bound Of Integral}}} we deduce that
\begin{equation}\label{Equation Sum Converges Against Zero}
\mathrm{d\cdot c_{d}\int_{\mathbf{Q}_{1}}\left(\sum_{j=0}^{d-1}\vert \alpha_{n,j}\vert^{2}\right)^{\frac{1}{d}}\,\omega_{0}^{q}\rightarrow 0}
\end{equation}
where
\begin{equation}\label{Equation F As Zero Of Polynomial}
\mathrm{f_{n}^{d}+\left(p\vert \mathbf{Z}_{n}\right)^{*}\alpha_{n,d-1}\,f_{n}^{d-1}+\dots+\left(p\vert \mathbf{Z}_{n}\right)^{*}\alpha_{n,0}=0.}
\end{equation} Since the sequence $\mathrm{\left(f_{n}\right)_{n}}$ is uniformly bounded, i.e.\ there exists $\mathrm{C>0}$ so that $\mathrm{\vert f_{n}\vert^{2}\leq C}$ on $\mathrm{\mathbf{Z}_{n}}$ for all $\mathrm{n}$, it follows by equation \ref{Equation F As Zero Of Polynomial} in combination with {\bf{Lemma \ref{Lemma Bound Of Norm Squares Of Roots}}} that $$\mathrm{d\cdot C\geq c_{d}\,\left(\sum_{j=0}^{d-1}\vert \alpha_{n,j}\vert^{2}\right)^{\frac{1}{d}}\text{ on }\mathbf{Q}_{1}\text{ for all }n.}$$ Hence, the sequences $\mathrm{\left(\alpha_{n,j}\right)_{n}}$, where $\mathrm{\alpha_{n,j}\in\mathcal{O}\left(\mathbf{Q}_{1}\right)}$, are uniformly bounded as well for all $\mathrm{1\leq j\leq d-1}$. By the theorem of {\scshape{Montel}} (after having chosen a subsequence) it follows that $$\mathrm{\alpha_{n,j}\rightarrow \alpha_{j}\in \mathcal{O}\left(\mathbf{K}\right)\text{ uniformly on the compact subset }\mathbf{K}\subset \mathbf{Q}_{1}.}$$ The next step is to show that $\mathrm{\alpha_{j}\equiv 0}$ for all $\mathrm{j}$. Let us assume that this is false, i.e.\ there exist at least one $\mathrm{\alpha_{\ell}\not\equiv 0}$ for $\mathrm{0\leq \ell\leq d-1}$. Since $\mathrm{\alpha_{n,j}\rightarrow \alpha_{j}}$ converges uniformly on $\mathrm{\mathbf{K}}$, it follows by \ref{Equation Sum Converges Against Zero} that $$\mathrm{0=\underset{n\rightarrow\infty}{lim}\,c_{d}\int_{\mathbf{K}}\left(\sum_{j=0}^{d-1}\vert \alpha_{n,j}\vert^{2}\right)^{\frac{1}{d}}\,\omega_{0}^{q}=c_{d}\int_{\mathbf{K}}\left(\sum_{j=0}^{d-1}\vert \alpha_{j}\vert^{2}\right)^{\frac{1}{d}}\,\omega_{0}^{q}}$$ which yields a contradiction to the assumption that  $\mathrm{\alpha_{\ell}\not\equiv 0}$ on $\mathrm{\mathbf{K}}$ for at least one $\mathrm{\ell}$. Hence, we deduce that $\mathrm{\alpha_{j}\equiv 0}$ on $\mathrm{\mathbf{K}}$ for all $\mathrm{0\leq j\leq d-1}$. As the convergence is uniform, we find for an arbitrary $\mathrm{\Gamma>0}$ an integer $\mathrm{N\left(\Gamma\right)\in \mathbb{N}}$ so that $\mathrm{\vert \alpha_{n,j}\vert^{2}\leq \Gamma}$ for all $\mathrm{n\geq N\left(\Gamma\right)}$ on $\mathrm{\mathbf{K}}$. By equation \ref{Equation F As Zero Of Polynomial} combined with the general fact that if $\mathrm{\zeta\in \mathbb{C}}$ is a root of $\mathrm{z^{d}+\alpha_{d-1}\,z^{d-1}+\dots+}$ $\mathrm{\alpha_{0}=0}$ then $\mathrm{\vert \zeta\vert\leq 2\,\underset{1\leq j\leq d}{max}\,\vert \alpha_{d-j}\vert^{\frac{1}{j}} }$ (cf.\ \cite{Dem}), we deduce $$\mathrm{\vert f_{n}\vert\leq 2\,\underset{1\leq j\leq d}{max}\,\Gamma^{\frac{1}{j}} \text{ on }p^{-1}\left(\mathbf{K}\right)\cap \mathbf{Z}_{n}\text{ for all }n\geq N\left(\Gamma\right)}$$ which proves the claim:\ Choose $\mathrm{\Gamma_{\epsilon}>0}$ so that $\mathrm{2\,\underset{1\leq j\leq d}{max}\,\Gamma_{\epsilon}^{\frac{1}{j}}=\epsilon}$ and set $\mathrm{N_{\epsilon}\left(\mathbf{K}\right)\coloneqq N(\Gamma_{\epsilon})}$. 
\end{proof}

\subsection{Local Uniform Convergence on $\mathrm{\mathbf{R}_{N_{0}}\cap\mathbf{Y}_{0}}$}

Recall that the set of all removable singularities $\mathrm{\mathbf{R}_{N_{0}}}$ of the measure sequence $\mathrm{\left(\bm{\nu}_{n}\right)_{n}}$ induced by the $\mathrm{\xi}$-approximating sequence $\mathrm{\left(s_{n}\right)_{n}}$ of $\mathrm{\xi_{n}}$-eigensections $\mathrm{s_{n}\in H^{0}\left(\mathbf{X},\mathbf{L}^{n}\right)}$ is assumed to be non-empty throughout this section (cf.\ {\bf{General Agreement}} at the beginning of page \pageref{General Agreement}). The first step towards the proof of {\bf{Theorem 5.a}} and {\bf{Theorem 6.a}} is the following proposition.

\begin{prop}\label{Proposition Tales Are Uniformly Bounded} Let $\mathrm{\mathbf{W}\subset\mathbf{Y}_{0}\cap \mathbf{R}_{N_{0}}}$ be a compact neighborhood of $\mathrm{y_{0}\in \mathbf{Y}_{0}\cap \mathbf{R}_{N_{0}}}$ (we can assume that $\mathrm{\mathbf{W}\subset \mathbf{Y}^{i}}$) and $\mathrm{\epsilon>0}$, then after having shrunken $\mathrm{\mathbf{W}}$, there exists a constant $\mathrm{C>0}$ and $\mathrm{m_{0}\in\mathbb{N}}$ so that $$\mathrm{\underset{x\in\pi^{-1}\left(y\right)}{max}\,\,\,\frac{\left|s_{m_{0}}^{i}\cdot\triangle_{y,n}^{i}\right|^{2}}{\int_{\pi^{-1}\left(y\right)\cap T\left(\epsilon,\mathbf{W}\right)}\left|s_{m_{0}}^{i}\cdot\triangle_{y,n}^{i}\right|^{2}\,d\,[\pi_{y}]}<C}$$ for all $\mathrm{n\geq m_{0}}$ and all $\mathrm{y\in{\mathbf{W}}}$.
\end{prop}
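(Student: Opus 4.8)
The plan is to argue by contradiction, reducing the global estimate to the local covering-space picture of Section~\ref{A Local Proposition Concerning Fiber Integration} and then invoking Proposition~\ref{Proposition Local Version Of Proposition}. First I would normalise the data. After shrinking $\mathbf{W}$ we may assume $\mathbf{W}\subset\mathbf{U}_{y_0}$ for a single $y_0$, so that one sequence of difference functions $\triangle^i_n\coloneqq\triangle^i_{y_0,n}$ is defined on the $\pi$-saturated set $\pi^{-1}(\mathbf{W})$; by Corollary~\ref{Corollary Independence Of Extension} the quotient in the statement is independent of this choice. Put $\sigma_n\coloneqq s^i_{m_0}\cdot\triangle^i_n$. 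By Proposition~\ref{Theorem Estimates Tales of Triangle} fix $m_0$ and $N_1$ so that for $y\in\mathbf{W}$ and $n\ge N_1$ the maximum of $|\sigma_n|^2$ on $\pi^{-1}(y)$ is attained on the compact set $\mu^{-1}(m_0^{-1}\eta_{m_0})\cap\pi^{-1}(y)$, enlarging $m_0$ if needed so that $\mu^{-1}(m_0^{-1}\eta_{m_0})\cap\pi^{-1}(\mathbf{W})\subset T(\tfrac{\epsilon}{2},\mathbf{W})\subset\{\varrho^i<\epsilon\}\cap T(\epsilon,\mathbf{W})$. For every $y\in\mathbf{W}$ both the numerator and the denominator of the quotient lie in $(0,\infty)$: finiteness from the relative compactness of $T(\epsilon,\mathbf{W})$ together with Corollary~\ref{Boundedness of Fiber Integral}, strict positivity of the numerator from Lemma~\ref{Lemma The Maximum Of Triangle Is Fiberwisely Not Zero}, and strict positivity of the denominator from Lemma~\ref{Lemma Intersection Zero Measure} (the fibre volume of $\pi^{-1}(y)\cap T(\epsilon,\mathbf{W})$ is $>0$ for $\epsilon>0$) combined again with Lemma~\ref{Lemma The Maximum Of Triangle Is Fiberwisely Not Zero}. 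The finitely many indices $m_0\le n<N_1$ cause no trouble: for each such fixed $n$ the quotient, as a function of $y\in\mathbf{W}$, is bounded above, since the numerator is $\le\max_{\mathbf{X}}|\sigma_n|^2$ and the denominator is continuous (Theorem~\ref{Theorem King} applied to a cut-off of $|\sigma_n|^2$) and bounded away from $0$ on the compact $\mathbf{W}$; so it suffices to bound the quotient for all $n\ge N_1$.

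\emph{The contradiction argument.} Assume no constant $C$ works for $n\ge N_1$. Then there are $n_k\ge N_1$ and $y_k\in\mathbf{W}$ with $M_k/I_k\to\infty$, where $M_k\coloneqq\max_{\pi^{-1}(y_k)}|\sigma_{n_k}|^2$ and $I_k\coloneqq\int_{\pi^{-1}(y_k)\cap T(\epsilon,\mathbf{W})}|\sigma_{n_k}|^2\,d\,[\pi_{y_k}]$. Passing to subsequences, $y_k\to y^\ast\in\mathbf{W}$ and the maximum is attained at points $x_k\in\mu^{-1}(m_0^{-1}\eta_{m_0})\cap\pi^{-1}(y_k)$ with $x_k\to x^\ast$, where $\pi(x^\ast)=y^\ast\in\mathbf{Y}_0$ and $\varrho^i(x^\ast)<\epsilon$. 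Normalise by $\widetilde\sigma_k\coloneqq M_k^{-1/2}\sigma_{n_k}$, a section of $\mathbf{L}^{m_0}$ over $\pi^{-1}(\mathbf{W})$ with $|\widetilde\sigma_k|^2\le1$ on $\pi^{-1}(y_k)$, $|\widetilde\sigma_k|^2(x_k)=1$, and $\int_{\pi^{-1}(y_k)\cap T(\epsilon,\mathbf{W})}|\widetilde\sigma_k|^2\,d\,[\pi_{y_k}]=I_k/M_k\to0$.

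\emph{Passing to the local chart.} Choose a holomorphic frame $e$ of $\mathbf{L}^{m_0}$ on a relatively compact neighbourhood $\mathbf{U}^\ast$ of $x^\ast$ small enough that $\varrho^i<\epsilon$ on $\mathbf{U}^\ast$, and realise $\pi\vert\mathbf{X}_0$ near $x^\ast$ in the normal form of Section~\ref{A Local Proposition Concerning Fiber Integration}: a closed $\mathbf{Z}\subset\mathbf{Q}_0\times\mathbf{Q}_1$ with $d$-sheeted coverings $p\vert\mathbf{Z}_y\colon\mathbf{Z}_y\to\mathbf{Q}_1$ over a base neighbourhood $\mathbf{B}\ni y^\ast$. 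Writing $\widetilde\sigma_k=g_k\cdot e$, the $g_k$ are holomorphic on $\mathbf{Z}_{y_k}$ (via the identification $\Phi$); since $|e|^2_h$ is bounded above and below by positive constants on $\mathbf{U}^\ast$ and $|\widetilde\sigma_k|^2\le1$, the $g_k$ are uniformly bounded, while $|g_k|^2(x_k)\ge c_0>0$ for $k$ large. Because $\mathbf{U}^\ast\subset\{\varrho^i<\epsilon\}$ and $y_k\in\mathbf{W}$ we have $\pi^{-1}(y_k)\cap\mathbf{U}^\ast\subset\pi^{-1}(y_k)\cap T(\epsilon,\mathbf{W})$, so comparing $\omega_0^k$ with $\omega^k$ on the relatively compact $\mathbf{U}^\ast$ (together with the multiplicities defining $d\,[\pi_y]$ and $d\,[\mathbf{Z}_y]$) gives $\int_{\mathbf{Z}_{y_k}}|g_k|^2\,d\,[\mathbf{Z}_{y_k}]\le\mathrm{const}\cdot(I_k/M_k)\to0$. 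Proposition~\ref{Proposition Local Version Of Proposition} now applies to $(g_k)_k$: with $\mathbf{K}\subset\mathbf{Q}_1$ a compact neighbourhood of $p(\Phi(x^\ast))$ we obtain $|g_k|^2\to0$ uniformly on $p^{-1}(\mathbf{K})\cap\mathbf{Z}_{y_k}$; since $x_k\to x^\ast$, eventually $\Phi(x_k)\in p^{-1}(\mathbf{K})\cap\mathbf{Z}_{y_k}$, so $|g_k|^2(x_k)\to0$, contradicting $|g_k|^2(x_k)\ge c_0>0$. This contradiction proves the proposition.

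\emph{Main obstacle.} The delicate point is exactly this passage to the local chart: one must reconcile the global, metric-twisted fibre integral over the tube $T(\epsilon,\mathbf{W})$ with the flat coordinate fibre integral over a Weierstrass-type chart $\mathbf{Z}\subset\mathbf{Q}_0\times\mathbf{Q}_1$ — controlling the frame $e$, the comparison of volume forms, and the multiplicities in $d\,[\pi_y]$ versus $d\,[\mathbf{Z}_y]$ — so that the two hypotheses of Proposition~\ref{Proposition Local Version Of Proposition} (uniform boundedness of the $g_k$ and vanishing of $\int_{\mathbf{Z}_{y_k}}|g_k|^2\,d\,[\mathbf{Z}_{y_k}]$) genuinely hold; once that is arranged, the covering-space estimate underlying Lemma~\ref{Lemma Bound Of Norm Squares Of Roots} does the rest.
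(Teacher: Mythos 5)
Your proof is correct and follows essentially the same route as the paper's: argue by contradiction, use Proposition~\ref{Theorem Estimates Tales of Triangle} to locate the maximiser in the tube $\mathrm{T(\epsilon/2,\mathbf{W})}$, extract a convergent subsequence $\mathrm{x_k\to x^{\ast}}$ with $\mathrm{\pi(x^{\ast})\in\mathbf{Y}_0}$, trivialise $\mathrm{\mathbf{L}^{m_0}}$ and pass to the Weierstrass chart of Section~\ref{A Local Proposition Concerning Fiber Integration}, and apply Proposition~\ref{Proposition Local Version Of Proposition} to the normalised functions to contradict $\mathrm{|g_k|^2(x_k)\geq A^{-1}>0}$. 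The only cosmetic differences are that you make explicit the (finitely many) indices $\mathrm{m_0\leq n<N_1}$ and the comparison between the metric-twisted fibre integral $\mathrm{d[\pi_y]}$ and the flat $\mathrm{d[\mathbf{Z}_y]}$ — points the paper leaves implicit — but the underlying argument is the same.
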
 
\begin{proof} First of all by {\bf{Proposition \ref{Theorem Estimates Tales of Triangle}}} we know that there exists $\mathrm{m_{0}}$ so that 
\begin{equation}\label{Equation Maximum Is Contained In}
\mathrm{\underset{x\in\pi^{-1}\left(y\right)}{max}\,\left|s_{m_{0}}^{i}\cdot\triangle_{y,n}^{i}\right|^{2}=\underset{x\in\pi^{-1}\left(y\right)\cap T\left(\frac{\epsilon}{2},\mathbf{W}^{i}\right)}{max}\,\left|s_{m_{0}}^{i}\cdot\triangle_{y,n}^{i}\right|^{2}}
\end{equation} for all $\mathrm{n}$ big enough and all $\mathrm{y\in \mathbf{W}\cap \mathbf{R}_{N_{0}}=\mathbf{W}}$ (by the above assumption, $\mathrm{\mathbf{W}}$ is contained in $\mathrm{\mathbf{R}_{N_{0}}\cap \mathbf{Y}_{0}}$). Furthermore, by lemma {\bf{Lemma \ref{Lemma The Maximum Of Triangle Is Fiberwisely Not Zero}}} we deduce that $$\mathrm{\underset{x\in\pi^{-1}\left(y\right)\cap T\left(\epsilon,\mathbf{W}^{i}\right)}{max}\,\left|s_{m_{0}}^{i}\cdot\triangle_{y,n}^{i}\right|^{2}>0}$$ for all $\mathrm{n}$ big enough and all $\mathrm{y\in \mathbf{W}\cap \mathbf{R}_{N_{0}}=\mathbf{W}}$. Hence, $$\mathrm{\int_{\pi^{-1}\left(y\right)\cap T\left(\epsilon,\mathbf{W}\right)}\frac{\left|s_{m_{0}}^{i}\cdot\triangle_{y,n}^{i}\right|^{2}}{\underset{x\in\pi^{-1}\left(y\right)\cap T\left(\epsilon,\mathbf{W}\right)}{max}\,\left|s_{m_{0}}^{i}\cdot\triangle_{y,n}^{i}\right|^{2}}\,d\,[\pi_{y}]}$$ is well defined for all $\mathrm{n}$ big enough and all $\mathrm{y\in \mathbf{W}}$. Note that the claim is shown, as soon as we have shown that, after having shrunken $\mathrm{\mathbf{W}}$, there exists $\mathrm{c>0}$ (set $\mathrm{C\coloneqq c^{-1}}$) so that $$\mathrm{\int_{\pi^{-1}\left(y\right)\cap T\left(\epsilon,\mathbf{W}\right)}\frac{\left|s_{m_{0}}^{i}\cdot\triangle_{y,n}^{i}\right|^{2}}{\underset{x\in\pi^{-1}\left(y\right)\cap T\left(\epsilon\mathbf{W}\right)}{max}\,\left|s_{m_{0}}^{i}\cdot\triangle_{y,n}^{i}\right|^{2}}\,d\,[\pi_{y}]>c}$$ for all $\mathrm{n}$ big enough and all $\mathrm{y\in \mathbf{W}}$.

Let us assume that this is not the case, then there exists a sequence $\mathrm{y_{n}\in \mathbf{W}}$ converging to $\mathrm{y_{0}}$ so that 
\begin{equation}\label{Equation Integral Converges To Zero}
\mathrm{\int_{\pi^{-1}\left(y_{n}\right)\cap T\left(\epsilon,\mathbf{W}\right)}\frac{\left|s_{m_{0}}^{i}\cdot\triangle_{y_{n},n}^{i}\right|^{2}}{\underset{x\in\pi^{-1}\left(y_{n}\right)\cap T\left(\epsilon,\mathbf{W}\right)}{max}\,\left|s_{m_{0}}^{i}\cdot\triangle_{y_{n},n}^{i}\right|^{2}}\,d\,[\pi_{y_{n}}]\rightarrow 0.}
\end{equation} To shorten notation we will set $$\mathrm{f_{n}\coloneqq \frac{s^{i}_{m_{0}}\cdot \triangle^{i}_{y_{n},n}}{\underset{x\in\pi^{-1}\left(y_{n}\right)\cap T\left(\epsilon,\mathbf{W}\right)}{max}\,\left|s_{m_{0}}^{i}\cdot\triangle_{y_{n},n}^{i}\right|}\Bigg\vert\pi^{-1}\left(y_{n}\right)}$$ throughout the rest of this proof where $\mathrm{f_{n}\in H^{0}\left(\pi^{-1}\left(y_{n}\right),\mathbf{L}^{m_{0}}\vert \pi^{-1}\left(y_{n}\right)\right)}$. By equation \ref{Equation Maximum Is Contained In} it follows that $$\mathrm{\underset{x\in\pi^{-1}\left(y_{n}\right)}{max}\,\left|f_{n}\right|^{2}=\underset{x\in\pi^{-1}\left(y_{n}\right)\cap T\left(\frac{\epsilon}{2},\mathbf{W}\right)}{max}\,\left|f_{n}\right|^{2}=1}$$ for all $\mathrm{n}$ big enough where $\mathrm{y_{n}\rightarrow y_{0}\in Int\,\mathbf{W}}$. Therefore we can find a lift of $\mathrm{\left(y_{n}\right)_{n}}$, i.e.\ a sequence $\mathrm{\left(x_{n}\right)_{n}}$ in $\mathrm{T\left(\frac{\epsilon}{2},\mathbf{W}\right)}$ so that $\mathrm{\pi\left(x_{n}\right)=y_{n}}$ with the property
\begin{equation}\label{Equation Norm Is Equal To One}
\mathrm{\vert f_{n}\vert^{2}\left(x_{n}\right)=1=\underset{x\in\pi^{-1}\left(y_{n}\right)}{max}\,\left|f_{n}\right|^{2}}.
\end{equation} As $\mathrm{x_{n}\in T\left(\frac{\epsilon}{2},\mathbf{W}\right)}$ where $\mathrm{y_{n}=\pi\left(x_{n}\right)\rightarrow y_{0}\in Int\,\mathbf{W}}$, we can assume using the compactness of $\mathrm{T\left(\frac{\epsilon}{2},\mathbf{W}\right)}$ (after having chosen a subsequence) that $\mathrm{x_{n}\rightarrow x_{0}\in }$ $\mathrm{Int\,T\left(\epsilon,\mathbf{W}\right)}$. Hence, there exists an open neighborhood $\mathrm{\mathbf{U}}$ of $\mathrm{x_{0}}$ which is contained in the interior of $\mathrm{T\left(\epsilon,\mathbf{W}\right)}$ so that $\mathrm{x_{n}\in \mathbf{U}}$ for all $\mathrm{n}$ big enough.

Since we have $\mathrm{x_{0} \in \pi^{-1}\left(y_{0}\right)\subset\pi^{-1}\left(\mathbf{Y}_{0}\right)}$, we can assume that the open neighborhood $\mathrm{\mathbf{U}}$ (after having shrunken it) is isomorphic via an holomorphic embedding $\mathrm{\Phi\!:\!\mathbf{U}\rightarrow \mathbb{C}^{\upkappa}\times \mathbb{C}^{k}}$ (let $\mathrm{\Phi\left(x_{0}\right)=0=\left(0,0\right)\in \mathbf{Q}_{0}\times \mathbf{Q}_{1}}$) to a closed analytic subset in a relatively compact product neighborhood $\mathrm{\mathbf{Q}=\mathbf{Q}_{0}\times \mathbf{Q}_{1}\subset}$ $\mathrm{\mathbb{C}^{\upkappa}\times \mathbb{C}^{k}}$ so that each $\mathrm{\Phi\left(\pi^{-1}\left(y\right)\cap \mathbf{U}\right)=\mathbf{Z}_{y}}$, where $\mathrm{y}$ varies in an open neighborhood $\mathrm{\mathbf{B}\subset \mathbf{Y}_{0}}$ of $\mathrm{\pi\left(x_{0}\right)}$, is a closed analytic subset which yields a surjective, $\mathrm{d}$-sheeted covering of $\mathrm{\mathbf{Q}_{1}}$ given by $\mathrm{p\vert \mathbf{Z}_{y}\rightarrow \mathbf{Q}_{1}}$. The existence of such a neighborhood $\mathrm{\mathbf{U}}$ has been treated in {\bf{Section \ref{A Local Proposition Concerning Fiber Integration}}}, page {\bf{\oldstylenums{\pageref{Notation D-Sheeted Covering}}}}.

After having shrunken $\mathrm{\mathbf{U}}$ again, we can also assume that $\mathrm{\mathbf{L}^{m_{0}}\vert \mathbf{U} \cong  \mathbf{U}\times \mathbb{C}}$. In particular, the norm $\mathrm{\vert s_{m_{0}}^{i}\vert^{2}}$ of the holomorphic sections $\mathrm{s_{m_{0}}^{i}}$ over $\mathrm{\mathbf{U}}$ is then given by a smooth, strictly positive function $\mathrm{\alpha\in\mathcal{C}^{\infty}\left(\mathbf{U}\right)}$, independent of $\mathrm{n\in\mathbb{N}}$, so that $\mathrm{\vert f_{n} \vert^{2}=\alpha\cdot \vert \triangle^{i}_{y_{n},n}\vert^{2}}$. It is important to note that $\mathrm{\vert f_{n} \vert^{2}}$ denotes the norm of the restricted local section $\mathrm{f_{n}}$ with respect to the hermitian bundle metric $\mathrm{h}$, whereas $\mathrm{\vert \triangle^{i}_{y_{n},n}\vert^{2}}$ is the norm induced by the absolute value of the complex numbers. In the sequel, we will use the abbreviation $\mathrm{g_{n}\coloneqq \triangle^{i}_{y_{n},n}\in \mathcal{O}\left(\mathbf{Z}_{y_{n}}\right)}$, i.e.\ we have $\mathrm{\vert f_{n}\vert^{2}=\alpha\cdot \vert g_{n}\vert^{2}}$ on $\mathrm{{\bf{Z}}_{y_{n}}}$. Note that we have $$\mathrm{\underset{x\in \mathbf{Z}_{y_{n}}}{max}\vert f_{n}\vert \mathbf{Z}_{y_{n}}\vert^{2}=\vert f_{n}\left(x_{n}\right)\vert^{2}=1}$$ because $\mathrm{x_{n}\in \mathbf{U}}$ for all $\mathrm{n}$ big enough where $\mathrm{\Phi\left(\pi^{-1}\left(y_{n}\right)\cap \mathbf{U}\right)=\mathbf{Z}_{y_{n}}}$. In other words, if $\mathrm{A>0}$, resp.\ $\mathrm{a>0}$ denotes the maximum, resp.\ the minimum of $\mathrm{\alpha}$ on $\mathrm{\mathbf{U}}$ we deduce that 
\begin{equation}\label{Equation Bla bla bla bli blub}
\mathrm{a^{-1}\geq \underset{x\in \mathbf{Z}_{y_{n}}}{max}\vert g_{n}\vert \mathbf{Z}_{y_{n}}\vert^{2}\text{ and } \vert g_{n}\vert^{2}\left(x_{n}\right)\geq A^{-1}>0}
\end{equation} for all $\mathrm{n}$ big enough. Furthermore, by assumption \ref{Equation Integral Converges To Zero}, it follows that $$\mathrm{\int_{\mathbf{Z}_{n}}\vert f_{n}\vert^{2}d\,[\mathbf{Z}_{n}]\geq a\,\int_{\mathbf{Z}_{n}}\vert g_{n}\vert ^{2}d\,[\mathbf{Z}_{y_{n}}]\rightarrow 0\text{ and hence }\int_{\mathbf{Z}_{n}}\vert g_{n}\vert ^{2}d\,[\mathbf{Z}_{y_{n}}]\rightarrow 0.}$$

We are now in the situation of {\bf{Proposition \ref{Proposition Local Version Of Proposition}}}: We have a sequence $\mathrm{\mathbf{Z}_{n}=\mathbf{Z}_{y_{n}}}$ and a sequence $\mathrm{\left(g_{n}\right)_{n}}$ of uniformly bounded holomorphic functions on $\mathrm{\mathbf{Z}_{n}}$ given by $\mathrm{g_{n}}$ so that $$\mathrm{\int_{\mathbf{Z}_{n}}\vert g_{n}\vert^{2}\,d\,[\mathbf{Z}_{n}]\rightarrow 0.}$$ In particular, if $\mathrm{\mathbf{K}\subset \mathbf{Q}_{1}}$ is a compact neighborhood of $\mathrm{0\in \mathbf{K}}$ and if $\mathrm{\epsilon=\frac{1}{2}A^{-1}}$ we deduce, by {\bf{Proposition \ref{Proposition Local Version Of Proposition}}}, that there exists $\mathrm{N_{\frac{1}{2}A^{-1}}\left(\mathbf{K}\right)}$ so that
\begin{equation}\label{Equation sd<vh dsb}
\mathrm{\vert g_{n}\vert^{2}\leq \frac{1}{2}A^{-1}\text{ on }p^{-1}\left(\mathbf{K}\right)\cap \mathbf{Z}_{n}\text{ for all }n\geq N_{\frac{1}{2}A^{-1}}\left(\mathbf{K}\right).}
\end{equation} However, since $\mathrm{x_{n}\rightarrow x_{0}=0\in p^{-1}\left(\mathbf{K}\right)}$, it follows that $\mathrm{x_{n}\in p^{-1}\left(\mathbf{K}\right)\cap \mathbf{Z}_{n}}$ for all $\mathrm{n}$ big enough. According to the second inequality of \ref{Equation Bla bla bla bli blub}, we have $\mathrm{\vert g_{n}\vert^{2}}$ $\mathrm{\left(x_{n} \right)\geq A^{-1}}$ and hence a contradiction to \ref{Equation sd<vh dsb}. Therefore, the assumption \ref{Equation Integral Converges To Zero} is false and the claim is proven.
\end{proof}

\begin{theo_5.a*}\label{theo_5.a*}\textnormal{[\scshape{Locally Uniform Convergence of the Initial Distribution Sequence}]}\vspace{0.10 cm}\\ 
Let $\mathrm{y\in \mathbf{Y}_{0}\cap \mathbf{R}_{N_{0}}}$, $\mathrm{t\in \mathbb{R}}$ and let $\mathrm{\mathbf{W}\subset \mathbf{Y}_{0}\cap \mathbf{R}_{N_{0}}}$ be a compact neighborhood\,\footnote{From now on, we will always assume that $\mathrm{\mathbf{W}\subset \mathbf{Y}^{i}}$ which is possible without restriction of generality.}of $\mathrm{y_{0}}$. Then after having shrunken $\mathrm{\mathbf{W}}$, the sequence $\mathrm{\left(D_{n}\left(\cdot,t\right)\right)_{n}}$ converges uniformly to the zero function over $\mathrm{\mathbf{W}}$.
\end{theo_5.a*}
\begin{proof} Let $\mathrm{\epsilon>0}$, then by the first part of {\bf{Proposition \ref {Continuity of Fiber Integral}}} there exists $\mathrm{\sigma_{\epsilon}>0}$ so that 
\begin{equation}\label{Equation 6}
\mathrm{vol\left(\pi^{-1}\left(y\right)\cap T\left(\sigma_{\epsilon},\mathbf{W}\right)\right)\leq \epsilon}
\end{equation}
for all $\mathrm{y\in \mathbf{W}\cap \mathbf{Y}_{0}=\mathbf{W}}$. Hence, it is enough to show that
\begin{equation}\label{Equation Distribution Functions Is Small Outside Local Case} 
\mathrm{\phi_{n}=\frac{\vert s_{n}\vert^{2}}{\Vert s_{n}\Vert^{2}}\leq t}
\end{equation} on $\mathrm{T^{c}\left(\sigma_{\epsilon},\mathbf{W}\right)}$ for all $\mathrm{n}$ big enough. Recall that $\mathrm{\vert s_{n}\vert^{2}\Vert s_{n}\Vert^{-2}}$ is an abbreviation for the local description given by $\mathrm{\vert s_{y,n}\vert^{2}\Vert s_{y,n}\Vert^{-2}}$ on $\mathrm{\pi^{-1}\left(\mathbf{U}_{y}\right)}$. 

The first step is to write $$\mathrm{\phi_{n}=\frac{\vert s_{n}^{i}\cdot s_{m}^{i,-1}\vert^{2}\cdot \vert s_{m}^{i}\cdot \triangle^{i}_{y,n}\vert^{2}}{\int_{\pi^{-1}\left(y\right)}\vert s_{n}^{i}\cdot s_{m}^{i,-1}\vert^{2}\cdot \vert s_{m}^{i}\cdot \triangle^{i}_{y,n}\vert^{2}\,d\,[\pi_{y}]}}$$ for arbitrary $\mathrm{m\in \mathbb{N}}$ which is possible because $\mathrm{s^{i}_{m}\left(x\right)\neq 0}$ for all $\mathrm{x\in \mathbf{X}^{i}}$ and all $\mathrm{m\in \mathbb{N}}$. Using {\bf{Lemma \ref{Lemma The Maximum Of Triangle Is Fiberwisely Not Zero}}} we deduce that 
\begin{equation}\label{Equation Intergal In The Denominator Will Not Be Zero}
\mathrm{\underset{x\in\pi^{-1}\left(y\right)\cap T\left(\epsilon_{\frac{\sigma}{2}},\mathbf{W}\right)}{max}\,\left|s_{m}^{i}\cdot\triangle_{y,n}^{i}\right|^{2}>0}
\end{equation} for all $\mathrm{n}$ big enough and all $\mathrm{y\in \mathbf{W}}$ and hence we can estimate
\begin{equation}\label{Equation 2}
\mathrm{\phi_{n}\leq\frac{\vert s_{n}^{i}\cdot s_{m}^{i,-1}\vert^{2}\cdot \vert s_{m}^{i}\cdot \triangle^{i}_{y,n}\vert^{2}}{\int_{\pi^{-1}\left(y\right)\cap T\left(\frac{\sigma_{\epsilon}}{2},\mathbf{W}\right)}\vert s_{n}^{i}\cdot s_{m}^{i,-1}\vert^{2}\cdot \vert s_{m}^{i}\cdot \triangle^{i}_{y,n}\vert^{2}\,d\,[\pi_{y}]}}\end{equation}
for all $\mathrm{n,m\in \mathbb{N}}$ and all $\mathrm{x\in\pi^{-1}\left( \mathbf{W}\right)}$.   

Note that $\mathrm{\varrho^{m}_{n}\coloneqq -\frac{1}{n}\mathbf{log}\,\vert s^{i}_{n}\cdot s^{i,-1}_{m} \vert^{2}=\varrho^{i}_{n}-\frac{1}{n}\mathbf{log}\,\vert s^{i,-1}_{m}\vert^{2}}$ defines for a fixed $\mathrm{m}$ and for all $\mathrm{n\geq m }$ a strictly plurisubharmonic function on $\mathrm{\pi^{-1}\left(\mathbf{W}\right)}$ which converges uniformly on compact subsets to $\mathrm{\varrho^{i}}$. Therefore, for each $\mathrm{m}$, there exists $\mathrm{N_{m}\in \mathbb{N}}$ so that $\mathrm{\varrho^{m}_{n}\left(x\right)\leq \frac{5}{8}\sigma_{\epsilon}}$ for all $\mathrm{x\in T\left(\frac{\sigma_{\epsilon}}{2},\mathbf{W}\right)}$ and for all $\mathrm{n\geq N_{m}}$ or equivalently
\begin{equation}\label{Equation 1}
\mathrm{\vert s^{i}_{n}\cdot s^{i,-1}_{m}\vert^{2}\left(x\right)\geq e^{-n\,\frac{5}{8}\,\sigma_{\epsilon}}} 
\end{equation}
for all $\mathrm{x\in T\left(\frac{\sigma_{\epsilon}}{2},\mathbf{W}\right)}$ and for all $\mathrm{n\geq N_{m}}$. Using inequality \ref{Equation 1} and \ref{Equation 2}, we deduce 
\begin{equation}\label{Equation 3}
\mathrm{\phi\leq e^{n\,\frac{5}{8}\,\sigma_{\epsilon}}\,\frac{\vert s_{n}^{i}\cdot s_{m}^{i,-1}\vert^{2}\cdot \vert s_{m}^{i}\cdot \triangle^{i}_{y,n}\vert^{2}}{\int_{\pi^{-1}\left(y\right)\cap T\left(\frac{\sigma_{\epsilon}}{2},\mathbf{W}\right)}\vert s^{i}_{m}\cdot\triangle_{y,n}\vert^{2}\,d\,[\pi_{y}]}}\end{equation} for all $\mathrm{n\geq N_{m}}$ an all $\mathrm{x\in\pi^{-1}\left( \mathbf{W}\right)}$. 

Note that by {\bf{Corollary \ref{Remark Convergence Theorem For The Shifted Sequence}}} we know that $\mathrm{\varrho^{m}_{n}\left(x\right)\geq \frac{7}{8}\, \sigma_{\epsilon}}$ or equivalently 
\begin{equation}\label{Equation 4}
\mathrm{\vert s^{i}_{n}\cdot s^{i,-1}_{m}\vert^{2}\left(x\right)\leq e^{-\frac{7}{8}\,n\,\sigma_{\epsilon}}} 
\end{equation} for all $\mathrm{x\in T^{c}\left(\sigma_{\epsilon},\mathbf{W}\right)}$ and all $\mathrm{n\geq N_{m}^{\prime}}$. So if we combine \ref{Equation 4} and \ref{Equation 3} we deduce
\begin{equation}\label{Equation 5}
\mathrm{\phi\leq e^{-n\,\frac{1}{4}\,\sigma_{\epsilon}}\,\frac{\vert s_{m}^{i}\cdot \triangle^{i}_{y,n}\vert^{2}}{\int_{\pi^{-1}\left(y\right)\cap T\left(\frac{\sigma_{\epsilon}}{2},\mathbf{W}\right)}\vert s^{i}_{m}\cdot\triangle^{i}_{y,n}\vert^{2}\,d\,[\pi_{y}]}}\end{equation} for all $\mathrm{x\in T^{c}\left(\sigma_{\epsilon},\mathbf{W}\right)}$ and all $\mathrm{n\geq max\,\left\{N^{\phantom{\prime}}_{m},\, N_{m}^{\prime}\right\}}$. After having shrunken $\mathrm{\mathbf{W}}$ there exists (cf.\ {\bf{Proposition \ref{Proposition Tales Are Uniformly Bounded}}}) $\mathrm{C>0}$ and $\mathrm{m_{0}\in \mathbb{N}}$ so that $$\mathrm{\underset{x\in\pi^{-1}\left(y\right)}{max}\,\,\,\frac{\left|s_{m_{0}}^{i}\cdot\triangle_{y,n}^{i}\right|^{2}}{\int_{\pi^{-1}\left(y\right)\cap T\left(\frac{\sigma_{\epsilon}}{2},\mathbf{W}\right)}\left|s_{m_{0}}^{i}\cdot\triangle_{y,n}^{i}\right|^{2}\,d\,[\pi_{y}]}<C}$$ for all $\mathrm{n}$ big enough and all $\mathrm{y\in \mathbf{W}}$. In particular combining this with inequality \ref{Equation 5} we deduce 
\begin{equation}\label{Equation Which Is Necessary}
\mathrm{\phi_{n}\leq e^{-n\,\frac{1}{4}\,\sigma_{\epsilon}}\cdot C}
\end{equation} for all $\mathrm{n}$ big enough (i.e.\ at least $\mathrm{n\geq max\,\left\{N^{\phantom{\prime}}_{m_{0}},\, N_{m_{0}}^{\prime}\right\}}$) and all $$\mathrm{x\in T^{c}\left(\sigma_{\epsilon},\mathbf{W}\right)\cap \pi^{-1}(\mathbf{W})=T^{c}(\sigma_{\epsilon},\mathbf{W}).}$$ In other words we have
\begin{equation}\label{Equation 6}
\mathrm{\phi_{n}\leq t}
\end{equation} on $\mathrm{T^{c}(\sigma_{\epsilon},\mathbf{W})}$ for all $\mathrm{n}$ big enough which proves the claim because of equation \ref{Equation Distribution Functions Is Small Outside Local Case}.
\end{proof}

As a direct consequence of the above theorem we deduce {\bf{Theorem 6.a}}.
\begin{theo_6.a*}\label{theo_6.a*}\textnormal{[\scshape{Locally Uniform Convergence of the Initial Measure Sequence}]}\vspace{0.1 cm}\\ Let $\mathrm{y_{0}\in \mathbf{Y}_{0}\cap \mathbf{R}_{N_{0}}}$, $\mathrm{t\in \mathbb{R}}$ and let $\mathrm{\mathbf{W}\subset \mathbf{Y}_{0}\cap \mathbf{R}_{N_{0}}}$ be a compact neighborhood of $\mathrm{y_{0}}$. Moreover, let $\mathrm{f\in \mathcal{C}^{0}\!\left(\mathbf{X}\right)}$. Then after having shrunken $\mathrm{ \mathbf{W}}$, the sequence $$\mathrm{\left(y\mapsto \int_{\pi^{-1}\left(y\right)}f\,d\bm{\nu}_{n}\left(y\right)\right)_{n}}$$ converges uniformly on $\mathrm{\mathbf{W}}$ to the reduced function $\mathrm{f_{red}}$.
\end{theo_6.a*}
\begin{proof} By the same argumentation as in the proof of {\bf{Proposition \ref{Proposition Uniform Convergence}}}, it is enough to prove the claim for all $\mathrm{f\in\mathcal{C}^{0}\!\left(\mathbf{X}\right)}$ which are $\mathrm{T}$-invariant. Hence, in the sequel let $\mathrm{f}$ be continuous, $\mathrm{T}$-invariant function on $\mathrm{\mathbf{X}}$ and let $\mathrm{\epsilon>0}$. Then by {\bf{Lemma \ref{Lemma Reduced Function}}} there exists $\mathrm{\sigma_{\epsilon}>0}$ so that 
\begin{equation}\label{Equation Integral Of F With Respect To}
\mathrm{\left|f\vert\left(\pi^{-1}\left(y\right)\cap T\left(\sigma_{\epsilon},\mathbf{W}\right)\right)-f_{red}\left(y\right)\right|\leq\frac{\epsilon}{2}}
\end{equation} for all $\mathrm{y\in \mathbf{W}}$. Furthermore, note that we have \begin{equation*}
\begin{split}
&\mathrm{\int_{\pi^{-1}\left(y\right)}\left(f-\pi^{*}f_{red}\right)\,d\bm{\nu}_{n}\left(y\right)}\\[0,2 cm]
=&\mathrm{\int_{\pi^{-1}\left(y\right)\cap T\left(\sigma_{\epsilon},\mathbf{W}\right)}\left(f-\pi^{*}f_{red}\right)\,\phi_{n}\,d\,[\pi_{y}]}\\[0,2 cm]
+&\mathrm{\int_{\pi^{-1}\left(y\right)\cap T^{c}\left(\sigma_{\epsilon},\mathbf{W}\right)}\left(f-\pi^{*}f_{red}\right)\,\phi_{n}\,d\,[\pi_{y}].}\\
\end{split}
\end{equation*} Since $\mathrm{f}$ is bounded as continuous function on a compact space, we find $\mathrm{\Gamma\coloneqq max\, \{ f-}$ $\mathrm{\pi^{*}f_{red}\}<\infty}$. Moreover, we can assume that $\mathrm{f-\pi^{*}f_{red}\not\equiv 0}$ and therefore $\mathrm{\Gamma^{-1}<\infty}$ is defined. Furthermore, using {\bf{Corollary \ref{Boundedness of Fiber Integral}}}, there exists a constant $\mathrm{C>0}$ so that $$\mathrm{\int_{\pi^{-1}\left(y\right)\cap T^{c}\left(\sigma_{\epsilon},\mathbf{W}\right)}\,d\,[\pi_{y}]\leq C}$$ for all $\mathrm{y\in \mathbf{W}\subset \mathbf{Y}_{0}\cap \mathbf{R}_{N_{0}}}$. According to the proof of {\bf{Theorem 5.a}}, we know by \ref{Equation Which Is Necessary} that, after having replaced $\mathrm{\mathbf{W}}$ by a smaller compact neighborhood, $$\mathrm{\phi_{n}\leq C^{-1}\cdot\Gamma^{-1}\cdot\frac{\epsilon}{4}\text{ on }T^{c}(\sigma_{\epsilon},\mathbf{W})\text{ for all }n\text{ big enough.}}$$ Hence, it follows that 
 \begin{equation*}
\begin{split}
&\mathrm{\left\vert\int_{\pi^{-1}\left(y\right)}\left(f-\pi^{*}f_{red}\right)\,d\bm{\nu}_{n}\left(y\right)\right\vert\leq \int_{\pi^{-1}\left(y\right)}\vert f-\pi^{*}f_{red}\vert\,d\bm{\nu}_{n}\left(y\right)}\\[0,2 cm]
\leq&\mathrm{\int_{\pi^{-1}\left(y\right)\cap T\left(\sigma_{\epsilon},\mathbf{W}\right)}\left\vert f-\pi^{*}f_{red}\right\vert\,\phi_{n}\,d\,[\pi_{y}]+\frac{\epsilon}{4}}\\
\end{split}
\end{equation*} for all $\mathrm{n}$ big enough and all $\mathrm{y\in \mathbf{W}}$. Using inequality \ref{Equation Integral Of F With Respect To}, we deduce $$\mathrm{\left|\int_{\pi^{-1}\left(y\right)}f\, d\bm{\nu}_{n}\left(y\right)-f_{red}\left(y\right)\right|\leq\frac{\epsilon}{2}\int_{\pi^{-1}\left(y\right)\cap T\left(\sigma_{\epsilon},\mathbf{W}\right)}\phi\,d\,[\pi_{y}]+\frac{\epsilon}{4}}$$ for all $\mathrm{y\in \mathbf{W}}$ and all $\mathrm{n}$ big enough. Since $\mathrm{\int_{\pi^{-1}\left(y\right)\cap T\left(\sigma_{\epsilon},\mathbf{W}\right)}\phi_{n}\leq 1}$ for all $\mathrm{n}$ we find $$\mathrm{\left|\int_{\pi^{-1}\left(y\right)}f\, d\bm{\nu}_{n}\left(y\right)-f_{red}\left(y\right)\right|\leq\frac{3}{4}\,\epsilon}$$ for all $\mathrm{y\in\mathbf{W}}$ and all $\mathrm{n}$ big enough and therefore the theorem is proven.
\end{proof}

\subsection{Global Uniform Convergence on $\mathrm{\mathbf{R}_{N_{0}}\cap\mathbf{Y}_{0}}$}

The strategy of the proof of the globally uniform convergence theorems on $\mathrm{\mathbf{R}_{N_{0}}\cap\mathbf{Y}_{0}}$ runs along the following lines: If $\mathrm{y_{0}\in cl\,\left(\mathbf{Y}_{0}\cap \mathbf{R}_{N_{0}}\right)}$\footnote{Recall that we have assumed that $\mathrm{\mathbf{R}_{n_{0}}\neq \varnothing}$; in particular $\mathrm{\mathbf{Y}_{0}\cap \mathbf{R}_{N_{0}}}$ is non-empty and euclidean open.}, then we find an open neighborhood $\mathrm{\mathbf{W}\subset \mathbf{Y}}$ of $\mathrm{y_{0}}$ so that we have uniform convergence over $\mathrm{\mathbf{W}\cap \mathbf{Y}_{0}\cap \mathbf{R}_{N_{0}}}$. Since $\mathrm{cl\left(\mathbf{Y}_{0}\cap \mathbf{R}_{N_{0}}\right)}$ is compact, we can cover $\mathrm{\mathbf{Y}_{0}\cap \mathbf{R}_{N_{0}}}$ by finitely many of such compact neighborhoods and the claim follows.

We begin this section by proving an extended version {\bf{Proposition \ref{Proposition Tales Are Uniformly Bounded}}}.

\begin{prop}\label{Proposition Tales Are Uniformly Bounded On Y} Let $\mathrm{\mathbf{W}\subset\mathbf{Y}}$ be a compact neighborhood of $\mathrm{y_{0}\in cl\,\left(\mathbf{Y}_{0}\cap \mathbf{R}_{N_{0}}\right)}$ (recall: we have assumed that $\mathrm{\mathbf{W}\subset \mathbf{Y}^{i}}$) and $\mathrm{\epsilon>0}$, then after having shrunken $\mathrm{\mathbf{W}}$, there exists a constant $\mathrm{C>0}$ and $\mathrm{m_{0}\in\mathbb{N}}$ so that $$\mathrm{\underset{x\in\pi^{-1}\left(y\right)}{max}\,\,\,\frac{\left|s_{m_{0}}^{i}\cdot\triangle_{y,n}^{i}\right|^{2}}{\int_{\pi^{-1}\left(y\right)\cap T\left(\epsilon,\mathbf{W}\right)}\left|s_{m_{0}}^{i}\cdot\triangle_{y,n}^{i}\right|^{2}\,d\,[\pi_{y}]}<C}$$ for all $\mathrm{n}$ big enough and all $\mathrm{y\in \mathbf{W}\cap \mathbf{Y}_{0}\cap \mathbf{R}_{N_{0}}}$.
\end{prop}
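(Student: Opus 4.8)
The plan is to reduce \textbf{Proposition \ref{Proposition Tales Are Uniformly Bounded On Y}} to \textbf{Proposition \ref{Proposition Tales Are Uniformly Bounded}} by showing that the only use of the hypothesis $\mathrm{y_{0}\in \mathbf{Y}_{0}\cap \mathbf{R}_{N_{0}}}$ in the latter was to guarantee (a) the existence, on a suitable compact neighborhood, of the local decomposition $\mathrm{s_{n}=s^{i}_{n}\cdot\triangle^{i}_{y,n}}$ with $\mathrm{\triangle^{i}_{y,n}\vert\pi^{-1}(y)\not\equiv 0}$ for all $\mathrm{n\geq N_{0}}$, and (b) the biholomorphism $\mathrm{\pi\vert\mathbf{X}_{0}\!:\!\mathbf{X}_{0}\to\mathbf{Y}_{0}}$ being a $\mathrm{k}$-fibering so that the local product-neighborhood structure of \textbf{Section \ref{A Local Proposition Concerning Fiber Integration}} (the $\mathrm{d}$-sheeted coverings $\mathrm{p\vert\mathbf{Z}_{y}\to\mathbf{Q}_{1}}$) is available. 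First I would observe that since $\mathrm{y_{0}\in cl\,(\mathbf{Y}_{0}\cap\mathbf{R}_{N_{0}})}$ and since $\mathrm{\mathbf{Y}_{0}\cap\mathbf{R}_{N_{0}}}$ is open (and assumed non-empty by the \textbf{General Agreement}), every open neighborhood of $\mathrm{y_{0}}$ meets $\mathrm{\mathbf{Y}_{0}\cap\mathbf{R}_{N_{0}}}$ in a non-empty open set; the point $\mathrm{y_{0}}$ itself, however, may lie in the complement $\mathrm{\mathbf{Y}\setminus(\mathbf{Y}_{0}\cap\mathbf{R}_{N_{0}})}$, so the estimate is only claimed over $\mathrm{\mathbf{W}\cap\mathbf{Y}_{0}\cap\mathbf{R}_{N_{0}}}$, not over all of $\mathrm{\mathbf{W}}$.

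The key steps, in order, are as follows. Step 1: Fix $\mathrm{y_{0}\in cl\,(\mathbf{Y}_{0}\cap\mathbf{R}_{N_{0}})}$ and choose, by \textbf{Theorem 2} (the existence of tame sequences together with the localization setup), a tame sequence $\mathrm{(s^{i}_{n})_{n}}$ and a compact neighborhood $\mathrm{\mathbf{W}\subset\mathbf{Y}^{i}=\pi(\mathbf{X}^{i})}$ of $\mathrm{y_{0}}$ with $\mathrm{T(\epsilon,\mathbf{W})}$ compact; note $\mathrm{\mathbf{W}\cap\mathbf{Y}_{0}\cap\mathbf{R}_{N_{0}}}$ is a non-empty open subset of $\mathrm{\mathbf{W}}$. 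Step 2: Reduce to a contradiction exactly as in the proof of \textbf{Proposition \ref{Proposition Tales Are Uniformly Bounded}}: by \textbf{Proposition \ref{Theorem Estimates Tales of Triangle}} pick $\mathrm{m_{0}}$ so that $\mathrm{\vert s^{i}_{m_{0}}\cdot\triangle^{i}_{y,n}\vert^{2}}$ attains its fiberwise maximum inside $\mathrm{T(\tfrac{\epsilon}{2},\mathbf{W})}$ for all large $\mathrm{n}$ and all $\mathrm{y\in\mathbf{W}\cap\mathbf{R}_{N_{0}}}$, and by \textbf{Lemma \ref{Lemma The Maximum Of Triangle Is Fiberwisely Not Zero}} this maximum is strictly positive; so it suffices to bound $\mathrm{\int_{\pi^{-1}(y)\cap T(\epsilon,\mathbf{W})}\vert s^{i}_{m_{0}}\cdot\triangle^{i}_{y,n}\vert^{2}\,d[\pi_{y}]\big/\max_{\pi^{-1}(y)}\vert s^{i}_{m_{0}}\cdot\triangle^{i}_{y,n}\vert^{2}}$ away from zero. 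Step 3: Suppose not; extract a sequence $\mathrm{y_{n}\in\mathbf{W}\cap\mathbf{Y}_{0}\cap\mathbf{R}_{N_{0}}}$ with $\mathrm{y_{n}\to y_{0}}$ along which this quotient tends to $\mathrm{0}$, normalize $\mathrm{f_{n}:=s^{i}_{m_{0}}\cdot\triangle^{i}_{y_{n},n}/\max}$, and lift to $\mathrm{x_{n}\in T(\tfrac{\epsilon}{2},\mathbf{W})}$ with $\mathrm{\vert f_{n}\vert^{2}(x_{n})=1}$; by compactness $\mathrm{x_{n}\to x_{0}\in T(\epsilon,\mathbf{W})}$. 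The crucial point — and the only place where the argument genuinely differs from \textbf{Proposition \ref{Proposition Tales Are Uniformly Bounded}} — is that here $\mathrm{x_{0}}$ need not lie in $\mathrm{\pi^{-1}(\mathbf{Y}_{0})}$. Step 4: Handle this by noting $\mathrm{T(\epsilon,\mathbf{W})\subset\mathbf{X}^{i}\subset\mathbf{X}^{ss}_{\xi}}$ and that the whole tube is relatively compact and $\mathrm{\mathbb{T}}$-saturated inside $\mathrm{\mathbf{X}^{ss}_{\xi}}$, while the points $\mathrm{x_{n}}$ do lie over $\mathrm{\mathbf{Y}_{0}}$; so the local product-neighborhood $\mathrm{\Phi\!:\!\mathbf{U}\hookrightarrow\mathbb{C}^{\upkappa}\times\mathbb{C}^{k}}$ of $\mathrm{x_{0}}$ with the $\mathrm{d}$-sheeted coverings $\mathrm{p\vert\mathbf{Z}_{y}\to\mathbf{Q}_{1}}$ must be constructed using the $\mathrm{k}$-fibering $\mathrm{\widehat{\pi}\!:\!\widehat{\mathbf{\,X\,}}\to\mathbf{Y}}$ of \textbf{Section \ref{Definition of Y_{0}}} pulled back through the biholomorphism $\mathrm{\zeta\vert\zeta^{-1}(\mathbf{X}^{ss}_{\xi})}$, rather than $\mathrm{\pi\vert\mathbf{X}_{0}}$ directly. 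Since $\mathrm{\widehat{\pi}}$ restricted to $\mathrm{\widehat{\pi}^{-1}(\mathbf{Y}_{0})}$ is a $\mathrm{k}$-fibering and $\mathrm{x_{0}}$ (viewed in $\mathrm{\widehat{\mathbf{\,X\,}}}$) has $\mathrm{\widehat{\pi}(x_{0})}$ possibly outside $\mathrm{\mathbf{Y}_{0}}$, one instead shrinks $\mathrm{\mathbf{W}}$ and uses that $\mathrm{\mathbf{Y}_{0}}$ is dense so the generic fiber over $\mathrm{\mathbf{B}\cap\mathbf{Y}_{0}}$ is purely $\mathrm{k}$-dimensional; the local covering structure of \textbf{Section \ref{A Local Proposition Concerning Fiber Integration}} was precisely established for an arbitrary $\mathrm{k}$-fibering $\mathrm{F\!:\!\mathbf{X}\to\mathbf{Y}}$ with normal base, so it applies to $\mathrm{\widehat{\pi}}$ on a neighborhood of $\mathrm{x_{0}}$. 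Step 5: Once the coverings $\mathrm{\mathbf{Z}_{y_{n}}}$ and the holomorphic functions $\mathrm{g_{n}:=\triangle^{i}_{y_{n},n}\in\mathcal{O}(\mathbf{Z}_{y_{n}})}$ are in place with $\mathrm{\int_{\mathbf{Z}_{n}}\vert g_{n}\vert^{2}\,d[\mathbf{Z}_{n}]\to 0}$ and uniform boundedness, apply \textbf{Proposition \ref{Proposition Local Version Of Proposition}} verbatim to obtain $\mathrm{\vert g_{n}\vert^{2}\leq\tfrac{1}{2}A^{-1}}$ on $\mathrm{p^{-1}(\mathbf{K})\cap\mathbf{Z}_{n}}$ for $\mathrm{n}$ large, contradicting $\mathrm{\vert g_{n}\vert^{2}(x_{n})\geq A^{-1}}$ since $\mathrm{x_{n}\to x_{0}=0\in p^{-1}(\mathbf{K})}$. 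This closes the argument.

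The main obstacle is Step 4: ensuring the local $\mathrm{d}$-sheeted covering structure is genuinely available at the limit point $\mathrm{x_{0}}$ even though $\mathrm{x_{0}}$ may sit over $\mathrm{\mathbf{Y}\setminus\mathbf{Y}_{0}}$ — i.e., verifying that after shrinking $\mathrm{\mathbf{W}}$ the relevant fibers $\mathrm{\pi^{-1}(y_{n})\cap\mathbf{U}}$ for $\mathrm{y_{n}\in\mathbf{Y}_{0}}$ still organize into a flat family of $\mathrm{d}$-sheeted coverings over a common $\mathrm{\mathbf{Q}_{1}}$. One resolves this by working in $\mathrm{\widehat{\mathbf{\,X\,}}}$, where $\mathrm{\widehat{\pi}}$ is a bona fide holomorphic map of purely dimensional spaces and the $\mathrm{k}$-fibering property holds over $\mathrm{\mathbf{Y}_{0}}$; the local normal form of \cite{Kin}, p.\ \textbf{\oldstylenums{205}}, quoted in \textbf{Section \ref{A Local Proposition Concerning Fiber Integration}}, produces the product chart $\mathrm{\mathbf{Q}_{0}\times\mathbf{Q}_{1}}$ and the covering $\mathrm{p\vert\mathbf{Z}_{y}}$ over an open set $\mathrm{\mathbf{B}\ni\widehat{\pi}(x_{0})}$ once $\mathrm{\widehat{\pi}}$ restricted near $\mathrm{x_{0}}$ is still a $\mathrm{k}$-fibering; this may require first invoking a further generic-flatness reduction (or simply restricting to $\mathrm{\mathbf{B}\cap\mathbf{Y}_{0}}$, which is all we need since $\mathrm{y_{n}\in\mathbf{Y}_{0}}$). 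Everything else — \textbf{Proposition \ref{Theorem Estimates Tales of Triangle}}, \textbf{Lemma \ref{Lemma The Maximum Of Triangle Is Fiberwisely Not Zero}}, \textbf{Proposition \ref{Proposition Local Version Of Proposition}} — transfers without change, so the bulk of the write-up is a careful re-run of the proof of \textbf{Proposition \ref{Proposition Tales Are Uniformly Bounded}} with the local chart constructed on $\mathrm{\widehat{\mathbf{\,X\,}}}$.
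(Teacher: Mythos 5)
Your Steps 1, 2, 3, and 5 mirror the paper's argument exactly (contradiction, normalization $\mathrm{f_{n}}$ with $\mathrm{\vert f_{n}\vert^{2}(x_{n})=1}$, extraction of a convergent $\mathrm{x_{n}\rightarrow x_{0}}$ in the compact tube, and the final application of \textbf{Proposition \ref{Proposition Local Version Of Proposition}}), and you correctly isolate the new difficulty: the limit point $\mathrm{x_{0}}$ may lie over $\mathrm{\mathbf{Y}\setminus\mathbf{Y}_{0}}$, where the fiber of $\mathrm{\widehat{\pi}}$ can have dimension strictly larger than $\mathrm{k}$.

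Your Step 4, however, does not actually resolve the obstruction it names. The local product chart of \cite{Kin}, p.\ \textbf{\oldstylenums{205}} — a product neighborhood $\mathrm{\mathbf{Q}_{0}\times\mathbf{Q}_{1}\subset\mathbb{C}^{\upkappa}\times\mathbb{C}^{k}}$ in which each nearby fiber projects as a $\mathrm{d}$-sheeted covering onto the $\mathrm{k}$-dimensional factor $\mathrm{\mathbf{Q}_{1}}$ — is established at a point $\mathrm{x_{0}}$ and requires that the map be a $\mathrm{k}$-fibering in a full neighborhood of $\mathrm{x_{0}}$, i.e.\ including the fiber through $\mathrm{x_{0}}$ itself. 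If $\mathrm{\widehat{\pi}(x_{0})\notin\mathbf{Y}_{0}}$, that fiber may have components of dimension $\mathrm{>k}$, and then no such chart exists: you cannot restrict ``to $\mathrm{\mathbf{B}\cap\mathbf{Y}_{0}}$'' after the fact because the chart itself must first be built around $\mathrm{x_{0}}$. This is precisely the reason the paper constructs the Barlet flattening $\mathrm{\Pi\!:\!\widetilde{\bf{\,X\,}}\rightarrow\widetilde{\bf{\,Y\,}}}$ in Section V: $\mathrm{\Pi}$ is a genuine $\mathrm{k}$-fibering over \emph{all} of $\mathrm{\widetilde{\bf{\,Y\,}}}$, so the King normal form applies unconditionally at the lifted limit $\mathrm{\widetilde{x}_{0}}$. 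Concretely, the paper's proof lifts the sequence $\mathrm{(x_{n})_{n}}$ through $\mathrm{P=p_{\mathbf{X}}\vert cl\,\mathbf{\Gamma}_{\pi}\circ\zeta\circ\Sigma}$ to $\mathrm{(\widetilde{x}_{n})_{n}\subset\widetilde{\,T\,}(\frac{\epsilon}{2},\mathbf{W})}$, passes to a convergent subsequence $\mathrm{\widetilde{x}_{n}\rightarrow\widetilde{x}_{0}}$, sets $\mathrm{\widetilde{y}_{n}=\Pi(\widetilde{x}_{n})}$, pulls back $\mathrm{f_{n}}$ to $\mathrm{\widetilde{f}_{n}}$ on $\mathrm{\widetilde{\,T\,}(\epsilon,\mathbf{W})\cap\Pi^{-1}(\widetilde{y}_{n})}$, and only then invokes the local covering structure of Section \ref{A Local Proposition Concerning Fiber Integration} for $\mathrm{\Pi}$ near $\mathrm{\widetilde{x}_{0}}$. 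Your parenthetical remark about ``a further generic-flatness reduction'' gestures toward this, but the flattening is not an optional shortcut you can avoid by restricting to a dense subset — it is the missing idea that makes the chart available at the limit point.
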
 
\begin{proof} Let us assume that this is not the case, then as in the proof of {\bf{Proposition \ref{Proposition Tales Are Uniformly Bounded}}}, we can find a sequence $\mathrm{\left(y_{n}\right)_{n}}$ in $\mathrm{y_{n}\in \mathbf{Y}_{0}\cap {\mathbf{R}}_{N_{0}}}$ converging to $\mathrm{y_{0}\in cl\left(\mathbf{Y}_{0}\cap {\bf{R}}_{N_{0}}\right)}$ so that \begin{equation}\label{Equation Integral Converges To Zero Global Case}
\mathrm{\int_{\pi^{-1}\left(y_{n}\right)\cap T\left(\epsilon,\mathbf{W}\right)}\frac{\left|s_{m_{0}}^{i}\cdot\triangle_{y_{n},n}^{i}\right|^{2}}{\underset{x\in\pi^{-1}\left(y_{n}\right)\cap T\left(\epsilon,\mathbf{W}\right)}{max}\,\left|s_{m_{0}}^{i}\cdot\triangle_{y_{n},n}^{i}\right|^{2}}\,d\,[\pi_{y_{n}}]\rightarrow 0.}
\end{equation} Using the same notation as in the proof of {\bf{Proposition \ref{Proposition Tales Are Uniformly Bounded}}}, we can write \begin{equation}\label{Equation Integral Converges To Zero Global Case}
\mathrm{\int_{\pi^{-1}\left(y_{n}\right)\cap T\left(\epsilon,\mathbf{W}\right)}\vert f_{n}\vert^{2}\,d\,[\pi_{y_{n}}]\rightarrow 0.}
\end{equation} Furthermore, as as in the proof of {\bf{Proposition \ref{Proposition Tales Are Uniformly Bounded}}}, we can assume that $$\mathrm{\underset{x\in\pi^{-1}\left(y_{n}\right)}{max}\,\left|f_{n}\right|^{2}=\underset{x\in\pi^{-1}\left(y_{n}\right)\cap T\left(\frac{\epsilon}{2},\mathbf{W}\right)}{max}\,\left|f_{n}\right|^{2}=1}$$ for all $\mathrm{n}$ big enough where $\mathrm{y_{n}\rightarrow y_{0}\in Int\,\mathbf{W}}$. Moreover, as before we find a lift $\mathrm{\left(x_{n}\right)_{n}}$ of the sequence $\mathrm{\left(y_{n}\right)_{n}}$, i.e.\ a sequence $\mathrm{\left(x_{n}\right)_{n}}$ in $\mathrm{T\left(\frac{\epsilon}{2},\mathbf{W}\right)\cap \pi^{-1}\left(\mathbf{Y}_{0}\cap \mathbf{R}_{N_{0}}\right)}$ so that $\mathrm{\pi\left(x_{n}\right)=y_{n}}$ where \begin{equation}\label{Equation Norm Is Equal To One}
\mathrm{\vert f_{n}\vert^{2}\left(x_{n}\right)=1=\underset{x\in\pi^{-1}\left(y_{n}\right)}{max}\,\left|f_{n}\right|^{2}=\underset{x\in\pi^{-1}\left(y_{n}\right)\cap T\left(\epsilon,\mathbf{W}\right)}{max}\,\left|f_{n}\right|^{2}}.
\end{equation} 

Recall that we have the following commutative diagram: 

$$\begin{xy}
\xymatrix{
\mathrm{\mathbf{X}^{ss}_{\xi}\subset \mathbf{X}}&&\mathrm{\widehat{\mathbf{\,X\,}}}\ar[d]_{\mathrm{\widehat{\pi}}}\ar[ll]^{\mathrm{p_{\mathbf{X}}\vert cl{\mathbf{\Gamma}}_{\pi}\circ \zeta}} &&& \mathrm{\widetilde{\bf{\,X\,}}}\ar[d]^{\mathrm{\Pi}}\ar[lll]_{\mathrm{\Sigma}}\\
&&\mathrm{\mathbf{Y}}&&& \mathrm{\widetilde{\bf{\,Y\,}}}\ar[lll]_{\mathrm{\sigma}}\\
}
\end{xy}
$$ Set $\mathrm{P\coloneqq p_{\mathbf{X}}\vert cl {\mathbf{\Gamma}}_{\pi}\circ\zeta\circ\Sigma}$ and define $\mathrm{\widetilde{\mathbf{L}}\coloneqq P^{*}\mathbf{L}}$. Since $\mathrm{\Sigma}$ is surjective and the above diagram commutes, we can find a lift $\mathrm{\left(\widetilde{x}_{n}\right)_{n}}$ in $\mathrm{\widetilde{\bf{\,X\,}}}$ of $\mathrm{\left(x_{n}\right)_{n}}$ under $\mathrm{P}$, i.e.\ we have $\mathrm{P\left(\widetilde{x}_{n}\right)=x_{n}}$. In the sequel, let $\mathrm{\left(\widetilde{y}_{n}\right)_{n}}$ be the sequence in $\mathrm{\widetilde{\bf{\,Y\,}}}$ given by $\mathrm{\widetilde{y}_{n}=\Pi\left(\widetilde{x}_{n}\right)}$ and note that $\mathrm{\sigma\left(\widetilde{y}_{n}\right)=y_{n}}$. Moreover, we have $\mathrm{\widetilde{x}_{n}\in \widetilde{\,T\,}\big(\frac{\epsilon}{2},\bf{W}\big) \subset \widetilde{\,T\,}\big(\epsilon,\bf{W}\big)}$ (recall:\ $\mathrm{\widetilde{\bf{W}}\coloneqq \sigma^{-1}\left(\mathbf{W}\right)}$) for all $\mathrm{n}$. Arguing as before, since $\mathrm{\widetilde{x}_{n}\in \widetilde{\,T\,}\big(\frac{\epsilon}{2},\bf{W}\big)}$ for all $\mathrm{n}$, we can assume that (after having chosen a subsequence) $$\mathrm{\widetilde{x}_{n}\rightarrow\widetilde{x}_{0}\in Int\,\widetilde{\,T\,}\big(\epsilon,\bf{W}\big).}$$ Hence, there exists an open neighborhood $\mathrm{\mathbf{U} \subset \widetilde{\,T\,}\big(\epsilon,\bf{W}\big)}$ of $\mathrm{\widetilde{x}_{0}}$ so that $\mathrm{\widetilde{x}_{n}\in \mathbf{U}}$ for all $\mathrm{n}$ big enough. In the sequel, let $\mathrm{\widetilde{y}_{0}\coloneqq \Pi\left(\widetilde{x}_{0}\right)\in \widetilde{\bf{\,Y\,}}}$. 

We will now define $\mathrm{\widetilde{f}_{n}}$ on $\mathrm{\widetilde{\,T\,}\big(\epsilon,\bf{W}\big)\cap}$ $\mathrm{\Pi^{-1}\left(\widetilde{y}_{n}\right)}$ by the pull back of the restriction of $\mathrm{f_{n}}$ to $\mathrm{T\left(\epsilon,\mathbf{W}\right)\cap \pi^{-1}\left(y_{n}\right)}$. Note that we have $\mathrm{\vert \widetilde{f}_{n}?\vert^{2}\left(\widetilde{x}_{n}\right)=1}$ for all $\mathrm{n}$ and more precisely
\begin{equation}\label{Equation 12}
\mathrm{\underset{x\in\widetilde{\,T\,}(\epsilon,{\bf{W}})\cap\Pi^{-1}\left(\widetilde{y}_{n}\right)}{max}\vert \widetilde{f}_{n}\vert^{2}=\vert \widetilde{f}_{n}\vert^{2}\left(x_{n}\right)=1}
\end{equation} which is a direct consequence of \ref{Equation Norm Is Equal To One}.

Since $\mathrm{\Pi\!:\!\widetilde{\bf{\,X\,}}\rightarrow \widetilde{\bf{\,Y\,}}}$ is a $\mathrm{k}$-fibering, we can proceed as in the proof of {\bf{Proposition \ref{Proposition Tales Are Uniformly Bounded}}}: After having shrunken $\mathrm{\mathbf{U}}$ we can assume that the open neighborhood $\mathrm{\mathbf{U}}$ is isomorphic to a closed analytic subset in a relatively compact product neighborhood $\mathrm{\mathbf{Q}=\mathbf{Q}_{0}\times \mathbf{Q}_{1}\subset \mathbb{C}^{\upkappa}\times}$ $\mathrm{\mathbb{C}^{k}}$, i.e.\ there exists an isomorphism $\mathrm{\Phi\!:\!\mathbf{U}\rightarrow \mathbb{C}^{\upkappa}\times \mathbb{C}^{k}}$ (let $\mathrm{\Phi\left(\widetilde{x}_{0}\right)=0}$) so that for each $\mathrm{\widetilde{y}}$ in an open neighborhood $\mathrm{\mathbf{B}\subset \widetilde{\bf{\,Y\,}}}$ of $\mathrm{\Pi\left(\widetilde{x}_{0}\right)=\widetilde{y}_{0}}$ each $\mathrm{\Phi\left(\Pi^{-1}(\widetilde{y}\right)}$ $\mathrm{\cap \mathbf{U})=\mathbf{Z}_{\widetilde{y}}}$ is a closed analytic subset which yields a $\mathrm{d}$-sheeted covering onto $\mathrm{\mathbf{Q}_{1}}$ given by the restriction $\mathrm{p\vert \mathbf{Z}_{\widetilde{y}_{n}}\rightarrow \mathbf{Q}_{1}}$. We can now finish the proof exactly like the proof of {\bf{Proposition \ref{Proposition Tales Are Uniformly Bounded}}}. First of all after having shrunken $\mathrm{\mathbf{U}}$ we can assume that $\mathrm{\widetilde{f}_{n}}$ is represented by $\mathrm{\widetilde{f}_{n}=\widetilde{\alpha} \cdot \widetilde{g}_{n}}$ where $\mathrm{\widetilde{\alpha}\in \mathcal{C}^{\infty}\left(\mathbf{U}\right)}$ and $\mathrm{\widetilde{g}_{n}\in \mathcal{O}\left(\mathbf{Z}_{y_{n}}\right)}$. Furthermore, we also have  
\begin{equation}\label{Equation Bla bla bla bli blub asf}
\mathrm{a^{-1}\geq \underset{x\in \mathbf{Z}_{\widetilde{y}_{n}}}{max}\vert \widetilde{g}_{n}\vert \mathbf{Z}_{\widetilde{y}_{n}}\vert^{2}\text{ and }\vert \widetilde{g}_{n}\vert^{2}\left(\widetilde{x}_{n}\right)\geq A^{-1}>0}
\end{equation} for all $\mathrm{n}$ big enough, where $\mathrm{A>0}$, resp.\ $\mathrm{a>0}$ denotes the maximum, resp.\ the minimum of $\mathrm{\widetilde{\alpha}}$ on $\mathrm{\mathbf{U}}$. We can now apply {\bf{Proposition \ref{Proposition Local Version Of Proposition}}} to the sequence $\mathrm{\widetilde{g}_{n}}$: Each $\mathrm{\widetilde{g}_{n}}$ is a holomorphic function on $\mathrm{\widetilde{\mathbf{Z}}_{n}\coloneqq\mathbf{Z}_{\widetilde{y}_{n}}}$. Furthermore, this sequence is uniformly bounded by $\mathrm{a^{-1}}$ and by assumption \ref{Equation Integral Converges To Zero Global Case}, we have $$\mathrm{\int_{\widetilde{\mathbf{Z}}_{n}}\vert \widetilde{g}_{n}\vert^{2}\,d\,[\widetilde{\mathbf{Z}}_{n}]\rightarrow 0.}$$ Using {\bf{Proposition \ref{Proposition Local Version Of Proposition}}}, it follows that for a compact subset $\mathbf{K}\subset \mathbf{Q}_{1}$ and for $\mathrm{\epsilon=\frac{1}{2}A^{-1}}$ we have 
\begin{equation}\label{Equation wlisac?kns}
\mathrm{\vert \widetilde{g}_{n}\vert^{2}\leq \frac{1}{2}A^{-1}\text{ on }p^{-1}\left(\mathbf{K}\right)\cap \widetilde{\mathbf{Z}}_{n}\text{ for all }n\geq N_{\frac{1}{2}A^{-1}}\left(\mathbf{K}\right).}
\end{equation} However, as $\mathrm{\widetilde{x}_{n}\rightarrow x_{0}=0\in p^{-1}\left(\mathbf{K}\right)}$ we know that $\mathrm{\widetilde{x}_{n}\in p^{-1}\left(\mathbf{K}\right)\cap \widetilde{\mathbf{Z}}_{n}}$ for all $\mathrm{n}$ big enough. By the second inequality of \ref{Equation Bla bla bla bli blub asf}, we know that $\mathrm{\vert g_{n}\vert^{2}}$ $\mathrm{\left(x_{n} \right)\geq A^{-1}}$ which yields a contradiction to \ref{Equation wlisac?kns}. Consequently, the assumption \ref{Equation Integral Converges To Zero Global Case} is false and the claim of the proposition holds.
\end{proof} 

As a consequence of {\bf{Proposition \ref{Proposition Tales Are Uniformly Bounded On Y}}} we deduce a generalization of {\bf{Theorem 5.a}}.
\begin{theo_5.b*}\label{theo_5.b*}\textnormal{[\scshape{Uniform Convergence of the Initial Distribution Sequence}]}\vspace{0.1 cm}\\
For fixed $\mathrm{t\in \mathbb{R}}$ the sequence $\mathrm{\left(D_{n}\left(\cdot,t\right)\right)_{n}}$ converges uniformly on $\mathrm{\mathbf{Y}_{0}\cap\mathbf{R}_{N_{0}}}$ to the zero function.
\end{theo_5.b*}
\begin{proof} Note that by the compactness of $\mathrm{cl\left(\mathbf{Y}_{0}\cap\mathbf{R}_{N_{0}}\right)}$ the claim follows as soon as we have shown that for each $\mathrm{y_{0}\in cl\left(\mathbf{Y}_{0}\cap\mathbf{R}_{N_{0}}\right)}$, there exists a compact neighborhood $\mathrm{\mathbf{W}\subset \mathbf{Y}}$ of $\mathrm{y_{0}}$ (after having shrunken $\mathrm{\mathbf{W}}$ we can assume that $\mathrm{\mathbf{W}\subset \mathbf{Y}^{i}}$) so that $\mathrm{D_{n}\left(\cdot,t\right)}$ converges uniformly on $\mathrm{\mathbf{W}\cap \mathbf{Y}_{0}\cap\mathbf{R}_{N_{0}}}$ to the zero function. The proof of the latter claim is similar to the proof of {\bf{Theorem 5.a}}: First of all fix $\mathrm{y_{0}\in cl\left(\mathbf{Y}_{0}\cap\mathbf{R}_{N_{0}}\right)}$ and $\mathrm{\epsilon>0}$. As in the proof of {\bf{Theorem 5.a}}, we apply {\bf{Proposition \ref {Continuity of Fiber Integral}}} in order to find $\mathrm{\sigma_{\epsilon}>0}$ so that 
\begin{equation}\label{Equation 6}
\mathrm{vol\left(\pi^{-1}\left(y\right)\cap T\left(\sigma_{\epsilon},\mathbf{W}\right)\right)\leq \epsilon}
\end{equation}
for all $\mathrm{y\in \mathbf{W}\cap \mathbf{Y}_{0}}$. Hence, it is enough to show that after having shrunken $\mathrm{\mathbf{W}}$ we have
\begin{equation}\label{Equation Distribution Functions Is Small Outside Global Case} 
\mathrm{\phi_{n}=\frac{\vert s_{n}\vert^{2}}{\Vert s_{n}\Vert^{2}}\leq t}
\end{equation} on $\mathrm{T^{c}\big(\sigma_{\epsilon},\mathbf{W}\big)\cap \pi^{-1}\left(\mathbf{Y}_{0}\cap \mathbf{R}_{N_{0}}\right)}$ for all $\mathrm{n}$ big enough. Note that we can now perform the same steps as in the proof {\bf{Theorem 5.a}} over the set $\mathrm{\mathbf{W}\cap \mathbf{Y}_{0}\cap \mathbf{R}_{N_{0}}}$.

First of all, we write $$\mathrm{\phi_{n}=\frac{\vert s_{n}^{i}\cdot s_{m}^{i,-1}\vert^{2}\cdot \vert s_{m}^{i}\cdot \triangle^{i}_{y,n}\vert^{2}}{\int_{\pi^{-1}\left(y\right)}\vert s_{n}^{i}\cdot s_{m}^{i,-1}\vert^{2}\cdot \vert s_{m}^{i}\cdot \triangle^{i}_{y,n}\vert^{2}\,d\,[\pi_{y}]}}$$ where $\mathrm{m\in \mathbb{N}}$ is arbitrary and by  {\bf{Lemma \ref{Lemma The Maximum Of Triangle Is Fiberwisely Not Zero}}}, we know that
\begin{equation}\label{Equation Intergal In The Denominator Will Not Be Zero Global Case}
\mathrm{\underset{x\in\pi^{-1}\left(y\right)\cap T\left({\frac{\sigma_{\epsilon}}{2}},\mathbf{W}\right)}{max}\,\left|s_{m}^{i}\cdot\triangle_{y,n}^{i}\right|^{2}>0}
\end{equation} for all $\mathrm{n}$ big enough and all $\mathrm{y\in \mathbf{W}\cap \mathbf{R}_{N_{0}}}$. Therefore we deduce the estimate \begin{equation}\label{Equation 2 Global Case}
\mathrm{\phi_{n}\leq\frac{\vert s_{n}^{i}\cdot s_{m}^{i,-1}\vert^{2}\cdot \vert s_{m}^{i}\cdot \triangle^{i}_{y,n}\vert^{2}}{\int_{\pi^{-1}\left(y\right)\cap T\left(\frac{\sigma_{\epsilon}}{2},\mathbf{W}\right)}\vert s_{n}^{i}\cdot s_{m}^{i,-1}\vert^{2}\cdot \vert s_{m}^{i}\cdot \triangle^{i}_{y,n}\vert^{2}\,d\,[\pi_{y}]}}\end{equation}
for all $\mathrm{n,m\in \mathbb{N}}$ and all $\mathrm{x\in\pi^{-1}\left( \mathbf{W\cap \mathbf{R}_{N_{0}}}\cap \mathbf{Y}_{0}\right)}$. Note that the right hand side is well defined because the denominator of the term on the right hand side of the above inequality is not zero according to equation \ref{Equation Intergal In The Denominator Will Not Be Zero Global Case}.   

As before we know that $\mathrm{\varrho^{m}_{n}\coloneqq -\frac{1}{n}\mathbf{log}\,\vert s^{i}_{n}\cdot s^{i,-1}_{m} \vert^{2}=\varrho^{i}_{n}-\frac{1}{n}\mathbf{log}\,\vert s^{i,-1}_{m}\vert^{2}}$ is a strictly plurisubharmonic function on $\mathrm{\pi^{-1}\left(\mathbf{W}\right)}$ for $\mathrm{m}$ fixed and for all $\mathrm{n\geq m }$ which converges uniformly on compact subsets to $\mathrm{\varrho^{i}}$. In particular we deduce that $\mathrm{\varrho^{m}_{n}\left(x\right)\leq \frac{5}{8}\sigma_{\epsilon}}$ for all $\mathrm{x\in T\left(\frac{\sigma_{\epsilon}}{2},\mathbf{W}\right)}$ and all $\mathrm{n\geq N_{m}\in \mathbb{N}}$ big enough where $\mathrm{m}$ is fixed. Combined this with inequality \ref{Equation 2 Global Case}, it follows that \begin{equation}\label{Equation 3 Global Case}
\mathrm{\phi_{n}\leq e^{n\,\frac{5}{8}\,\sigma_{\epsilon}}\,\frac{\vert s_{n}^{i}\cdot s_{m}^{i,-1}\vert^{2}\cdot \vert s_{m}^{i}\cdot \triangle^{i}_{y,n}\vert^{2}}{\int_{\pi^{-1}\left(y\right)\cap T\left(\frac{\sigma_{\epsilon}}{2},\mathbf{W}\right)}\vert s^{i}_{m}\cdot\triangle_{y,n}\vert^{2}\,d\,[\pi_{y}]}}\end{equation} on $\mathrm{\pi^{-1}\left(\mathbf{R}_{N_{0}}\cap \mathbf{Y}_{0}\cap \mathbf{W}\right)}$ for all $\mathrm{n\geq N_{m}}$. 

On the other hand, using {\bf{Corollary \ref{Remark Convergence Theorem For The Shifted Sequence}}}, we have $\mathrm{\varrho^{m}_{n}\left(x\right)\geq \frac{7}{8}\, \sigma_{\epsilon}}$ or equivalently 
\begin{equation}\label{Equation 4 Global Case}
\mathrm{\vert s^{i}_{n}\cdot s^{i,-1}_{m}\vert^{2}\left(x\right)\leq e^{-\frac{7}{8}\,n\,\sigma_{\epsilon}}} 
\end{equation} for all $\mathrm{x\in T^{c}\left(\sigma_{\epsilon},\mathbf{W}\right)}$ and all $\mathrm{n\geq N_{m}^{\prime}}$. Combining \ref{Equation 4 Global Case} and \ref{Equation 3 Global Case} we deduce
\begin{equation}\label{Equation 5 Global Case}
\mathrm{\phi_{n}\leq e^{-n\,\frac{1}{4}\,\sigma_{\epsilon}}\,\frac{\vert s_{m}^{i}\cdot \triangle^{i}_{y,n}\vert^{2}}{\int_{\pi^{-1}\left(y\right)\cap T\left(\frac{\sigma_{\epsilon}}{2},\mathbf{W}\right)}\vert s^{i}_{m}\cdot\triangle^{i}_{y,n}\vert^{2}\,d\,[\pi_{y}]}}\end{equation} for all $\mathrm{x\in T^{c}\left(\sigma_{\epsilon},\mathbf{W}\right)\cap \pi^{-1}\left(\mathbf{Y}_{0}\cap \mathbf{R}_{N_{0}}\right)}$ and all $\mathrm{n\geq max\,\left\{N_{m},\, N_{m}^{\prime}\right\}}$. Since $\mathrm{y_{0}\in cl\,(\mathbf{Y}_{0}\cap }$ $\mathrm{\mathbf{R}_{N_{0}})}$, we can apply {\bf{Proposition \ref{Proposition Tales Are Uniformly Bounded On Y}}} in order to deduce, after having shrunken $\mathrm{\mathbf{W}}$, the existence of $\mathrm{C>0}$ and $\mathrm{m_{0}\in \mathbb{N}}$ so that $$\mathrm{\underset{x\in\pi^{-1}\left(y\right)}{max}\,\,\,\frac{\left|s_{m_{0}}^{i}\cdot\triangle_{y,n}^{i}\right|^{2}}{\int_{\pi^{-1}\left(y\right)\cap T\left(\frac{\sigma_{\epsilon}}{2},\mathbf{W}\right)}\left|s_{m_{0}}^{i}\cdot\triangle_{y,n}^{i}\right|^{2}\,d\,[\pi_{y}]}<C}$$ for all $\mathrm{n}$ big enough and all $\mathrm{y\in \mathbf{W}\cap \mathbf{Y}_{0}\cap \mathbf{R}_{N_{0}}}$. In combination with inequality \ref{Equation 5 Global Case}, it then follows 
\begin{equation}\label{Equation Which Is Necessary Global Case}
\mathrm{\phi_{n}\leq e^{-n\,\frac{1}{4}\,\sigma_{\epsilon}}\cdot C}
\end{equation} for all $\mathrm{n}$ big enough (i.e.\ at least $\mathrm{n\geq max\,\left\{N_{m},\, N_{m}^{\prime}\right\}}$) and all $$\mathrm{x\in T^{c}\left(\sigma_{\epsilon},\mathbf{W}\right)\cap \pi^{-1}(\mathbf{W}\cap \mathbf{Y}_{0}\cap \mathbf{R}_{N_{0}})=T^{c}(\sigma_{\epsilon},\mathbf{W})\cap \pi^{-1}\left(\mathbf{Y}_{0}\cap\mathbf{R}_{N_{0}}\right).}$$ To sum up we have found that \begin{equation}\label{Equation 6}
\mathrm{\phi_{n}\leq t}
\end{equation} on $\mathrm{T^{c}(\sigma_{\epsilon},\mathbf{W})\cap \pi^{-1}\left(\mathbf{Y}_{0}\cap\mathbf{R}_{N_{0}}\right)}$ for all $\mathrm{n}$ big enough which proves the claim because of equation \ref{Equation Distribution Functions Is Small Outside Global Case}.
\end{proof}

As a direct consequence of the proof of {\bf{Theorem 5.b}} we can prove {\bf{Theorem 6.b}}.

\begin{theo_6.b*}\label{theo_6.b*}\textnormal{[\scshape{Uniform Convergence of the Initial Measure Sequence}]}\vspace{0.10 cm}\\ 
Let $\mathrm{f\in \mathcal{C}^{0}\!\left(\mathbf{X}\right)}$ then sequence $$\mathrm{\left(y\mapsto \int_{\pi^{-1}\left(y\right)}f\,d\bm{\nu}_{n}\left(y\right)\right)_{n}}$$ converges uniformly over $\mathrm{\mathbf{Y}_{0}\cap \mathbf{R}_{N_{0}}}$ to the reduced function $\mathrm{f_{red}}$.
\end{theo_6.b*}
\begin{proof} As before (cf.\ proof of {\bf{Theorem 6.a}}) it is enough to consider the case where $\mathrm{f\in\mathcal{C}^{0}\!\left(\mathbf{X}\right)}$ is $\mathrm{T}$-invariant. 

Let $\mathrm{\epsilon>0}$. Again, by the compactness of $\mathrm{cl\left(\mathbf{Y}_{0}\cap \mathbf{R}_{N_{0}}\right)}$, the proof of the claim reduces to the following statement: If $\mathrm{y_{0}\in cl\left(\mathbf{Y}_{0}\cap \mathbf{R}_{N_{0}}\right)}$, then there exists a compact neighborhood $\mathrm{\mathbf{W}}$ so that the claim is true over the set $\mathrm{\mathbf{W}\cap \mathbf{Y}_{0}\cap \mathbf{R}_{N_{0}}}$. In order to show this, we proceed similar as in the proof of {\bf{Theorem 6.a}}: First of all, by {\bf{Lemma \ref{Lemma Reduced Function}}} there exists $\mathrm{\sigma_{\epsilon}>0}$ so that 
\begin{equation}\label{Equation Integral Of F With Respect To Global Version}
\mathrm{\left|f\vert\left(\pi^{-1}\left(y\right)\cap T\left(\sigma_{\epsilon},\mathbf{W}\right)\right)-f_{red}\left(y\right)\right|\leq\frac{\epsilon}{2}}
\end{equation} for all $\mathrm{y\in \mathbf{W}}$. Furthermore, as before we have the decomposition \begin{equation*}
\begin{split}
&\mathrm{\int_{\pi^{-1}\left(y\right)}\left(f-\pi^{*}f_{red}\right)\,d\bm{\nu}_{n}\left(y\right)}\\[0,2 cm]
=&\mathrm{\int_{\pi^{-1}\left(y\right)\cap T\left(\sigma_{\epsilon},\mathbf{W}\right)}\left(f-\pi^{*}f_{red}\right)\,\phi_{n}\,d\,[\pi_{y}]}\\[0,2 cm]
+&\mathrm{\int_{\pi^{-1}\left(y\right)\cap T^{c}\left(\sigma_{\epsilon},\mathbf{W}\right)}\left(f-\pi^{*}f_{red}\right)\,\phi_{n}\,d\,[\pi_{y}].}\\
\end{split}
\end{equation*} Again we have $\mathrm{\Gamma\coloneqq max\,\left\{ f-\pi^{*}f_{red}\right\}<\infty}$ because $\mathrm{f}$ is continuous and without restriction of generality we can assume that $\mathrm{f-\pi^{*}f_{red}\not\equiv 0}$ so that $\mathrm{\Gamma^{-1}<\infty}$ exists. Moreover, as in the proof of {\bf{Theorem 5.b}}, using {\bf{Corollary \ref{Boundedness of Fiber Integral}}}, there exists $\mathrm{C>0}$ so that $$\mathrm{\int_{\pi^{-1}\left(y\right)\cap T^{c}\left(\sigma_{\epsilon},\mathbf{W}\right)}\,d\,[\pi_{y}]\leq C}$$ for all $\mathrm{y\in \mathbf{W}\cap \mathbf{Y}_{0}}$. By inequality \ref{Equation Which Is Necessary Global Case} in the proof of {\bf{Theorem 5.b}} we know, after having shrunken $\mathrm{\mathbf{W}}$, that $\mathrm{\phi_{n}\leq C^{-1}\cdot\Gamma^{-1}\cdot\frac{\epsilon}{4}\text{ on } T^{c}\big(\sigma_{\epsilon},\mathbf{W}\big)\cap\pi^{-1}(\mathbf{Y}_{0}\cap\mathbf{R}_{N_{0}})}$ for all $\mathrm{n\in \mathbb{N}}$ big enough and hence
 \begin{equation*}
\begin{split}
&\mathrm{\left\vert\int_{\pi^{-1}\left(y\right)}\left(f-\pi^{*}f_{red}\right)\,d\bm{\nu}_{n}\left(y\right)\right\vert\leq \int_{\pi^{-1}\left(y\right)}\vert f-\pi^{*}f_{red}\vert\,d\bm{\nu}_{n}\left(y\right)}\\[0,2 cm]
\leq&\mathrm{\int_{\pi^{-1}\left(y\right)\cap T\left(\sigma_{\epsilon},\mathbf{W}\right)}\left\vert f-\pi^{*}f_{red}\right\vert\,\phi_{n}\,d\,[\pi_{y}]+\frac{\epsilon}{4}}\\
\end{split}
\end{equation*} for all $\mathrm{n}$ big enough and all $\mathrm{y\in \mathbf{W}\cap \mathbf{Y}_{0}\cap \mathbf{R}_{N_{0}}}$. By \ref{Equation Integral Of F With Respect To Global Version} we have $$\mathrm{\left|\int_{\pi^{-1}\left(y\right)}f\, d\bm{\nu}_{n}\left(y\right)-f_{red}\left(y\right)\right|\leq\frac{\epsilon}{2}\int_{\pi^{-1}\left(y\right)\cap T\left(\sigma_{\epsilon},\mathbf{W}\right)}\phi\,d\,[\pi_{y}]+\frac{\epsilon}{4}}$$ for all $\mathrm{y\in \mathbf{W}\cap \mathbf{Y}_{0}\cap \mathbf{R}_{N_{0}}}$ and all $\mathrm{n}$ big enough. As $\mathrm{\int_{\pi^{-1}\left(y\right)\cap T\left(\sigma_{\epsilon},\mathbf{W}\right)}\phi_{n}\leq 1}$ for all $\mathrm{n}$ we deduce $$\mathrm{\left|\int_{\pi^{-1}\left(y\right)}f\, d\bm{\nu}_{n}\left(y\right)-f_{red}\left(y\right)\right|\leq\frac{3}{4}\,\epsilon}$$ for all $\mathrm{y\in \mathbf{W}\cap \mathbf{Y}_{0}\cap \mathbf{R}_{N_{0}}}$ and all $\mathrm{n}$ big enough as claimed. So with $\mathrm{\mathbf{W}\subset \mathbf{Y}}$ we have found a compact neighborhood of $\mathrm{y_{0}\in cl\left(\mathbf{Y}_{0}\cap \mathbf{R}_{N_{0}}\right)}$ so that the claim is true over $\mathrm{\mathbf{W}\cap \mathbf{Y}_{0}\cap \mathbf{R}_{N_{0}}}$ and hence the claim follows by the introducing statement of this proof.
\end{proof}

We close this section by giving an alternative formulation of our results in the language of operators. For this let $\mathrm{f\in \mathcal{C}^{0}\!\left(\mathbf{X}\right)}$ and consider the continuous, $\mathrm{T}$-invariant function $\mathrm{\overline{f}}$ on $\mathrm{\mathbf{X}}$ defined by $$\mathrm{\overline{f}\left(x\right)\coloneqq \int_{T}f\left(t.x\right)d\nu_{T}}$$ where $\mathrm{\nu_{T}}$ denotes the {\scshape{Haar}} measure on $\mathrm{T}$. Furthermore, we introduce $$\mathrm{\Phi_{n}\left(f\right)\left(y\right)\coloneqq \int_{\pi^{-1}\left(y\right)}\overline{f}\,d\bm{\nu}_{n}\left(y\right)\text{for all }f\in \mathcal{C}^{0}\!\left(\mathbf{X}\right)\text{ and all }y\in \mathbf{R}_{N_{0}}\cap \mathbf{Y}_{0}.\label{Notation Phi_n}}$$ If $\mathrm{\mathcal{C}^{0}_{bd}\!\left(\mathbf{R}_{N_{0}}\cap \mathbf{Y}_{0}\right)}$ denotes the space of all bounded continuous functions on $\mathrm{\mathbf{R}_{N_{0}}\cap \mathbf{Y}_{0}}$ then we have the following lemma.

\begin{lemma} Each $\mathrm{\Phi_{n}}$ (for $\mathrm{n}$ big enough) defines an operator from $\mathrm{\mathcal{C}^{0}\left(\mathbf{X}\right)}$ to $\mathrm{\mathcal{C}^{0}_{bd}(\mathbf{R}_{N_{0}}}$ $\mathrm{\cap \mathbf{Y}_{0})}$ for all $\mathrm{n}$ big enough.
\end{lemma}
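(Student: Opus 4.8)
The plan is to verify two properties for each $\mathrm{\Phi_{n}}$ with $\mathrm{n}$ large enough: that $\mathrm{\Phi_{n}\left(f\right)}$ is a continuous function on $\mathrm{\mathbf{R}_{N_{0}}\cap\mathbf{Y}_{0}}$ for every $\mathrm{f\in\mathcal{C}^{0}\!\left(\mathbf{X}\right)}$, and that it is bounded there. Linearity of $\mathrm{\Phi_{n}}$ in $\mathrm{f}$ is immediate from linearity of the fiber integral and of $\mathrm{f\mapsto\overline{f}}$, so the content is the target space. Since $\mathrm{\overline{f}}$ is again continuous (it is a $\mathrm{T}$-average of a continuous function over a compact group, hence continuous by dominated convergence / uniform continuity on the compact $\mathrm{\mathbf{X}}$), it suffices to treat $\mathrm{\Phi_{n}\left(g\right)\left(y\right)=\int_{\pi^{-1}\left(y\right)}g\,d\bm{\nu}_{n}\left(y\right)}$ for $\mathrm{g}$ continuous and $\mathrm{T}$-invariant; I will carry the general $\mathrm{f}$ along only through the reduction $\mathrm{f\rightsquigarrow\overline{f}}$.

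First I would establish boundedness. For $\mathrm{n\geq N_{0}}$ and $\mathrm{y\in\mathbf{R}_{N_{0}}\cap\mathbf{Y}_{0}}$ the measure $\mathrm{\bm{\nu}_{n}\left(y\right)}$ is, by {\bf{Definition \ref{Definition Fiber Probability Measure Non Tame Case}}} together with {\bf{Corollary \ref{Corollary Independence Of Extension}}}, a well-defined probability measure on $\mathrm{\pi^{-1}\left(y\right)}$ (the density $\mathrm{\phi_{n}}$ integrates to $1$ by construction of $\mathrm{\Vert\widehat{s}_{f_{y,n}}\Vert^{2}}$ as the fiber integral of $\mathrm{\vert\widehat{s}_{f_{y,n}}\vert^{2}}$). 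Hence
$$
\mathrm{\left\vert\Phi_{n}\left(f\right)\left(y\right)\right\vert=\left\vert\int_{\pi^{-1}\left(y\right)}\overline{f}\,d\bm{\nu}_{n}\left(y\right)\right\vert\leq\underset{x\in\mathbf{X}}{\max}\,\vert f\vert<\infty,
$$
uniformly in $\mathrm{y}$, which gives the bound with norm $\mathrm{\Vert\Phi_{n}\left(f\right)\Vert_{\infty}\leq\Vert f\Vert_{\infty}}$.

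The real work is continuity of $\mathrm{y\mapsto\Phi_{n}\left(g\right)\left(y\right)}$ in a neighborhood of an arbitrary $\mathrm{y_{0}\in\mathbf{R}_{N_{0}}\cap\mathbf{Y}_{0}}$. Here I would fix a tame sequence $\mathrm{\left(s^{i}_{n}\right)_{n}}$ with $\mathrm{y_{0}\in\mathbf{Y}^{i}}$, choose the neighborhood $\mathrm{\mathbf{U}_{y_{0}}\subset\mathbf{R}_{N_{0}}\cap\mathbf{Y}_{0}\cap\mathbf{Y}^{i}}$ on which the local nonvanishing extension $\mathrm{\widehat{s}_{f_{y_{0},n}}}$ of {\bf{Definition \ref{Definition Removable Singularity}}} exists, and write, over $\mathrm{\pi^{-1}\left(\mathbf{U}_{y_{0}}\right)}$,
$$
\mathrm{\Phi_{n}\left(g\right)\left(y\right)=\frac{\int_{\pi^{-1}\left(y\right)}g\cdot\vert\widehat{s}_{f_{y_{0},n}}\vert^{2}\,d\,[\pi_{y}]}{\int_{\pi^{-1}\left(y\right)}\vert\widehat{s}_{f_{y_{0},n}}\vert^{2}\,d\,[\pi_{y}]}}
$$
for $\mathrm{n\geq N_{0}}$, the scaling ambiguity having cancelled. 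Both numerator and denominator are fiber integrals over the $\mathrm{k}$-fibering $\mathrm{\pi\vert\mathbf{X}_{0}\colon\mathbf{X}_{0}\to\mathbf{Y}_{0}}$ of continuous $\mathrm{\left(k,k\right)}$-forms — namely $\mathrm{g\cdot\vert\widehat{s}_{f_{y_{0},n}}\vert^{2}\,\omega^{k}}$ and $\mathrm{\vert\widehat{s}_{f_{y_{0},n}}\vert^{2}\,\omega^{k}}$. These forms do not have compact support on $\mathrm{\mathbf{X}_{0}}$, so {\bf{Theorem \ref{Theorem King}}} does not apply directly; I would therefore multiply by a cutoff $\mathrm{\chi\in\mathcal{C}^{\infty}_{c}\left(\pi^{-1}\left(\mathbf{U}_{y_{0}}\right)\right)}$ and split each integrand as $\mathrm{\chi\cdot(\,\cdot\,)+\left(1-\chi\right)\cdot(\,\cdot\,)}$. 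The $\mathrm{\chi}$-part is a compactly supported continuous $\mathrm{\left(k,k\right)}$-form, whose fiber integral is continuous on $\mathrm{\mathbf{U}_{y_{0}}}$ by {\bf{Theorem \ref{Theorem King}}} (using normality of $\mathrm{\mathbf{Y}_{0}}$). For the tail $\mathrm{\left(1-\chi\right)}$-part I would choose $\mathrm{\chi\equiv1}$ on a tube $\mathrm{T\left(\epsilon,\mathbf{W}\right)}$ with $\mathrm{\mathbf{W}\subset\mathbf{U}_{y_{0}}}$ a compact neighborhood of $\mathrm{y_{0}}$, so that the tail integrand is supported in $\mathrm{T^{c}\left(\epsilon,\mathbf{W}\right)}$; there the estimates already developed in the proof of {\bf{Theorem 5.a}} (inequality \ref{Equation Which Is Necessary}, via {\bf{Theorem 2}} and {\bf{Proposition \ref{Proposition Tales Are Uniformly Bounded}}}) give a bound of the form $\mathrm{\vert\widehat{s}_{f_{y_{0},n}}\vert^{2}\leq C\,e^{-n\,\delta}\cdot\int_{\pi^{-1}\left(y\right)\cap T\left(\epsilon/2,\mathbf{W}\right)}\vert\widehat{s}_{f_{y_{0},n}}\vert^{2}\,d\,[\pi_{y}]}$ pointwise on $\mathrm{T^{c}\left(\epsilon,\mathbf{W}\right)\cap\pi^{-1}\left(\mathbf{W}\cap\mathbf{Y}_{0}\cap\mathbf{R}_{N_{0}}\right)}$; combined with the uniform fiber-volume bound {\bf{Corollary \ref{Boundedness of Fiber Integral}}}, this makes the tail contribution to both numerator and denominator uniformly small relative to the denominator. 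Letting $\mathrm{\epsilon\downarrow0}$ (i.e.\ enlarging the support of $\mathrm{\chi}$) and invoking continuity of the $\mathrm{\chi}$-parts, one gets that $\mathrm{\Phi_{n}\left(g\right)}$ is a uniform limit, on $\mathrm{\mathbf{W}\cap\mathbf{Y}_{0}\cap\mathbf{R}_{N_{0}}}$, of continuous functions, hence continuous there; since $\mathrm{y_{0}}$ was arbitrary, $\mathrm{\Phi_{n}\left(g\right)\in\mathcal{C}^{0}\!\left(\mathbf{R}_{N_{0}}\cap\mathbf{Y}_{0}\right)}$.

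The main obstacle I anticipate is precisely the non-compact support of the forms $\mathrm{\vert\widehat{s}_{f_{y_{0},n}}\vert^{2}\,\omega^{k}}$: King's continuity theorem is stated only for compactly supported forms, so one cannot conclude continuity of the raw fiber integrals. Handling this cleanly requires the cutoff-plus-tail-estimate argument above, and the tail estimate in turn leans on the localization machinery ({\bf{Theorem 2}}) and on the uniform lower bound for the denominator coming from {\bf{Proposition \ref{Proposition Tales Are Uniformly Bounded}}} and the positivity of $\mathrm{\omega^{k}}$ on fibers (the second claim of {\bf{Lemma \ref{Lemma Intersection Zero Measure}}}). A secondary, more bookkeeping-level point is that $\mathrm{\mathbf{R}_{N_{0}}\cap\mathbf{Y}_{0}}$ is merely locally compact, so continuity is genuinely a local statement and the argument must be run in a neighborhood of each point rather than globally — but this is already built into the strategy.
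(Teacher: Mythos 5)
Your approach matches the paper's: build $T$\nobreakdash-invariant cutoffs supported in a growing sequence of tubes, apply {\scshape King}'s continuity theorem ({\bf Theorem \ref{Theorem King}}) to the compactly supported parts, use the exponential tail estimate already established in the proof of {\bf Theorem 5.a} (inequality \ref{Equation Which Is Necessary}) to control the remainders uniformly over $\mathrm{\mathbf{W}\cap\mathbf{Y}_{0}\cap\mathbf{R}_{N_{0}}}$, and conclude by uniform convergence of continuous functions. One slip: the limit should be $\mathrm{\epsilon\uparrow\infty}$ (a sequence $\mathrm{\epsilon_{m}\to\infty}$ as the paper does), not $\mathrm{\epsilon\downarrow0}$. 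Shrinking $\mathrm{\epsilon}$ shrinks $\mathrm{T\left(\epsilon,\mathbf{W}\right)}$ and grows the complement $\mathrm{T^{c}\left(\epsilon,\mathbf{W}\right)}$, which is the opposite of what you want; your parenthetical ``enlarging the support of $\mathrm{\chi}$'' is the right intent and flags the typo. A small point in your favour: you explicitly decompose the normalization $\mathrm{\Vert\widehat{s}_{f_{y,n}}\Vert^{2}}$ into a cutoff piece (continuous by {\scshape King}) and an exponentially small tail piece, whereas the paper asserts $\mathrm{\Phi_{n}\left(\psi_{m}f\right)\in\mathcal{C}^{0}\left(\mathbf{W}\right)}$ directly from {\bf Theorem \ref{Theorem King}}, which glosses over the fact that the density $\mathrm{\phi_{n}}$ contains the factor $\mathrm{\Vert\widehat{s}_{f_{y,n}}\Vert^{-2}}$, itself a fiber integral of a form \emph{without} compact support. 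Your decomposition fills that gap.
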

\begin{proof} Let $\mathrm{f\in\mathcal{C}^{0}\!\left(\mathbf{X}\right)}$, $\mathrm{K\coloneqq max\, \left\{\vert f\vert\right\}=max\, \left\{\vert \overline{f}\vert\right\}}$ and $\mathrm{y\in \mathbf{R}_{N_{0}}\cap \mathbf{Y}_{0}}$. Choose a compact neighborhood $\mathrm{\mathbf{W}\subset \mathbf{R}_{N_{0}}\cap \mathbf{Y}_{0}}$ of $\mathrm{y}$ and let $\mathrm{\epsilon_{m}\in \mathbb{R}^{\geq 0}}$ be a strictly increasing sequence converging to $\mathrm{\infty}$. Furthermore, choose a sequence $\mathrm{\left(\psi_{m}\right)_{m}}$ of smooth, $\mathrm{T}$-invariant cut-off functions defined on $\mathrm{\mathbf{X}}$ so that $$\mathrm{\psi_{m}\vert T\left(\epsilon_{m},\mathbf{W}\right)\equiv 1,\text{ } supp\,\psi_{m}\cap T^{c}\left(\epsilon_{m+1},\mathbf{W}\right)=\varnothing\text{ and } supp\,\psi_{m}\subset \pi^{-1}\left(\mathbf{Y}_{0}\cap \mathbf{R}_{N_{0}}\right).}$$ Since $\mathrm{\pi\vert \mathbf{X}_{0}\!:\!\mathbf{X}_{0}\rightarrow \mathbf{Y}_{0}}$ is a $\mathrm{k}$-fibering, it follows by {\bf{Theorem \ref{Theorem King}}} that $\mathrm{\Phi_{n}\left(\psi_{m}\,f\right)\in \mathcal{C}^{0}(\mathbf{W})}$ for all $\mathrm{m\in \mathbb{N}}$. We calculate \begin{equation*}
\begin{split}
\mathrm{\vert \Phi_{n}\left(\psi_{m}\,f\right)-\Phi_{n}\left(f\right)\vert\left(y\right)}=&\mathrm{\left|\int_{\pi^{-1}\left(y\right)}\left(\psi_{m}\,\overline{f}-\overline{f}\right)\,d\bm{\nu}_{n}\left(y\right)\right|}\\[0,2 cm]
\leq&\mathrm{2\,K\int_{\pi^{-1}\left(y\right)\cap T^{c}\left(\epsilon_{m},\mathbf{W}\right)}\left|\psi_{m}-1\right|\,\phi_{n}\,d\,[\pi_{y}]}\\[0,2 cm]
\leq&\mathrm{2\,K\int_{\pi^{-1}\left(y\right)\cap T^{c}\left(\epsilon_{m},\mathbf{W}\right)}\phi_{n}\,d\,[\pi_{y}].}\\
\end{split}
\end{equation*} Using equation \ref{Equation Which Is Necessary} (for $\mathrm{\sigma_{\epsilon}=\sigma_{m}}$) in the proof of {\bf{Theorem 5.a}}, we deduce that $$\mathrm{\vert \Phi_{n}\left(\psi_{m}\,f\right)-\Phi_{n}\left(f\right)\vert \leq 2\cdot e^{-n\,\frac{1}{4}\,\epsilon_{m}}\cdot C\cdot K}$$ for all $\mathrm{n}$ big enough and all $\mathrm{y\in \mathbf{W}}$. Hence, for a fixed $\mathrm{n\in \mathbb{N}}$ big enough, the sequence of continuous functions on $\mathrm{\mathbf{W}}$ given by $\mathrm{\left(\Phi_{n}\left(\psi_{m}\,f\right)\vert \mathbf{W}\right)_{m}}$ converges uniformly to $\mathrm{\Phi_{n}\left(f\right)\vert \mathbf{W}}$ as $\mathrm{m\rightarrow \infty}$ and consequently $\mathrm{\Phi_{n}\left(f\right)\vert \mathbf{W}\in \mathcal{C}^{0}\!\left(\mathbf{W}\right)}$. Since $\mathrm{y\in  \mathbf{R}_{N_{0}}\cap \mathbf{Y}_{0}}$ was chosen arbitrarily, it follows that $\mathrm{\Phi_{n}\left(f\right)\in \mathcal{C}^{0}\!\left(\mathbf{R}_{N_{0}}\cap \mathbf{Y}_{0}\right)}$ for all $\mathrm{f\in \mathcal{C}^{0}\!\left(\mathbf{X}\right)}$ and all $?\mathrm{n\in \mathbb{N}}$ big enough as claimed.
\end{proof}

\begin{remark} Even if $\mathrm{\complement\left(\mathbf{R}_{N_{0}}\cap \mathbf{Y}_{0}\right)}$ is a proper analytic subset of $\mathrm{\mathbf{Y}}$, it is in general not possible to extend $\mathrm{\Phi_{n}\!:\!\mathcal{C}^{0}\!\left(\mathbf{X}\right)\rightarrow \mathcal{C}^{0}\!\left(\mathbf{R}_{N_{0}}\cap \mathbf{Y}\right)}$ to an operator $\mathrm{\widehat{\Phi}_{n}\!:\!\mathcal{C}^{0}\!\left(\mathbf{X}\right)\rightarrow }$ $\mathrm{\mathcal{C}^{0}\!\left(\mathbf{Y}\right)}$: Using {\bf{Example \ref{Example Existence Of Removable Singularities}}}, one can find a continuous function $\mathrm{f\in\mathcal{C}^{0}\!\left(\mathbf{X}\right)}$ so that $\mathrm{\Phi_{n}\left(f\right)}$ has no continuous extension in $\mathrm{[0\!:\!{\dots}\!:\!0\!:\!1]\in \mathbf{Y}=\mathbb{C}\mathbb{P}^{k}}$.
\end{remark}

If $\mathrm{\mathfrak{Red}\!:\!\mathcal{C}^{0}\!\left(\mathbf{X}\right)\rightarrow \mathcal{C}^{0}_{bd}\!\left(\mathbf{R}_{N_{0}}\cap \mathbf{Y}_{0}\right)}$ denotes the operator given by $\mathrm{\mathfrak{Red}\left(f\right)\coloneqq \overline{f}_{red}\vert \mathbf{R}_{N_{0}}\cap \mathbf{Y}_{0}}$, then we deduce the following corollary.

\begin{cor} The operator sequence $\mathrm{\left(\Phi_{n}\right)_{n}}$ converges to  $\mathrm{\mathfrak{Red}}$ with respect to the topology induced by the supremum norm on $\mathrm{\mathcal{C}^{0}\!\left(\mathbf{X}\right)}$ and $\mathrm{\mathcal{C}^{0}_{bd}\!\left(\mathbf{R}_{N_{0}}\cap \mathbf{Y}\right)}$.
\end{cor}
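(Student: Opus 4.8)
The plan is to unwind the definitions so that the statement becomes a direct reformulation of \textbf{Theorem 6.b}. Recall that the topology on the space of operators induced by the supremum norms on $\mathcal{C}^{0}(\mathbf{X})$ and $\mathcal{C}^{0}_{bd}(\mathbf{R}_{N_{0}}\cap \mathbf{Y}_{0})$ is the operator-norm topology; thus I must show that for every $\epsilon>0$ there exists $N_{0}'\in\mathbb{N}$ so that $\|\Phi_{n}-\mathfrak{Red}\|_{op}\leq\epsilon$ for all $n\geq N_{0}'$, where by definition $\|\Phi_{n}-\mathfrak{Red}\|_{op}=\sup_{\|f\|_{\infty}\leq 1}\|\Phi_{n}(f)-\mathfrak{Red}(f)\|_{\infty}$. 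First I would observe that, by the $T$-invariance identity \ref{Equation Interchanging Integration} used in the proof of \textbf{Proposition \ref{Proposition Uniform Convergence}}, one has $\Phi_{n}(f)(y)=\int_{\pi^{-1}(y)}\overline{f}\,d\bm{\nu}_{n}(y)=\int_{\pi^{-1}(y)}f\,d\bm{\nu}_{n}(y)$ for all $y\in\mathbf{R}_{N_{0}}\cap\mathbf{Y}_{0}$, since $\bm{\nu}_{n}(y)$ is a $T$-invariant probability measure. Likewise $\mathfrak{Red}(f)=\overline{f}_{red}=f_{red}$ by the definition of the reduced function (the averaging is built into $f_{red}$). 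Hence $\Phi_{n}(f)-\mathfrak{Red}(f)=\big(y\mapsto\int_{\pi^{-1}(y)}f\,d\bm{\nu}_{n}(y)\big)-f_{red}$, which is exactly the object controlled by \textbf{Theorem 6.b}.

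The key remaining point is to upgrade the $f$-by-$f$ uniform convergence of \textbf{Theorem 6.b} to convergence that is also uniform over the unit ball of $\mathcal{C}^{0}(\mathbf{X})$, i.e.\ to genuine operator-norm convergence. For this I would inspect the quantitative estimate produced in the proof of \textbf{Theorem 6.b}: there the bound on $\big|\int_{\pi^{-1}(y)}f\,d\bm{\nu}_{n}(y)-f_{red}(y)\big|$ is obtained in terms of $\epsilon$, the constants $\Gamma=\max\{f-\pi^{*}f_{red}\}$ and $C$ (from \textbf{Corollary \ref{Boundedness of Fiber Integral}}), and the exponential decay $e^{-n\,\frac{1}{4}\,\sigma_{\epsilon}}$ coming from \ref{Equation Which Is Necessary Global Case}, together with the trivial bound $\int_{\pi^{-1}(y)\cap T(\sigma_{\epsilon},\mathbf{W})}\phi_{n}\leq 1$. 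The crucial feature is that $\sigma_{\epsilon}$ is chosen via \textbf{Lemma \ref{Lemma Reduced Function}} applied to $f/\|f\|_{\infty}$ and via \textbf{Proposition \ref{Continuity of Fiber Integral}}, and that the modulus of continuity of $f$ only enters through $\|f\|_{\infty}$ after normalization; since $\|f\|_{\infty}\leq 1$ we may normalize and absorb $\Gamma\leq 2\|f\|_{\infty}\leq 2$ into a constant independent of $f$. Running the argument of \textbf{Theorem 6.b} verbatim but tracking that all constants ($C$, the tube $T(\sigma_{\epsilon},\mathbf{W})$, the finite subcover of $cl(\mathbf{Y}_{0}\cap\mathbf{R}_{N_{0}})$, the thresholds $N_{m}, N_{m}'$) depend only on $\epsilon$ and on the fixed geometric data and not on the individual $f$ in the unit ball, yields an $N_{0}'$ that works simultaneously for all such $f$, which is precisely operator-norm convergence.

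The main obstacle I anticipate is exactly this last uniformity-in-$f$ bookkeeping: one must check that \textbf{Lemma \ref{Lemma Reduced Function}} (choice of $\epsilon_{\sigma}$) and the compactness-cover argument at the start of the proof of \textbf{Theorem 5.b}/\textbf{6.b} can be carried out with a single $\mathbf{W}$-cover and a single threshold valid for the whole normalized unit ball — since $f_{red}$ itself varies with $f$, one needs that the relevant estimates depend on $f$ only through $\|f\|_{\infty}$ and through the uniform modulus of continuity on the compact space $\mathbf{X}$, which holds because the unit ball of $\mathcal{C}^{0}(\mathbf{X})$ is \emph{not} equicontinuous in general. In fact, here one should be slightly careful: \textbf{Lemma \ref{Lemma Reduced Function}} uses continuity of $f$ in an essential, $f$-dependent way, so strict operator-norm convergence on all of $\mathcal{C}^{0}(\mathbf{X})$ would normally fail for a non-compact operator family. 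The resolution is that the decisive term $\phi_{n}\leq e^{-n\,\frac{1}{4}\,\sigma_{\epsilon}}\cdot C$ on $T^{c}(\sigma_{\epsilon},\mathbf{W})$ is paired with the factor $|f-\pi^{*}f_{red}|\leq 2\|f\|_{\infty}$, while on the small tube $T(\sigma_{\epsilon},\mathbf{W})$ one uses $|f-f_{red}|\leq\tfrac{\epsilon}{2}$ together with $\int\phi_{n}\leq 1$; the $\tfrac{\epsilon}{2}$ there does require an $f$-dependent $\sigma_{\epsilon}$. Therefore the honest statement is that $\Phi_{n}\to\mathfrak{Red}$ in the \emph{strong operator topology} (pointwise on $\mathcal{C}^{0}(\mathbf{X})$, uniformly over $\mathbf{R}_{N_{0}}\cap\mathbf{Y}_{0}$ in the target), which is what \textbf{Theorem 6.b} literally gives; I would phrase the proof as: ``fix $f\in\mathcal{C}^{0}(\mathbf{X})$ with $\|f\|_{\infty}\leq 1$; by \textbf{Theorem 6.b} there is $N_{0}'$ with $\|\Phi_{n}(f)-\mathfrak{Red}(f)\|_{\infty}\leq\epsilon$ for $n\geq N_{0}'$, and since $f$ was arbitrary in the unit ball the convergence holds in the stated topology,'' with the understanding (as in the paper's phrasing) that ``the topology induced by the supremum norm'' here means convergence of $\Phi_{n}(f)$ to $\mathfrak{Red}(f)$ in sup-norm for each fixed $f$.
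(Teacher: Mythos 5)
Your proposal is correct and takes the same route as the paper: the paper's entire proof of this corollary consists of the single line ``Apply \textbf{Theorem 6.b}.'' Your caveat is also well-taken — since the constants $\sigma_{\epsilon}$ in \textbf{Lemma \ref{Lemma Reduced Function}} and hence the thresholds in \textbf{Theorem 6.b} depend on the individual $f$, what is actually established is convergence in the strong operator topology (for each fixed $f$, convergence of $\Phi_{n}(f)$ to $\mathfrak{Red}(f)$ in sup-norm on $\mathbf{R}_{N_{0}}\cap\mathbf{Y}_{0}$), which is the reading the paper's one-line proof implicitly adopts.
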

\begin{proof} Apply {\bf{Theorem 6.b}}.
\end{proof}

\newpage
\addcontentsline{toc}{section}{Index of Notation}
\hspace{-0.6 cm}{\Large{\bf{Index of Notation}}}
\vspace{0.55 cm}

\noindent\begin{tabular}{@{}p{2.5 cm}p{12.35cm}@{}}
$\mathrm{\mathcal{C}^{k}(\widehat{\,\mathbf{X}\,})}$ & The complex space of all $\mathrm{k}$-dimensional cycles $\mathrm{{\bm{\mathfrak{C}}}}$ in $\mathrm{\widehat{\,\mathbf{X}\,}}$, \textbf{S. \oldstylenums{\pageref{Notation Cycle Space Of All k-Dim Cycles}}}\\[0.05 cm]

$\mathrm{{\bm{\mathfrak{C}}}}$ &  $\mathrm{k}$-dimensional cycle $\mathrm{\sum_{i}n_{i}\mathbf{C}_{i}}$, $\mathrm{n_{i}\in \mathbb{N}}$, $\mathrm{dim_{\mathbb{C}}\,\mathbf{C}_{i}=k}$ in $\mathrm{\widehat{\,\mathbf{X}\,}}$, \textbf{S. \oldstylenums{\pageref{Notation Cycle Of Dimension K}}}\\[0.05 cm]

$\mathrm{{\bm{\mathfrak{C}}}_{y}}$ & $\mathrm{k}$-dimensional cycle associated to the fiber $\mathrm{\widehat{\pi}^{-1}\left(y\right)}$ for $\mathrm{y\in \mathbf{Y}_{0}}$, \textbf{S. \oldstylenums{\pageref{Notation Cycle Of Dimension K Associated To The Fiber Of}}}\\[0.05 cm]

$\mathrm{\vert {\bm{\mathfrak{C}}}\vert}$&Support $\mathrm{\vert{\bm{\mathfrak{C}}}\vert=\bigcup_{i\in I}\mathbf{C}_{i}}$ of the cycle $\mathrm{{\bm{\mathfrak{C}}}=\sum_{i}n_{i}\mathbf{C}_{i}}$, \textbf{S. \oldstylenums{\pageref{Notation Support Of A Cycle}}}\\[0.05 cm]

$\mathrm{cl\left(\mathbb{T}.x\right)}$&Zariski closure of the $\mathrm{\mathbb{T}}$-orbit $\mathrm{\mathbb{T}.x}$ where $\mathrm{x\in \mathbf{X}}$, \textbf{S. \oldstylenums{\pageref{Notation T-Orbit Closure}}}\\[0.05 cm]

$\mathrm{\mathfrak{Conv}\,\mathbf{A}}$&Convex hull of a subset $\mathrm{\mathbf{A}\subset \mathfrak{t}^{*}}$, \textbf{S. \oldstylenums{\pageref{Notation Convex Hull Of A Subset}}}\\[0.05 cm]

$\mathrm{D_{n}^{i}\left(\cdot,t\right)}$& Sequence of cumulative distribution densities associated to the tame sequence $\mathrm{\left(s^{i}_{n}\right)_{n}}$, \textbf{S. \oldstylenums{\pageref{Notation Cumulative Distribution Densities}}}\\[0.05 cm]

$\mathrm{D_{n}^{\mathfrak{U}}\left(\cdot,t\right)}$& Sequence of distribution functions associated to the open cover $\mathrm{\mathfrak{U}}$, \textbf{S. \oldstylenums{\pageref{Notation Collection Of Cumulative Fiber Probability Densities}}}\\[0.05 cm]

$\mathrm{\triangle^{i}_{f_{y,n}}\left(=\triangle^{i}_{y,n}\right)}$& Holomorphic $\mathrm{\xi_{n}-\xi^{i}_{n}}$-eigenfunction on $\mathrm{\pi^{-1}\left(\mathbf{U}_{y}\right)}$ where $\mathrm{y\in \mathbf{R}_{N_{0}}}$ defined by $\mathrm{s^{i}_{n}\cdot\triangle_{f_{y,n}}^{i}=\widehat{s}_{f_{y,n}}}$, \textbf{S. \oldstylenums{\pageref{Notation Difference Function}}}\\[0.05 cm]

$\mathrm{{\bf{Fix}}^{\mathbb{T}}}$& Set of all $\mathrm{\mathbb{T}}$-fixed points in $\mathrm{\mathbf{X}}$, \textbf{S. \oldstylenums{\pageref{Notation Fix Point Set Of T-Action}}}\\[0.05 cm]

$\mathrm{\overline{f}}$& Averaged function defined by $\mathrm{\overline{f}\left(x\right)=\int_{T}f\left(t.x\right)\,d\nu_{T}}$, \textbf{S. \oldstylenums{\pageref{Notation Averaged Function}}}\\[0.05 cm]

$\mathrm{f_{red}}$& Function on the quotient $\mathrm{\mathbf{Y}=\mathbf{X}^{ss}_{\xi}/\!\!/\mathbb{T}}$ induced by the restriction $\mathrm{\overline{f}\vert \mu^{-1}\left(\xi\right)}$ of the averaged function $\mathrm{\overline{f}}$, \textbf{S. \oldstylenums{\pageref{Notation Reduced Function}}}\\[0.05 cm]

$\mathrm{\mathfrak{F}_{{\bm{\mathfrak{C}}},\mathbb{C}\mathbb{P}^{m}}}$& Chow form associated to a cycle $\mathrm{{\bm{\mathfrak{C}}}}$ in $\mathrm{\mathbb{C}\mathbb{P}^{m}}$, \textbf{S. \oldstylenums{\pageref{Notation Chow Form}}}\\[0.05 cm]

${\bm{\Gamma}_{\pi}}$& Graph of the quotient map $\mathrm{\pi\!:\!\mathbf{X}^{ss}_{\xi}\rightarrow \mathbf{X}^{ss}_{\xi}/\!\!/\mathbb{T}=\mathbf{Y}}$ in $\mathrm{\mathbf{X}\times \mathbf{Y}}$, \textbf{S. \oldstylenums{\pageref{Notation Graph of Quotient Map}}}\\[0.05 cm]

$\mathrm{h}$& Hermitian, positive, $\mathrm{T}$-invariant bundle metric on $\mathrm{\mathbf{L}}$, \textbf{S. \oldstylenums{\pageref{Hermitian Bundle Metric}}}\\[0.05 cm]

$\mathrm{\int_{F^{-1}\left(y\right)}f\,d\,[F_y]}$& Fiber integral of $\mathrm{f}$ with respect to a $\mathrm{k}$-fibering $\mathrm{F\!:\!\mathbf{X}\rightarrow \mathbf{Y}}$, \textbf{S. \oldstylenums{\pageref{Notation Fiber Integration with Respect to a k-Fibering}}}\\[0.05 cm]

$\mathrm{\mathbf{M}_{J}}$& Subset of all points $\mathrm{x\in\mu^{-1}\left(\xi\right)}$ so that $\mathrm{s_{j}\left(x\right)\neq 0}$ for all $\mathrm{j\in J}$ and $\mathrm{s_{j}\left(x\right)\neq 0}$ for all $\mathrm{j\in J^{c}}$, \textbf{S. \pageref{Notation Definition of M_J}}\\[0.05 cm]

$\mathrm{\mu}$& Moment map associated to the hermitian, positive, $\mathrm{T}$-invariant bundle metric $\mathrm{h}$, \textbf{S. \oldstylenums{\pageref{Notation Moment Map}}}\\[0.05 cm]

$\mathrm{\bm{\nu}^{i}_{n}}$& Sequence of fiber measures associated to the tame sequence $\mathrm{\left(s^{i}_{n}\right)_{n}}$, \textbf{S. \oldstylenums{\pageref{Notation Sequence Of Fiber Probability Measures}}}\\[0.05 cm]

$\mathrm{\bm{\nu}^{\mathfrak{U}}_{n}}$& Sequence of fiber measures associated to the open cover $\mathrm{\mathfrak{U}}$, \textbf{S. \oldstylenums{\pageref{Notation Collection Of Fiber Probability Measures}}}\\[0.05 cm]

$\mathrm{\nu_{F}\left(x\right)}$& Order of a $\mathrm{k}$-fibering $\mathrm{F\!:\!\mathbf{X}\rightarrow \mathbf{Y}}$ at a point $\mathrm{x\in \mathbf{X}}$, \textbf{S. \oldstylenums{\pageref{Notation Order Of F At Point}}}\\[0.05 cm]

$\mathrm{\omega}$& Naturally associated K{\"a}hler form given by $\mathrm{\omega=-\frac{\sqrt{-1}}{\,\,2}\partial\overline{\partial}\,\mathbf{log}\,\vert \cdot\vert_{h}^{2}}$, \textbf{S. \oldstylenums{\pageref{Notation Kaehler form}}}\\[0.05 cm]

$\mathrm{\omega^{\prime}}$ & Smooth (2,2,)-form on $\mathrm{\widehat{\mathbf{\,X\,}}}$ given by $\mathrm{\omega^{\prime}= \left(p_{\mathbf{X}}\vert cl\left({\mathbf{\Gamma}}_{\pi}\right)\circ\zeta\right)^{*}\omega}$, \textbf{S. \oldstylenums{\pageref{Notation Smooth (2,2,)-form}}}\\[0.05 cm]

$\mathrm{\Omega}$ & Smooth (2,2,)-form on the compact variety $\mathrm{\widetilde{\bf{\,X\,}}}$ given by $\mathrm{\Omega\coloneqq\Sigma^{*}\omega^{\prime}}$, \textbf{S. \oldstylenums{\pageref{Notation Capital Omega}}}\\[0.05 cm]

$\mathrm{\pi}$& Algebraic projection map $\mathrm{\pi\!:\!\mathbf{X}^{ss}_{\xi}\rightarrow \mathbf{X}^{ss}_{\xi}/\!\!/\mathbb{T}}$ of the Hilbert Quotient, \textbf{S. \oldstylenums{\pageref{Notation Hilbert Quotient Map}}}\\[0.05 cm]

$\mathrm{\widehat{\pi}}$& Algebraic projection map from the compact variety $\mathrm{\widehat{\bf{\,X\,}}}$ to $\mathrm{\mathbf{Y}}$ given by $\mathrm{\widehat{\pi}= p_{\mathbf{Y}}\vert cl\,\mathbf{\Gamma}_{\pi}\circ \zeta}$, \textbf{S. \oldstylenums{\pageref{Notation Regular Map between cal X and Y}}}\\[0.05 cm]

$\mathrm{\Pi}$& Surjective, holomorphic $\mathrm{k}$-fibering from $\mathrm{\mathbf{\widetilde{\,\bf{X}\,}}}$ to $\mathrm{\widetilde{\,\bf{Y}\,}}$, \textbf{S. \oldstylenums{\pageref{Notation Blown Up Projection Map}}}\\[0.05 cm] 

$\mathrm{\varphi^{\widehat{\pi}}}$& Holomorphic map from $\mathrm{\mathbf{Y}_{0}}$ into the cycle space $\mathrm{\mathcal{C}^{k}(\widehat{\,\mathbf{X}\,})}$ induced by the $\mathrm{k}$-fibering $\mathrm{\widehat{\pi}\vert \widehat{\pi}^{-1}\left(\mathbf{Y}_{0}\right)\rightarrow \mathbf{Y}_{0}}$, \textbf{S. \oldstylenums{\pageref{Notation Regular Map Universal Property Chow Scheme}}}\\[0.05 cm]
\end{tabular}

\noindent\begin{tabular}{@{}p{2.5 cm}p{12.35cm}@{}}

$\mathrm{\Phi_{n}}$& Continuous operator from $\mathrm{\mathcal{C}^{0}\left(\mathbf{X}\right)}$ to $\mathrm{\mathcal{C}^{0}_{bd}\left(\mathbf{R}_{N_{0}}\cap \mathbf{Y}_{0}\right)}$ induced by $\mathrm{\bf{\nu}_{n}}$, \textbf{S. \oldstylenums{\pageref{Notation Phi_n}}}\\[0.05 cm]

$\mathrm{Relint\,\mathbf{A}}$& Relative interior of a convex subset $\mathrm{\mathbf{A}\subset \mathfrak{t}^{*}}$, \textbf{S. \oldstylenums{\pageref{Notation Relative Interior}}}\\[0.05 cm]

$\mathrm{\mathbf{R}_{N_{0}}}$& The set of all removable singularities of order $\mathrm{N_{0}\in \mathbb{N}}$ of the fiber measure sequence $\mathrm{\left(\bm{\nu}_{n}\right)_{n}}$, \textbf{S. \oldstylenums{\pageref{Notation Removable Singularity Of Order N_0}}}\\[0.05 cm]

$\mathrm{\varrho^{i}}$& S.p.s.h limit function associated to the tame sequence $\mathrm{\left(s^{i}_{n}\right)_{n}}$, \textbf{S. \oldstylenums{\pageref{Notation S.p.s.h. Limit Function Associated to the Tame Sequence}}}\\[0.05 cm]

$\mathrm{\varrho^{i}_{n}}$& Sequence of s.p.s.h functions associated to the tame sequence $\mathrm{\left(s^{i}_{n}\right)_{n}}$, \textbf{S. \oldstylenums{\pageref{Notation Sequence of S.p.s.h. Functions Associated to the Tame Sequence}}}\\[0.05 cm]

$\mathrm{s^{i}_{n}}$& $\mathrm{i}$-th tame sequence associated to the ray $\mathrm{\mathbb{R}^{\geq 0}\xi}$ where $\mathrm{\xi\in \mathfrak{t}^{*}}$, \textbf{S. \oldstylenums{\pageref{Notation i-th tame sequence}}}\\[0.05 cm]

$\mathrm{\widehat{s}_{f_{y,n}}}$& Extension of $\mathrm{s_{n}}$ given by $\mathrm{\widehat{s}_{f_{y,n}}=s_{n}\cdot\pi^{*}f^{-1}_{y,n}}$ for all $\mathrm{n\geq  N_{0}\in \mathbb{N}}$ where $\mathrm{f_{y,n}\in \mathcal{O}\left(\mathbf{U}_{y}\right)}$ and $\mathrm{y\in \mathbf{R}_{N_{0}}}$, \textbf{S. \oldstylenums{\pageref{Notation Local Extension Of S_{n}}}}\\[0.05 cm]

$\mathrm{\sigma}$& Proper, surjective holomorphic map from $\mathrm{\widetilde{\bf{\,Y\,}}}$ to $\mathrm{\mathbf{Y}}$ given by $\mathrm{p_{\mathbf{Y}}\vert \widetilde{\bf{\,Y\,}}}$, \textbf{S. \oldstylenums{\pageref{Notation Blow Up Map onto Y}}}\\[0.05 cm]

$\mathrm{\Sigma}$& Proper, surjective holomorphic map from $\mathrm{\widetilde{\,\bf{X}\,}}$ to $\mathrm{\widehat{\,\bf{X}\,}}$, \textbf{S. \oldstylenums{\pageref{Notation Blow Up Map onto X}}}\\[0.05 cm]

$\mathrm{\vert s\vert^{2}}$& Norm function of a section $\mathrm{s\in H^{0}\left(\mathbf{X},\mathbf{L}\right)}$ with respect to the bundle metric $\mathrm{h}$, \textbf{S. \oldstylenums{\pageref{Notation Norm of Section with Respect to H}}}\\[0.05 cm]

$\mathrm{\Vert s^{i}_{n}\Vert^{2}}$& Fiber integral of $\mathrm{\vert s^{i}_{n}\vert ^{2}}$ over the subset $\mathrm{\mathbf{Y}_{0}\subset \mathbf{Y}}$ with respect to $\mathrm{\pi\vert \pi^{-1}\left(\mathbf{Y}_{0}\right)}$, \textbf{S. \oldstylenums{\pageref{Notation Fiber Integral Initial Sequence}}}\\[0.05 cm]

$\mathrm{{\bf{S}}_{x}}$& Set of all $\mathrm{\mathbb{T}}$-fixed points contained in the Zariski closure $\mathrm{\mathbb{T}.x}$ where $\mathrm{x\in\mathbf{X}}$, \textbf{S. \oldstylenums{\pageref{Notation T-Fixed Points Contained In The Closure Of T.x}}}\\[0.05 cm] 

$\mathrm{\mathcal{S}_{J}}$& Set of all $\mathrm{\mathbb{T}}$-eigensections indexed by $\mathrm{J\subset \left\{1,\dots,m\right\}}$ where $\mathrm{m=dim_{\mathbb{C}}}$ $\mathrm{H^{0}\left(\mathbf{X},\mathbf{L}\right)}$, \textbf{S. \oldstylenums{\pageref{Notation Set Of T-Eigensections Indexed By J}}}\\[0.05 cm]

$\mathrm{{\bm{\mathfrak{S}}}_{J}}$& Set of all characters $\mathrm{\xi_{j}\in \mathfrak{t}^{*}_{\mathbb{Z}}}$ attached to the collection of all $\mathrm{\mathbb{T}}$-eigensections given by $\mathrm{\mathcal{S}_{J}}$, \textbf{S. \oldstylenums{\pageref{Notation Set Of All Characters Corresponding To S_x}}}\\[0.05 cm]

$\mathrm{T\left(\epsilon,\mathbf{W}^{i}\right)}$& Compact $\mathrm{\epsilon}$-neighborhood tube around $\mathrm{\mu^{-1}\left(\xi\right)}$ over $\mathrm{\mathbf{W}^{i}\subset \mathbf{Y}}$, \textbf{S. \oldstylenums{\pageref{Notation Compact Neighborhood Tube}}}\\[0.05 cm]

$\mathrm{T^{c}\left(\epsilon,\mathbf{W}^{i}\right)}$& Complement of $\mathrm{T\left(\epsilon,\mathbf{W}^{i}\right)}$ in $\mathrm{\pi^{-1}\left(\mathbf{W}^{i}\right)}$, \textbf{S. \oldstylenums{\pageref{Notation Complement of Compact Neighborhood Tube}}}\\[0.05 cm]

$\mathrm{\widetilde{\,T\,}\left(\epsilon,\mathbf{W}^{i}\right)}$& Compact $\mathrm{\epsilon}$-neighborhood tube in $\mathrm{\widetilde{\bf{\,X\,}}}$ induced by $\mathrm{T\left(\epsilon,\mathbf{W}^{i}\right)}$, \textbf{S. \oldstylenums{\pageref{Notation Compact Corresponding Neighborhood Tube}}}\\[0.05 cm]

$\mathrm{\mathbf{V}^{i}}$& Inverse image of the compact neighborhood $\mathrm{\mathbf{W}^{i}\subset \mathbf{Y}}$ under the projection $\mathrm{\pi}$, \textbf{S. \oldstylenums{\pageref{Notation V^i}}}\\[0.05 cm]

$\mathrm{vol\left(\pi^{-1}\left(y\right)\right)}$& Volume of the fiber $\mathrm{\pi^{-1}\left(y\right)}$ for $\mathrm{y\in \mathbf{Y}_{0}}$ with respect to the form $\mathrm{\omega^{k}\vert \pi^{-1}\left(y\right)}$ counted with multiplicities, \textbf{S. \oldstylenums{\pageref{Notation Volume of a Fiber Over Y_0}}}\\[0.05 cm]

$\mathrm{\mathbf{W}^{i}}$& Compact neighborhood contained in $\mathrm{\mathbf{Y}^{i}}$, \textbf{S. \oldstylenums{\pageref{Notation W^i}}}\\[0.05 cm]

$\mathrm{\widetilde{\bf{W}}^{i}}$& Compact neighborhood in $\mathrm{\widetilde{\bf{\,Y\,}}}$ induced by $\mathrm{\mathbf{W}^{i}}$ via $\mathrm{\widetilde{\bf{W}}^{i}=\sigma^{-1}\left(\mathbf{W}^{i}\right)}$, \textbf{S. \oldstylenums{\pageref{Notation cal W^i}}}\\[0.05 cm]

$\mathrm{\mathbf{\widehat{\,X\,}}}$& Compact variety defined as the normalization of the compact variety subvariety $\mathrm{cl\,\bm{\Gamma}_{\pi}\subset\mathbf{X}\times \mathbf{Y}}$, \textbf{S. \oldstylenums{\pageref{Notation X Hat}}}\\[0.05 cm]

$\mathrm{\widetilde{\bf{\,X\,}}}$& Compact variety given by $\mathrm{\widetilde{\bf{\,X\,}}=\left(\mathbf{Y}\times {\bm{\mathfrak{X}}}\right)\cap \big(\widetilde{\bf{\,Y\,}}\times\widehat{\bf{\,X\,}}\big)}$, \textbf{S. \oldstylenums{\pageref{Notation Blow Up Space}}}\\[0.05 cm]

$\mathrm{\mathbf{X}_{0}}$& Inverse image of $\mathrm{\mathbf{Y}_{0}}$ with respect to the projection map $\mathrm{\pi\!:\!\mathbf{X}^{ss}_{\xi}\rightarrow \mathbf{Y}}$, \textbf{S. \oldstylenums{\pageref{Notation Inverse Image of Y_0 Under The Projection Map}}}\\[0.05 cm]

$\mathrm{\mathbf{X}^{i}}$& Open, $\mathrm{\pi}$-saturated subset of $\mathrm{\mathbf{X}^{ss}_{\xi}\cap \mathbf{X}(s^{i}_{n})}$ given by $\mathrm{\mathbf{X}^{i}\coloneqq\pi^{-1}\left(\pi\left(\mathbf{M}_{i}\right)\right)}$, \textbf{S. \oldstylenums{\pageref{Notation i-th set}}}\\[0.05 cm]

$\mathrm{\mathbf{X}\left(s_{n}^{i}\right)}$& n-stable, Zariski open subset of $\mathrm{\mathbf{X}}$ given by $\mathrm{\left\{x\in \mathbf{X}:\, s_{n}^{i}\left(x\right)\neq 0\right\}}$, \textbf{S. \oldstylenums{\pageref{Notation GTZU}}}\\[0.05 cm]

$\mathrm{\mathbf{X}^{ss}_{\xi}}$& Set of semistable points with respect to the level subset $\mathrm{\mu^{-1}\left(\xi\right)}$, \textbf{S. \oldstylenums{\pageref{Notation Set of Semistable Points}}}\\[0.05 cm]

\end{tabular}

\noindent\begin{tabular}{@{}p{2.5 cm}p{12.35cm}@{}}
$\mathrm{\mathbf{X}^{ss}_{\xi}/\!\!/\mathbb{T}}$& Hilbert Quotient with respect to the level subset $\mathrm{\mu^{-1}\left(\xi\right)}$, \textbf{S. \oldstylenums{\pageref{Notation Hilbert Quotient}}}\\[0.05 cm]

$\mathrm{\widehat{X}_{\xi}}$& Fundamental vector field on $\mathrm{\mathbf{L}}$ generated by the flow induced by $\mathrm{\xi\in \mathfrak{k}}$, \textbf{S. \oldstylenums{\pageref{Notation Fundamental Vector Field on L}}}\\[0.05 cm]

$\mathrm{{\bm{\mathfrak{X}}}}$& Universal space defined by $\mathrm{{\bm{\mathfrak{X}}}\coloneqq \left\{\big({\bm{\mathfrak{C}}},x\right)\in \mathcal{C}^{k}(\widehat{\,\bf{X}\,})\times \widehat{\,\bf{X}\,}: x\in \vert {\bm{\mathfrak{C}}}\vert\big\}}$, \textbf{S. \oldstylenums{\pageref{Notation Universal Space}}}\\[0.05 cm]

${\xi}$& Normalization map $\mathrm{\xi\!:\!\widetilde{\bf{\,Y\,}}^{nor}\rightarrow \widetilde{\bf{\,Y\,}}}$ of $\mathrm{\widetilde{\bf{\,Y\,}}}$, \textbf{S. \oldstylenums{\pageref{Notation Normalization Map Xi}}}\\[0.05 cm]

$\mathrm{\xi^{i}_{n}}$& Sequence of weights associated to a tame sequence $\mathrm{\left(s^{i}_{n}\right)_{n}}$, \textbf{S. \oldstylenums{\pageref{Notation Approximating Sequence of Weight Vectors}}}\\[0.05 cm]

$\mathrm{\left(y,{\bm{\mathfrak{C}}}_{y}\right)}$& Point in $\mathrm{\sigma^{-1}\left(\mathbf{Y}_{0}\right)\subset \widetilde{\bf{\,Y\,}}\subset \mathbf{Y}\times \mathcal{C}^{k}(\widehat{\,\bf{X}\,})}$ where $\mathrm{\varphi^{\widehat{\pi}}\left(y\right)={\bm{\mathfrak{C}}}_{y}}$ for $\mathrm{y\in \mathbf{Y}_{0}}$, \textbf{S. \oldstylenums{\pageref{Notation y,frak C_y}}}\\[0.05 cm]

$\mathrm{\mathbf{Y}}$& Abbreviation for the Hilbert Quotient $\mathrm{\mathbf{X}^{ss}_{\xi}/\!\!/\mathbb{T}}$, \textbf{S. \oldstylenums{\pageref{Notation Abbreviation for Hilbert Quotient}}}\\[0.05 cm]

$\mathrm{\mathbf{Y}^{i}}$& Image of $\mathrm{\mathbf{X}^{i}}$ under the projection map $\mathrm{\pi\!:\!\mathbf{X}^{ss}_{\xi}\rightarrow \mathbf{Y}}$, \textbf{S. \oldstylenums{\pageref{Notation Image of X_i Under Projection Map}}}\\[0.05 cm]

$\mathrm{\mathbf{Y}_{0}}$& Subset of all points in $\mathrm{\mathbf{Y}}$ so that $\mathrm{\widehat{\pi}^{-1}\left(y\right)}$ is a $\mathrm{k}$-dimensional subvariety of $\mathrm{\mathbf{\widehat{\,X\,}}}$, \textbf{S. \oldstylenums{\pageref{Notation Subset Y_0}}}\\[0.05 cm]

$\widetilde{\bf{\,Y\,}}$& Complex subspace of $\mathrm{\mathbf{Y}\times \mathcal{C}^{k}(\widehat{\,\mathbf{X}\,})}$ given by the closure of the graph of $\mathrm{\varphi^{\widehat{\pi}}\!:\!\mathbf{Y}_{0}\rightarrow \mathcal{C}^{k}(\widehat{\,\mathbf{X}\,})}$, \textbf{S. \oldstylenums{\pageref{Notation Blown Up Space Associated to Y}}}\\[0.05 cm]

${\zeta}$& Normalization map $\mathrm{\zeta\!:\!\widehat{\,\mathbf{X}\,}=\left(cl\,\mathbf{\Gamma}_{\pi}\right)^{nor}\rightarrow cl\,\mathbf{\Gamma}_{\pi}}$ of the compact variety $\mathrm{cl\,\mathbf{\Gamma}_{\pi}\subset}$ $\mathrm{ \mathbf{X}\times \mathbf{Y}}$, \textbf{S. \oldstylenums{\pageref{Notation Normalization Map Zeta}}}

\end{tabular}

\newpage
\vspace{1 cm}
\addcontentsline{toc}{section}{References}

\end{spacing}
\end{document}